\pdfoutput=1 
\documentclass[12pt,twoside]{article}
\usepackage{amsmath,amssymb,mathtools}
\usepackage{verbatim}
\usepackage{dsfont,bm,mathrsfs}
\usepackage{color}
\usepackage{graphicx}
\usepackage{amsthm}
\usepackage[shortlabels]{enumitem}
\usepackage{esint} 
\usepackage[title]{appendix}
\usepackage[T1]{fontenc}

\usepackage{xcolor}
\definecolor{darkgreen}{rgb}{0,0.75,0}
\definecolor{darkred}{rgb}{0.75,0,0}
\definecolor{darkmagenta}{rgb}{0.5,0,0.5}
\usepackage[colorlinks,citecolor=darkgreen,linkcolor=darkred,urlcolor=darkmagenta]{hyperref}
\hoffset -0.2 truein 

\setlength{\topmargin}{-1.5cm}
\setlength{\headheight}{1.5cm}
\setlength{\headsep}{0.6cm}\setlength{\textheight}{22.5cm}
\setlength{\oddsidemargin}{0.5cm}
\setlength{\evensidemargin}{0.5cm}
\setlength{\textwidth}{16.0cm}

\tolerance 500

\newtheorem{thm}{Theorem}[section]

\newtheorem{cor}[thm]{Corollary}
\newtheorem{lem}[thm]{Lemma}
\newtheorem{prop}[thm]{Proposition}

\theoremstyle{definition}
\newtheorem{defn}[thm]{Definition}
\newtheorem{assum}[thm]{Assumption}
\newtheorem{rmk}[thm]{Remark}

\newtheorem{framework}[thm]{Framework}

\newtheorem*{notation}{Notation}
\numberwithin{equation}{section}
\numberwithin{figure}{section}

\parskip 0.1cm

\allowdisplaybreaks


\newcommand{\norm}[1]{\left\lVert#1\right\rVert}
\newcommand{\trinorm}[1]{{\left\vert\kern-0.25ex\left\vert\kern-0.25ex\left\vert #1
    \right\vert\kern-0.25ex\right\vert\kern-0.25ex\right\vert}}
\newcommand{\indicator}[1]{\mathds{1}_{#1}} 
\newcommand{\closure}[1]{\overline{#1}}
\DeclareMathOperator*{\esssup}{ess\,sup}
\DeclareMathOperator*{\essinf}{ess\,inf}

\newcommand{\abs}[1]{\left\lvert#1\right\rvert} 

\newcommand{\supp}{\operatorname{supp}}


\newcommand{\diam}{\operatorname{diam}}
\newcommand{\dist}{\operatorname{dist}}

\newcommand{\Ext}{\mathsf{Ext}} 
\newcommand{\contfunc}{C}

\newcommand{\mr}[1]{{\tt \href{http://mathscinet.ams.org/mathscinet-getitem?mr=#1}{MR#1}}}
\newcommand{\arxiv}[1]{{\tt \href{http://arxiv.org/abs/#1}{arXiv:#1}}}


\begin{document}

	\font\titlefont=cmbx14 scaled\magstep1
	\title{\titlefont Characterizations of Sobolev functions via Besov-type energy functionals in fractals}

	\author{
	Ryosuke Shimizu
	}
	\date{May 5, 2025}
	\maketitle
	\vspace{-0.5cm}
	\begin{abstract}
		In the spirit of the ground-breaking result of Bourgain--Brezis--Mironescu, we establish some characterizations of Sobolev functions in metric measure spaces including fractals like the Vicsek set, the Sierpi\'{n}ski gasket and the Sierpi\'{n}ski carpet.
        As corollaries of our characterizations, we present equivalent norms on the Korevaar--Schoen--Sobolev space, and show that the domain of a $p$-energy form is identified with a Besov-type function space under a suitable $(p,p)$-Poincar\'e inequality, capacity upper bound and the volume doubling property.
		\vskip.2cm
		\noindent {\it Keywords:} Sobolev functions, Korevaar--Schoen--Sobolev spaces, Besov-type $p$-energy functionals, Sobolev extension domains, uniform domains

        \noindent {\it 2020 Mathematics Subject Classification:} Primary 28A80, 46E36; secondary 31E05

		\
	\end{abstract}


\section{Introduction}
For a smooth bounded domain $\Omega \subset \mathbb{R}^{n}$, $p \in (1,\infty)$ and $u \in W^{1,p}(\Omega)$, in the celebrated paper \cite{BBM01}, Bourgain, Brezis and Mironescu established the following limit formula describing the Sobolev norm of $u$ as a suitable limit of the norms of the fractional Sobolev space $W^{\theta,p}(\Omega)$ as $\theta \uparrow 1$:
\begin{equation*}\label{e:intro.BBM}
    \lim_{\theta \uparrow 1}(1 - \theta)\int_{\Omega}\int_{\Omega}\frac{\abs{u(x) - u(y)}^{p}}{\abs{x - y}^{n + p\theta}}\,dx\,dy
    = K_{p,n}\int_{\Omega}\abs{\nabla u(x)}^{p}\,dx,
\end{equation*}
where $K_{p,n}$ is a constant depending only on $p,n$.
This formula provides us a new characterization (BBM type characterization) of $W^{1,p}(\Omega)$ in terms of the norms of $W^{\theta,p}(\Omega)$, $0 < \theta < 1$, and indicates new equivalent norms on $(1,p)$-Sobolev spaces on a metric measure space $(X,d,m)$ in place of $\mathbb{R}^{n}$ (\cite[Remark 6]{Bre02}). 
(We assume that $(X,d)$ is a locally compact, separable, complete, uniformly perfect metric space and that $m$ is a Radon measure on $X$ throughout this paper.)  
Such generalizations were done by \cite{DS19,Gor22,Han24,HP21+,LPZ24,Mun15} under the assumptions that $(X,d,m)$ is \emph{volume doubling} (Definition \ref{defn:doubling}) and supports a suitable \emph{Poincar\'{e} inequality} (see also \eqref{e:PIug}), and BBM type characterizations of the $(1,p)$-Sobolev spaces introduced by \cite{Che99,Haj96,HK00,KS93,KM98,Sha00}\footnote{These definitions give the same space when $(X,d)$ is complete, $m$ is volume doubling and $(X,d,m)$ supports a $(1,p)$-Poincar\'{e} inequality; see, e.g., \cite[Theorems 10.5.3 and 12.3.9]{HKST}.} were proved.
In particular, in \cite[Theorem 1.3]{LPZ24}, it was shown that there exists $C \ge 1$ such that for any $u \in L^{p}(X,m)$ and any \emph{strong $p$-extension domain} $\Omega \subset X$ (\cite[Definition 2.7]{LPZ24}),
\begin{align}\label{e:intro.BBM.PIspace}
    &C^{-1}E_{u,p}(\Omega)
    \le \liminf_{\theta \uparrow 1}(1 - \theta)\int_{\Omega}\int_{\Omega}\frac{\abs{u(x) - u(y)}^{p}}{d(x,y)^{p\theta}m(B(y,d(x,y)))}\,m(dx)\,m(dy) \nonumber \\
    &\quad \le \limsup_{\theta \uparrow 1}(1 - \theta)\int_{\Omega}\int_{\Omega}\frac{\abs{u(x) - u(y)}^{p}}{d(x,y)^{p\theta}m(B(y,d(x,y)))}\,m(dx)\,m(dy)
    \le CE_{u,p}(\Omega);
\end{align}
here $E_{u,p}(\Omega)$ denotes the $p$-th power of the $p$-energy norm of $u$ on $\Omega$ defined via the notion of upper gradient, i.e., $E_{u,p}(\Omega) \coloneqq \int_{\Omega}g_{u}^{p}\,dm$, where $g_{u}$ is a minimal $p$-weak upper gradient of $u$; see \cite[Section 6.3]{HKST} for the definition of $g_{u}$.
There are several results related to \eqref{e:intro.BBM.PIspace} that are not mentioned here; see, e.g., \cite{DS19,LPZ24,LPZ24b} and the references therein for further background.

The main aim of this paper is extending \eqref{e:intro.BBM.PIspace} to a setting including \emph{fractals} in view of recent progress on $(1,p)$-Sobolev spaces on fractals as done in \cite{CGQ22,HPS04,Kig23,MS25,Shi24}.
In this direction, under the volume doubling property and the so-called \emph{weak monotonicity condition} (see \eqref{e:intro.WM} below), Gao, Yu and Zhang \cite[Theorem 1.5]{GYZ23+} proved the following BBM type characterization in the case of $\Omega = X$:
\begin{align}\label{e:intro.BBM.GYZ}
    &C^{-1}\mathcal{E}_{p}(u)
    \le \liminf_{\alpha \uparrow \alpha_{p}(X)}(\alpha_{p}(X) - \alpha)\int_{X}\int_{X}\frac{\abs{u(x) - u(y)}^{p}}{d(x,y)^{p\alpha}m(B(y,d(x,y)))}\,m(dx)\,m(dy) \nonumber \\
    &\quad \le \limsup_{\alpha \uparrow \alpha_{p}(X)}(\alpha_{p}(X) - \alpha)\int_{X}\int_{X}\frac{\abs{u(x) - u(y)}^{p}}{d(x,y)^{p\alpha}m(B(y,d(x,y)))}\,m(dx)\,m(dy)
    \le C\mathcal{E}_{p}(u),
\end{align}
where $\alpha_{p}(X)$ is the \emph{$L^{p}$-Besov critical exponent} of $(X,d,m)$ (see Definition \ref{defn:criticalexponents}) and
\[
\mathcal{E}_{p}(u) \coloneqq \sup_{\varepsilon > 0}\int_{X}\fint_{B(y,\varepsilon)}\frac{\abs{u(x) - u(y)}^{p}}{\Psi(\varepsilon)}\,m(dx)\,m(dy).
\]
Here $\Psi(\varepsilon) \coloneqq \varepsilon^{p\alpha_{p}(X)}$ is called a \emph{scale function}, $\fint_{A}f\,dm \coloneqq \frac{1}{m(A)}\int_{A}f\,dm$ for $f \in L^{1}(X,m)$ and a Borel set $A$ of $X$ with $m(A) \in (0,\infty)$, and we say that the weak monotonicity condition holds if and only if there exists $C \ge 1$ such that for any $u \in L^{p}(X,m)$ with $\mathcal{E}_{p}(u) < \infty$,
\begin{equation}\label{e:intro.WM}
    \mathcal{E}_{p}(u) \le C\liminf_{\varepsilon \downarrow 0}\int_{X}\fint_{B(y,\varepsilon)}\frac{\abs{u(x) - u(y)}^{p}}{\Psi(\varepsilon)}\,m(dx)\,m(dy);
\end{equation}
see also \cite[Definition 4.5]{Bau24}.
(The BBM type characterization \eqref{e:intro.BBM.GYZ} is also used to construct good energy forms on fractals; see \cite{GYZ23,GY19}.)
A notable difference from the setting in the previous paragraph is that there is a possibility of $\alpha_{p}(X) > 1$; see \cite{KS.gc,KS.KS,MS25,Shi24} for this strict inequality for Sierpi\'{n}ski carpets and Sierpi\'{n}ski gaskets.
Because of this difference, many typical techniques relying on Lipschitz functions do not work, which makes the analysis harder.
Our main result (Theorem \ref{thm:main1} below), which is proved in an unified way using (a suitable Poincar\'{e}-type inequality and) good cutoff functions instead of Lipschitz functions, provide not only a localized version of \eqref{e:intro.BBM.GYZ} corresponding to \eqref{e:intro.BBM.PIspace} but also the weak monotonicity condition for Korevaar--Schoen type $p$-energy norms with a general scale function $\Psi$. 
Since the weak monotonicity condition is implied in our setting, the assumption in Theorem \ref{thm:main1} is stronger than that in \cite[Theorem 1.5]{GYZ23+}. We need this stronger assumption to ensure the existence of sufficiently nice cutoff functions, which play essential roles in the paper, especially in showing an analogous estimate to \eqref{e:intro.BBM.PIspace} (see \eqref{e:main1} or \eqref{e:BBM} later) that generalizes \cite[Theorem 1.5]{GYZ23+}.

\subsection{Framework}
We first introduce our framework and some conditions that are needed to state our main results.
In the following definition, we introduce $p$-energy forms satisfying the Lipschitz contraction property. 
\begin{defn}\label{defn.pEFLip}
	Let $(X,\mathcal{B},m)$ be a measure space and let $\mathcal{F}$ be a linear subspace of the class of all measurable functions on $X$, 
	and let $\mathcal{E} \colon \mathcal{F} \to [0,\infty)$.
	We say that $(\mathcal{E},\mathcal{F})$ is a \emph{$p$-energy form} on $(X,m)$ if and only if $\mathcal{E}^{1/p}$ is a seminorm on $\mathcal{F}$.
	In addition, the pair $(\mathcal{E},\mathcal{F})$ is said to satisfy the \emph{Lipschitz contraction property} if and only if for any $u \in \mathcal{F}$ and any $1$-Lipschitz map $\varphi \in C(\mathbb{R})$, i.e., $\abs{\varphi(x) - \varphi(y)} \le \abs{x-y}$ for any $x,y \in \mathbb{R}$, we have 
	\begin{equation}\label{e:Lipc}
		\varphi(u) \in \mathcal{F} \quad \text{and} \quad \mathcal{E}(\varphi(u)) \le \mathcal{E}(u). 
	\end{equation}
\end{defn}

Now we present our framework of a $p$-energy form and $p$-energy measures in this paper. We fix the following setting throughout this paper.
\begin{framework}\label{frame:EPEM}
    $(X,d)$ is a locally compact, separable, complete metric space with $\#X \ge 2$, and $m$ is a Radon measure on $X$ with full topological support. 
    Let $p \in (1,\infty)$, let $(\mathcal{E}_{p},\mathcal{F}_{p})$ be a $p$-energy form on $(X,m)$ with $\mathcal{F}_{p} \subset L^{p}(X,m)$, and let $\{ \Gamma_{p}\langle u \rangle \}_{u \in \mathcal{F}_{p}}$ be a family of finite Borel measures on $X$.
    We assume that the triple $(\mathcal{E}_{p},\mathcal{F}_{p},\Gamma_{p}\langle \,\cdot\, \rangle)$ satisfies the following conditions.
    \begin{enumerate}[label=\textup{(\arabic*)},align=left,leftmargin=*,topsep=2pt,parsep=0pt,itemsep=2pt]
        \item\label{it:Lip} $(\mathcal{E}_{p},\mathcal{F}_{p})$ satisfies the Lipschitz contraction property, and $\mathcal{E}_{p}^{-1}(0) \subset \mathbb{R}\indicator{X}$.
        \item\label{it:banach} We equip $\mathcal{F}_{p}$ with the norm $\norm{\,\cdot\,}_{L^{p}(X)} + \mathcal{E}_{p}(\,\cdot\,)^{1/p}$. Then $\mathcal{F}_{p}$ is a reflexive Banach space.
        \item\label{it:regular} $\mathcal{F}_{p} \cap C_{c}(X)$ is dense in $\mathcal{F}_{p}$ and in $C_{c}(X)$ which is equipped with the uniform norm. Here $C_{c}(X)$ is the set of compactly supported continuous functions on $X$. 
        \item\label{it:EMtri} For any $A \in \mathcal{B}(X)$, $(\Gamma_{p}\langle \,\cdot\,\rangle(A)^{1/p},\mathcal{F}_{p})$ is a $p$-energy form on $(X,m)$.
        \item\label{it:EMtotal} $\mathcal{E}_{p}(u) = \Gamma_{p}\langle u \rangle(X)$ for any $u \in \mathcal{F}_{p}$.
        \item\label{it:EMslocal} Let $u,v \in \mathcal{F}_{p} \cap C_{c}(X)$ and let $U$ be an open set of $X$. If $(u - v)|_{U} \in \mathbb{R}\indicator{U}$, then $\Gamma_{p}\langle u \rangle|_{U} = \Gamma_{p}\langle v \rangle|_{U}$. (Here, for Borel measure $\mu$ on $X$, $\mu|_{U}$ denotes the restriction of $\mu$ to the Borel sets of $U$.)
    \end{enumerate}
\end{framework}
\begin{rmk}\label{rmk:canonical.EM}
    There are several examples of $(\mathcal{E}_{p},\mathcal{F}_{p},\Gamma_{p}\langle \,\cdot\, \rangle)$ satisfying the conditions in Framework \ref{frame:EPEM}.
    See, e.g., \cite{BC23,CGQ22,CGYZ24+,KS.KS,Kig23,KO+,MS25,Shi24} for such examples on self-similar sets including Vicsek type fractals, the Sierpi\'{n}ski gasket and the Sierpi\'{n}ski carpet. 
	There has been recent progress on the existence of $p$-energy measures in \cite{Sas25+}, which provides us another framework in terms of $p$-energy forms only. Indeed, if a $p$-energy form $(\mathcal{E}_{p},\mathcal{F}_{p})$ on $(X,m)$ with $\mathcal{F}_{p} \subset L^p(X,m)$ and $\mathcal{E}_{p}^{-1}(0) \subset \mathbb{R}\indicator{X}$ satisfies the conditions (F1)-(F5) in \cite[Assumption 1.3]{Sas25+}, then there exists a family of finite Borel measures $\{ \Gamma_{p}\langle u \rangle \}_{u \in \mathcal{F}_{p}}$ on $X$ such that \ref{it:Lip}-\ref{it:EMslocal} of Framework \ref{frame:EPEM} hold; see \cite[Proposition 3.13]{KS.gc} and \cite[Theorem 1.4 and Proposition 2.6]{Sas25+}. Note that the $p$-energy measures constructed in \cite{Sas25+} are associated with $(\mathcal{E}_{p},\mathcal{F}_{p})$ through the representation formula in \cite[(5.3)]{Sas25+}. 
\end{rmk}

Let us emphasize that we do not deal with the case of $p \in \{ 1,\infty \}$ in this paper. This is mainly because the reflexivity of $\mathcal{F}_{p}$ is crucially used in the proofs of main results (Theorem \ref{thm:main1} and Corollary \ref{cor:main1.3}). In addition, the case of $p \in \{ 1,\infty \}$ is not included in the constructions of $p$-energy forms via discrete approximations in \cite{CGQ22,CGYZ24+,HPS04,Kig23,KO+,MS25,Shi24}. (The only exceptional is the case of the Vicsek set \cite{BC23}, where the case $p \in \{ 1,\infty \}$ is also established with the help of the tree structure of the Vicsek set. See also \cite[Remark 5.7]{BC23} for a remark in the case of $p = 1$.) We refer to \cite{ABCRST3,ABCRST.fractional} for some studies of BV class functions as the $L^1$ Korevaar--Schoen-type function space on fractals. 

Next we recall the doubling properties.
\begin{defn}[Doubling properties]\label{defn:doubling}
    \begin{enumerate}[label=\textup{(\arabic*)},align=left,leftmargin=*,topsep=2pt,parsep=0pt,itemsep=2pt]
        \item $(X,d)$ is said to be a \emph{doubling metric space} if and only if there exists $N_{\mathrm{D}} \in \mathbb{N}$ such that for any ball $B(x,r)$, where $B(x,r) \coloneqq \{ y \in X \mid d(x,y) < r \}$, can be covered by $N_{\mathrm{D}}$ balls whose radii are $r/2$.
        \item $(X,d,m)$ is said to satisfy the \emph{volume doubling property} \ref{e:VD} if and only if there exists $c_{\mathrm{D}} \in [1,\infty)$ such that
        \begin{equation}\label{e:VD}
            m(B(x,2r)) \le c_{\mathrm{D}}\,m(B(x,r)) \quad \text{for any $(x,r) \in X \times (0,\infty)$.} \tag{\textup{VD}}
        \end{equation}
    \end{enumerate}
\end{defn}
It is well known that if $(X,d,m)$ satisfies \ref{e:VD}, then $(X,d)$ is a doubling metric space. 

Henceforth, we fix a continuous increasing bijection $\Psi \colon (0,\infty) \to (0,\infty)$ such that for all $0 < r \le R$,
\begin{equation}\label{e:scalefunction}
    C_{\Psi}^{-1}\left(\frac{R}{r}\right)^{\beta_{1}} \le \frac{\Psi(R)}{\Psi(r)} \le C_{\Psi}\left(\frac{R}{r}\right)^{\beta_{2}}.
\end{equation}
Here $C_{\Psi} \in [1,\infty)$ and $\beta_{1},\beta_{2} \in (0,\infty)$ are constants independent of $r$ and $R$.

We introduce two key conditions in the following definition.
\begin{defn}[$(p,p)$-Poincar\'{e} inequality and $p$-capacity upper estimate]\label{defn:PI.cap}
    \begin{enumerate}[label=\textup{(\arabic*)},align=left,leftmargin=*,topsep=2pt,parsep=0pt,itemsep=2pt]
        \item We say that the \emph{$(p,p)$-Poincar\'{e} inequality} \ref{e:PI} holds if and only if there exist $C_{\mathrm{P}} \in (0,\infty)$ and $A_{\mathrm{P}} \in [1,\infty)$ such that for any $x \in X$, any $r \in (0,\infty)$ and any $u \in \mathcal{F}_{p}$, 
        \begin{equation}\label{e:PI}
            \int_{B(x,r)}\abs{u - u_{B(x,r)}}^{p}\,dm \le C_{\mathrm{P}}\Psi(r)\int_{B(x,A_{\mathrm{P}}r)}\,d\Gamma_{p}\langle u \rangle,  \tag{\textup{PI$_p$($\Psi$)}}
        \end{equation}
        where $u_{B(x,r)} \coloneqq \frac{1}{m(B(x,r))}\int_{B(x,r)}u\,dm$. 
        For $\beta \in (0,\infty)$, we say that \hyperref[e:PI]{\textup{PI$_p$($\beta$)}} holds if \ref{e:PI} holds with $\Psi(r) = r^{\beta}$.
        \item For Borel sets $E_{0},E_{1}$ of $X$, we set
        \[
        \mathcal{F}_{p}(E_{1},E_{0}) \coloneqq \{ u \in \mathcal{F}_{p} \mid \text{$u = 1$ $m$-a.e.\ on $E_{1}$ and $u = 0$ $m$-a.e.\ on $E_{0}$} \}.
        \]
        We say that the \emph{$p$-capacity upper estimate} \ref{e:capu} holds if and only if there exist $C_{\mathrm{cap}},A_{\mathrm{cap},1},A_{\mathrm{cap},2} \in (1,\infty)$ such that for any $x \in X$ and $0 < R < \diam(X,d)/A_{\mathrm{cap},2}$,
        \begin{equation}\label{e:capu}
            \inf\{ \mathcal{E}_{p}(u) \mid u \in \mathcal{F}_{p}(B(x,R), X \setminus B(x,A_{\mathrm{cap},1}R)) \}
            \le C_{\mathrm{cap}}\frac{m(B(x,R))}{\Psi(R)}. \tag{\textup{cap$_{p}$($\Psi$)$_{\le}$}}
        \end{equation}
        For $\beta \in (0,\infty)$, we say that \hyperref[e:capu]{\textup{cap$_p$($\beta$)$_{\le}$}} holds if \ref{e:capu} holds with $\Psi(r) = r^{\beta}$.
    \end{enumerate}
\end{defn}
\begin{rmk} 
	\begin{enumerate}[label=\textup{(\arabic*)},align=left,leftmargin=*,topsep=2pt,parsep=0pt,itemsep=2pt]
    	\item In the field of analysis on metric spaces, $(X,d,m)$ is said to support a $(p,p)$-Poincar\'{e} inequality if and only if for any \emph{upper gradient} $g$ of $u$ (see, e.g., \cite[Sections 6.2-6.4]{HKST} for the definition and background of upper gradients),
    \begin{equation}\label{e:PIug}
        \int_{B(x,r)}\abs{u - u_{B(x,r)}}^{p}\,dm \le C_{\mathrm{P}}\,r^{p}\int_{B(x,A_{\mathrm{P}}r)}g^{p}\,dm,
    \end{equation}
    which can be viewed as a special case of \hyperref[e:PI]{\textup{PI$_{p}$($p$)}}.
    Indeed, \eqref{e:PIug} implies that \hyperref[e:PI]{\textup{PI$_{p}$($p$)}} holds for $(\mathcal{E}_{p},\mathcal{F}_{p},\Gamma_{p}\langle \,\cdot\, \rangle)$ given by
    \[
    \mathcal{E}_{p}(u) \coloneqq \int_{X}g_{u}^{p}\,dm, \quad \mathcal{F}_{p} \coloneqq \{ u \in L^{p}(X,m) \mid \mathcal{E}_{p}(u) < \infty \}, \quad \Gamma_{p}\langle u \rangle(dx) \coloneqq g_{u}(x)^{p}\,m(dx), 
    \]
    where $g_{u}$ is a \emph{minimal $p$-weak upper gradient} of $u$ (see \cite[Section 6.3]{HKST}). 
    (The above $\mathcal{F}_{p}$ is called the \emph{Newton--Sobolev space}.)
    We can verify \hyperref[e:capu]{\textup{cap$_{p}$($p$)$_{\le}$}} for $(\mathcal{E}_{p},\mathcal{F}_{p},\Gamma_{p}\langle \,\cdot\, \rangle)$ by using a Lipschitz partition of unity (see \cite[pp.~104--105]{HKST}). 
    	\item In some settings including fractals like the Vicsek set, the Sierpi\'nski gasket and the Sierpi\'nski carpet, the proof of \hyperref[e:PI]{\textup{PI$_{p}$($\beta$)}} and \hyperref[e:capu]{\textup{cap$_p$($\beta$)$_{\le}$}} for a suitable parameter $\beta$ (called the $p$-walk dimension) can be found in, e.g., \cite[Propositions 5.28, 6.9 and 6.14]{KS.KS}, \cite[Proposition 8.21]{KS.gc} and \cite[Theorem 1.2]{MS25}. (Precisely, Eq. (5.30) in \cite[Proposition 5.28]{KS.KS} is not the Poincar\'{e} inequality in our sense, but it can be upgraded to \hyperref[e:PI]{\textup{PI$_{p}$($\beta$)}} by using \cite[(4.7) and Theorem 4.5]{KS.KS} and the regularity condition in Framework \ref{frame:EPEM}-\ref{it:regular}.) 
    	
    	We can also consider the case where the scale function $\Psi$ is not a power function, e.g., $\Psi(r) = r^{\beta}\indicator{\{r < 1\}} + r^{\beta'}\indicator{\{r \ge 1\}}$ with $\beta \neq \beta'$. See \cite[Figures 1,2 and p.~516]{BBK06} for fractal-like spaces satisfying \hyperref[e:PI]{\textup{PI$_{2}$($\Psi$)}} and \hyperref[e:capu]{\textup{cap$_2$($\Psi$)$_{\le}$}} with such a scale function. It is intriguing to ask which scale function $\Psi$ is allowed for the validity of \ref{e:PI} and \ref{e:capu}. In the case of $p = 2$, under a certain assumption on the behavior of measures of metric balls, a complete characterization in terms of a relation between $\Psi$ and a doubling scaling function $\Phi$ governing the volume growth of metric balls is given in \cite[Theorems 2.5 and 2.6]{Mur24+}. This result is a generalization of the previous result for random walks by \cite{Bar04}, where the case of both $\Psi$ and $\Phi$ are power functions is considered. (More precisely, in \cite{Bar04,Mur24+}, a metric measure space equipped with a strongly local regular Dirichlet form satisfying the full sub-Gaussian heat kernel estimates with the scale function $\Psi$ is constructed.)
    	Very recently, a $p$-analogue of there results was shown in \cite[Proposition 2.1 and Theorem 2.3]{Yan25+}. In particular, for any scale functions $\Phi$, $\Psi$ satisfying 
    	\begin{equation}\label{e:scale.characterization}
    		\frac{1}{C}\left(\frac{R}{r}\right)^{p} \le \frac{\Psi(R)}{\Psi(r)} \le C\left(\frac{R}{r}\right)^{p-1}\frac{\Phi(R)}{\Phi(r)} \quad \text{for any $0 < r \le R < \infty$,}
    	\end{equation}  
    	where $C \in [1,\infty)$ is a constant independent of $r$ and $R$, \cite[Theorem 2.3]{Yan25+} provides us an unbounded metric measure space $(X,d,m)$, $p$-energy form $(\mathcal{E}_p,\mathcal{F}_{p})$ and $p$-energy measures $\Gamma_{p}\langle \,\cdot\, \rangle$ on $(X,d,m)$ that satisfy the chain condition, volume growth condition V($\Phi$) (\cite[p.~4]{Yan25+}), \ref{e:PI} and \ref{e:capu}.  
    \end{enumerate}
\end{rmk} 

The following definition is a variant of \cite[Definition 2.5]{Mur24}.
\begin{defn}\label{defn.Flocal-ss}
    Let $U$ be a non-empty open subset of $X$.
    \begin{enumerate}[label=\textup{(\arabic*)},align=left,leftmargin=*,topsep=2pt,parsep=0pt,itemsep=2pt]
    	\item We define a linear subspace $\mathcal{F}_{p,\mathrm{loc}}(U)$ of $L^{0}(U,m|_{U})$ by
    	\begin{equation}\label{e:defn.Floc}
    		\mathcal{F}_{p,\mathrm{loc}}(U) \coloneqq
        	\biggl\{ u \in L^{0}(U,m|_{U}) \biggm|
        	\begin{minipage}{220pt}
            	$u = u^{\#}$ $m$-a.e.\ on $A$ for some $u^{\#} \in \mathcal{F}_{p}$ for each relatively compact open subset $A$ of $U$
        	\end{minipage}
        	\biggr\}.
    	\end{equation}
    	Here $L^{0}(U,m|_{U})$ is the set of all $m$-equivalence classes of $\mathbb{R}$-valued Borel measurable functions defined on $U$. 
    	\item For each $u \in \mathcal{F}_{p,\mathrm{loc}}(U)$, we further define a measure $\Gamma_{p,U}\langle u \rangle$ on $U$ as follows.
        We first define $\Gamma_{p,U}\langle u \rangle(E) \coloneqq \Gamma_{p}\langle u^{\#} \rangle(E)$ for each relatively compact Borel subset $E$ of $U$, with $A \subset U$ and $u^{\#} \in \mathcal{F}_{p}$ as in \eqref{e:defn.Floc} chosen so that $E \subset A$; this definition of $\Gamma_{p,U}\langle u \rangle(E)$ is independent of a particular choice of such $A$ and $u^{\#}$ by Framework \ref{frame:EPEM}-\ref{it:EMslocal}.
		We then define $\Gamma_{p,U}\langle u \rangle(E) \coloneqq \lim_{n \to \infty}\Gamma_{p,U}\langle u \rangle(E \cap A_{n})$ for each $E \in\{ B \cap U \mid \text{$B$ is a Borel set of $X$} \}$, where $\{ A_{n} \}_{n \in \mathbb{N}}$ is a non-decreasing sequence of relatively compact open subsets of $U$ such that $\bigcup_{n \in \mathbb{N}}A_{n} = U$; it is clear that this definition of $\Gamma_{p,U}\langle u \rangle(E)$ is independent of a particular choice of $\{ A_{n} \}_{n \in \mathbb{N}}$, coincides with the previous one when $E$ is relatively compact in $U$, and gives a Radon measure on $U$. 
        \item We define
        \begin{equation}\label{e:defn.FU}
            \mathcal{F}_{p}(U) \coloneqq \biggl\{ u \in \mathcal{F}_{p,\mathrm{loc}}(U) \biggm| \int_{U}\abs{u}^{p}\,dm + \int_{U}\,d\Gamma_{p,U}\langle u \rangle < \infty \biggr\}.
        \end{equation}
    \end{enumerate}
\end{defn}

Now we introduce an extension property for domains, which is inspired by \cite[Definition 2.7]{LPZ24}.
\begin{defn}\label{defn:sextdomain}
    Let $\Omega$ be an open subset of $X$ with $\Omega \neq X$.
    We say that $\Omega$ satisfies the extension property \hyperref[defn:sextdomain]{\textup{(E)}} (with respect to $(X,d,m,\mathcal{E}_{p},\mathcal{F}_{p},\Gamma_{p}\langle \,\cdot\, \rangle)$) if and only if for any $f \in \mathcal{F}_{p}(\Omega)$ there exists $F \in \mathcal{F}_{p}$ such that $f = F$ $m$-a.e.\ on $\Omega$ and $\Gamma_{p}\langle F \rangle(\partial\Omega) = 0$.
\end{defn}

\subsection{Main results}

Our main results will be described in terms of mollifiers as in \cite{LPZ24}.
In the rest of this paper, we fix a sequence of nonnegative measurable functions $\{ \rho_{\varepsilon} \}_{\varepsilon > 0}$ on $X \times X$.
Let us introduce the following assumptions on $\{ \rho_{\varepsilon} \}_{\varepsilon > 0}$.
\begin{assum}\label{assum:kernel.1}
    There exists $C_{\rho} \in (0,\infty)$ such that the following conditions hold.
    \begin{enumerate}[label=\textup{(\arabic*)},align=left,leftmargin=*,topsep=2pt,parsep=0pt,itemsep=2pt]
        \item\label{it:assum.nonlocal.upper} There exists $\{ d_{j}(\varepsilon) \}_{j \in \mathbb{N}} \subset [0,\infty)$ such that
        \begin{equation}\label{e:truncate.upper.const}
            \sum_{j = 1}^{\infty}d_{j}(\varepsilon) \le C_{\rho},
        \end{equation}
        and, for any $x,y \in X$ with $d(x,y) \le 1$,
        \begin{equation}\label{e:truncate.upper}
            \rho_{\varepsilon}(x,y) \le \sum_{j = 1}^{\infty}d_{j}(\varepsilon)\frac{\indicator{B(y,2^{-j+1}) \setminus B(y,2^{-j})}(x)}{m(B(y,2^{-j+1}))}.
        \end{equation}
        \item\label{it:assum.nonlocal.compl} For all $\delta > 0$,
        \begin{equation}\label{e:kernel.compl}
            \lim_{\varepsilon \downarrow 0}\esssup_{y \in X}\int_{X \setminus B(y,\delta)}\frac{\rho_{\varepsilon}(x,y)}{\Psi(d(x,y))}\,m(dx) = 0.
        \end{equation}
        \item\label{it:assum.nonlocal.lower} For any $x,y \in X$ with $d(x,y) \le 1$, either of the following two estimates hold:
        \begin{equation}\label{e:kernel.lower.1}
            \rho_{\varepsilon}(x,y) \ge C_{\rho}^{-1}\frac{\Psi(d(x,y))}{\Psi(\varepsilon)}\frac{\indicator{B(y,\varepsilon)}(x)}{m(B(y,\varepsilon))}
        \end{equation}
        or
        \begin{equation}\label{e:kernel.lower.2}
            \rho_{\varepsilon}(x,y) \ge \Psi(d(x,y))\frac{\nu_{\varepsilon}(d(x,y),\infty)}{m(B(y,d(x,y)))},
        \end{equation}
        where $\nu_{\varepsilon}$ is a Radon measure on $[0,\infty)$ for which
        \begin{equation}\label{e:kernel.measure}
            \liminf_{\varepsilon \downarrow 0}\int_{0}^{\delta}\Psi(t)\,\nu_{\varepsilon}(dt) \ge C_{\rho}^{-1} \quad \text{for any $\delta > 0$}.
        \end{equation}
    \end{enumerate}
\end{assum}

\begin{assum}\label{assum:kernel.2}
    There exists $C_{\rho} \in (0,\infty)$ such that the following conditions hold.
    \begin{enumerate}[label=\textup{(\arabic*)},align=left,leftmargin=*,topsep=2pt,parsep=0pt,itemsep=2pt]
        \item\label{it:assum.local.upper} There exists $\{ d_{j}(\varepsilon) \}_{j \in \mathbb{N}} \subset [0,\infty)$ satisfying \eqref{e:truncate.upper.const} and, for any $x,y \in X$,
        \begin{equation}\label{e:truncate.upper.2}
            \rho_{\varepsilon}(x,y) \le \sum_{j = 1}^{\infty}d_{j}(\varepsilon)\frac{\indicator{B(y,2^{-j+1}\varepsilon) \setminus B(y,2^{-j}\varepsilon)}(x)}{m(B(y,2^{-j+1}\varepsilon))},
        \end{equation}
        \item\label{it:assum.local.lower} Assumption \ref{assum:kernel.1}-\ref{it:assum.nonlocal.lower} holds.
    \end{enumerate}
\end{assum}
\begin{rmk}\label{rmk:slocalkernel}
    If $\{ \rho_{\varepsilon} \}_{\varepsilon > 0}$ satisfies Assumption \ref{assum:kernel.2} and $d(x,y) \ge \varepsilon$, then $\rho_{\varepsilon}(x,y) = 0$.
\end{rmk}
 
Before stating the main result of the paper, we present three main examples of $\{ \rho_{\varepsilon} \}_{\varepsilon > 0}$ satisfying Assumption \ref{assum:kernel.1} or \ref{assum:kernel.2}. (The verifications will be done in Subsection \ref{subsec:proofs}.) 
In the case of $\Psi(r) = r^{p\theta_{p}}$ for some $\theta_{p} \in [1,\infty)$, the family $\bigl\{ \rho_{\varepsilon}^{\mathrm{BBM}} \bigr\}_{\varepsilon > 0}$ given by 
\begin{equation}\label{e:BBMkernel}
	\rho_{\varepsilon}^{\mathrm{BBM}}(x,y) \coloneqq (\theta_{p} - \theta(\varepsilon))\frac{d(x,y)^{p\theta_{p}}}{d(x,y)^{p\theta(\varepsilon)}m(B(y,d(x,y)))}, \quad x,y \in X,   
\end{equation}
satisfies Assumption \ref{assum:kernel.1}. 
Here $\{ \theta(\varepsilon) \}_{\varepsilon > 0} \subset (0,\infty)$ is a sequence satisfying $\theta(\varepsilon) \uparrow \theta_p$ as $\varepsilon \downarrow 0$.  
Next, for any scale function $\Psi$ satisfying \eqref{e:scalefunction}, if $(X,d)$ is uniformly perfect (see Proposition \ref{prop:PI-up} for the definition), then the following families $\bigl\{ \rho_{\varepsilon}^{\mathrm{KS}} \bigr\}_{\varepsilon > 0}$ and $\bigl\{ \widehat{\rho}_{\varepsilon}^{\,\mathrm{KS}} \bigr\}_{\varepsilon > 0}$ satisfy Assumption \ref{assum:kernel.2}: 
\begin{equation}\label{e:KSkernel}
    \rho_{\varepsilon}^{\mathrm{KS}}(x,y) \coloneqq \frac{\Psi(d(x,y))}{\Psi(\varepsilon)}\frac{\indicator{B(y,\varepsilon)}(x)}{m(B(y,\varepsilon))} \quad \text{and} \quad \widehat{\rho}_{\varepsilon}^{\,\mathrm{KS}}(x,y) \coloneqq \frac{\indicator{B(y,\varepsilon)}(x)}{m(B(y,\varepsilon))}, \quad x,y \in X. 
\end{equation}

Now we present the main theorem in this paper.
\begin{thm}\label{thm:main1}
    Assume that \ref{e:VD}, \ref{e:PI} and \ref{e:capu} hold.
    Let $\{ \rho_{\varepsilon} \}_{\varepsilon > 0}$ satisfy Assumption \ref{assum:kernel.1} or \ref{assum:kernel.2}.
    Then there exists $C \in [1,\infty)$ such that for any open subset $\Omega$ of $X$ satisfying \hyperref[defn:sextdomain]{\textup{(E)}} and any $u \in \mathcal{F}_{p}(\Omega)$,
    \begin{align}\label{e:main1}
        C^{-1}\Gamma_{p}\langle u \rangle(\Omega)
        &\le \liminf_{\varepsilon \downarrow 0}\int_{\Omega}\int_{\Omega}\frac{\abs{u(x) - u(y)}^{p}}{\Psi(d(x,y))}\rho_{\varepsilon}(x,y)\,m(dx)\,m(dy) \nonumber \\
        &\le \limsup_{\varepsilon \downarrow 0}\int_{\Omega}\int_{\Omega}\frac{\abs{u(x) - u(y)}^{p}}{\Psi(d(x,y))}\rho_{\varepsilon}(x,y)\,m(dx)\,m(dy)
        \le C\Gamma_{p}\langle u \rangle(\Omega).
    \end{align}
	Moreover, if $\{ \rho_{\varepsilon} \}_{\varepsilon > 0}$ satisfies Assumption \ref{assum:kernel.2}, then for any $u \in \mathcal{F}_{p}$,   
    \begin{align}\label{e:WM}
        &\sup_{\varepsilon > 0}\int_{X}\int_{X}\frac{\abs{u(x) - u(y)}^{p}}{\Psi(d(x,y))}\rho_{\varepsilon}(x,y)\,m(dx)\,m(dy) \nonumber \\
        &\quad\le C\liminf_{\varepsilon \downarrow 0}\int_{X}\int_{X}\frac{\abs{u(x) - u(y)}^{p}}{\Psi(d(x,y))}\rho_{\varepsilon}(x,y)\,m(dx)\,m(dy).
    \end{align}
\end{thm}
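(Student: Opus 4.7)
The plan is to establish the three inequalities in \eqref{e:main1} in turn and then deduce \eqref{e:WM} in the local-kernel case from the chain. The upper bound will come from the Poincar\'{e} inequality \ref{e:PI} applied to an extension of $u$, while the crucial lower bound is achieved via good cutoff functions supplied by the capacity upper bound \ref{e:capu}, combined with a weak compactness argument in the reflexive Banach space $\mathcal{F}_{p}$.

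For the \textbf{upper bound} $\limsup_{\varepsilon\downarrow 0}\int_{\Omega}\int_{\Omega}\cdots\le C\,\Gamma_{p}\langle u\rangle(\Omega)$, I would first apply \hyperref[defn:sextdomain]{\textup{(E)}} to obtain $F\in\mathcal{F}_{p}$ with $F=u$ $m$-a.e.\ on $\Omega$ and $\Gamma_{p}\langle F\rangle(\partial\Omega)=0$, so that the integrand may be replaced by the same quantity with $F$. The shell decomposition in Assumption \ref{assum:kernel.1}-\ref{it:assum.nonlocal.upper} or \ref{assum:kernel.2}-\ref{it:assum.local.upper} reduces the double integral to a weighted sum $\sum_{j\ge 1}d_{j}(\varepsilon)\,I_{j,\varepsilon}(F)$, where $I_{j,\varepsilon}(F)$ averages $\abs{F(x)-F(y)}^{p}/\Psi(r_{j})$ over annular shells of radius $r_{j}$. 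The elementary inequality $\abs{F(x)-F(y)}^{p}\lesssim \abs{F(x)-F_{B(y,r_{j})}}^{p}+\abs{F(y)-F_{B(y,r_{j})}}^{p}$, followed by \ref{e:PI}, Fubini, and \ref{e:VD}, gives $I_{j,\varepsilon}(F)\le C\,\Gamma_{p}\langle F\rangle(X)$ uniformly in $j$, and summing with $\sum_{j}d_{j}(\varepsilon)\le C_{\rho}$ produces a uniform-in-$\varepsilon$ bound. To localize to $\Gamma_{p}\langle F\rangle(\overline{\Omega})=\Gamma_{p}\langle u\rangle(\Omega)$ I would split into short- and long-range parts at scale $\delta$: the long-range contribution vanishes as $\varepsilon\downarrow 0$ (trivially via Remark \ref{rmk:slocalkernel} in the local case, and via \eqref{e:kernel.compl} combined with $\abs{F(x)-F(y)}^{p}\le 2^{p-1}(\abs{F(x)}^{p}+\abs{F(y)}^{p})$ in the nonlocal case), while the short-range part only sees $\Gamma_{p}\langle F\rangle$ on a $\delta$-neighborhood of $\overline\Omega$, which collapses to $\Gamma_{p}\langle F\rangle(\overline\Omega)$ as $\delta\downarrow 0$.

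The \textbf{lower bound} $\Gamma_{p}\langle u\rangle(\Omega)\le C\,\liminf_{\varepsilon\downarrow 0}\int_{\Omega}\int_{\Omega}\cdots$ is the main obstacle. Assuming the $\liminf$ is finite and picking a realizing sequence $\varepsilon_{n}\downarrow 0$, the kernel lower bounds in Assumption \ref{assum:kernel.1}-\ref{it:assum.nonlocal.lower}/\ref{assum:kernel.2}-\ref{it:assum.local.lower} reduce the task to showing $\Gamma_{p}\langle u\rangle(\Omega)\le C\sup_{n}J_{n}(u)$, where $J_{n}(u)\coloneqq\int_{\Omega}\fint_{B(y,\varepsilon_{n})}\abs{u(x)-u(y)}^{p}\Psi(\varepsilon_{n})^{-1}\,m(dx)\,m(dy)$ is a Korevaar--Schoen-type functional (the variant involving $\nu_{\varepsilon}$ is handled analogously by integrating against $\nu_{\varepsilon}$ and invoking \eqref{e:kernel.measure}). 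For each $\varepsilon_{n}$, \ref{e:capu} combined with \ref{it:Lip} produces a partition of unity $\{\chi_{B,n}\}_{B}$ subordinate to a \ref{e:VD}-controlled covering of a neighborhood of $\Omega$ by balls of radius $\sim\varepsilon_{n}$, with $\mathcal{E}_{p}(\chi_{B,n})\le Cm(B)/\Psi(\varepsilon_{n})$; these cutoffs, which replace the Lipschitz functions unavailable in the fractal setting, allow one to define discrete approximations $u_{n}=\sum_{B}u_{B}\chi_{B,n}\in\mathcal{F}_{p}$ (with $u_{B}\coloneqq u_{B(\,\cdot\,,\varepsilon_{n})}$) whose energies satisfy $\mathcal{E}_{p}(u_{n})\le C\,J_{n}(u)$, via seminorm/Leibniz-type estimates together with the bounded overlap of the covering, and which converge to $u$ in $L^{p}(\Omega,m)$. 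Reflexivity of $\mathcal{F}_{p}$ (Framework \ref{frame:EPEM}-\ref{it:banach}) furnishes a subsequence weakly convergent to some $G\in\mathcal{F}_{p}$ with $G=u$ $m$-a.e.\ on $\Omega$; lower semicontinuity of $v\mapsto\Gamma_{p}\langle v\rangle(\Omega)$ on $\mathcal{F}_{p}$ under weak convergence (a consequence of Mazur's lemma together with the seminorm property of \ref{it:EMtri}) then yields $\Gamma_{p}\langle u\rangle(\Omega)=\Gamma_{p}\langle G\rangle(\Omega)\le\liminf_{k}\mathcal{E}_{p}(u_{n_{k}})\le C\liminf_{n}J_{n}(u)$, where the first equality uses Framework \ref{frame:EPEM}-\ref{it:EMslocal}. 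The technical heart is the careful construction of the cutoff partition so that the Leibniz-type remainders in $\mathcal{E}_{p}(u_{n})$ are absorbed by $J_{n}(u)$; this is where reflexivity and the capacity estimate \ref{e:capu} really interact.

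Finally, for the \textbf{weak monotonicity} \eqref{e:WM} in the local-kernel case, observe that Remark \ref{rmk:slocalkernel} forces $\rho_{\varepsilon}(x,y)=0$ whenever $d(x,y)\ge\varepsilon$, so the upper-bound computation above (taking $\Omega=X$, with no extension needed since $u\in\mathcal{F}_{p}$ is globally defined and no tail estimate at scale $\delta$ is required) produces a bound $\int_{X}\int_{X}\abs{u(x)-u(y)}^{p}\rho_{\varepsilon}(x,y)\Psi(d(x,y))^{-1}\,m(dx)\,m(dy)\le C_{1}\,\Gamma_{p}\langle u\rangle(X)$ valid for every single $\varepsilon>0$. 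Combining this with the lower bound $\Gamma_{p}\langle u\rangle(X)\le C_{2}\liminf_{\varepsilon\downarrow 0}\int_{X}\int_{X}\cdots$, which holds by the same cutoff-based argument applied directly on $X$, yields \eqref{e:WM} with constant $C=C_{1}C_{2}$.
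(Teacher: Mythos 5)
Your upper-bound argument and your deduction of \eqref{e:WM} are essentially the paper's: the shell decomposition plus \ref{e:PI} is Proposition \ref{prop:preupper} (resp.\ Proposition \ref{prop:preupper.KS}), the tail is killed by \eqref{e:kernel.compl} (resp.\ Remark \ref{rmk:slocalkernel}), and the localization to $\Gamma_{p}\langle u\rangle(\Omega)$ rests, as in Theorem \ref{thm:upper.ext}, on $\Gamma_{p}\langle F\rangle(\partial\Omega)=0$ from \hyperref[defn:sextdomain]{\textup{(E)}}; that you run the whole short-range estimate with the extension $F$ and shrink $\Omega(8A_{\mathrm{P}}R)$ to $\closure{\Omega}$, rather than splitting off the boundary layer $\Omega\setminus\Omega_{8A_{\mathrm{P}}R}$ as the paper does, is only a bookkeeping difference.

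The lower bound, however, contains a genuine gap: the claimed global estimate $\mathcal{E}_{p}(u_{n})\le C\,J_{n}(u)$ for the discrete approximations cannot hold when $u$ is defined only on $\Omega$. Since the ball averages $u_{B}$ make no sense for balls away from $\Omega$, your partition of unity can sum to $1$ only on a neighborhood of $\Omega$, so $u_{n}$ has a transition layer where it decays to $0$; the energy of that layer is of order $m(\text{layer})/\Psi(\varepsilon_{n})$ and is not controlled by $J_{n}(u)$ at all. Concretely, take $u\equiv1\in\mathcal{F}_{p}(\Omega)$ with $\Omega$ bounded and $X$ unbounded: then $J_{n}(u)=0$ and $\Gamma_{p}\langle u\rangle(\Omega)=0$, yet $u_{n}$ is non-constant, so $\mathcal{E}_{p}(u_{n})>0$ by Framework \ref{frame:EPEM}-\ref{it:Lip}. (Near $\partial\Omega$ there is a second problem: averages over $B\cap\Omega$ are not comparable to averages over $B$ without a measure-density assumption on $\Omega$, which is not available.) The paper's proof of Theorem \ref{thm:lower} circumvents exactly this by separating two estimates that you have conflated: it fixes a relatively compact $U$ with $\dist(U,X\setminus\Omega)>t_{\ast}$ and a global representative $u^{\#}\in\mathcal{F}_{p}$ with $u^{\#}=u$ near $U$, bounds only the \emph{localized} quantity $\Gamma_{p}\langle A_{\delta}u^{\#}\rangle(U)$ by the Korevaar--Schoen energy of $u$ over $\Omega\times\Omega$ (this is \eqref{e:em.approx}, where all relevant balls stay inside $\Omega$), and uses a separate crude bound $\mathcal{E}_{p}(A_{\delta}u^{\#})\le C\,\Gamma_{p}\langle u^{\#}\rangle(X)<\infty$ via \ref{e:PI} (this is \eqref{e:discreteconv.prePI}) solely to secure boundedness in $\mathcal{F}_{p}$ for the reflexivity/Mazur step; the conclusion then follows from the seminorm property in Framework \ref{frame:EPEM}-\ref{it:EMtri} applied to $\Gamma_{p}\langle\cdot\rangle(U)^{1/p}$ and exhaustion of $\Omega$ by such $U$. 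Without this localization your chain $\Gamma_{p}\langle u\rangle(\Omega)\le\liminf_{k}\mathcal{E}_{p}(u_{n_{k}})\le C\liminf_{n}J_{n}(u)$ breaks at its middle link; note also that the paper's lower bound needs neither \hyperref[defn:sextdomain]{\textup{(E)}} nor any extension of $u$, so repairing your argument by averaging the extension $F$ instead would still not yield a bound by the $\Omega\times\Omega$ integral.
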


We obtain the following asymptotic behaviors of the Sobolev seminorms as corollaries of Theorem \ref{thm:main1} by choosing $\{ \rho_{\varepsilon} \}_{\varepsilon > 0}$ as given in \eqref{e:BBMkernel} or \eqref{e:KSkernel}. 
\begin{cor}[BBM type characterization]\label{cor:main1.1}
    Assume that \ref{e:VD}, \hyperref[e:PI]{\textup{PI$_p$($p\theta_{p}$)}} and \hyperref[e:capu]{\textup{cap$_p$($p\theta_{p}$)$_{\le}$}} hold for some $\theta_{p} \ge 1$.
    Then there exists $C \in [1,\infty)$ such that for any open subset $\Omega$ of $X$ satisfying \hyperref[defn:sextdomain]{\textup{(E)}} and any $u \in \mathcal{F}_{p}(\Omega)$,
    \begin{align}\label{e:BBM}
        &C^{-1}\Gamma_{p}\langle u \rangle(\Omega)
        \le \liminf_{\theta \uparrow \theta_{p}}(\theta_{p} - \theta)\int_{\Omega}\int_{\Omega}\frac{\abs{u(x) - u(y)}^{p}}{d(x,y)^{p\theta}m(B(y,d(x,y)))}\,m(dx)\,m(dy) \nonumber \\
        &\,\le \limsup_{\theta \uparrow \theta_{p}}(\theta_{p} - \theta)\int_{\Omega}\int_{\Omega}\frac{\abs{u(x) - u(y)}^{p}}{d(x,y)^{p\theta}m(B(y,d(x,y)))}\,m(dx)\,m(dy)
        \le C\Gamma_{p}\langle u \rangle(\Omega).
    \end{align}
\end{cor}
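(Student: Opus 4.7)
The strategy is to specialize Theorem \ref{thm:main1} to $\Psi(r) := r^{p\theta_{p}}$ and to the kernel family $\rho_{\varepsilon} := \rho_{\varepsilon}^{\mathrm{BBM}}$ defined in \eqref{e:BBMkernel}. For this choice of $\Psi$, the hypotheses \hyperref[e:PI]{\textup{PI$_{p}$($p\theta_{p}$)}} and \hyperref[e:capu]{\textup{cap$_{p}$($p\theta_{p}$)$_{\le}$}} of the corollary are precisely \ref{e:PI} and \ref{e:capu} of Theorem \ref{thm:main1}, and \ref{e:VD} is inherited directly. The remaining input, that $\bigl\{\rho_{\varepsilon}^{\mathrm{BBM}}\bigr\}_{\varepsilon>0}$ satisfies Assumption \ref{assum:kernel.1}, is asserted in the paragraph preceding Theorem \ref{thm:main1} and will be verified in Subsection \ref{subsec:proofs}; granting this, Theorem \ref{thm:main1} applies.

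It then only remains to translate the conclusion \eqref{e:main1} into the form \eqref{e:BBM}. Since $\Psi(d(x,y)) = d(x,y)^{p\theta_{p}}$, a direct cancellation yields
\begin{equation*}
    \frac{\abs{u(x) - u(y)}^{p}}{\Psi(d(x,y))}\,\rho_{\varepsilon}^{\mathrm{BBM}}(x,y)
    = (\theta_{p} - \theta(\varepsilon))\,\frac{\abs{u(x) - u(y)}^{p}}{d(x,y)^{p\theta(\varepsilon)}\,m(B(y,d(x,y)))}.
\end{equation*}
Choosing $\theta(\varepsilon) := \theta_{p} - \varepsilon$ for $\varepsilon \in (0,\theta_{p})$ realizes an order-reversing bijection between $(0,\theta_{p})$ and itself under which $\varepsilon \downarrow 0$ corresponds to $\theta(\varepsilon) \uparrow \theta_{p}$. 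Consequently, the $\liminf$ (resp.\ $\limsup$) as $\varepsilon \downarrow 0$ on the left-hand side of \eqref{e:main1} coincides with the $\liminf$ (resp.\ $\limsup$) as $\theta \uparrow \theta_{p}$ appearing in \eqref{e:BBM}, and the chain of inequalities \eqref{e:BBM} drops out of \eqref{e:main1} at once.

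The only step involving genuine work, which is not part of this proof but supplied by the verification in Subsection \ref{subsec:proofs}, is checking Assumption \ref{assum:kernel.1} for $\bigl\{\rho_{\varepsilon}^{\mathrm{BBM}}\bigr\}_{\varepsilon>0}$. A brief sketch: the upper bound \eqref{e:truncate.upper} is obtained via a dyadic annular decomposition, where on $B(y,2^{-j+1})\setminus B(y,2^{-j})$ one bounds $d(x,y)^{p(\theta_{p}-\theta(\varepsilon))}$ by $C\,2^{-jp(\theta_{p}-\theta(\varepsilon))}$ and uses \ref{e:VD} to compare $m(B(y,d(x,y)))$ with $m(B(y,2^{-j+1}))$, yielding $d_{j}(\varepsilon) \asymp (\theta_{p}-\theta(\varepsilon))\,2^{-jp(\theta_{p}-\theta(\varepsilon))}$; the geometric sum is uniformly bounded since $1-2^{-p(\theta_{p}-\theta(\varepsilon))} \asymp (\theta_{p}-\theta(\varepsilon))$ as $\theta(\varepsilon) \uparrow \theta_{p}$. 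The tail condition \eqref{e:kernel.compl} follows from an analogous dyadic estimate summed over annular scales exceeding $\delta$. Finally, \eqref{e:kernel.lower.2} is handled by taking $\nu_{\varepsilon}$ to have density proportional to $(\theta_{p}-\theta(\varepsilon))\,r^{-p\theta(\varepsilon)-1}\indicator{(0,1]}(r)$, whereupon an explicit computation gives $\int_{0}^{\delta}\Psi(t)\,\nu_{\varepsilon}(dt) \asymp (\delta\wedge 1)^{p(\theta_{p}-\theta(\varepsilon))} \to 1$ as $\theta(\varepsilon)\uparrow\theta_{p}$, which is \eqref{e:kernel.measure}. The main potential obstacle here is the precise bookkeeping: the same prefactor $\theta_{p}-\theta(\varepsilon)$ must appear consistently in the definition of $\rho_{\varepsilon}^{\mathrm{BBM}}$, in $d_{j}(\varepsilon)$, and in the total mass of $\nu_{\varepsilon}$, so that all three conditions close up with constants independent of $\varepsilon$.
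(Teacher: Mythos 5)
Your proposal is correct and follows essentially the same route as the paper: specialize Theorem \ref{thm:main1} to $\Psi(r)=r^{p\theta_{p}}$ and the family $\bigl\{\rho_{\varepsilon}^{\mathrm{BBM}}\bigr\}_{\varepsilon>0}$, with the verification of Assumption \ref{assum:kernel.1} carried out via the dyadic upper bound and the choice $\nu_{\varepsilon}(dt)\propto(\theta_{p}-\theta(\varepsilon))t^{-p\theta(\varepsilon)-1}\,dt$, exactly as the paper does (the paper simply defers the computation to \cite[Corollary 6.1]{LPZ24}). Your explicit reparametrization $\theta(\varepsilon)=\theta_{p}-\varepsilon$ to identify the $\varepsilon\downarrow 0$ and $\theta\uparrow\theta_{p}$ limits is a slightly more careful bookkeeping of a point the paper leaves implicit, but it is the same argument.
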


\begin{cor}[Korevaar--Schoen type characterization]\label{cor:main1.2}
    Assume that \ref{e:VD}, \ref{e:PI} and \ref{e:capu} hold.
    Then there exists $C \in [1,\infty)$ such that for any open subset $\Omega$ of $X$ satisfying \hyperref[defn:sextdomain]{\textup{(E)}} and any $u \in \mathcal{F}_{p}(\Omega)$,
    \begin{align}\label{e:KS.local}
        &C^{-1}\Gamma_{p}\langle u \rangle(\Omega)
        \le \liminf_{r \downarrow 0}\int_{\Omega}\frac{1}{m(B(y,r))}\int_{\Omega \cap B(y,r)}\frac{\abs{u(x) - u(y)}^{p}}{\Psi(r)}\,m(dx)\,m(dy) \nonumber \\
        &\quad\le \limsup_{r \downarrow 0}\int_{\Omega}\frac{1}{m(B(y,r))}\int_{\Omega \cap B(y,r)}\frac{\abs{u(x) - u(y)}^{p}}{\Psi(d(x,y))}\,m(dx)\,m(dy)
        \le C\Gamma_{p}\langle u \rangle(\Omega).
    \end{align}
    Moreover, if $u \in \mathcal{F}_{p}$, then
    \begin{align}\label{e:KS}
        C^{-1}\mathcal{E}_{p}(u)
        &\le \liminf_{r \downarrow 0}\int_{X}\fint_{B(y,r)}\frac{\abs{u(x) - u(y)}^{p}}{\Psi(r)}\,m(dx)\,m(dy) \nonumber \\
        &\le \sup_{r > 0}\int_{X}\fint_{B(y,r)}\frac{\abs{u(x) - u(y)}^{p}}{\Psi(d(x,y))}\,m(dx)\,m(dy)
        \le C\mathcal{E}_{p}(u).
    \end{align}
\end{cor}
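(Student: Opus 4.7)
The plan is to deduce Corollary \ref{cor:main1.2} directly from Theorem \ref{thm:main1} by substituting the two mollifiers $\rho_{\varepsilon}^{\mathrm{KS}}$ and $\widehat{\rho}_{\varepsilon}^{\,\mathrm{KS}}$ from \eqref{e:KSkernel}; both satisfy Assumption \ref{assum:kernel.2}, as will be verified in Subsection \ref{subsec:proofs}. The algebraic key is a cancellation of $\Psi(d(x,y))$: substituting $\rho_{\varepsilon}^{\mathrm{KS}}$ into the integrand of \eqref{e:main1} yields
\[
\int_{\Omega}\int_{\Omega}\frac{\abs{u(x)-u(y)}^{p}}{\Psi(d(x,y))}\rho_{\varepsilon}^{\mathrm{KS}}(x,y)\,m(dx)\,m(dy) = \int_{\Omega}\frac{1}{m(B(y,\varepsilon))}\int_{\Omega \cap B(y,\varepsilon)}\frac{\abs{u(x)-u(y)}^{p}}{\Psi(\varepsilon)}\,m(dx)\,m(dy),
\]
which is precisely the leftmost functional in \eqref{e:KS.local} with $r$ playing the role of $\varepsilon$. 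Substituting $\widehat{\rho}_{\varepsilon}^{\,\mathrm{KS}}$ instead reproduces the rightmost functional, the one with $\Psi(d(x,y))$ in the denominator.

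With these identifications, the leftmost inequality of \eqref{e:KS.local} is the first inequality of \eqref{e:main1} applied to $\rho_{\varepsilon}^{\mathrm{KS}}$, and the rightmost inequality of \eqref{e:KS.local} is the last inequality of \eqref{e:main1} applied to $\widehat{\rho}_{\varepsilon}^{\,\mathrm{KS}}$. The middle inequality is elementary: since $\Psi$ is increasing, $x \in B(y,r)$ implies $\Psi(d(x,y)) \le \Psi(r)$, so the $\Psi(r)$-integrand is pointwise dominated by the $\Psi(d(x,y))$-integrand; taking $\liminf_{r \downarrow 0}$ on both sides and then bounding the right-hand side by its $\limsup$ produces the desired chain.

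For the global statement \eqref{e:KS}, the same template works with $\Omega = X$, combined with \eqref{e:WM} to upgrade $\limsup_{r \downarrow 0}$ to $\sup_{r > 0}$ on the right. Explicitly, \eqref{e:WM} applied to $\widehat{\rho}_{\varepsilon}^{\,\mathrm{KS}}$ bounds $\sup_{r > 0}[\Psi(d(x,y))\text{-functional}]$ by $C\liminf_{r \downarrow 0}[\Psi(d(x,y))\text{-functional}]$, which is in turn $\le C'\,\mathcal{E}_{p}(u)$ by the $\Omega = X$ analogue of the last inequality of \eqref{e:main1}; likewise the leftmost inequality is the $\Omega = X$ analogue of the first inequality of \eqref{e:main1} applied to $\rho_{\varepsilon}^{\mathrm{KS}}$. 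The main technical point requiring attention is that Definition \ref{defn:sextdomain} excludes $\Omega = X$, so \eqref{e:main1} cannot be invoked verbatim; I would handle this either via an exhaustion by open sets $\Omega_n \subsetneq X$ satisfying property \hyperref[defn:sextdomain]{\textup{(E)}} with $\Omega_n \uparrow X$, combined with monotone convergence for $\Gamma_{p}\langle u \rangle$, or by inspecting the proof of Theorem \ref{thm:main1} and observing that (E) is only invoked to extend $u \in \mathcal{F}_{p}(\Omega)$ to some $F \in \mathcal{F}_{p}$, a step which is trivially satisfied when $u \in \mathcal{F}_{p}$ already.
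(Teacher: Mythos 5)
Your reduction to Theorem \ref{thm:main1} is exactly the paper's strategy, and the algebraic identifications (plugging $\rho_{\varepsilon}^{\mathrm{KS}}$ gives the $\Psi(r)$-functional, plugging $\widehat{\rho}_{\varepsilon}^{\,\mathrm{KS}}$ gives the $\Psi(d(x,y))$-functional, the middle inequality by pointwise monotonicity of $\Psi$) are correct. The genuine gap is that you treat the verification of Assumption \ref{assum:kernel.2} for the two kernels in \eqref{e:KSkernel} as something "to be verified in Subsection \ref{subsec:proofs}" --- but that verification \emph{is} the paper's proof of this corollary; there is nothing else to cite, and it is not automatic. The lower bound \eqref{e:kernel.lower.1} and the vanishing-tail condition \eqref{e:kernel.compl} are indeed trivial (both kernels vanish for $d(x,y)\ge\varepsilon$), but the dyadic upper bound \eqref{e:truncate.upper.2} with summable coefficients as in \eqref{e:truncate.upper.const} is not: on the annulus $B(y,2^{-j+1}\varepsilon)\setminus B(y,2^{-j}\varepsilon)$ one needs $d_{j}(\varepsilon)\gtrsim m(B(y,2^{-j+1}\varepsilon))/m(B(y,\varepsilon))$, and summability of the $d_{j}(\varepsilon)$ requires geometric decay of these volume ratios, i.e.\ a reverse volume doubling estimate. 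The paper obtains this from the uniform perfectness of $(X,d)$, which is itself a consequence of \ref{e:VD} and \ref{e:PI} via Proposition \ref{prop:PI-up}, and then takes $d_{j}(\varepsilon)=c_{1}2^{(-j+1)Q}$. This chain (PI $\Rightarrow$ uniform perfectness $\Rightarrow$ reverse doubling $\Rightarrow$ summable $d_{j}$) is the actual mathematical content of the corollary's proof and is missing from your proposal.

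A secondary point: of your two suggested fixes for the $\Omega=X$ issue, the exhaustion route does not work as stated --- an exhaustion of $X$ by domains satisfying \hyperref[defn:sextdomain]{\textup{(E)}} is not established in general, and even granting it, \eqref{e:main1} on each $\Omega_{n}$ only controls $\limsup_{r\downarrow 0}$ of the truncated functionals, which does not pass through the limit in $n$ to give the $\sup_{r>0}$ bound required in \eqref{e:KS}. Your second alternative is the right one and is what the paper does: the lower bound (Theorem \ref{thm:lower}) holds for every open set including $X$, and the upper bounds have explicit global versions for $u\in\mathcal{F}_{p}$ (the "Moreover" parts of Theorems \ref{thm:upper.ext} and \ref{thm:upper.local.KSext}); in fact \eqref{e:sup.upper} gives the $\sup_{r>0}$ bound directly, so the detour through \eqref{e:WM} is unnecessary, though not wrong.
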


Similar to \cite[Theorem 7.1]{MS25}, we can identify $\mathcal{F}_{p}$ with Besov-type spaces as follows.
\begin{cor}[Determination of $\mathcal{F}_{p}$]\label{cor:main1.3}
    Assume that \ref{e:VD}, \ref{e:PI} and \ref{e:capu} hold.
    Let $\{ \rho_{\varepsilon} \}_{\varepsilon > 0}$ satisfy Assumption \ref{assum:kernel.1} or \ref{assum:kernel.2}.
    Then
    \begin{align}\label{e:determine}
        \mathcal{F}_{p} 
        &= \biggl\{ u \in L^{p}(X,m) \biggm| \liminf_{\varepsilon \downarrow 0}\int_{X}\int_{X}\frac{\abs{u(x) - u(y)}^{p}}{\Psi(d(x,y))}\rho_{\varepsilon}(x,y)\,m(dx)\,m(dy) < \infty \biggr\} \nonumber \\
        &= \biggl\{ u \in L^{p}(X,m) \biggm| \limsup_{\varepsilon \downarrow 0}\int_{X}\int_{X}\frac{\abs{u(x) - u(y)}^{p}}{\Psi(d(x,y))}\rho_{\varepsilon}(x,y)\,m(dx)\,m(dy) < \infty \biggr\}.
    \end{align}
\end{cor}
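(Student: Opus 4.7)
The plan is to combine the upper bound of Theorem \ref{thm:main1} (for the easy direction) with a reflexivity argument using Framework \ref{frame:EPEM}\,\ref{it:banach} (for the converse). Writing $\mathscr{B}_{\varepsilon}(u) := \int_{X}\int_{X}\abs{u(x)-u(y)}^{p}\Psi(d(x,y))^{-1}\rho_{\varepsilon}(x,y)\,m(dx)\,m(dy)$, the trivial inequality $\liminf \le \limsup$ reduces \eqref{e:determine} to the two inclusions
\[
\mathcal{F}_{p} \subset \bigl\{u \in L^{p}(X,m) : \textstyle\limsup_{\varepsilon\downarrow 0}\mathscr{B}_{\varepsilon}(u) < \infty\bigr\} \quad\text{and}\quad \bigl\{u \in L^{p}(X,m) : \textstyle\liminf_{\varepsilon\downarrow 0}\mathscr{B}_{\varepsilon}(u) < \infty\bigr\} \subset \mathcal{F}_{p}.
\]

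For the first: given $u \in \mathcal{F}_{p}$, the upper estimate in \eqref{e:main1}---whose derivation only relies on \ref{e:VD}, \ref{e:PI}, \ref{e:capu} and the decomposition \eqref{e:truncate.upper}/\eqref{e:truncate.upper.2}, but not on the extension property \hyperref[defn:sextdomain]{\textup{(E)}}---applies at $\Omega = X$ (using Framework \ref{frame:EPEM}\,\ref{it:EMtotal} to identify $\Gamma_{p}\langle u\rangle(X) = \mathcal{E}_{p}(u)$) to yield $\limsup_{\varepsilon\downarrow 0}\mathscr{B}_{\varepsilon}(u) \le C\mathcal{E}_{p}(u) < \infty$.

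For the converse, fix $u \in L^{p}(X,m)$ with $M := \liminf_{\varepsilon\downarrow 0}\mathscr{B}_{\varepsilon}(u) < \infty$ and pick $\varepsilon_{n} \downarrow 0$ with $\mathscr{B}_{\varepsilon_{n}}(u) \le 2M$. The main step is to produce approximants $u_{n} \in \mathcal{F}_{p}$ with
\[
\sup_{n}\bigl(\norm{u_{n}}_{L^{p}(X,m)} + \mathcal{E}_{p}(u_{n})^{1/p}\bigr) < \infty \quad\text{and}\quad u_{n} \to u \text{ in } L^{p}(X,m).
\]
Such a sequence is furnished by the same discretization used for the lower bound of Theorem \ref{thm:main1}: take a maximal $\varepsilon_{n}$-separated net $\{x_{n,i}\}_{i}$ in $X$, invoke \ref{e:capu} and the Lipschitz contraction property to build an $\mathcal{F}_{p}$-partition of unity $\{\psi_{n,i}\}_{i}$ subordinate to $\{B(x_{n,i},A\varepsilon_{n})\}_{i}$, and set $u_{n} := \sum_{i}c_{n,i}\psi_{n,i}$ with $c_{n,i} := u_{B(x_{n,i},\varepsilon_{n})}$ the $m$-average of $u$ on $B(x_{n,i},\varepsilon_{n})$. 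Sub-additivity of $\mathcal{E}_{p}^{1/p}$, \ref{e:VD} to control the multiplicity of the cover, and \ref{e:PI} to dominate $\abs{c_{n,i}-c_{n,j}}^{p}$ on adjacent balls combine to yield $\mathcal{E}_{p}(u_{n}) \le C'\mathscr{B}_{\varepsilon_{n}}(u) \le 2C'M$; Lebesgue differentiation under \ref{e:VD} delivers the strong $L^{p}$-convergence.

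With $(u_{n})$ in hand, the reflexivity of $\mathcal{F}_{p}$ (Framework \ref{frame:EPEM}\,\ref{it:banach}) extracts a subsequence $u_{n_{k}}$ converging weakly in $\mathcal{F}_{p}$ to some $v \in \mathcal{F}_{p}$. Since the embedding $\mathcal{F}_{p} \hookrightarrow L^{p}(X,m)$ is continuous, $u_{n_{k}} \rightharpoonup v$ weakly in $L^{p}(X,m)$; combined with $u_{n} \to u$ in $L^{p}(X,m)$ this forces $v = u$ $m$-a.e., whence $u \in \mathcal{F}_{p}$. The hard part will be the energy bound $\mathcal{E}_{p}(u_{n}) \le C'\mathscr{B}_{\varepsilon_{n}}(u)$: one must manufacture cutoff functions in $\mathcal{F}_{p}$ with bounded overlap and controlled $p$-energy from \ref{e:capu}, and then use \ref{e:PI} to compare the discrete averages $c_{n,i}$ across overlapping balls---both steps essentially reprising the mechanism already developed for the lower bound in Theorem \ref{thm:main1}, now applied globally on $X$ without recourse to \hyperref[defn:sextdomain]{\textup{(E)}}.
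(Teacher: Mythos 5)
Your overall architecture is the paper's: the inclusion $\mathcal{F}_{p}\subset\{\limsup<\infty\}$ via the global upper bound (the ``Moreover'' parts of Theorems \ref{thm:upper.ext} and \ref{thm:upper.local.KSext}, which indeed do not need \hyperref[defn:sextdomain]{\textup{(E)}}), and the converse via discrete convolutions built from the controlled partition of unity, a uniform energy bound, $L^{p}$-convergence, reflexivity and identification of the weak limit. However, the way you justify the crucial energy bound contains a genuine gap. You propose to ``use \ref{e:PI} to dominate $\abs{c_{n,i}-c_{n,j}}^{p}$'': this is circular, since \ref{e:PI} is an inequality for functions in $\mathcal{F}_{p}$ (or $\mathcal{F}_{p,\mathrm{loc}}$) with $\Gamma_{p}\langle u\rangle$ on the right-hand side, while your $u$ is only known to lie in $L^{p}(X,m)$ --- and even if it were applicable it would produce $\Gamma_{p}\langle u\rangle$, not $\mathscr{B}_{\varepsilon_{n}}(u)$. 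The correct mechanism (which is what the proof of Theorem \ref{thm:lower} actually does) is purely elementary at this point: H\"older/Jensen together with \eqref{e:doublevar} bound $\abs{c_{n,i}-c_{n,j}}^{p}$ by a double average of $\abs{u(x)-u(y)}^{p}$ over an enlarged ball, and then the \emph{kernel lower bound} (Assumption \ref{assum:kernel.1}-\ref{it:assum.nonlocal.lower}, via the estimate \eqref{e:main.upper.1}, or equivalently Proposition \ref{prop:newPI}) converts the resulting Korevaar--Schoen-type sum into the quantity $M$.

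A second, related problem is your scale matching: you tie the net scale to the kernel parameter $\varepsilon_{n}$ and claim $\mathcal{E}_{p}(u_{n})\le C'\mathscr{B}_{\varepsilon_{n}}(u)$. This is plausible only for kernels satisfying \eqref{e:kernel.lower.1} (the Korevaar--Schoen family of Assumption \ref{assum:kernel.2}), where $\mathscr{B}_{\varepsilon}(u)$ dominates the KS functional at spatial scale $\varepsilon$. Under Assumption \ref{assum:kernel.1} with the lower bound \eqref{e:kernel.lower.2} --- e.g.\ the BBM kernels \eqref{e:BBMkernel}, where $\varepsilon$ is merely an index for $\theta(\varepsilon)\uparrow\theta_{p}$ and not a length scale --- $\mathscr{B}_{\varepsilon_{n}}(u)$ does not control anything at scale $\varepsilon_{n}$; what the measure condition \eqref{e:kernel.measure} yields is that the KS functional at each \emph{fixed} sufficiently small scale $t$ is bounded by $C\liminf_{\varepsilon\downarrow 0}\mathscr{B}_{\varepsilon}(u)$, as in \eqref{e:main.upper.1}. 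So the discretization scale $\delta$ must be decoupled from $\varepsilon$: one bounds $\mathcal{E}_{p}(A_{\delta}u)$ by the KS functional at scale comparable to $\delta$ and then by $C M$ via \eqref{e:main.upper.1}, for a sequence of scales $\delta\downarrow 0$; after that your reflexivity and weak-limit argument (no Mazur needed here) goes through exactly as in the paper.
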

\begin{rmk}
    In the case $p = 2$, the determination of $\mathcal{F}_{2}$ as a Besov-type space was done by \cite{GHL03,Jon96,Kum00,KS05,PP99} under nice two-sided heat kernel estimates.
    It is conjectured that such nice heat kernel estimates are equivalent to the conjunction of \ref{e:VD}, \hyperref[e:PI]{\textup{PI$_{2}$($\Psi$)}} and \hyperref[e:PI]{\textup{cap$_{2}$($\Psi$)$_{\le}$}}. 
    This is called the \emph{resistance conjecture} and is still open except for some ``low-dimensional'' cases; see, e.g., \cite[Section 6.3]{Mur24} for further details.
\end{rmk}

The last main result ensures that there are many examples of domains satisfying \hyperref[defn:sextdomain]{\textup{(E)}}; see Section \ref{sec.ud} for the definition of uniform domains and for a proof of this theorem.
\begin{thm}\label{thm:ud.sext}
    Assume that \ref{e:VD}, \ref{e:PI} and \ref{e:capu} hold.
    Then any uniform domain $U$ of $X$ satisfies \hyperref[defn:sextdomain]{\textup{(E)}}. 
\end{thm}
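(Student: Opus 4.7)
The plan is to adapt the Jones--Bj\"orn--Shanmugalingam extension for uniform domains to the $p$-energy form setting. By \ref{e:VD}, $(X,d)$ is a doubling metric space, so $X \setminus \closure{U}$ admits a Whitney-type covering $\{B_{i} = B(x_{i}, r_{i})\}_{i \in I}$ with $r_{i}$ comparable to $\dist(x_{i}, \partial U)$ and the enlarged balls $\{2B_{i}\}_{i \in I}$ of bounded overlap. The uniformity of $U$ ensures that for each $i$ I can fix a reflected ball $B_{i}^{\ast} = B(x_{i}^{\ast}, r_{i}) \subset U$ with $d(x_{i}, x_{i}^{\ast})$ comparable to $r_{i}$, and for each pair $(i,k)$ with $2B_{i} \cap 2B_{k} \ne \emptyset$ a quasi-hyperbolic chain of balls inside $U$ of uniformly bounded cardinality joining $B_{i}^{\ast}$ and $B_{k}^{\ast}$, with radii comparable to $\min(r_{i}, r_{k})$.

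Using \ref{e:capu} together with Framework \ref{frame:EPEM}-\ref{it:regular}, I would build cutoffs $\varphi_{i} \in \mathcal{F}_{p} \cap C_{c}(X)$ satisfying $\varphi_{i} = 1$ on $B_{i}$, $\supp \varphi_{i} \subset 2B_{i}$ and $\Gamma_{p}\langle \varphi_{i}\rangle(X) \le C\,m(B_{i})/\Psi(r_{i})$, and normalize $\phi_{i} = \varphi_{i}/\sum_{j}\varphi_{j}$ to a partition of unity on $X \setminus \closure{U}$. The extension is defined by $F = f$ on $U$ and $F = \sum_{i}\phi_{i}\,f_{B_{i}^{\ast}}$ on $X \setminus \closure{U}$, where $f_{B_{i}^{\ast}} = \fint_{B_{i}^{\ast}} f\,dm$. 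On each Whitney ball $B_{k}$, writing $F - f_{B_{k}^{\ast}} = \sum_{i}\phi_{i}(f_{B_{i}^{\ast}} - f_{B_{k}^{\ast}})$ and combining the seminorm and Lipschitz contraction properties of Framework \ref{frame:EPEM} yields
\[
\Gamma_{p}\langle F\rangle(B_{k}) \le C \sum_{i \,:\, 2B_{i} \cap 2B_{k} \ne \emptyset} \abs{f_{B_{i}^{\ast}} - f_{B_{k}^{\ast}}}^{p}\,\Gamma_{p}\langle \varphi_{i}\rangle(2B_{k}).
\]
Telescoping along the chain between $B_{i}^{\ast}$ and $B_{k}^{\ast}$ and applying \ref{e:PI} at each link bounds each $\abs{f_{B_{i}^{\ast}} - f_{B_{k}^{\ast}}}^{p}$ by $C\,\Psi(r_{k})\,m(B_{k})^{-1}\,\Gamma_{p,U}\langle f\rangle(T_{ik})$ for a tube $T_{ik} \subset U$ of controlled overlap multiplicity. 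Summing over $k$ using the bounded overlap and the capacity bound on $\Gamma_{p}\langle \varphi_{i}\rangle$ produces $\Gamma_{p}\langle F\rangle(X \setminus \closure{U}) \le C\,\Gamma_{p,U}\langle f\rangle(U)$, hence $F \in \mathcal{F}_{p}$.

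The main obstacle is the additional requirement $\Gamma_{p}\langle F\rangle(\partial U) = 0$, since the Jones construction in itself only yields $F \in \mathcal{F}_{p}$. My plan is to obtain it via approximation combined with Framework \ref{frame:EPEM}-\ref{it:EMslocal}. First reduce to $f \in L^{\infty}(U, m|_{U})$ by truncation (which is allowed by the Lipschitz contraction property), then approximate $f$ in $\mathcal{F}_{p}(U)$ by functions $f_{n}$ that equal some constant $c_{n}$ on an open inner collar neighborhood of $\partial U$; such $f_{n}$ are produced by exhausting $U$ from inside using cutoffs derived from \ref{e:capu} and \ref{e:PI}. The corresponding extensions $F_{n}$ are then locally constant on an open neighborhood of $\partial U$ (since the reflected Whitney averages coincide with $c_{n}$ near the boundary), and Framework \ref{frame:EPEM}-\ref{it:EMslocal} together with $\mathcal{E}_{p}^{-1}(0) \subset \mathbb{R}\indicator{X}$ yields $\Gamma_{p}\langle F_{n}\rangle(\partial U) = 0$. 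The energy estimate of the previous paragraph gives $F_{n} \to F$ in $\mathcal{F}_{p}$, and combined with reflexivity (Framework \ref{frame:EPEM}-\ref{it:banach}) and the upper semicontinuity of $\Gamma_{p}\langle \,\cdot\,\rangle(\partial U)$ along sequences converging in $\mathcal{F}_{p}$ applied to the closed set $\partial U$, one concludes $\Gamma_{p}\langle F\rangle(\partial U) = 0$. The technically hardest step is the quantitative inner-collar approximation, which I expect to establish using a Poincar\'{e}-based lemma showing that any $f \in \mathcal{F}_{p}(U)$ can be modified on a thin collar near $\partial U$ to become constant there at the cost of an energy error that vanishes as the collar shrinks.
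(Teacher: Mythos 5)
Your construction of the extension (Whitney cover of the complement, reflected balls, a partition of unity built from \ref{e:capu}, and chaining with \ref{e:PI} to control the Whitney-ball energies) is essentially the same skeleton as the paper's (the paper follows Murugan's reflection map and proves a Poincar\'e inequality on uniform domains, Theorem \ref{thm:PI.ud}, and the estimates of Proposition \ref{prop:extQ}). However, there are two genuine gaps. First, the step ``$\Gamma_{p}\langle F\rangle(X\setminus\closure{U})\le C\,\Gamma_{p,U}\langle f\rangle(U)$, hence $F\in\mathcal{F}_{p}$'' does not follow in this abstract framework: $\mathcal{F}_{p}$ is not defined as the class of functions with finite energy measure, and there is no gluing lemma allowing you to pass from $f\in\mathcal{F}_{p}(U)$ plus local membership and energy bounds on $X\setminus\closure{U}$ to global membership $F\in\mathcal{F}_{p}$. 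This is precisely why the paper proves membership in Proposition \ref{prop:extQ}-\ref{it:ref.energy.c} by estimating the Korevaar--Schoen-type energy $\limsup_{r\downarrow 0}W_{p}(\Ext_{Q}(u),r)$ and invoking Corollaries \ref{cor:main1.2} and \ref{cor:main1.3} (which in turn rest on the reflexivity-based discrete-convolution argument of Theorem \ref{thm:lower}); some substitute of this kind is indispensable and is missing from your argument.

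Second, and more seriously, your route to $\Gamma_{p}\langle F\rangle(\partial U)=0$ rests on a lemma that is false: an arbitrary $f\in\mathcal{F}_{p}(U)$ cannot in general be approximated in $\mathcal{F}_{p}(U)$ by functions that are constant on an inner collar of $\partial U$. Already for $X=\mathbb{R}^{2}$, $U$ the unit disc and $f(x,y)=x$, any function constant near $\partial U$ has constant trace, and forcing $f$ to become constant on a collar of width $\delta$ costs an energy of order $\delta\cdot\delta^{-p}$ in the transition layer, which blows up (and in no regime tends to $0$) as $\delta\downarrow 0$; the obstruction is exactly the nonconstant boundary trace of $f$. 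So the approximating sequence $f_{n}$ you need does not exist, and the subsequent continuity-of-energy-measure argument (which would otherwise be fine, since $\Gamma_{p}\langle\cdot\rangle(\partial U)^{1/p}$ is a seminorm dominated by $\mathcal{E}_{p}^{1/p}$ by Framework \ref{frame:EPEM}-\ref{it:EMtri},\ref{it:EMtotal}) has nothing to start from. The paper avoids this entirely: in Proposition \ref{prop:extQ}-\ref{it:ref.energy.d} it bounds the localized Korevaar--Schoen energy of $\Ext_{Q}(u)$ on the collar $(\partial U)(\delta)$ by $\Gamma_{p,U}\langle u\rangle$ of a shrinking collar inside $U$, applies the lower bound Theorem \ref{thm:lower} with $\Omega=(\partial U)(\delta)$, and lets $\delta\downarrow 0$, using only continuity from above of the measure $\Gamma_{p,U}\langle u\rangle$. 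You would need to replace your approximation scheme by a direct boundary-collar estimate of this type.
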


The rest of this paper is organized as follows.
In Section \ref{sec:preli}, we recall some basic results used in this paper.
We prove the main results in Section \ref{sec:BBM}.
In Subsection \ref{subsec:upper}, we see that \ref{e:VD} and \ref{e:PI} imply upper bounds in Theorem \ref{thm:main1}.
In Subsection \ref{subsec:lower}, we show the existence of a controlled partition of unity under \ref{e:VD} and \ref{e:capu}, introduce discrete approximations of function in $L^{p}(X,m)$ using controlled partition of unity, and prove the lower bounds in Theorem \ref{thm:main1} using these tools and \ref{e:PI}.
The $L^{p}$-Besov critical exponents introduced in \cite{ABCRST1,ABCRST.fractional,Bau24,KSS24+} are discussed in Section \ref{sec:critical}.
In Section \ref{sec.ud}, we introduce uniform domains, Whitney coverings and the extension map. After stating basic properties of them, we prove Poincar\'{e} inequalities on uniform domains, a scale-invariant boundedness of the extension map and Theorem \ref{thm:ud.sext}.

\begin{notation}
    Throughout this paper, we use the following notation and conventions.
    \begin{enumerate}[label=\textup{(\arabic*)},align=left,leftmargin=*,topsep=2pt,parsep=0pt,itemsep=2pt]
        \item In general, $c_{i}$, $i \in \mathbb{N}$, will denote a positive constant depending on some unimportant parameters. By writing $C = C(\alpha,\beta,\dots)$ we indicate that $C$ is a constant depending only on $\alpha,\beta,\dots$.
	    \item For a set $A$, we let $\#A \in \mathbb{N} \cup \{ 0,\infty \}$ denote the cardinality of $A$. 
	    \item We set $\sup\emptyset \coloneqq 0$ and $\inf\emptyset \coloneqq \infty$. We write $a \vee b \coloneqq \max\{ a, b \}$, $a \wedge b \coloneqq \min\{ a, b \}$ and $a^{+} \coloneqq a \vee 0$ for $a, b \in [-\infty,\infty]$, and we use the same notation also for $[-\infty,\infty]$-valued functions and equivalence classes of them. 
		\item Let $K$ be a non-empty set. We define $\indicator{A}=\indicator{A}^{K} \in \mathbb{R}^{K}$ for $A \subset K$ by $\indicator{A}(x)\coloneqq \begin{cases} 1 \quad &\text{if $x \in A$,} \\ 0 \quad &\text{if $x \not\in A$,} \end{cases}$. 
		\item Let $(X,d)$ be a metric space and let $m$ be a Borel measure on $X$. The Borel $\sigma$-algebra of $X$ is denoted by $\mathcal{B}(X)$ and the closure of $A \subset X$ in $X$ by $\closure{A}^{X}$. We set $\supp_{X}[u] \coloneqq \closure{X \setminus u^{-1}(0)}^{X}$ for $u \in \contfunc(X)$, $B(x,r) \coloneqq \{ y \in X \mid d(x,y) < r \}$ for $(x,r) \in X \times (0,\infty)$, and $\diam(A,d) \coloneqq \sup_{x,y \in A}d(x,y)$ and $\dist(A,B) \coloneqq \inf\{ d(x,y) \mid x \in A, y \in B \}$ for subsets $A,B$ of $X$. When $A = \{ x \}$, we write $d(x,B) \coloneqq \dist(\{ x \}, B)$ for simplicity. Also, we set $f_{A} \coloneqq \fint_{A}f\,dm \coloneqq \frac{1}{m(A)}\int_{A}f\,dm$ for $f \in L^{1}(X,m)$ and $A \in \mathcal{B}(X)$ with $m(A) \in (0,\infty)$, and set $m|_{A} \coloneqq m|_{\mathcal{B}(X)|_{A}}$ for $A \in \mathcal{B}(X)$, where $\mathcal{B}(X)|_{A} \coloneqq \{ B \cap A \mid B \in \mathcal{B}(X) \}$.
    \end{enumerate}
\end{notation}

\section{Preliminary results}\label{sec:preli}
Recall the setting described in Framework \ref{frame:EPEM}. 
In this section, we collect some consequences of Framework \ref{frame:EPEM}. 
We first record an elementary proposition concerning \ref{e:VD}; see, e.g., \cite[Proposition 3.1]{BB}.
\begin{prop}\label{prop:top}
    Assume that \ref{e:VD} holds. 
    Then $\closure{A}^{X}$ is compact whenever $A$ is a bounded subset of $X$. 
    In particular, $m$ is $\sigma$-finite. 
\end{prop}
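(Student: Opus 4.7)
The strategy is to reduce to a standard metric-topology fact: in a complete metric space, total boundedness implies compactness. Since $(X,d)$ is complete by Framework \ref{frame:EPEM}, it suffices to show that every bounded subset of $X$ is totally bounded; $\sigma$-finiteness of $m$ will then follow immediately because $m$ is Radon.

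First I would invoke the (well-known and already-noted) implication that \ref{e:VD} forces $(X,d)$ to be a doubling metric space: given any ball $B(x,r)$, the doubling property of $m$ combined with a standard $5B$-type covering argument yields an upper bound $N_{\mathrm{D}} \in \mathbb{N}$ depending only on $c_{\mathrm{D}}$ such that $B(x,r)$ admits a cover by at most $N_{\mathrm{D}}$ balls of radius $r/2$. Iterating this, every ball $B(x,r)$ is covered by at most $N_{\mathrm{D}}^{k}$ balls of radius $r/2^{k}$ for each $k \in \mathbb{N}$.

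Next, let $A \subset X$ be bounded, fix $x_{0} \in A$, and set $D \coloneqq 1 + \diam(A,d) \in (0,\infty)$, so $A \subset B(x_{0},D)$. For any $\varepsilon>0$ pick $k$ with $D/2^{k}<\varepsilon$; the iterated doubling cover then provides finitely many $\varepsilon$-balls covering $A$, showing that $A$, and hence $\closure{A}^{X}$, is totally bounded. Since $\closure{A}^{X}$ is a closed subset of the complete space $(X,d)$, it is itself complete; a complete, totally bounded metric space is compact, which proves the first assertion.

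For the second assertion, fix any $x_{0}\in X$ and write $X=\bigcup_{n\in\mathbb{N}}\closure{B(x_{0},n)}^{X}$. Each $\closure{B(x_{0},n)}^{X}$ is compact by what has just been shown, and since $m$ is a Radon measure (Framework \ref{frame:EPEM}), $m$ assigns finite mass to each compact set. This exhibits $X$ as a countable union of sets of finite $m$-measure. There is no genuine obstacle here; the argument is the textbook fact that complete doubling metric spaces are proper, and the only thing worth flagging is the (standard) passage from the measure-doubling condition \ref{e:VD} to the purely metric doubling property, for which a reference such as \cite[Proposition 3.1]{BB} suffices.
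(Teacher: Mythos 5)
Your argument is correct and is exactly the standard proof of the result the paper invokes: the paper gives no proof of its own but simply cites \cite[Proposition 3.1]{BB}, which is the statement that a complete metric space with a doubling measure is proper, proved by the same route (\ref{e:VD} $\Rightarrow$ metric doubling $\Rightarrow$ total boundedness of bounded sets, plus completeness), with $\sigma$-finiteness following as you say from $X=\bigcup_{n}\closure{B(x_{0},n)}^{X}$ and finiteness of Radon measures on compacta.
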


Let us recall the notion of a net in metric space.
\begin{defn}
    A maximal $\delta$-separated subset $A \subset X$ is called a \emph{$\delta$-net};, i.e., $A$ satisﬁes the following properties:
    \begin{enumerate}[label=\textup{(\alph*)},align=left,leftmargin=*,topsep=2pt,parsep=0pt,itemsep=2pt]
        \item $A$ is \emph{$\delta$-separated}, i.e., $d(x,y) \ge \delta$ whenever $x,y \in A$ with $x \neq y$.
        \item If $A' \subset X$ is also $\delta$-separated and $A \subset A'$, then $A' = A$.
    \end{enumerate}
\end{defn}

The following proposition collects properties implied by the Lipschitz contraction property of $(\mathcal{E}_{p},\mathcal{F}_{p})$. 
\begin{prop}\label{prop:Lipc.list}
	\begin{enumerate}[label=\textup{(\alph*)},align=left,leftmargin=*,topsep=2pt,parsep=0pt,itemsep=2pt]
		\item\label{it:Lip-bdd} Let $\Phi \colon \mathbb{R} \to \mathbb{R}$ satisfy $\Phi(0) = 0$. For a non-empty subset $A \subset \mathbb{R}$, define $\mathrm{Lip}(\Phi; A) \coloneqq \sup_{x,y \in A; x \neq y}\frac{\abs{\Phi(x)-\Phi(y)}}{\abs{x-y}}$. If $u \in \mathcal{F}_{p} \cap L^{\infty}(X,m)$ and $\mathrm{Lip}\bigl(\Phi; \bigl[-\norm{u}_{L^\infty(X,m)},\norm{u}_{L^\infty(X,m)}\bigr]\bigr) < \infty$, then 
        \begin{equation}\label{e:compos}
    		\Phi(u) \in \mathcal{F}_{p} \quad \text{and} \quad \mathcal{E}_{p}(\Phi(u)) \le \mathrm{Lip}\bigl(\Phi; \bigl[-\norm{u}_{L^\infty(X,m)},\norm{u}_{L^\infty(X,m)}\bigr]\bigr)^{p}\mathcal{E}_{p}(u).
    	\end{equation} 
    	In particular, $uv \in \mathcal{F}_{p} \cap L^{\infty}(X,m)$ whenever $u,v \in \mathcal{F}_{p} \cap L^{\infty}(X,m)$.
		\item\label{it:sadd.another} There exists $C = C(p) \ge 1$ such that for any $u,v \in \mathcal{F}_{p}$,
		\begin{equation}\label{e:sadd.another}
			\mathcal{E}_{p}(u \wedge v) + \mathcal{E}_{p}(u \vee v) \le C\bigl(\mathcal{E}_{p}(u) + \mathcal{E}_{p}(v)\bigr).
		\end{equation}
		\item\label{it:divide} Let $\delta, M > 0$ and let $u,v \in \mathcal{F}_{p}$ be non-negative functions such that $(u + v)|_{\{ u \neq 0 \}} \ge \delta$ and $u \le M$.
		Then there exists $C = C(p,\delta,M) \in (0,\infty)$ such that
		\begin{equation}\label{e:negative-power}
			\mathcal{E}_{p}\biggl(\frac{u}{u + v}\biggr) \le C\bigl(\mathcal{E}_{p}(u) + \mathcal{E}_{p}(v)\bigr).
		\end{equation}
	\end{enumerate}
\end{prop}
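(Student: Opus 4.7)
The plan is to prove parts \ref{it:Lip-bdd}--\ref{it:divide} in order, using the Lipschitz contraction property \eqref{e:Lipc} from Framework \ref{frame:EPEM}-\ref{it:Lip} together with the seminorm/vector space structure of $(\mathcal{E}_{p}^{1/p},\mathcal{F}_{p})$ as the main tools; the main obstacle will be \ref{it:divide}.

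For \ref{it:Lip-bdd}, I would extend $\Phi$ to a globally Lipschitz function by truncation. Setting $I \coloneqq [-\norm{u}_{L^{\infty}(X,m)},\norm{u}_{L^{\infty}(X,m)}]$ and $L \coloneqq \mathrm{Lip}(\Phi;I)$, define $\widetilde{\Phi}$ to agree with $\Phi$ on $I$ and to be constant equal to the respective endpoint values of $\Phi$ outside $I$; then $\widetilde{\Phi}$ is $L$-Lipschitz on $\mathbb{R}$, $\widetilde{\Phi}(0) = \Phi(0) = 0$, and $\widetilde{\Phi}(u) = \Phi(u)$ $m$-a.e.\ on $X$ since $\abs{u} \le \norm{u}_{L^{\infty}(X,m)}$ $m$-a.e. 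When $L = 0$ the claim is trivial; otherwise $L^{-1}\widetilde{\Phi}$ is $1$-Lipschitz with $(L^{-1}\widetilde{\Phi})(0) = 0$, and \eqref{e:Lipc} applied with $\varphi = L^{-1}\widetilde{\Phi}$ gives $\mathcal{E}_{p}(\Phi(u)) = L^{p}\mathcal{E}_{p}(L^{-1}\widetilde{\Phi}(u)) \le L^{p}\mathcal{E}_{p}(u)$, which is \eqref{e:compos}. The ``in particular'' statement about products then follows by applying \eqref{e:compos} with $\Phi(t) = t^{2}$ to each of $u$, $v$, and $u+v \in \mathcal{F}_{p} \cap L^{\infty}(X,m)$ and using the polarization identity $uv = \tfrac{1}{2}\bigl((u+v)^{2} - u^{2} - v^{2}\bigr)$ together with the linearity of $\mathcal{F}_{p}$.

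For \ref{it:sadd.another}, I would use the identities $u \wedge v = \tfrac{1}{2}\bigl((u+v) - \abs{u-v}\bigr)$ and $u \vee v = \tfrac{1}{2}\bigl((u+v) + \abs{u-v}\bigr)$. Since $t \mapsto \abs{t}$ is $1$-Lipschitz and vanishes at $0$, \eqref{e:Lipc} gives $\mathcal{E}_{p}(\abs{u-v})^{1/p} \le \mathcal{E}_{p}(u-v)^{1/p}$, so the triangle inequality for the seminorm $\mathcal{E}_{p}^{1/p}$ produces $\max\{\mathcal{E}_{p}(u \wedge v)^{1/p}, \mathcal{E}_{p}(u \vee v)^{1/p}\} \le \mathcal{E}_{p}(u)^{1/p} + \mathcal{E}_{p}(v)^{1/p}$; then $(a+b)^{p} \le 2^{p-1}(a^{p}+b^{p})$ yields \eqref{e:sadd.another} with $C = 2^{p}$.

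For \ref{it:divide}, the key observation is that because $(u+v)|_{\{u \neq 0\}} \ge \delta$, one has $u/(u+v) = u \cdot g(u+v)$ $m$-a.e.\ (interpreting $0/0$ as $0$), where $g(t) \coloneqq 1/\max(t,\delta)$ is bounded by $\delta^{-1}$ and $\delta^{-2}$-Lipschitz on $\mathbb{R}$ (after extending by the constant $\delta^{-1}$ to the negative axis). Since $g(0) = \delta^{-1} \neq 0$, I would instead apply \ref{it:Lip-bdd} to the shifted function $h \coloneqq g - \delta^{-1}$, which vanishes at $0$ and satisfies $\abs{h} \le \delta^{-1}$ and $\mathrm{Lip}(h) \le \delta^{-2}$, obtaining $h(u+v) \in \mathcal{F}_{p} \cap L^{\infty}(X,m)$ with $\mathcal{E}_{p}(h(u+v)) \le \delta^{-2p}\mathcal{E}_{p}(u+v)$. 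Since $u \le M$ forces $u \in \mathcal{F}_{p} \cap L^{\infty}(X,m)$ with $\norm{u}_{L^{\infty}(X,m)} \le M$, the ``in particular'' product formula from \ref{it:Lip-bdd}---realized via the polarization identity $AB = \tfrac{1}{2}\bigl((A+B)^{2} - A^{2} - B^{2}\bigr)$ with $A = u$, $B = h(u+v)$ together with \eqref{e:compos} applied to $\Phi(t) = t^{2}$ on the relevant bounded intervals---then yields $\mathcal{E}_{p}(u \cdot h(u+v)) \le C(p,\delta,M)(\mathcal{E}_{p}(u) + \mathcal{E}_{p}(v))$. Finally, adding $u/\delta$ to recover $u/(u+v) = u \cdot h(u+v) + u/\delta$ and using the seminorm triangle inequality completes the proof of \eqref{e:negative-power}. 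The main subtlety, which I expect to require care, is tracking the $L^{\infty}$-bounds through the polarization step so that the resulting constant genuinely depends only on $p$, $\delta$, and $M$.
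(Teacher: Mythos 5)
Your proposal is correct. For part (a) it is essentially the paper's own argument: the paper extends $\Phi$ off $\bigl[-\norm{u}_{L^\infty},\norm{u}_{L^\infty}\bigr]$ via the McShane--Whitney lemma and then invokes the Lipschitz contraction property, whereas you extend by constants at the endpoints (which also preserves the Lipschitz constant), and the product statement is obtained in both cases by polarization with $\Phi(t)=t^{2}$ (the paper uses $uv=\tfrac14[(u+v)^{2}-(u-v)^{2}]$, you use $uv=\tfrac12[(u+v)^{2}-u^{2}-v^{2}]$; immaterial). For parts (b) and (c) the paper does not write out a proof at all but defers to the proof of Proposition 6.26 in [MS23+]; your contribution is a short self-contained derivation from \eqref{e:Lipc} and the seminorm structure, which is a perfectly valid alternative: (b) via $u\wedge v=\tfrac12((u+v)-\abs{u-v})$, $u\vee v=\tfrac12((u+v)+\abs{u-v})$ and the triangle inequality gives $C=2^{p}$, and (c) via the factorization $\frac{u}{u+v}=u\,g(u+v)$ with $g(t)=1/\max(t,\delta)$, the shift $h=g-\delta^{-1}$, and the quantitative polarization on the bounded ranges $\abs{u}\le M$, $\abs{h(u+v)}\le\delta^{-1}$, which indeed yields a constant depending only on $p,\delta,M$.

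One small imprecision to fix in (c): you say you "apply \ref{it:Lip-bdd} to $h$" with argument $u+v$, but \ref{it:Lip-bdd} as stated requires the argument to lie in $L^{\infty}(X,m)$, and $v$ (hence $u+v$) is not assumed bounded. This is harmless: since $h$ is globally $\delta^{-2}$-Lipschitz on $\mathbb{R}$, continuous, and $h(0)=0$, you can apply the Lipschitz contraction property \eqref{e:Lipc} directly to the $1$-Lipschitz map $\delta^{2}h$ and the function $u+v\in\mathcal{F}_{p}$, obtaining $h(u+v)\in\mathcal{F}_{p}$ and $\mathcal{E}_{p}(h(u+v))\le\delta^{-2p}\mathcal{E}_{p}(u+v)$ without any boundedness of $u+v$; boundedness of $h(u+v)$ itself (by $\delta^{-1}$) is automatic. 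With that substitution the rest of your argument, including the final step $\frac{u}{u+v}=u\,h(u+v)+\delta^{-1}u$ and the triangle inequality for $\mathcal{E}_{p}^{1/p}$, goes through as written.
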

\begin{proof}
	We obtain \ref{it:Lip-bdd} by applying Framework \ref{frame:EPEM}-\ref{it:Lip} with a Lipschitz map $\widetilde{\Phi}$ satisfying $\widetilde{\Phi} = \Phi$ on $\bigl[-\norm{u}_{L^{\infty}(X,m)},\norm{u}_{L^{\infty}(X,m)}\bigr]$ and $\mathrm{Lip}(\widetilde{\Phi}; \mathbb{R}) = \mathrm{Lip}\bigl(\Phi; \bigl[-\norm{u}_{L^\infty(X,m)},\norm{u}_{L^\infty(X,m)}\bigr]\bigr)$. (Note that such a map $\widetilde{\Phi}$ exists by the McShane--Whitney extension lemma \cite[p.~99]{HKST}.) The latter assertion in \ref{it:Lip-bdd} is immediate by seeing that $uv = \frac{1}{4}\bigl[(u+v)^{2} - (u-v)^{2}\bigr]$. 
	One can show \ref{it:sadd.another} and \ref{it:divide} by following \cite[Proof of Proposition 6.26]{MS23+}. 
\end{proof}

The following lemma ensures that there are many good cutoff functions in $\mathcal{F}_{p} \cap C_{c}(X)$.
\begin{lem}\label{lem:capu.reduction}
    Assume that \ref{e:VD} and \ref{e:capu} hold.
    Then there exists $C = C(p,C_{\mathrm{cap}}) \in (0,\infty)$ such that for any $x \in X$ and any $0 < R < \diam(X)/A_{\mathrm{cap},2}$,
    \begin{equation}\label{e:conticapu}
        \inf\bigl\{ \mathcal{E}_{p}(v) \bigm| v \in \mathcal{F}_{p}(B(x,R), X \setminus B(x,A_{\mathrm{cap},1}R)) \cap C_{c}(X) \bigr\} \le C\,\frac{m(B(x,R))}{\Psi(R)}. 
    \end{equation}
\end{lem}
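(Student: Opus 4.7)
My plan is to extract a near-minimizer from \ref{e:capu}, bring it into $\mathcal{F}_p \cap C_c(X)$ using the regularity assumption in Framework \ref{frame:EPEM}-\ref{it:regular}, and then close the boundary conditions by combining a Markovian squeeze from Proposition \ref{prop:Lipc.list}-\ref{it:Lip-bdd} with lattice operations (Proposition \ref{prop:Lipc.list}-\ref{it:sadd.another}).

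First, I apply \ref{e:capu} to pick $u \in \mathcal{F}_p(B(x,R), X\setminus B(x, A_{\mathrm{cap},1}R))$ with $\mathcal{E}_p(u) \le 2 C_{\mathrm{cap}} m(B(x,R))/\Psi(R)$. Since the truncation $t \mapsto 0 \vee t \wedge 1$ is $1$-Lipschitz and fixes both $0$ and $1$, Proposition \ref{prop:Lipc.list}-\ref{it:Lip-bdd} lets me assume $0 \le u \le 1$ $m$-a.e.\ without increasing the energy. Framework \ref{frame:EPEM}-\ref{it:regular} next supplies $u_n \in \mathcal{F}_p \cap C_c(X)$ with $u_n \to u$ in the graph norm $\norm{\,\cdot\,}_{L^p(X)} + \mathcal{E}_p(\,\cdot\,)^{1/p}$; truncating once more gives $0 \le u_n \le 1$ and $\mathcal{E}_p(u_n) \to \mathcal{E}_p(u)$.

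For $\epsilon \in (0, 1/4)$, I introduce the $(1-2\epsilon)^{-1}$-Lipschitz piecewise-linear map $\phi_\epsilon(t) := \bigl((t-\epsilon)^+/(1-2\epsilon)\bigr) \wedge 1$, which sends $(-\infty, \epsilon]$ to $0$ and $[1-\epsilon, \infty)$ to $1$, and set $w_{n,\epsilon} := \phi_\epsilon(u_n)$. By Proposition \ref{prop:Lipc.list}-\ref{it:Lip-bdd}, $w_{n,\epsilon} \in \mathcal{F}_p \cap C_c(X)$ (continuity descends from $u_n$, and compact support follows from $\phi_\epsilon(0) = 0$ together with $\supp u_n$ being compact) with $\mathcal{E}_p(w_{n,\epsilon}) \le (1-2\epsilon)^{-p} \mathcal{E}_p(u_n)$. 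The same Lipschitz contraction gives $w_{n,\epsilon} \to \phi_\epsilon(u)$ in $\mathcal{F}_p$, and $\phi_\epsilon(u)$ belongs to $\mathcal{F}_p(B(x,R), X\setminus B(x, A_{\mathrm{cap},1}R))$ because $u \in \{0,1\}$ $m$-a.e.\ on the relevant sets.

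The principal difficulty lies in the final step: $w_{n,\epsilon}$ is continuous but need not actually satisfy the boundary conditions exactly, only its $\mathcal{F}_p$-limit does. I would resolve this by flanking $w_{n,\epsilon}$ between two continuous ``anchor'' cutoffs $\chi_-, \chi_+ \in \mathcal{F}_p \cap C_c(X)$ so that $\tilde v_{n,\epsilon} := (w_{n,\epsilon} \vee \chi_-) \wedge \chi_+$ lies in $\mathcal{F}_p \cap C_c(X)$, satisfies the exact boundary conditions, and obeys $\mathcal{E}_p(\tilde v_{n,\epsilon}) \le C \bigl[ \mathcal{E}_p(w_{n,\epsilon}) + \mathcal{E}_p(\chi_-) + \mathcal{E}_p(\chi_+) \bigr]$ by Proposition \ref{prop:Lipc.list}-\ref{it:sadd.another}. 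Existence of $\chi_\pm$ in $\mathcal{F}_p \cap C_c(X)$ is provided by density of $\mathcal{F}_p \cap C_c(X)$ in $C_c(X)$ (Framework \ref{frame:EPEM}-\ref{it:regular}) applied to a metric Urysohn function followed by a Markov squeeze, but the truly delicate point is bounding $\mathcal{E}_p(\chi_\pm)$ by $C(p, C_{\mathrm{cap}}) m(B(x,R))/\Psi(R)$: bare density gives no such bound, and the argument has to construct $\chi_\pm$ themselves from \ref{e:capu} at comparable but slightly perturbed scales, exploiting \ref{e:VD} and \eqref{e:scalefunction} to keep the volume-to-scale ratio comparable with the base scale. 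Once this is achieved, letting $n \to \infty$ and then $\epsilon \downarrow 0$ produces the desired estimate with a constant depending only on $p$ and $C_{\mathrm{cap}}$.
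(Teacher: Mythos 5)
There is a genuine gap, and it sits exactly where you flag it: the construction of the anchors $\chi_{\pm}$ is circular. For $\tilde v_{n,\epsilon} = (w_{n,\epsilon}\vee\chi_-)\wedge\chi_+$ to equal $1$ on $B(x,R)$ and $0$ off $B(x,A_{\mathrm{cap},1}R)$, the function $\chi_+$ must itself be a continuous member of $\mathcal{F}_p$ that is $\ge 1$ on $B(x,R)$, vanishes outside $B(x,A_{\mathrm{cap},1}R)$, and has $\mathcal{E}_p(\chi_+)\le C\,m(B(x,R))/\Psi(R)$ --- after a unit truncation this is precisely the function whose existence the lemma asserts. Deferring to \ref{e:capu} ``at comparable but slightly perturbed scales'' does not break the circle: \ref{e:capu} only produces a (generally non-continuous) element of $\mathcal{F}_p$, and the problem of upgrading it to a continuous admissible cutoff with the same energy bound recurs unchanged at every scale; density of $\mathcal{F}_p\cap C_c(X)$ in $C_c(X)$ is in the uniform norm and, as you concede, carries no energy control. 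So the proposal reduces the lemma to itself. (Two secondary points: the claimed convergence $\phi_\epsilon(u_n)\to\phi_\epsilon(u)$ in $\mathcal{F}_p$ does not follow from the Lipschitz contraction property, which gives only energy bounds, not continuity of the composition operator; and the preliminary $\epsilon$-squeeze of $u_n$ buys nothing, since $u_n$ need not be close to $1$ on $B(x,R)$ in any pointwise sense.)

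The paper's proof avoids this trap by never trying to force exact admissibility through sandwiching with pre-fabricated cutoffs. It chooses the continuous approximant $v$ of the near-minimizer $u$ with a \emph{quantitative} $L^p$-error, $\norm{u-v}_{L^p}^p\le 4^{-p}\min\{m(B(x,R)),\,m(B(x,2A_{\mathrm{cap},1}R)\setminus B(x,A_{\mathrm{cap},1}R))\}$, then directly rescales and truncates $v$, setting $\varphi=[2(v-\tfrac14)]^+\wedge 1$ on $B(x,2A_{\mathrm{cap},1}R)$ and $0$ outside, so that $\mathcal{E}_p$-type control comes from Proposition \ref{prop:Lipc.list} applied to $v$ itself ($\Gamma_p\langle\varphi\rangle(B(x,A_{\mathrm{cap},1}R))\le 2^p\mathcal{E}_p(v)$). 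The decisive ingredient, absent from your plan, is the use of the energy measures and strong locality (Framework \ref{frame:EPEM}-\ref{it:EMtri},\ref{it:EMtotal},\ref{it:EMslocal}): these let one rewrite the infimum over global continuous cutoffs as an infimum of the \emph{localized} quantity $\Gamma_p\langle\,\cdot\,\rangle(B(x,A_{\mathrm{cap},1}R))$ over the local class $\mathcal{F}_{p,\mathrm{loc}}(B(x,2A_{\mathrm{cap},1}R))\cap C_c(B(x,2A_{\mathrm{cap},1}R))$, to which $\varphi$ belongs, so no globally admissible continuous function with small energy has to be produced by hand a priori. Without this passage to $\Gamma_p$ and locality (or some substitute gluing argument), your scheme cannot close.
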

\begin{proof}
    Let $u \in \mathcal{F}_{p}(B(x,R),X \setminus B(x,A_{1}R))$ satisfy $\mathcal{E}_{p}(u) \le 2C_{\mathrm{cap}}\frac{m(B(x,R))}{\Psi(R)}$ and assume that $B(x,A_{\mathrm{cap},1}R) \neq X$.
    By Framework \ref{frame:EPEM}-\ref{it:regular}, there exists $v \in \mathcal{F}_{p} \cap C_{c}(X)$ such that $\mathcal{E}_{p}(v) \le 2\mathcal{E}_{p}(u)$ and 
    \[
    \norm{u - v}_{L^{p}}^{p} \le 4^{-p}\min\{ m(B(x,R)), m(B(x,2A_{\mathrm{cap},1}R) \setminus B(x,A_{\mathrm{cap},1}R)) \}.
    \]
    Set $\delta_{0} \coloneqq \essinf_{B(x,2A_{\mathrm{cap},1}R) \setminus B(x,A_{\mathrm{cap},1}R)}\abs{u - v}$ and $\delta_{1} \coloneqq \essinf_{B(x,R)}\abs{u - v}$; note that $\delta_{0} \vee \delta_{1} \le \frac{1}{4}$. 
    Now we define $\varphi \in C_{c}(X)$ by
    \[
    \varphi(y) \coloneqq
    \begin{cases}
        \left[2\left(v(y) - \frac{1}{4}\right)\right]^{+} \wedge 1 \quad &\text{for $y \in B(x,2A_{\mathrm{cap},1}R)$,}\\
        0 \quad &\text{for $y \in X \setminus B(x,2A_{\mathrm{cap},1}R)$.}\\
    \end{cases}
    \]
    It is clear that $\varphi = 1$ on $B(x,R)$ and $\varphi = 0$ on $X \setminus B(x,A_{\mathrm{cap},1}R)$.
    By Framework \ref{frame:EPEM}-\ref{it:Lip} \ref{it:EMtri} and \ref{it:EMslocal}, we have $\varphi \in \mathcal{F}_{p,\mathrm{loc}}(B(x,2A_{\mathrm{cap},1}R))$ and $\Gamma_{p}\langle \varphi \rangle(B(x,A_{\mathrm{cap},1}R)) \le 2^{p}\mathcal{E}_{p}(v)$. 
    Since any metric ball is relatively compact (Proposition \ref{prop:top}), we see from Framework \ref{frame:EPEM}-\ref{it:EMtotal}, \ref{it:EMslocal} that 
    \begin{align*}
        &\inf\bigl\{ \mathcal{E}_{p}(v) \bigm| v \in \mathcal{F}_{p}(B(x,R), X \setminus B(x,A_{\mathrm{cap},1}R)) \cap C_{c}(X) \bigr\} \\
        &= \inf\bigl\{ \Gamma_{p}\langle v \rangle(B(x,A_{\mathrm{cap},1}R)) \bigm| v \in \mathcal{F}_{p}(B(x,R), X \setminus B(x,A_{\mathrm{cap},1}R)) \cap C_{c}(X) \bigr\} \\
        &= \inf\biggl\{ \Gamma_{p}\langle v \rangle(B(x,A_{\mathrm{cap},1}R)) \biggm|
        \begin{minipage}{240pt}
            $v \in \mathcal{F}_{p,\mathrm{loc}}\bigl(B(x,2A_{\mathrm{cap},1}R)\bigr) \cap C_{c}(B(x,2A_{\mathrm{cap},1}R))$,
            $v = 1$ on $B(x,R)$, $v = 0$ on $X \setminus B(x,A_{\mathrm{cap},1}R)$
        \end{minipage}
        \biggr\} \\
        &\le \Gamma_{p}\langle \varphi \rangle(B(x,A_{\mathrm{cap},1}R)) 
        \le 2^{p + 2}C_{\mathrm{cap}}\frac{m(B(x,R))}{\Psi(R)}. 
    \end{align*}
    One can easily see that the left-hand side of \eqref{e:conticapu} is equal to $0$ when $B(x,A_{\mathrm{cap},1}R) = X$, so we complete the proof. 
\end{proof}

The following version of \ref{e:capu} can be shown by a standard covering argument. 
\begin{prop}\label{prop:capu.refine}
	Assume that \ref{e:VD} and \ref{e:capu} hold and let $\sigma \in (1,\infty)$. 
	Set $A_{\mathrm{cap},2}(\sigma) \coloneqq \frac{A_{\mathrm{cap},1} + 1}{(\sigma - 1)A_{\mathrm{cap},2}}$. 
	Then there exists $C_{\mathrm{cap}}(\sigma) = C(p,\sigma,C_{\Psi},\beta_{2},c_{\mathrm{D}},C_{\mathrm{cap}},A_{\mathrm{cap},1})$ such that for any $x \in X$ and any $0 < R < \diam(X)/A_{\mathrm{cap},2}(\sigma)$, 
	\begin{equation}\label{e:capu.refine}
		\inf\{ \mathcal{E}_{p}(u) \mid u \in \mathcal{F}_{p}(B(x,R), X \setminus B(x,\sigma R)) \}
            \le C_{\mathrm{cap}}(\sigma)\frac{m(B(x,R))}{\Psi(R)}.
	\end{equation}
\end{prop}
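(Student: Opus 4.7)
The strategy is to patch together local cutoff functions obtained from \ref{e:capu} on small balls covering $\overline{B(x,R)}^{X}$. Because \ref{e:VD} forces the cardinality of this covering to be bounded by a constant depending only on $\sigma$, $A_{\mathrm{cap},1}$ and $c_{\mathrm{D}}$, summing the local cutoffs and truncating to $[0,1]$ produces a global cutoff whose energy picks up only a $\sigma$-dependent multiplicative constant.

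Concretely, set $\delta \coloneqq (\sigma - 1)R/(A_{\mathrm{cap},1} + 1)$. The choice of $A_{\mathrm{cap},2}(\sigma)$ in the statement is arranged precisely so that the hypothesis on $R$ yields $\delta < \diam(X,d)/A_{\mathrm{cap},2}$, permitting the application of \ref{e:capu} at scale $\delta$. Pick a maximal $\delta$-separated set $\{x_{i}\}_{i \in I} \subset \overline{B(x,R)}^{X}$; then $\bigcup_{i}B(x_{i},\delta) \supset \overline{B(x,R)}^{X}$, while the balls $\{B(x_{i},\delta/2)\}_{i}$ are pairwise disjoint subsets of $B(x,2R)$, so a standard volume comparison using \ref{e:VD} gives $\#I \le N_{0}$ with $N_{0} = N_{0}(c_{\mathrm{D}},\sigma,A_{\mathrm{cap},1})$. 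Moreover, for every $x_{i} \in \overline{B(x,R)}^{X}$,
\[
B(x_{i},A_{\mathrm{cap},1}\delta) \subset B(x,R + A_{\mathrm{cap},1}\delta) \subset B(x,\sigma R),
\]
as one checks from the definition of $\delta$.

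For each $i \in I$ I then use \ref{e:capu} to select $\psi_{i} \in \mathcal{F}_{p}(B(x_{i},\delta), X \setminus B(x_{i},A_{\mathrm{cap},1}\delta))$ with $\mathcal{E}_{p}(\psi_{i}) \le 2C_{\mathrm{cap}}\,m(B(x_{i},\delta))/\Psi(\delta)$, and replace $\psi_{i}$ by $(\psi_{i} \wedge 1) \vee 0$, which remains in $\mathcal{F}_{p}$ with no larger energy by Framework \ref{frame:EPEM}-\ref{it:Lip}. Define
\[
\varphi \coloneqq 1 \wedge \sum_{i \in I}\psi_{i}.
\]
Since $I$ is finite, $\sum_{i \in I}\psi_{i} \in \mathcal{F}_{p} \cap L^{\infty}(X,m)$, and Proposition \ref{prop:Lipc.list}\ref{it:Lip-bdd} applied to the $1$-Lipschitz map $t \mapsto t \wedge 1$ gives $\varphi \in \mathcal{F}_{p}$ with $\mathcal{E}_{p}(\varphi) \le \mathcal{E}_{p}\bigl(\sum_{i}\psi_{i}\bigr)$. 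By construction $\varphi = 1$ $m$-a.e.\ on $B(x,R)$ and $\varphi = 0$ $m$-a.e.\ on $X \setminus B(x,\sigma R)$, so $\varphi \in \mathcal{F}_{p}(B(x,R), X \setminus B(x,\sigma R))$.

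For the energy bound, the seminorm property of $\mathcal{E}_{p}^{1/p}$, H\"older's inequality with $\#I \le N_{0}$, the bounded-overlap estimate $\sum_{i}m(B(x_{i},\delta)) \le c\,m(B(x,R))$ (from \ref{e:VD}), and the lower bound $\Psi(\delta) \ge C_{\Psi}^{-1}(\delta/R)^{\beta_{2}}\Psi(R)$ from \eqref{e:scalefunction} combine to give
\[
\mathcal{E}_{p}(\varphi) \le \Bigl(\sum_{i \in I}\mathcal{E}_{p}(\psi_{i})^{1/p}\Bigr)^{p} \le N_{0}^{p-1}\sum_{i \in I}\mathcal{E}_{p}(\psi_{i}) \le C(p,\sigma,C_{\Psi},\beta_{2},c_{\mathrm{D}},C_{\mathrm{cap}},A_{\mathrm{cap},1})\frac{m(B(x,R))}{\Psi(R)},
\]
which is exactly \eqref{e:capu.refine}. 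The only delicate point is to confirm that the hypothesis on $R$ is strong enough to apply \ref{e:capu} at the smaller scale $\delta$; aside from that, all estimates are routine consequences of the $\delta$-net construction, \ref{e:VD}, the Lipschitz contraction property, and the doubling behavior \eqref{e:scalefunction} of $\Psi$.
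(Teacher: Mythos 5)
Your proposal is correct and follows essentially the same route as the paper: cover $B(x,R)$ by a net at scale $\delta=\frac{\sigma-1}{A_{\mathrm{cap},1}+1}R$ (the paper's $R'$), apply \ref{e:capu} on each small ball, verify $B(x_i,A_{\mathrm{cap},1}\delta)\subset B(x,\sigma R)$, and combine the boundedly many local cutoffs using \ref{e:VD} and \eqref{e:scalefunction}. The only difference is cosmetic: you glue via $1\wedge\sum_i\psi_i$, using the Lipschitz contraction property and the triangle inequality for $\mathcal{E}_p^{1/p}$, whereas the paper takes $\max_i\psi_i$ and invokes \eqref{e:sadd.another}; both give a constant with the stated dependence, and both read the hypothesis on $R$ in the intended way, namely as guaranteeing $\delta<\diam(X,d)/A_{\mathrm{cap},2}$ (minor slips in your write-up, such as the claim $B(x_i,\delta/2)\subset B(x,2R)$ and the use of the $\beta_2$-bound when $\delta>R$, are harmless since your constants are allowed to depend on $\sigma$).
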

\begin{proof}
	Let $x \in X$ and $0 < R < \diam(X)/A_{\mathrm{cap},2}(\sigma)$. 
	Fix an $R'$-net $\mathcal{N}$ of $X$, where $R' \coloneqq \frac{\sigma - 1}{A_{\mathrm{cap},1} + 1}R$. 
	Note that if $y \in \mathcal{N}$ satisfies $B(x,R) \cap B(y,R') \neq \emptyset$, then $B(y,A_{\mathrm{cap},1}R') \subset B(x,\sigma R)$. 
	For each $y \in \mathcal{N}$, let $\psi_{y} \in \mathcal{F}_{p}(B(y,R'),X \setminus B(y,A_{\mathrm{cap},1}R'))$ be such that $\mathcal{E}_{p}(\psi_{y}) \le 2C_{\mathrm{cap}}\frac{m(B(y,R'))}{\Psi(R')}$. 
	By \ref{e:VD} and \eqref{e:scalefunction}, $\mathcal{E}_{p}(\psi_{y}) \le c_{1}\frac{m(B(x,R))}{\Psi(R)}$ for any $y \in \mathcal{N}$ satisfying$B(x,R) \cap B(y,R') \neq \emptyset$. 
	Hence the desired estimate \eqref{e:capu.refine} follows by considering 
	\[
	\psi \coloneqq \max_{y \in \mathcal{N}; B(x,R) \cap B(y,R') \neq \emptyset}\psi_{y}
	\]
	and using the doubling property of $(X,d)$ and \eqref{e:sadd.another}. 
\end{proof}

The following version of \ref{e:PI} is immediate from Framework \ref{frame:EPEM}-\ref{it:EMslocal}. 
\begin{prop}
    Assume that \ref{e:PI} holds and that all metric balls are relatively compact.
    Then for any $x \in X$, any $r \in (0,\infty)$ and any $u \in \mathcal{F}_{p,\mathrm{loc}}(B(x,A_{\mathrm{P}}r))$,
    \begin{equation}\label{e:PIloc}
        \int_{B(x,r)}\abs{u - u_{B(x,r)}}^{p}\,dm \le C_{\mathrm{P}}\Psi(r)\int_{B(x,A_{\mathrm{P}}\,r)}\,d\Gamma_{p,B(x,A_{\mathrm{P}}r)}\langle u \rangle. 
    \end{equation}
\end{prop}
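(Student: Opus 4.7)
The plan is to deduce the localized inequality \eqref{e:PIloc} from the global Poincar\'e inequality \ref{e:PI} via an exhaustion of $U \coloneqq B(x,A_{\mathrm{P}}r)$ by relatively compact open subballs and an approximation by global $\mathcal{F}_{p}$-representatives. The essential observation is that \ref{e:PI} applied with a shrunken radius $\tilde{r} < r$ automatically only sees $u$ on a relatively compact subset $B(x,A_{\mathrm{P}}\tilde{r}) \Subset U$, on which the localized data agrees with a genuine $\mathcal{F}_{p}$-function and its $p$-energy measure.

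First I would fix $s \in (0,A_{\mathrm{P}}r)$ and observe that since all balls are relatively compact, $\closure{B(x,s)}^{X} \subset \{y : d(x,y) \le s\} \subset B(x,A_{\mathrm{P}}r) = U$, so $B(x,s)$ is a relatively compact open subset of $U$. By Definition \ref{defn.Flocal-ss}, there exists $u^{\#}_{s} \in \mathcal{F}_{p}$ with $u = u^{\#}_{s}$ $m$-a.e.\ on $B(x,s)$, and
\begin{equation*}
\Gamma_{p}\langle u^{\#}_{s} \rangle(B(x,s)) = \Gamma_{p,U}\langle u \rangle(B(x,s))
\end{equation*}
by the well-definedness part of Definition \ref{defn.Flocal-ss}. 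Next, set $\tilde{r} \coloneqq s/A_{\mathrm{P}}$, so $A_{\mathrm{P}}\tilde{r} = s$ and $\tilde{r} \le s$. Applying \ref{e:PI} to $u^{\#}_{s}$ at $x$ with radius $\tilde{r}$ and using that $u^{\#}_{s} = u$ $m$-a.e.\ on $B(x,s) \supset B(x,\tilde{r})$ yields
\begin{equation*}
\int_{B(x,\tilde{r})}\abs{u - u_{B(x,\tilde{r})}}^{p}\,dm
\le C_{\mathrm{P}}\Psi(\tilde{r})\,\Gamma_{p,U}\langle u \rangle(B(x,s)).
\end{equation*}

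Finally I would let $s \uparrow A_{\mathrm{P}}r$ (equivalently $\tilde{r} \uparrow r$). On the right-hand side, $\Psi(\tilde{r}) \to \Psi(r)$ by continuity of $\Psi$, and $\Gamma_{p,U}\langle u \rangle(B(x,s)) \to \Gamma_{p,U}\langle u \rangle(U)$ by continuity from below of the Radon measure $\Gamma_{p,U}\langle u \rangle$ applied to the increasing sequence $B(x,s) \uparrow U$. On the left-hand side, $B(x,\tilde{r}) \uparrow B(x,r)$ forces $m(B(x,\tilde{r})) \uparrow m(B(x,r))$, and since $u \in L^{1}_{\mathrm{loc}}(B(x,r))$ (as $u$ coincides $m$-a.e.\ with some $u^{\#} \in \mathcal{F}_{p} \subset L^{p}(X,m)$ on any relatively compact open subset of $U$), the averages satisfy $u_{B(x,\tilde{r})} \to u_{B(x,r)}$. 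Hence $\abs{u(y) - u_{B(x,\tilde{r})}}^{p}\indicator{B(x,\tilde{r})}(y) \to \abs{u(y) - u_{B(x,r)}}^{p}\indicator{B(x,r)}(y)$ for $m$-a.e.\ $y \in X$, and Fatou's lemma gives
\begin{equation*}
\int_{B(x,r)}\abs{u - u_{B(x,r)}}^{p}\,dm
\le \liminf_{\tilde{r} \uparrow r}\int_{B(x,\tilde{r})}\abs{u - u_{B(x,\tilde{r})}}^{p}\,dm,
\end{equation*}
which combined with the previous display yields \eqref{e:PIloc}.

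There is no deep obstacle here: \ref{e:PI} is doing the real work. The only care needed is the bookkeeping in the limiting step, specifically to justify the passage under Fatou (which just needs pointwise a.e.\ convergence of the integrand, itself a consequence of $u \in L^{1}_{\mathrm{loc}}$ and $m(B(x,\tilde{r})) \uparrow m(B(x,r))$) and to match $\Gamma_{p}\langle u^{\#}_{s} \rangle$ with $\Gamma_{p,U}\langle u \rangle$ on $B(x,s)$ via the well-definedness statement in Definition \ref{defn.Flocal-ss}.
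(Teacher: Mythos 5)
Your argument is correct and is essentially a careful fleshing-out of what the paper treats as immediate: the paper offers no proof of this proposition, asserting only that it follows from the strong locality property in Framework \ref{frame:EPEM}-\ref{it:EMslocal}, and your exhaustion scheme (apply \ref{e:PI} at radius $\tilde r = s/A_{\mathrm{P}}$ to a global representative $u^{\#}_{s}$, identify $\Gamma_{p}\langle u^{\#}_{s}\rangle$ with $\Gamma_{p,B(x,A_{\mathrm{P}}r)}\langle u\rangle$ on $B(x,s)$ via Definition \ref{defn.Flocal-ss}, then let $s \uparrow A_{\mathrm{P}}r$) is exactly the mechanism that makes this rigorous; in fact, since only an upper bound is needed, monotonicity of $\Psi$ and of the measure already suffices on the right-hand side, so continuity from below is not even required. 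The one point you gloss over is the convergence of the averages $u_{B(x,\tilde r)} \to u_{B(x,r)}$: local integrability on $B(x,r)$ is not enough for this, you need $u \in L^{1}(B(x,r))$. When $\closure{B(x,r)}^{X} \subset B(x,A_{\mathrm{P}}r)$ (in particular whenever $A_{\mathrm{P}} > 1$) this is automatic, since $B(x,r)$ is then a relatively compact open subset of $B(x,A_{\mathrm{P}}r)$ with finite $m$-measure, so $u$ agrees there with an $L^{p}$ function; in the borderline case $A_{\mathrm{P}} = 1$ (where $B(x,r)$ need not be compactly contained in the larger ball) this integrability is precisely what is needed for $u_{B(x,r)}$, hence the left-hand side of \eqref{e:PIloc}, to be defined at all, so it is harmless to assume it, but you should either state this explicitly or derive it from the uniform bound your intermediate inequality gives (bounded oscillation of the averages $u_{B(x,\tilde r)}$ plus Fatou yields $u \in L^{p}(B(x,r))$ whenever the right-hand side is finite).
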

%

Let us conclude this section by seeing a geometric implication of \ref{e:PI}. 
The following proposition is an analogue of \cite[Lemma 2.2 and Corollary 2.3]{Mur20}. 
See also Appendix \ref{sec:conn} for a stronger result, which is not needed to follow the main part of the paper. 
\begin{prop}\label{prop:PI-up}
	For $\varepsilon > 0$, define $d_{\varepsilon} \colon X \times X \to [0,\infty]$ by 
	\begin{equation}\label{e:defn.chainmet}
		d_{\varepsilon}(x,y) 
		\coloneqq \inf\Biggl\{ \sum_{i=0}^{N-1}d(x_{i},x_{i+1}) \Biggm| 
		\begin{minipage}{220pt}
			$N \in \mathbb{N}$, $\{ x_{i} \}_{i = 0}^{N} \subset X$, $x_0 = x$, $x_N = y$, and $d(x_{i},x_{i+1}) < \varepsilon$ for all $i = 0,\dots,N-1$
		\end{minipage}
		\Biggr\},  
	\end{equation}
	where the infimum taken over the empty set is defined to be infinite.   
	Assume that \ref{e:VD} and \ref{e:PI} hold. 
	Then 
	\begin{equation}\label{e:chainconn}
		d_{\varepsilon}(x,y) < \infty \quad \text{for all $x,y \in X$ and for all $\varepsilon > 0$.}
	\end{equation}
	In particular, $(X,d)$ is uniformly perfect, i.e., there exists $\sigma \in (0,1)$ such that $B(x,r) \setminus B(x,\sigma r) \neq \emptyset$ whenever $B(x,r) \neq X$. 
\end{prop}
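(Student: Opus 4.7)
The plan is to argue by contradiction: assume $d_{\varepsilon_0}(x_0, y_0) = \infty$ for some $x_0, y_0 \in X$ and some $\varepsilon_0 > 0$, and derive a contradiction from \ref{e:PI}. I would set $A \coloneqq \{z \in X \mid d_{\varepsilon_0}(x_0, z) < \infty\}$, and first check that both $A$ and $X \setminus A$ are open (since for any $z$ in either set, $B(z, \varepsilon_0)$ lies in the same set), so that $\dist(A, X \setminus A) \ge \varepsilon_0$, and that $B(x_0, \varepsilon_0/2) \subset A$ and $B(y_0, \varepsilon_0/2) \subset X \setminus A$ both have positive $m$-measure.

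The central step is to produce a test function $u \in \mathcal{F}_{p} \cap C_{c}(X)$ that coincides with $\indicator{A}$ on a sufficiently large ball around $x_0$ and whose energy measure on that ball vanishes. I would fix $r_0 \coloneqq d(x_0, y_0) + \varepsilon_0/2$, pick $R \ge A_{\mathrm{P}} r_0$ admissible for \ref{e:capu}, and use Lemma \ref{lem:capu.reduction} to obtain a cutoff $\eta \in \mathcal{F}_{p} \cap C_{c}(X)$ with $\eta = 1$ on $B(x_0, R)$ and $\supp_{X}[\eta] \subset B(x_0, A_{\mathrm{cap},1} R)$. Next I would construct $\phi \in \mathcal{F}_{p} \cap C_{c}(X)$ with $\phi = 1$ on $A \cap \supp_{X}[\eta]$ and $\phi = 0$ on $X \setminus A$ by covering the compact set $A \cap \supp_{X}[\eta]$ by finitely many balls $B(z_j, \varepsilon_0/4)$ with $z_j \in A$, applying Proposition \ref{prop:capu.refine} with $\sigma = 2$ to produce $\phi_j \in \mathcal{F}_{p} \cap C_{c}(X)$ supported in $B(z_j, \varepsilon_0/2) \subset A$ with $\phi_j = 1$ on $B(z_j, \varepsilon_0/4)$, and setting $\phi \coloneqq \max_j \phi_j$, which lies in $\mathcal{F}_{p} \cap C_{c}(X)$ by Proposition \ref{prop:Lipc.list}\ref{it:sadd.another}. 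Proposition \ref{prop:Lipc.list}\ref{it:Lip-bdd} then gives $u \coloneqq \eta \phi \in \mathcal{F}_{p} \cap C_{c}(X)$, and by construction $u = \eta \cdot \indicator{A}$ pointwise.

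The key claim, which I expect to be the decisive step, is that $\Gamma_{p}\langle u \rangle(B(x_0, R)) = 0$. The open ball $B(x_0, R)$ is the disjoint union of the open sets $U_1 \coloneqq B(x_0, R) \cap A$ (on which $u \equiv 1$) and $U_2 \coloneqq B(x_0, R) \cap (X \setminus A)$ (on which $u \equiv 0$). Applying Framework \ref{frame:EPEM}-\ref{it:EMslocal} with the pair $(u, 0)$ on each $U_i$, where the difference is a constant element of $\mathbb{R}\indicator{U_i}$, yields $\Gamma_{p}\langle u \rangle|_{U_i} = \Gamma_{p}\langle 0 \rangle|_{U_i} = 0$. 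Since $A_{\mathrm{P}} r_0 \le R$, plugging into \ref{e:PI} applied to $u$ on $B(x_0, r_0)$ gives
\[
\int_{B(x_0, r_0)} \abs{u - u_{B(x_0, r_0)}}^{p} \, dm \le C_{\mathrm{P}} \Psi(r_0)\, \Gamma_{p}\langle u \rangle(B(x_0, A_{\mathrm{P}} r_0)) = 0,
\]
forcing $u$ to be $m$-a.e.\ constant on $B(x_0, r_0)$. But $u = \indicator{A}$ on $B(x_0, r_0) \subset B(x_0, R)$, and this takes the value $1$ on $B(x_0, \varepsilon_0/2)$ and $0$ on $B(y_0, \varepsilon_0/2) \subset B(x_0, r_0)$, both of positive measure, a contradiction that proves \eqref{e:chainconn}.

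Once \eqref{e:chainconn} is in hand, uniform perfectness is quick: given $x \in X$ and $r > 0$ with $B(x, r) \neq X$, choose $z \in X \setminus B(x, r)$, apply \eqref{e:chainconn} with $\varepsilon = r/2$ to obtain an $(r/2)$-chain $x = x_0, \ldots, x_N = z$, and let $j$ be the last index with $x_j \in B(x, r)$; then $d(x, x_j) > r - r/2 = r/2$, so $x_j \in B(x, r) \setminus B(x, r/2)$, giving uniform perfectness with $\sigma = 1/2$. The main obstacles I anticipate are (i) ensuring the construction of $\phi$ remains legitimate when $X$ has small diameter and the naive choice $R = A_{\mathrm{P}} r_0$ is outside the admissible range of \ref{e:capu}, which I would handle by taking $\sigma$ large in Proposition \ref{prop:capu.refine} to push the admissible range of $R$ up toward $\diam(X, d)$, and (ii) carefully executing the slocal identity so that both pieces of $\Gamma_{p}\langle u \rangle|_{B(x_0, R)}$ indeed vanish via Framework \ref{frame:EPEM}-\ref{it:EMslocal}.
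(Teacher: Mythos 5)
Your overall strategy is the same as the paper's: split $X$ into the clopen sets $A=\{z: d_{\varepsilon_0}(x_0,z)<\infty\}$ and $X\setminus A$, build a function in $\mathcal{F}_p\cap C_c(X)$ that agrees with $\indicator{A}$ on a ball meeting both sets, kill its energy measure there via Framework \ref{frame:EPEM}-\ref{it:EMslocal}, and contradict \ref{e:PI}. That core mechanism is sound, and your direct chain argument for uniform perfectness (with $\sigma=1/2$) is fine and even more explicit than the paper's citation of Murugan.

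However, there is a genuine gap: the proposition assumes only \ref{e:VD} and \ref{e:PI}, whereas your construction of the cutoffs $\eta$ and $\phi_j$ relies on Lemma \ref{lem:capu.reduction} and Proposition \ref{prop:capu.refine}, both of which require \ref{e:capu}. As written, your argument therefore proves a weaker statement than the one claimed. The fix is that no quantitative energy bound on the cutoffs is ever used in your proof (only the vanishing of $\Gamma_p\langle u\rangle$ on $B(x_0,R)\cap A$ and $B(x_0,R)\setminus A$, which comes from strong locality alone), so the cutoffs should instead be produced from the regularity of the form, Framework \ref{frame:EPEM}-\ref{it:regular}, combined with the Lipschitz contraction property \ref{it:Lip}: approximate a continuous bump uniformly by an element of $\mathcal{F}_p\cap C_c(X)$ and post-compose with a Lipschitz map to get a function that is $1$ on a ball and supported in a slightly larger ball. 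This is exactly what the paper does (with an $\varepsilon/2$-net of such bumps, combined via Proposition \ref{prop:Lipc.list}-\ref{it:sadd.another}), and it is also how Lemma \ref{lem:Floc.another} is proved. A secondary point: even if \ref{e:capu} were available, Proposition \ref{prop:capu.refine} only yields $\mathcal{F}_p$ functions with $m$-a.e.\ constraints, not functions in $\mathcal{F}_p\cap C_c(X)$ with pointwise values, so your claim that it produces continuous $\phi_j$ with $\sigma=2$ would need an extra regularization step anyway; the regularity-based construction avoids this issue as well.
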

\begin{proof}
	The argument in \cite[Corollary 2.3]{Mur20} shows that \eqref{e:chainconn} implies that $(X,d)$ is uniformly perfect, so we prove \eqref{e:chainconn}. 
	The proof of \eqref{e:chainconn} is a minor modification of \cite[Lemma 2.2]{Mur20}. 
	Let $x \in X$ and $\varepsilon > 0$. 
	As in \cite[Proof of Lemma 2.2]{Mur20}, define $U_{x} \coloneqq \{ y \in X \mid d_{\varepsilon}(x,y) < \infty \}$ and $V_{x} \coloneqq X \setminus U_{x}$. 
	Then both $U_{x}$ and $V_{x}$ are open subsets of $X$.  
	Let $N$ be a $\varepsilon/2$-net in $(X,d)$ and, for each $z \in N$, let $\varphi_{z} \in \mathcal{F}_{p} \cap \contfunc_{c}(X)$ be such that $0 \le \varphi_{z} \le 1$, $\varphi_{z}|_{B(z,\varepsilon/2)} = 1$ and $\supp_{X}[\varphi_{z}] \subset B(z,\varepsilon)$. 
	For any relatively compact open subset $A$, define $N_{A} \coloneqq \{ z \in N \cap U_x \mid A \cap B(z,\varepsilon) \neq \emptyset \}$.  
	We have from $\#N_{A} < \infty$ and Proposition \ref{prop:Lipc.list}-\ref{it:sadd.another} that $\varphi_{A}^{\#} \coloneqq \min_{z \in N_{A}}\varphi_{z}$ satisfies $\varphi_{A}^{\#} \in \mathcal{F}_{p} \cap \contfunc_{c}(X)$. 
	Moreover, since $\dist(U_{x}, V_{x}) \ge \varepsilon$ (see \cite[(2.1)]{Mur20}) and $\{ B(z, \varepsilon/2) \}_{z \in N_{A}}$ covers $U_{x} \cap A$, we have $\varphi_{A}^{\#} = \indicator{U_{x}}$ on $A$. 
	Now we suppose $V_{x} \neq \emptyset$. 
	Then we can pick $(z,r) \in X \times (0,\infty)$ so that $U_{x} \cap B(z,r) \neq \emptyset$ and $V_{x} \cap B(z,r) \neq \emptyset$. 
	Recall that $B(z,A_{\mathrm{P}}r)$, where $A_{\mathrm{P}}$ is the constant in \ref{e:PI}, is a relatively compact open subset of $X$ by Proposition \ref{prop:top}. 
	By \ref{e:PI} with $u = \varphi_{B(z,A_{\mathrm{P}}r)}^{\#}$, 
	\begin{align*}
		0 < \int_{B(z,r)}\abs{u - u_{B(z,r)}}^{p}\,dm 
		&\le C_{\mathrm{P}}\Psi(r)\int_{B(z,A_{\mathrm{P}}r)}\,d\Gamma_{p}\langle u \rangle \\
		&= C_{\mathrm{P}}\Psi(r)\bigl[\Gamma_{p}\langle u \rangle(B(z,A_{\mathrm{P}}r) \cap U_{x}) + \Gamma_{p}\langle u \rangle(B(z,A_{\mathrm{P}}r) \cap V_{x})\bigr]. 
	\end{align*}
	However, both $\Gamma_{p}\langle u \rangle(B(z,A_{\mathrm{P}}r) \cap U_{x})$ and $\Gamma_{p}\langle u \rangle(B(z,A_{\mathrm{P}}r) \cap V_{x})$ are equal to $0$ by $u = \indicator{U_{x}}$ on $B(z,A_{\mathrm{P}}r)$ and Framework \ref{frame:EPEM}-\ref{it:EMslocal}.
	This is a contradiction and hence $V_{x} = \emptyset$, which means that \eqref{e:chainconn} holds. 
\end{proof} 

\section{BBM type characterizations}\label{sec:BBM}
In this section, we prove the first main result Theorem \ref{thm:main1}. 
We start with a few preparations. 
For any open subset $U \subsetneq X$ and any $\delta > 0$, we set
\begin{equation}\label{e:nbd-innner}
    U(\delta) \coloneqq \bigcup_{x \in U}B(x,\delta) \quad \text{and} \quad U_{\delta} \coloneqq \{ x \in U \mid d(x,X \setminus U) > \delta \};
\end{equation}
note that $U_{\delta}(\delta) \subset U$.

Next, we recall a few fundamental lemmas, which will be frequently used in this section.
\begin{lem}[{\cite[(3.1)]{LPZ24}}]
    For any $u \in L^{p}(X,m)$ and any $(z,r) \in X \times (0,\infty)$,
    \begin{equation}\label{e:doublevar}
        \int_{B(z,r)}\int_{B(z,r)}\abs{u(x) - u(y)}^{p}\,m(dx)\,m(dy)
        \le 2^{p}m(B(z,r))\int_{B(z,r)}\abs{u - u_{B(z,r)}}^{p}\,dm.
    \end{equation}
\end{lem}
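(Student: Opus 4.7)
The plan is to symmetrize the two-variable integrand by inserting the average $u_{B(z,r)}$ between $u(x)$ and $u(y)$ via the triangle inequality, apply an elementary convexity bound to split the $p$-th power, and then integrate one of the two variables out using that $\int_{B(z,r)}m(dx) = m(B(z,r))$. No deep input about $(X,d,m)$ or about $(\mathcal{E}_p,\mathcal{F}_p)$ is required; this is purely a consequence of Jensen/convexity on the ball $B(z,r)$.

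Concretely, I would proceed as follows. First, for every $x,y \in B(z,r)$ I write
\begin{equation*}
\abs{u(x) - u(y)}^{p}
= \abs{\bigl(u(x) - u_{B(z,r)}\bigr) - \bigl(u(y) - u_{B(z,r)}\bigr)}^{p}
\le 2^{p-1}\bigl(\abs{u(x) - u_{B(z,r)}}^{p} + \abs{u(y) - u_{B(z,r)}}^{p}\bigr),
\end{equation*}
using the convexity bound $(a+b)^{p} \le 2^{p-1}(a^{p}+b^{p})$ for $a,b \ge 0$ and $p \ge 1$. Integrating both sides against $m(dx)\otimes m(dy)$ over $B(z,r)\times B(z,r)$, the two terms on the right each contribute $m(B(z,r))\int_{B(z,r)}\abs{u-u_{B(z,r)}}^{p}\,dm$ because one of the two variables appears only through its measure. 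Summing and using $2\cdot 2^{p-1}=2^{p}$ yields exactly \eqref{e:doublevar}.

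Strictly speaking one should first verify that the right-hand side is finite when one wants to make sense of everything: if $\int_{B(z,r)}\abs{u-u_{B(z,r)}}^{p}\,dm = \infty$ the inequality is trivial, and otherwise $u\in L^{p}(B(z,r),m)$, so $u_{B(z,r)}$ is well defined and both integrals above are finite. There is no genuine obstacle in this argument; the only mild point is that one must be a little careful to use the constant $2^{p-1}$ from $|a-b|^{p}\le 2^{p-1}(|a|^{p}+|b|^{p})$ rather than the worse constant $2^{p}$ coming from $|a-b|^{p}\le (|a|+|b|)^{p}\le 2^{p}\max(|a|,|b|)^{p}$, so that after the factor of $2$ from the two symmetric contributions one arrives at the stated constant $2^{p}$ rather than a larger one.
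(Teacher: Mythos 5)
Your proof is correct: the symmetrization through $u_{B(z,r)}$ together with $\abs{a-b}^{p}\le 2^{p-1}(\abs{a}^{p}+\abs{b}^{p})$ and integrating out one variable is exactly the standard argument, and the bookkeeping giving the constant $2^{p}$ is right. The paper does not reprove this lemma but simply cites \cite[(3.1)]{LPZ24}, whose proof is the same elementary convexity computation, so your approach matches.
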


\begin{lem}[{\cite[Lemma 3.5]{LPZ24}}]
    Assume that \ref{e:VD} holds.
    For any open subset $U \subset X$ and any measurable function $h \colon X \times X \to [0,\infty]$ satisfying $h(x,y) = 0$ for $m \times m$-a.e.\  $(x,y) \in X \times X$ with $d(x,y) \ge \delta > 0$,
    \begin{align}\label{e:triint}
        &\int_{U}\int_{X}h(x,y)\,m(dx)\,m(dy) \nonumber\\
        &\quad \le c_{\mathrm{D}}\int_{U(2\delta)}\frac{1}{m(B(z,\delta))}\iint_{B(z,2\delta) \times B(z,2\delta)}h(x,y)\,m(dx)\,m(dy)\,m(dz).
    \end{align}
\end{lem}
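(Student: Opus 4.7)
The plan is to reduce the desired estimate to a pointwise inequality for the integrand obtained by averaging a trivial identity in an auxiliary variable $z$, and then to close the argument by Fubini--Tonelli. Since $h \ge 0$, Tonelli lets me freely swap the order of integration, so only a pointwise bound needs justification.

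Concretely, I would fix $(x,y) \in X \times X$ with $y \in U$ and $h(x,y) > 0$. The support hypothesis on $h$ forces $d(x,y) < \delta$ (outside an $m \times m$-null set, which is harmless). Then for every $z \in B(y,\delta)$, the triangle inequality gives $d(x,z) \le d(x,y) + d(y,z) < 2\delta$ and $d(y,z) < \delta < 2\delta$, so both $x$ and $y$ lie in $B(z,2\delta)$; moreover $z \in B(y,\delta) \subset U(\delta) \subset U(2\delta)$. Inserting the trivial identity $1 = m(B(y,\delta))^{-1}\int_{B(y,\delta)} m(dz)$ thus yields
\begin{equation*}
    \indicator{U}(y)\,h(x,y)
    \le \frac{1}{m(B(y,\delta))}\int_{U(2\delta)} \indicator{B(z,2\delta)}(x)\,\indicator{B(z,2\delta)}(y)\,h(x,y)\,m(dz).
\end{equation*}

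The remaining step is to convert the prefactor $m(B(y,\delta))^{-1}$ into $m(B(z,\delta))^{-1}$ up to a multiplicative constant. This is exactly where \ref{e:VD} enters: whenever $d(y,z) < \delta$ one has the inclusion $B(z,\delta) \subset B(y,2\delta)$, so $m(B(z,\delta)) \le m(B(y,2\delta)) \le c_{\mathrm{D}}\,m(B(y,\delta))$ by a single application of the doubling inequality, which rearranges to $m(B(y,\delta))^{-1} \le c_{\mathrm{D}}\,m(B(z,\delta))^{-1}$. Plugging this into the displayed bound above gives the pointwise estimate
\begin{equation*}
    \indicator{U}(y)\,h(x,y)
    \le c_{\mathrm{D}} \int_{U(2\delta)} \frac{\indicator{B(z,2\delta)}(x)\,\indicator{B(z,2\delta)}(y)}{m(B(z,\delta))}\,h(x,y)\,m(dz).
\end{equation*}

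Integrating this inequality in $(x,y) \in X \times X$ and interchanging the order of integration (Tonelli, using $h \ge 0$) produces the right-hand side of \eqref{e:triint} exactly. There is no real obstacle here: the only quantitative input is a single use of \ref{e:VD}, and the only bookkeeping required is to keep track of the three triangle-inequality inclusions that make all inserted indicators trivially bounded by $1$ on the region of integration in $z$.
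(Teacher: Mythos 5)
Your argument is correct in substance, and it is the standard averaging-plus-Tonelli proof; note that the paper itself does not prove this lemma but quotes it from \cite[Lemma 3.5]{LPZ24}, where the argument is of the same type, so there is no in-paper proof to diverge from. One ordering slip worth fixing: as written, you first enlarge the $z$-integral from $B(y,\delta)$ to $U(2\delta)$ (the indicators $\indicator{B(z,2\delta)}(x)\,\indicator{B(z,2\delta)}(y)$ only force $d(y,z)<2\delta$) and only afterwards ``plug in'' the bound $m(B(y,\delta))^{-1}\le c_{\mathrm{D}}\,m(B(z,\delta))^{-1}$, which you justified only for $d(y,z)<\delta$; on the enlarged region this step is not justified as stated, and a direct justification there (via $B(z,\delta)\subset B(y,3\delta)\subset B(y,4\delta)$) would cost $c_{\mathrm{D}}^{2}$ rather than the claimed $c_{\mathrm{D}}$. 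The remedy is simply to swap the two steps: for $y\in U$ and $d(x,y)<\delta$ write $h(x,y)=m(B(y,\delta))^{-1}\int_{B(y,\delta)}h(x,y)\,m(dz)\le c_{\mathrm{D}}\int_{B(y,\delta)}m(B(z,\delta))^{-1}h(x,y)\,m(dz)$, using your doubling estimate where it is valid, and only then enlarge the domain of integration to $U(2\delta)$ via your triangle-inequality inclusions and the nonnegativity of the integrand; integrating in $(x,y)$ and applying Tonelli (legitimate since $h\ge 0$ and $m$ is $\sigma$-finite under \ref{e:VD}) then yields \eqref{e:triint} with the stated constant $c_{\mathrm{D}}$, exactly as you intend.
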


\subsection{Upper bound of Theorem \ref{thm:main1}}\label{subsec:upper}
To get the upper estimate in \eqref{e:main1}, we first prove the following estimate, which is an analogue of \cite[Proposition 4.1]{LPZ24}. 
\begin{prop}\label{prop:preupper}
    Assume that \ref{e:VD}, \ref{e:PI} hold and that $\{ \rho_{\varepsilon} \}_{\varepsilon > 0}$ satisfies Assumption \ref{assum:kernel.1}-\ref{it:assum.nonlocal.upper}.
    Then there exists $C = C(p,C_{\rho},C_{\Psi},\beta_{2},c_{\mathrm{D}},C_{\mathrm{P}},A_{\mathrm{P}}) \in (0,\infty)$ such that for any open set $U$ of $X$, any $\varepsilon > 0$, any $R \in (0,1)$ and any $u \in \mathcal{F}_{p,\mathrm{loc}}(U(8A_{\mathrm{P}}R))$,
    \begin{equation}
        \int_{U}\int_{B(y,R)}\frac{\abs{u(x) - u(y)}^{p}}{\Psi(d(x,y))}\rho_{\varepsilon}(x,y)\,m(dx)\,m(dy)
        \le C\Gamma_{p}\langle u \rangle(U(8A_{\mathrm{P}}R)).
    \end{equation}
\end{prop}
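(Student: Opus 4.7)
My plan is to dyadically decompose the kernel $\rho_{\varepsilon}$ by means of the pointwise upper bound \eqref{e:truncate.upper} from Assumption \ref{assum:kernel.1}-\ref{it:assum.nonlocal.upper}, reduce each annular piece to an averaged Sobolev-type oscillation, and then chain together the tripling lemma \eqref{e:triint}, the double-variance inequality \eqref{e:doublevar}, and the localized Poincar\'{e} inequality \eqref{e:PIloc}. Denote by $I$ the left-hand side of the inequality to be proved.

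First I would substitute \eqref{e:truncate.upper} into the integrand. The constraint $x \in B(y,R)$ forces the contributions with $2^{-j} \ge R$ to vanish, and on the dyadic annulus $B(y,2^{-j+1}) \setminus B(y,2^{-j})$ monotonicity of $\Psi$ gives $\Psi(d(x,y)) \ge \Psi(2^{-j})$. This reduces $I$ to a sum $\sum_{j:\, 2^{-j} < R} d_{j}(\varepsilon) A_{j}$ with
\[
A_{j} \coloneqq \frac{1}{\Psi(2^{-j})}\int_{U}\frac{1}{m(B(y,2^{-j+1}))}\int_{B(y,2^{-j+1})}\abs{u(x)-u(y)}^{p}\,m(dx)\,m(dy).
\]

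Next I would apply \eqref{e:triint} to $A_{j}$ with $\delta = 2^{-j+1}$, using \ref{e:VD} to trade $m(B(y,2^{-j+1}))$ for $m(B(z,2^{-j+2}))$ up to a constant depending only on $c_{\mathrm{D}}$ (valid since $d(y,z) < 2^{-j+2}$). Then \eqref{e:doublevar} on $B(z,2^{-j+2})$ together with $\Psi(2^{-j+2}) \le C_{\Psi}4^{\beta_{2}}\Psi(2^{-j})$ from \eqref{e:scalefunction} reduces $A_{j}$ to an averaged $L^{p}$-oscillation of $u$ on $B(z,2^{-j+2})$. For every $j$ with $2^{-j} < R$ and every $z \in U(2^{-j+2})$ one checks $B(z,A_{\mathrm{P}} \cdot 2^{-j+2}) \subset U(8A_{\mathrm{P}}R)$, because $(A_{\mathrm{P}}+1)2^{-j+2} < 4(A_{\mathrm{P}}+1)R \le 8A_{\mathrm{P}}R$ whenever $A_{\mathrm{P}} \ge 1$. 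Thus \eqref{e:PIloc} applies and yields
\[
A_{j} \le c_{1}\int_{U(2^{-j+2})}\frac{\Gamma_{p}\langle u\rangle(B(z,A_{\mathrm{P}} \cdot 2^{-j+2}))}{m(B(z,2^{-j+2}))}\,m(dz).
\]

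Finally, I would swap the order of integration by Fubini; for $x \in B(z,A_{\mathrm{P}} \cdot 2^{-j+2})$, \ref{e:VD} gives $m(B(z,2^{-j+2})) \ge c_{2}\,m(B(x,A_{\mathrm{P}} \cdot 2^{-j+2}))$, so the $z$-integration over the intersection $U(2^{-j+2}) \cap B(x,A_{\mathrm{P}} \cdot 2^{-j+2})$ contributes only a constant, while forcing $x \in U((A_{\mathrm{P}}+1)2^{-j+2}) \subset U(8A_{\mathrm{P}}R)$. This gives $A_{j} \le c_{3}\,\Gamma_{p}\langle u\rangle(U(8A_{\mathrm{P}}R))$ uniformly in $j$, and summing against $\sum_{j} d_{j}(\varepsilon) \le C_{\rho}$ from \eqref{e:truncate.upper.const} closes the argument. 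The only technically delicate point is verifying $B(z,A_{\mathrm{P}} \cdot 2^{-j+2}) \subset U(8A_{\mathrm{P}}R)$ so that \eqref{e:PIloc} is legal, which is precisely the purpose of the enlargement factor $8A_{\mathrm{P}}$ in the hypothesis; the rest is routine bookkeeping with \ref{e:VD} and \eqref{e:scalefunction}.
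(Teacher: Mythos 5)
Your proposal is correct and follows essentially the same route as the paper: decompose the kernel dyadically via \eqref{e:truncate.upper}, note that only scales $2^{-j}\lesssim R$ survive on $B(y,R)$, apply \eqref{e:triint}, \eqref{e:doublevar} and the (localized) Poincar\'e inequality on balls of radius $2^{-j+2}$, recombine the resulting ball energies into $\Gamma_{p}\langle u \rangle(U(8A_{\mathrm{P}}R))$, and sum using \eqref{e:truncate.upper.const}. Your explicit Fubini/volume-comparison step is exactly the bookkeeping the paper delegates to the argument of \cite[Proof of Proposition 4.1]{LPZ24}, and your inclusion check $B(z,A_{\mathrm{P}}2^{-j+2})\subset U(8A_{\mathrm{P}}R)$ matches the paper's $U(2^{-j+3}A_{\mathrm{P}})\subset U(8A_{\mathrm{P}}R)$.
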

\begin{proof}
    For simplicity, we set $B_{x,j} \coloneqq B(x,2^{-j})$ for any $(x,j) \in X \times \mathbb{Z}$.
    Let $R \in (0,1)$ and let $j(R) \in \mathbb{N}$ satisfy $2^{-j(R)} \le R < 2^{-j(R) + 1}$.
    For any $j \in \mathbb{N}$ with $j \ge j(R)$, we have the following estimate, which is essentially the same as \cite[(4.4)]{LPZ24}:
    \begin{align}\label{e:upper.1}
        &\int_{U}\int_{B(y,R)}\frac{\abs{u(x) - u(y)}^{p}}{\Psi(d(x,y))}\frac{\indicator{B_{y,j - 1} \setminus B_{y,j}}(x)}{m(B_{y,j - 1})}\,m(dx)\,m(dy) \nonumber \\
        &\overset{\eqref{e:triint}}{\le} c_{\mathrm{D}}\int_{U(2^{-j+2})}\frac{1}{m(B_{z,j - 1})}\iint_{B_{z,j - 2} \times B_{z,j - 2}}\frac{\abs{u(x) - u(y)}^{p}}{\Psi(d(x,y))}\frac{\indicator{X \setminus B_{y,j}}(x)}{m(B_{y,j-1})}\,m(dx)\,m(dy)\,m(dz).
    \end{align}
    Note that the double integral in the left-hand side in the above inequality is equal to $0$ when $j < j(R)$.
    By using \eqref{e:doublevar} and \ref{e:PI}, we further obtain the following estimate: 
    \begin{align}\label{e:upper.2}
        &\frac{1}{m(B_{z,j - 1})}\iint_{B_{z,j - 2} \times B_{z,j - 2}}\frac{\abs{u(x) - u(y)}^{p}}{\Psi(d(x,y))}\frac{\indicator{X \setminus B_{y,j}}(x)}{m(B_{y,j-1})}\,m(dx)\,m(dy) \nonumber \\
        &\le c_{1}\frac{\Psi(2^{-j+2})}{\Psi(2^{-j})}\frac{\Gamma_{p}\langle u \rangle(B(z,2^{-j+2}A_{\mathrm{P}}))}{m(B_{z,j-2})} 
        \le c_{2}\frac{\Gamma_{p}\langle u \rangle(B(z,2^{-j+2}A_{\mathrm{P}}))}{m(B_{z,j-2})}.
    \end{align}
    Similar to \cite[Proof of Proposition 4.1]{LPZ24}, by \eqref{e:upper.1} and  \eqref{e:upper.2}, we have 
    \begin{equation}\label{e:upper.3}
    	\int_{U}\int_{B(y,R)}\frac{\abs{u(x) - u(y)}^{p}}{\Psi(d(x,y))}\frac{\indicator{B_{y,j - 1} \setminus B_{y,j}}(x)}{m(B_{y,j - 1})}\,m(dx)\,m(dy) 
    	\le c_{3}\Gamma_{p}\langle u \rangle(U(2^{-j+3}A_{\mathrm{P}})).
    \end{equation}
    Now we have from \eqref{e:truncate.upper.const}, \eqref{e:truncate.upper} and \eqref{e:upper.3} that
    \begin{align}
        &\int_{U}\int_{B(y,R)}\frac{\abs{u(x) - u(y)}^{p}}{\Psi(d(x,y))}\rho_{\varepsilon}(x,y)\,m(dx)\,m(dy) \nonumber \\
        &\le \sum_{j = 1}^{\infty}d_{j}(\varepsilon)\int_{U}\int_{B(y,R)}\frac{\abs{u(x) - u(y)}^{p}}{\Psi(d(x,y))}\frac{\indicator{B_{y,j - 1} \setminus B_{y,j}}(x)}{m(B_{y,j - 1})}\,m(dx)\,m(dy) \nonumber \\
        &\le c_{3}\sum_{j = j(R)}^{\infty}d_{j}(\varepsilon)\Gamma_{p}\langle u \rangle(U(2^{-j+3}A_{\mathrm{P}})) 
        \le c_{3}C_{\rho}\Gamma_{p}\langle u \rangle(U(8A_{\mathrm{P}}R)),
    \end{align}
    completing the proof.
\end{proof}

The following theorem shows the desired upper estimate in Theorem \ref{thm:main1} under Assumption \ref{assum:kernel.1}-\ref{it:assum.nonlocal.upper},\ref{it:assum.nonlocal.compl}.
\begin{thm}\label{thm:upper.ext}
    Assume that \ref{e:VD}, \ref{e:PI} hold and that $\{ \rho_{\varepsilon} \}_{\varepsilon > 0}$ satisfies Assumption \ref{assum:kernel.1}-\ref{it:assum.nonlocal.upper},\ref{it:assum.nonlocal.compl}.
    Then there exists $C = C(p,C_{\rho},C_{\Psi},\beta_{2},c_{\mathrm{D}},C_{\mathrm{P}},A_{\mathrm{P}}) \in (0,\infty)$ such that for any open subset $\Omega$ of $X$ satisfying \hyperref[defn:sextdomain]{\textup{(E)}} and any $u \in \mathcal{F}_{p}(\Omega)$,
    \begin{equation}\label{e:main.upper.ext}
        \limsup_{\varepsilon \downarrow 0}\int_{\Omega}\int_{\Omega}\frac{\abs{u(x) - u(y)}^{p}}{\Psi(d(x,y))}\rho_{\varepsilon}(x,y)\,m(dx)\,m(dy)
        \le C\Gamma_{p}\langle u \rangle(\Omega).
    \end{equation}
    Moreover, if $u \in \mathcal{F}_{p}$, then \eqref{e:main.upper.ext} holds with $\Omega = X$ as well. 
\end{thm}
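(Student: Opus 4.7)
The plan is to first invoke the extension property \hyperref[defn:sextdomain]{\textup{(E)}} to produce $F \in \mathcal{F}_{p}$ with $F = u$ $m$-a.e.\ on $\Omega$ and $\Gamma_{p}\langle F \rangle(\partial\Omega) = 0$; for the second assertion we simply set $F := u$. Fixing $R \in (0,1)$, I split the double integral at the scale $R$ as $I_{1}(\varepsilon,R) + I_{2}(\varepsilon,R)$, where $I_{1}$ is the contribution from $\{ d(x,y) < R \}$ and $I_{2}$ from $\{ d(x,y) \ge R \}$. The idea is that $I_{1}$ is controlled by Proposition \ref{prop:preupper} (after replacing $u$ by $F$), while $I_{2}$ is absorbed as $\varepsilon \downarrow 0$ via the complementary condition \ref{it:assum.nonlocal.compl}.

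For $I_{1}$: since $F \in \mathcal{F}_{p,\mathrm{loc}}(X) \supset \mathcal{F}_{p,\mathrm{loc}}(\Omega(8 A_{\mathrm{P}} R))$ and $u = F$ on $\Omega$, I dominate the inner integration by extending from $\Omega \cap B(y,R)$ to $B(y,R)$ (using nonnegativity of the integrand and replacing $u$ by $F$) and then apply Proposition \ref{prop:preupper} with $U = \Omega$ and function $F$ to obtain
\[
I_{1}(\varepsilon,R) \;\le\; \int_{\Omega}\!\int_{B(y,R)} \frac{\abs{F(x)-F(y)}^{p}}{\Psi(d(x,y))}\rho_{\varepsilon}(x,y)\,m(dx)\,m(dy) \;\le\; C\,\Gamma_{p}\langle F \rangle(\Omega(8A_{\mathrm{P}}R)).
\]

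For $I_{2}$: using $\abs{F(x) - F(y)}^{p} \le 2^{p-1}(\abs{F(x)}^{p} + \abs{F(y)}^{p})$, I split into two terms. The contribution from $\abs{F(y)}^{p}$ is bounded directly by Fubini and Assumption \ref{assum:kernel.1}-\ref{it:assum.nonlocal.compl}:
\[
\int_{\Omega}\abs{F(y)}^{p}\!\!\int_{X \setminus B(y,R)}\!\!\frac{\rho_{\varepsilon}(x,y)}{\Psi(d(x,y))}\,m(dx)\,m(dy) \;\le\; \norm{F}_{L^{p}}^{p}\,\esssup_{y}\!\!\int_{X \setminus B(y,R)}\!\!\frac{\rho_{\varepsilon}(x,y)}{\Psi(d(x,y))}\,m(dx) \;\xrightarrow[\varepsilon \downarrow 0]{}\; 0.
\]
The contribution from $\abs{F(x)}^{p}$ is the main technical obstacle: after Fubini it reads $\int_{\Omega}\abs{F(x)}^{p}\!\int_{\Omega \setminus B(x,R)} \rho_{\varepsilon}(x,y)/\Psi(d(x,y))\,m(dy)\,m(dx)$, and Assumption \ref{assum:kernel.1}-\ref{it:assum.nonlocal.compl} is formulated asymmetrically (fixed second argument). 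To handle this, I use the dyadic upper bound \ref{it:assum.nonlocal.upper} to write $\rho_{\varepsilon}(x,y) \le \sum_{j} d_{j}(\varepsilon)\indicator{B(y,2^{-j+1}) \setminus B(y,2^{-j})}(x)/m(B(y,2^{-j+1}))$, then swap $m(B(y,2^{-j+1}))$ with $m(B(x,2^{-j+1}))$ on the annulus $\{d(x,y) \in [2^{-j},2^{-j+1})\}$ via \ref{e:VD}, obtaining a quasi-symmetric upper bound that reduces the $y$-integral to an expression controlled (up to a VD-constant) by the complementary condition evaluated at the point $x$; this transfer is the crucial step that allows vanishing as $\varepsilon \downarrow 0$.

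Combining the three pieces yields $\limsup_{\varepsilon \downarrow 0}(\text{total integral}) \le C\,\Gamma_{p}\langle F \rangle(\Omega(8A_{\mathrm{P}}R))$ for every $R \in (0,1)$. Finally, since $\Gamma_{p}\langle F \rangle$ is a finite Borel measure on $X$, $\Omega(8A_{\mathrm{P}}R) \downarrow \closure{\Omega}^{X}$ as $R \downarrow 0$ and $\Gamma_{p}\langle F \rangle(\partial\Omega) = 0$, we obtain $\lim_{R \downarrow 0}\Gamma_{p}\langle F \rangle(\Omega(8A_{\mathrm{P}}R)) = \Gamma_{p}\langle F \rangle(\Omega)$; Framework \ref{frame:EPEM}-\ref{it:EMslocal} together with Definition \ref{defn.Flocal-ss} and $F = u$ $m$-a.e.\ on $\Omega$ then give $\Gamma_{p}\langle F \rangle(\Omega) = \Gamma_{p}\langle u \rangle(\Omega)$, proving \eqref{e:main.upper.ext}. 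The case $\Omega = X$ is a direct simplification: no extension is required, $X(8A_{\mathrm{P}}R) = X$ for all $R > 0$, and only the vanishing of the tail contributions remains (handled as above). The hardest step is the quasi-symmetric tail estimate for the $\abs{F(x)}^{p}$ term; once this is established, the remainder of the proof is a routine combination of Proposition \ref{prop:preupper} with \ref{it:assum.nonlocal.compl}.
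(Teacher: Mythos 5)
Your overall architecture is sound and is essentially the paper's: split at scale $R$, control the near-diagonal part by Proposition \ref{prop:preupper}, kill the far-field part as $\varepsilon \downarrow 0$, and finally let $R \downarrow 0$ using finiteness of $\Gamma_{p}\langle F \rangle$ and $\Gamma_{p}\langle F \rangle(\partial\Omega) = 0$. Your reorganization of the near part (one application of Proposition \ref{prop:preupper} with $U = \Omega$ and the extension $F$, instead of the paper's three-way split into $\Omega_{8A_{\mathrm{P}}R}$ with $u$ itself and the boundary strip $\Omega\setminus\Omega_{8A_{\mathrm{P}}R}$ with the extension) is a harmless and in fact slightly cleaner variant, and the identification $\Gamma_{p}\langle F \rangle(\Omega) = \Gamma_{p,\Omega}\langle u \rangle(\Omega)$ is consistent with Definition \ref{defn.Flocal-ss}.

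The genuine problem is the step you yourself single out as crucial: the treatment of the $\abs{F(x)}^{p}$ half of the tail. Replacing $\rho_{\varepsilon}$ by the dyadic majorant of \eqref{e:truncate.upper} and swapping $m(B(y,2^{-j+1}))$ for $m(B(x,2^{-j+1}))$ via \ref{e:VD} does give a quasi-symmetric \emph{upper bound}, but what you are then left with, after integrating in $y$ over $\{d(x,y)\ge R\}$, is at best $c_{\mathrm{D}}\sum_{j:\,2^{-j+1}>R} d_{j}(\varepsilon)/\Psi(2^{-j})$, and \eqref{e:truncate.upper.const} only bounds this by $C_{\rho}/\Psi(R/2)$ uniformly in $\varepsilon$ — no decay of the individual $d_{j}(\varepsilon)$ as $\varepsilon \downarrow 0$ is assumed, so this does not tend to $0$ for fixed $R$ (and a bound of size $\Psi(R/2)^{-1}$ is useless, since it blows up when you later send $R \downarrow 0$). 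Moreover, the majorant \eqref{e:truncate.upper} is only available for $d(x,y)\le 1$, so the region $d(x,y)>1$ is not covered by this swap at all. Finally, the claim that the swapped expression is "controlled by the complementary condition evaluated at $x$" does not follow: \eqref{e:kernel.compl} controls $\int_{X\setminus B(y,\delta)}\rho_{\varepsilon}(x,y)/\Psi(d(x,y))\,m(dx)$, i.e.\ integrals of the \emph{kernel itself} in its first variable for a fixed second argument, whereas after your swap you have the dyadic \emph{majorant} integrated in the second variable; Assumption \ref{assum:kernel.1} provides no lower bound relating that majorant back to $\rho_{\varepsilon}$ at those scales, so no comparison with \eqref{e:kernel.compl} is possible. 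Note that the paper does not attempt such a symmetrization: it disposes of the entire far-field term in one stroke via the bound $2^{p}\norm{u}_{L^{p}(X)}^{p}\esssup_{y\in\Omega}\int_{\Omega\setminus B(y,R)}\rho_{\varepsilon}(x,y)/\Psi(d(x,y))\,m(dx)$ as in \eqref{e:main.upper.1st} (following the argument of \cite{LPZ24}), which is exactly the quantity that \eqref{e:kernel.compl} sends to $0$; if you wish to handle the two halves separately and worry about the order of integration, the honest route is to verify the second-variable analogue of \eqref{e:kernel.compl} (which for the concrete kernels \eqref{e:BBMkernel} and \eqref{e:KSkernel} follows from \ref{e:VD}-quasi-symmetry, resp.\ from the support condition $d(x,y)<\varepsilon$), not to pass through the dyadic majorant.
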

\begin{proof}
	Let $R \in (0,1)$ and set $D_{\varepsilon}(u; x,y) \coloneqq \frac{\abs{u(x) - u(y)}^{p}}{\Psi(d(x,y))}\rho_{\varepsilon}(x,y)$ for $x,y \in X$.  
    Let us divide the double integral of $D_{\varepsilon}(u; x,y)$ into the following three parts: 
    \begin{align}\label{e:main.upper.divide}
        &\int_{\Omega}\int_{\Omega}D_{\varepsilon}(u; x,y)\,m(dx)\,m(dy) \nonumber \\
        &= \left(\int_{\Omega}\int_{\Omega \setminus B(y,R)} +\int_{\Omega_{8A_{\mathrm{P}}R}}\int_{B(y,R)} + \int_{\Omega \setminus \Omega_{8A_{\mathrm{P}}R}}\int_{\Omega \cap B(y,R)}\right)D_{\varepsilon} (u; x,y)\,m(dx)\,m(dy). 
    \end{align}
    For the first term, the same argument as in \cite[Proof of Theorem 4.8]{LPZ24} shows 
    \begin{align}\label{e:main.upper.1st}
        &\int_{\Omega}\int_{\Omega \setminus B(y,R)}D_{\varepsilon}(u; x,y)\,m(dx)\,m(dy) \nonumber \\
        &\le 2^{p}\norm{u}_{L^{p}(X)}^{p}\esssup_{y \in \Omega}\int_{\Omega \setminus B(y,R)}\frac{\rho_{\varepsilon}(x,y)}{\Psi(d(x,y))}\,m(dx)
        \xrightarrow[\varepsilon \downarrow 0]{} 0,
    \end{align}
    where we used \eqref{e:kernel.compl} in the last line. 
    For the second term, by Proposition \ref{prop:preupper}, we get 
    \begin{equation}\label{e:main.upper.2nd}
        \int_{\Omega_{8A_{\mathrm{P}}R}}\int_{B(y,R)}D_{\varepsilon}(u; x,y)\,m(dx)\,m(dy)
        \le C\Gamma_{p}\langle u \rangle(\Omega_{8A_{\mathrm{P}}R}(8A_{\mathrm{P}}R)) 
        \le C\Gamma_{p}\langle u \rangle(\Omega).
    \end{equation}
    Lastly, we estimate the third term.
    Since $\Omega$ satisfies \hyperref[defn:sextdomain]{\textup{(E)}}, there exists $v \in \mathcal{F}_{p}$ such that $v|_{\Omega} = u$ and $\Gamma_{p}\langle v \rangle(\partial\Omega) = 0$.
    Set $U \coloneqq \Omega \setminus \Omega_{8A_{\mathrm{P}}R}$ and note that $U(8A_{\mathrm{P}}R) \subset \Omega(8A_{\mathrm{P}}R) \setminus \Omega_{16A_{\mathrm{P}}R}$.
    Then we see from Proposition \ref{prop:preupper} that
    \begin{equation*}
        \int_{\Omega \setminus \Omega_{8A_{\mathrm{P}}R}}\int_{\Omega \cap B(y,R)}D_{\varepsilon}(u; x,y)\,m(dx)\,m(dy) 
        \le C\Gamma_{p}\langle v \rangle(\Omega(8A_{\mathrm{P}}R) \setminus \Omega_{16A_{\mathrm{P}}R}).
    \end{equation*}
    Since $R \in (0,1)$ is arbitrary and $\Gamma_{p}\langle v \rangle(\partial\Omega) = 0$, we get 
    \begin{equation}\label{e:main.upper.3rd}
        \lim_{R \downarrow 0}\int_{\Omega \setminus \Omega_{8A_{\mathrm{P}}R}}\int_{\Omega \cap B(y,R)}D_{\varepsilon}(u; x,y)\,m(dx)\,m(dy) = 0.
    \end{equation}
    By \eqref{e:main.upper.divide}, \eqref{e:main.upper.1st}, \eqref{e:main.upper.2nd} and \eqref{e:main.upper.3rd}, we obtain \eqref{e:main.upper.ext}.

	The case $u \in \mathcal{F}_{p}$ and $\Omega = X$ can be shown with minor modifications by observing that 
    \begin{equation*}
        \int_{X}\int_{X}D_{\varepsilon}(u; x,y)\,m(dx)\,m(dy) 
        = \left(\int_{X}\int_{X \setminus B(y,R)} + \int_{X}\int_{B(y,R)}\right)D_{\varepsilon}(u; x,y)\,m(dx)\,m(dy). 
    \end{equation*}
    Indeed, \eqref{e:kernel.compl} implies that the first term goes to $0$ as $\varepsilon \downarrow 0$ similarly to \eqref{e:main.upper.1st}, and Proposition \ref{prop:preupper} yields that the second term is bounded above by $C\Gamma_{p}\langle u \rangle(X) = C\mathcal{E}_{p}(u)$. 
\end{proof}

Now we move to the case where $\{ \rho_{\varepsilon} \}_{\varepsilon > 0}$ satisfies Assumption \ref{assum:kernel.2}.
The following proposition is an analogue of Proposition \ref{prop:preupper}.
\begin{prop}\label{prop:preupper.KS}
    Assume that \ref{e:VD}, \ref{e:PI} hold and that $\{ \rho_{\varepsilon} \}_{\varepsilon > 0}$ satisfies Assumption \ref{assum:kernel.2}-\ref{it:assum.local.upper}.
    Then there exists $C = C(p,C_{\rho},C_{\Psi},\beta_{2},c_{\mathrm{D}},C_{\mathrm{P}},A_{\mathrm{P}}) \in (0,\infty)$ such that for any open set $U$ of $X$, any $\varepsilon > 0$ and any $u \in \mathcal{F}_{p,\mathrm{loc}}(U(2A_{\mathrm{P}}\varepsilon))$,
    \begin{equation}\label{e:upper.local}
        \int_{U}\int_{X}\frac{\abs{u(x) - u(y)}^{p}}{\Psi(d(x,y))}\rho_{\varepsilon}(x,y)\,m(dx)\,m(dy)
        \le C\Gamma_{p}\langle u \rangle(U(2A_{\mathrm{P}}\varepsilon)).
    \end{equation}
\end{prop}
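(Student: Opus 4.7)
The plan is to follow the strategy of Proposition~\ref{prop:preupper} verbatim, replacing the dyadic scale $2^{-j+1}$ by $2^{-j+1}\varepsilon$ so that the whole argument is rescaled to the $\varepsilon$-support of the kernel. By Remark~\ref{rmk:slocalkernel}, $\rho_{\varepsilon}$ is already supported in $\{d(x,y)<\varepsilon\}$, and hence \emph{every} $j\in\mathbb{N}$ in the dyadic decomposition \eqref{e:truncate.upper.2} contributes; the cut-off $j\ge j(R)$ appearing in \eqref{e:upper.1} disappears here, because the relevant length scale is $\varepsilon$ itself rather than a separate truncation parameter.

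Concretely, for each $j\in\mathbb{N}$ I will set
\[
J_{j}(u)\coloneqq \int_{U}\int_{X}\frac{\abs{u(x)-u(y)}^{p}}{\Psi(d(x,y))}\,\frac{\indicator{B(y,2^{-j+1}\varepsilon)\setminus B(y,2^{-j}\varepsilon)}(x)}{m(B(y,2^{-j+1}\varepsilon))}\,m(dx)\,m(dy),
\]
apply \eqref{e:triint} with $\delta=2^{-j+1}\varepsilon$ to dominate $J_{j}(u)$ by a triple integral over $z\in U(2^{-j+2}\varepsilon)$ of a double integral on $B(z,2^{-j+2}\varepsilon)^{2}$, and then use that $d(x,y)\ge 2^{-j}\varepsilon$ on the support of the indicator, so $\Psi(d(x,y))\ge\Psi(2^{-j}\varepsilon)$. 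Combining \eqref{e:doublevar} with the localized Poincar\'e inequality \eqref{e:PIloc} applied to $u$ on $B(z,2^{-j+2}\varepsilon)$ — which is legitimate because $u\in\mathcal{F}_{p,\mathrm{loc}}(U(2A_{\mathrm{P}}\varepsilon))$ and the corresponding PI-ball $B(z,2^{-j+2}A_{\mathrm{P}}\varepsilon)$ lies in $U(2A_{\mathrm{P}}\varepsilon)$ for every $j\ge 1$ (since $2^{-j+2}\le 2$) — and then using \eqref{e:scalefunction} to absorb $\Psi(2^{-j+2}\varepsilon)/\Psi(2^{-j}\varepsilon)$ and \ref{e:VD} to identify the various $z$-ball volumes up to a constant, will give
\[
J_{j}(u)\le c_{1}\int_{U(2^{-j+2}\varepsilon)}\frac{\Gamma_{p,U(2A_{\mathrm{P}}\varepsilon)}\langle u\rangle\bigl(B(z,2^{-j+2}A_{\mathrm{P}}\varepsilon)\bigr)}{m\bigl(B(z,2^{-j+2}\varepsilon)\bigr)}\,m(dz),
\]
with $c_{1}$ independent of $j$ and $\varepsilon$. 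A standard Fubini / \ref{e:VD} argument of the type already used in the proof of \eqref{e:upper.3} will then reduce the right-hand side to $c_{2}\,\Gamma_{p,U(2A_{\mathrm{P}}\varepsilon)}\langle u\rangle(U(2A_{\mathrm{P}}\varepsilon))$ uniformly in $j\ge 1$.

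Summing the resulting bound $J_{j}(u)\le c_{2}\,\Gamma_{p,U(2A_{\mathrm{P}}\varepsilon)}\langle u\rangle(U(2A_{\mathrm{P}}\varepsilon))$ against the weights $d_{j}(\varepsilon)$ from \eqref{e:truncate.upper.2} and invoking \eqref{e:truncate.upper.const} will produce exactly \eqref{e:upper.local}. The main obstacle is not conceptual but a careful bookkeeping of constants: I must check that every factor coming out of \eqref{e:scalefunction}, \ref{e:VD}, \eqref{e:doublevar} and \eqref{e:PIloc} is genuinely $j$-independent, and that the enlarged PI-neighborhood remains uniformly inside $U(2A_{\mathrm{P}}\varepsilon)$ so that the local Poincar\'e application is justified in every annulus. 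Apart from this, every step is a faithful rescaling — from scale $1$ to scale $\varepsilon$ — of the proof of Proposition~\ref{prop:preupper}.
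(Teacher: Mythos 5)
Your overall skeleton (dyadic decomposition via \eqref{e:truncate.upper.2}, then \eqref{e:doublevar} plus the Poincar\'e inequality on each scale, then summation using \eqref{e:truncate.upper.const}) is the right one, but the covering device you chose breaks the stated statement. After applying \eqref{e:triint} with $\delta = 2^{-j+1}\varepsilon$, the outer variable $z$ ranges over $U(2^{-j+2}\varepsilon)$, \emph{not} over $U$. Your claim that the PI-ball $B(z,2^{-j+2}A_{\mathrm{P}}\varepsilon)$ lies in $U(2A_{\mathrm{P}}\varepsilon)$ ``since $2^{-j+2}\le 2$'' implicitly assumes $z\in U$; for $z\in U(2^{-j+2}\varepsilon)\setminus U$ and $j=1$ the PI-ball only lies in $U\bigl(2(1+A_{\mathrm{P}})\varepsilon\bigr)$, which strictly contains $U(2A_{\mathrm{P}}\varepsilon)$, and if $A_{\mathrm{P}}<2$ even the inner integration balls $B(z,2^{-j+2}\varepsilon)$ reach outside $U(2A_{\mathrm{P}}\varepsilon)$, where $u$ is not even defined under the hypothesis $u\in\mathcal{F}_{p,\mathrm{loc}}(U(2A_{\mathrm{P}}\varepsilon))$. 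So for the low-$j$ terms you can neither invoke \eqref{e:PIloc} nor make sense of $\Gamma_{p,U(2A_{\mathrm{P}}\varepsilon)}\langle u\rangle$ on those balls. Your Fubini/\ref{e:VD} step is fine as such, but it delivers $\Gamma_{p}\langle u\rangle\bigl(U(2^{-j+2}(1+A_{\mathrm{P}})\varepsilon)\bigr)$, so the faithful rescaling of Proposition \ref{prop:preupper} proves at best the estimate with $U(4A_{\mathrm{P}}\varepsilon)$ under the stronger hypothesis $u\in\mathcal{F}_{p,\mathrm{loc}}(U(4A_{\mathrm{P}}\varepsilon))$ --- not \eqref{e:upper.local} as stated. (This is exactly why Proposition \ref{prop:preupper} carries the generous factor $8A_{\mathrm{P}}R$.)

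The paper avoids this inflation by not using \eqref{e:triint} at all in this proposition: for each $j$ it takes a $(2^{-j+1}\varepsilon)$-net $\{z_k\}\subset U$ of $U$ itself, bounds the annulus integral by sums of double integrals over $B(z_k,2^{-j+2}\varepsilon)\times B(z_k,2^{-j+2}\varepsilon)$, applies \eqref{e:doublevar} and \ref{e:PI} there, and uses the bounded overlap of $\{B(z_k,2^{-j+2}A_{\mathrm{P}}\varepsilon)\}_k$ coming from doubling. Since the centers lie in $U$ and $2^{-j+2}A_{\mathrm{P}}\varepsilon\le 2A_{\mathrm{P}}\varepsilon$, every enlarged ball stays inside $U(2A_{\mathrm{P}}\varepsilon)$, which is what makes the sharp neighborhood in \eqref{e:upper.local} (and the stated hypothesis) attainable. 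To repair your argument, either switch to such a net-based covering with centers in $U$, or accept the weaker conclusion with $U(4A_{\mathrm{P}}\varepsilon)$ and adjust the hypothesis accordingly (which would still suffice, with minor bookkeeping changes, for the use in Theorem \ref{thm:upper.local.KSext}).
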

\begin{proof}
    For each $j \in \mathbb{N}$, let $\{ z_{k} \}_{k \in I_{j}(U)} \subset U$ be a $(2^{-j+1}\varepsilon)$-net of $U$.
    Then we have 
    \begin{align}\label{e:pvar.net}
		&\int_{U}\int_{B(y,2^{-j+1}\varepsilon) \setminus B(y,2^{-j}\varepsilon)}\frac{\abs{u(x) - u(y)}^{p}}{\Psi(d(x,y))m(B(y,2^{-j+1}r))}\,m(dx)\,m(dx) \nonumber \\
		&\le \sum_{k \in I_{j}(U)}\frac{c_{\mathrm{D}}^{2}}{\Psi(2^{-j}\varepsilon)m(B(z_{k},2^{-j+1}\varepsilon))}\iint_{B(z_{k},2^{-j+2}\varepsilon) \times B(z_{k},2^{-j+2}\varepsilon)}\abs{u(x) - u(y)}^{p}\,m(dx)\,m(dy) \nonumber \\
        &\overset{\eqref{e:doublevar}}{\le} \sum_{k \in I_{j}(U)}\frac{2^{p}c_{\mathrm{D}}^{3}}{\Psi(2^{-j}\varepsilon)}\int_{B(z_{k},2^{-j+2}\varepsilon)}\abs{u(x) - u_{B(z_{k},2^{-j+2}\varepsilon)}}^{p}\,m(dx) \nonumber \\
		&\overset{\ref{e:PI}}{\le} 2^{p}c_{\mathrm{D}}^{3}C_{\mathrm{P}}\frac{\Psi(2^{-j+2}\varepsilon)}{\Psi(2^{-j}\varepsilon)}\sum_{k \in I_{j}(U)}\int_{B(z_{k},2^{-j+2}A_{\mathrm{P}}\varepsilon)}\,d\Gamma_{p}\langle u \rangle
        \le c_{1}\int_{U(2A_{\mathrm{P}}\varepsilon)}\,d\Gamma_{p}\langle u \rangle,
	\end{align}
    where we used the doubling property of $(X,d)$ in the last inequality.
   Since we have from \eqref{e:truncate.upper.2} that
    \begin{align*}
        &\int_{U}\int_{X}\frac{\abs{u(x) - u(y)}^{p}}{\Psi(d(x,y))}\rho_{\varepsilon}(x,y)\,m(dx)\,m(dy) \\
        &\le \sum_{j = 1}^{\infty}d_{j}(\varepsilon)\int_{U}\int_{B(y,2^{-j+1}\varepsilon) \setminus B(y,2^{-j}\varepsilon)}\frac{\abs{u(x) - u(y)}^{p}}{\Psi(d(x,y))m(B(y,2^{-j+1}\varepsilon))}\,m(dx)\,m(dx), 
    \end{align*}
    we obtain \eqref{e:upper.local} by using \eqref{e:pvar.net} and \eqref{e:truncate.upper.const}.
\end{proof}

Now we obtain the desired upper estimate in Theorem \ref{thm:main1} under Assumption \ref{assum:kernel.2}-\ref{it:assum.local.upper}.
\begin{thm}\label{thm:upper.local.KSext}
    Assume that \ref{e:VD}, \ref{e:PI} hold and that $\{ \rho_{\varepsilon} \}_{\varepsilon > 0}$ satisfies Assumption \ref{assum:kernel.2}-\ref{it:assum.local.upper}.
    Then there exists $C = C(p,C_{\rho},C_{\Psi},\beta_{2},c_{\mathrm{D}},C_{\mathrm{P}},A_{\mathrm{P}}) \in (0,\infty)$ such that for any open subset $\Omega$ of $X$ satisfying \hyperref[defn:sextdomain]{\textup{(E)}} and any $u \in \mathcal{F}_{p}(\Omega)$,
    \begin{equation}\label{e:upper.local.KSext}
        \limsup_{\varepsilon \downarrow 0}\int_{\Omega}\int_{\Omega}\frac{\abs{u(x) - u(y)}^{p}}{\Psi(d(x,y))}\rho_{\varepsilon}(x,y)\,m(dx)\,m(dy)
        \le C\Gamma_{p}\langle u \rangle(\Omega).
    \end{equation}
    Moreover, if $u \in \mathcal{F}_{p}$, then we have 
    \begin{equation}\label{e:sup.upper}
        \sup_{\varepsilon > 0}\int_{X}\int_{X}\frac{\abs{u(x) - u(y)}^{p}}{\Psi(d(x,y))}\rho_{\varepsilon}(x,y)\,m(dx)\,m(dy)
        \le C\mathcal{E}_{p}(u).
    \end{equation}
\end{thm}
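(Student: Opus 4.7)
The plan is to reduce both inequalities to Proposition \ref{prop:preupper.KS}, exploiting that under Assumption \ref{assum:kernel.2} the kernel $\rho_{\varepsilon}$ has ``short range'' $\le \varepsilon$ by Remark \ref{rmk:slocalkernel}. The inequality \eqref{e:sup.upper} is immediate: applying Proposition \ref{prop:preupper.KS} with $U = X$ gives $X(2A_{\mathrm{P}}\varepsilon) = X$ and a uniform-in-$\varepsilon$ bound by $C\Gamma_{p}\langle u\rangle(X) = C\mathcal{E}_{p}(u)$ via Framework \ref{frame:EPEM}-\ref{it:EMtotal}, so the supremum is controlled.

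For \eqref{e:upper.local.KSext} I will first use \hyperref[defn:sextdomain]{\textup{(E)}} to pick $v \in \mathcal{F}_{p}$ with $v|_{\Omega} = u$ and $\Gamma_{p}\langle v\rangle(\partial\Omega) = 0$, and for fixed $\varepsilon > 0$ decompose
\begin{equation*}
\int_{\Omega}\int_{\Omega} = \int_{\Omega_{2A_{\mathrm{P}}\varepsilon}}\int_{\Omega} + \int_{\Omega\setminus\Omega_{2A_{\mathrm{P}}\varepsilon}}\int_{\Omega}.
\end{equation*}
For the interior term, $y \in \Omega_{2A_{\mathrm{P}}\varepsilon}$ together with Remark \ref{rmk:slocalkernel} and $A_{\mathrm{P}} \ge 1$ forces $B(y,\varepsilon) \subset \Omega$, so the inner integral over $\Omega$ is unchanged when enlarged to $X$; Proposition \ref{prop:preupper.KS} with $U = \Omega_{2A_{\mathrm{P}}\varepsilon}$ then gives a bound by $C\Gamma_{p,\Omega}\langle u\rangle(\Omega)$, since $U(2A_{\mathrm{P}}\varepsilon) \subset \Omega$ and $u \in \mathcal{F}_{p}(\Omega)$.

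For the boundary term I plan to use $v = u$ on $\Omega$ and nonnegativity to dominate it by $\int_{\Omega\setminus\Omega_{2A_{\mathrm{P}}\varepsilon}}\int_{X}$ of the same integrand with $u$ replaced by $v$, and then apply Proposition \ref{prop:preupper.KS} to the globally defined $v$ with $U = \Omega \setminus \Omega_{2A_{\mathrm{P}}\varepsilon}$. The elementary inclusion $(\Omega \setminus \Omega_{2A_{\mathrm{P}}\varepsilon})(2A_{\mathrm{P}}\varepsilon) \subset \Omega(2A_{\mathrm{P}}\varepsilon) \setminus \Omega_{4A_{\mathrm{P}}\varepsilon}$, coming from $d(z, X \setminus \Omega) \le d(z,x) + d(x, X\setminus\Omega) \le 4A_{\mathrm{P}}\varepsilon$, then bounds the boundary term by $C\Gamma_{p}\langle v\rangle(\Omega(2A_{\mathrm{P}}\varepsilon) \setminus \Omega_{4A_{\mathrm{P}}\varepsilon})$.

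The last step is the limit $\varepsilon \downarrow 0$: the sets $\Omega(2A_{\mathrm{P}}\varepsilon) \setminus \Omega_{4A_{\mathrm{P}}\varepsilon}$ are decreasing in $\varepsilon$, with intersection contained in $\overline{\Omega} \cap (X \setminus \Omega) = \partial\Omega$, so continuity of the finite measure $\Gamma_{p}\langle v\rangle$ combined with $\Gamma_{p}\langle v\rangle(\partial\Omega) = 0$ sends the boundary term to $0$; taking $\limsup_{\varepsilon \downarrow 0}$ yields \eqref{e:upper.local.KSext}. The main obstacle is precisely this boundary term: without \hyperref[defn:sextdomain]{\textup{(E)}} and the vanishing $\Gamma_{p}\langle v\rangle(\partial\Omega) = 0$, there is no natural object majorising contributions from the thickened shell $\Omega(2A_{\mathrm{P}}\varepsilon) \setminus \Omega$ that vanishes in the limit, which is exactly why the extension hypothesis is built into the statement, mirroring the role of \hyperref[defn:sextdomain]{\textup{(E)}} in the nonlocal counterpart Theorem \ref{thm:upper.ext}.
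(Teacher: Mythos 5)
Your proposal is correct and follows essentially the same route as the paper: both reduce everything to Proposition \ref{prop:preupper.KS} (applied with $U = X$ to get \eqref{e:sup.upper}) and prove \eqref{e:upper.local.KSext} by an interior/boundary-collar decomposition in which the extension $v$ provided by \hyperref[defn:sextdomain]{\textup{(E)}} and the vanishing $\Gamma_{p}\langle v \rangle(\partial\Omega) = 0$ kill the collar contribution, the paper doing this by reusing the proof of Theorem \ref{thm:upper.ext} with the far-field term vanishing identically for $\varepsilon < R$. Your only deviation is organizational: you take an $\varepsilon$-dependent collar $\Omega \setminus \Omega_{2A_{\mathrm{P}}\varepsilon}$ and let its contribution vanish via monotone continuity of the finite measure $\Gamma_{p}\langle v \rangle$ on the decreasing sets $\Omega(2A_{\mathrm{P}}\varepsilon) \setminus \Omega_{4A_{\mathrm{P}}\varepsilon}$, instead of the paper's fixed-$R$ collar followed by the iterated limit $\varepsilon \downarrow 0$, $R \downarrow 0$ --- a harmless streamlining made possible by the $\varepsilon$-scale support of $\rho_{\varepsilon}$.
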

\begin{proof}
    The same argument as in the proof of Theorem \ref{thm:upper.ext} proves \eqref{e:upper.local.KSext} since by \eqref{e:truncate.upper.2} (see also Remark \ref{rmk:slocalkernel}) we have
    \[
    \esssup_{y \in \Omega}\int_{\Omega \setminus B(y,R)}\frac{\rho_{\varepsilon}(x,y)}{\Psi(d(x,y))}\,m(dx)
    = 0
    \]
    whenever $0 < \varepsilon < R$. 
    If $u \in \mathcal{F}_{p}$, then we get \eqref{e:sup.upper} by applying \eqref{e:upper.local} with $U = X$.
\end{proof}

\subsection{Lower bounds of Theorem \ref{thm:main1}}\label{subsec:lower}
For a proof of the lower bound in Theorem \ref{thm:main1}, which is based on a similar argument as in \cite[Section 5]{LPZ24} using approximations by discrete convolutions, we need a few lemmas.
In Lemma \ref{lem:goodcover}, we present a nice covering for open sets.
A controlled partition of unity with respect to such a nice covering given in Lemma \ref{lem:goodcover} is constructed in Lemma \ref{lem:unity}, and we consider the associated discrete convolutions in Lemma \ref{lem:unity.approx}.
\begin{lem}\label{lem:goodcover}
    Let $(X,d)$ be a doubling metric space and let $U$ be an open set of $X$.
    Given $\lambda, A \in [1,\infty)$, there exists a family of balls $\mathcal{B} = \{ B(x_{i},r_{i}) \}_{i \in I_{\mathcal{B}}}$ with $I_{\mathcal{B}} \subset \mathbb{N}$ such that the following properties hold:
    \begin{enumerate}[label=\textup{(\alph*)},align=left,leftmargin=*,topsep=2pt,parsep=0pt,itemsep=2pt]
        \item $\{ B(x_{i}, r_{i} )\}_{i \in I_{\mathcal{B}}}$ are pairwise disjoint.
        \item $B(x_{i},\lambda r_{i}) \subset U$ for any $i \in I_{\mathcal{B}}$.
        \item $\bigcup_{i \in I_{\mathcal{B}}}B(x_{i},3r_{i}) \cap U = U$.
        \item There exists $\kappa_{1} \in [1,\infty)$ depending only on $\lambda,A$ such that for any $i,j \in I_{\mathcal{B}}$ with $B(x_{i},3Ar_{i}) \cap B(x_{j},3Ar_{j}) \cap U \neq \emptyset$,
        \begin{equation}\label{e:radcomparison}
            r_{i} \vee r_{j} \le \kappa_{1}(r_{i} \wedge r_{j}).
        \end{equation}
        \item There exists $N_{1} \in [1,\infty)$ depending only on $\lambda,A,N_{\mathrm{D}}$ such that
        \begin{equation}\label{e:finiteoverlap}
            \sum_{i \in I_{\mathcal{B}}}\indicator{B(x_{i},3Ar_{i}) \cap U} \le N_{1}.
        \end{equation}
    \end{enumerate}
\end{lem}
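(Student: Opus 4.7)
The plan is to perform a Whitney-type decomposition adapted to the doubling metric space $(X,d)$. If $U = X$, the conclusion is immediate: take any maximal $\delta$-separated subset $\{x_{i}\}_{i \in I_{\mathcal{B}}} \subset X$ for a fixed $\delta > 0$ and set $r_{i} \equiv \delta/2$; then (a)--(e) follow from maximality of the separated set together with the doubling property. So assume $U \subsetneq X$, in which case $\rho(x) \coloneqq d(x, X \setminus U) \in (0,\infty)$ for every $x \in U$ and $\rho$ is $1$-Lipschitz. Define the Whitney radius $r(x) \coloneqq \rho(x)/K$ for a large constant $K = K(\lambda, A)$ (say $K > 12 \lambda A$), to be fixed below.

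The family $\{B(x_{i}, r_{i})\}_{i}$ is constructed via a $5r$-covering argument applied on the dyadic shells $U_{k} \coloneqq \{x \in U : 2^{k} \le \rho(x) < 2^{k+1}\}$, $k \in \mathbb{Z}$. On each $U_{k}$ the radius function $r$ takes values in $[2^{k}/K, 2^{k+1}/K)$, so the basic covering theorem applies. I would extract a maximal $2^{k+1}/K$-separated subset $\{x_{i}\}_{i \in I_{k}} \subset U_{k}$, set $r_{i} \coloneqq r(x_{i})$, and take $I_{\mathcal{B}} \coloneqq \bigsqcup_{k \in \mathbb{Z}} I_{k}$. Disjointness within a single shell is immediate from the separation, and cross-shell disjointness can be arranged using the gap between the scales $r_{i}$ across different $k$ (after possibly shrinking the radii by a harmless universal factor).

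Property (b) follows at once from $\lambda r_{i} = \lambda \rho(x_{i})/K < \rho(x_{i})$, which holds for any $K > \lambda$. Property (c) follows from maximality of the separated set combined with the Lipschitz bound on $\rho$: for any $x \in U_{k}$ there exists $x_{i} \in I_{k}$ with $d(x, x_{i}) < 2^{k+1}/K$, and $K$ large enough makes this at most $3r_{i}$, since $r_{i} \ge 2^{k}/K$. For (d), if $B(x_{i}, 3Ar_{i}) \cap B(x_{j}, 3Ar_{j}) \ne \emptyset$, then $d(x_{i}, x_{j}) < 3A(r_{i} + r_{j})$, so the $1$-Lipschitz property of $\rho$ yields $K\abs{r_{i} - r_{j}} \le 3A(r_{i} + r_{j})$; the choice $K > 6A$ forces $r_{i} \asymp r_{j}$ with comparability constant $\kappa_{1} = \kappa_{1}(\lambda, A)$. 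Property (e) follows from (d) together with the doubling property of $(X,d)$: any index $j$ contributing to the overlap at $i$ satisfies $x_{j} \in B(x_{i}, Cr_{i})$ with $r_{j} \asymp r_{i}$, so the disjoint balls $B(x_{j}, r_{j})$ are all contained in $B(x_{i}, C'r_{i})$, and the doubling constant $N_{\mathrm{D}}$ bounds their number by a constant $N_{1} = N_{1}(\lambda, A, N_{\mathrm{D}})$.

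The main technical obstacle is the cross-shell disjointness for the assembled family $\{B(x_{i}, r_{i})\}_{i \in I_{\mathcal{B}}}$, since balls in adjacent shells $U_{k}$ and $U_{k+1}$ have radii differing by at most a factor of $2$ and need not be automatically disjoint. This can be handled either by careful bookkeeping combined with a uniform fractional reduction of the radii, or, more cleanly, by invoking Zorn's lemma directly on the family $\{B(x, r(x)/5) : x \in U\}$ to extract a maximal pairwise disjoint subfamily and then deducing the $3$-inflation covering (c) from the Lipschitz property of $r$; the latter avoids the shell-by-shell analysis at the cost of a less explicit construction.
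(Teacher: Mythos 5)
Your fallback suggestion at the end --- extracting, via Zorn's lemma, a maximal pairwise disjoint subfamily of $\{B(x,r(x)/5)\}_{x\in U}$ with $r(x)=d(x,X\setminus U)/K$, and deducing the factor-$3$ covering from the $1$-Lipschitz property of $d(\cdot,X\setminus U)$ --- is correct, and it is essentially the paper's proof: the paper simply cites the $\varepsilon$-Whitney covers of Proposition \ref{prop:whitney} (radii proportional to the distance to $X\setminus U$, pairwise disjoint, $K_{\varepsilon}$-inflations covering $U$, existence by Zorn's lemma), which Remark \ref{rmk:whitney-good} upgrades to a $(\lambda,A)$-good cover once $\varepsilon$ is chosen small in terms of $\lambda$ and $A$. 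Your separate treatment of $U=X$ by a fixed-scale net is also exactly how the paper produces a good cover of all of $X$ in the proof of Theorem \ref{thm:lower}, and your verifications of (b), (d), (e) (Lipschitz bound forcing comparable radii, then metric doubling bounding the number of disjoint balls of comparable radii sitting inside a ball of comparable radius) are the right ones.

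Your primary, shell-by-shell construction, however, has two genuine gaps. First, within a shell $U_{k}$ the radii $r_{i}=\rho(x_{i})/K$ range over $[2^{k}/K,2^{k+1}/K)$, so $2^{k+1}/K$-separation of the centers does \emph{not} give disjointness: $r_{i}+r_{j}$ can be nearly $2^{k+2}/K$, twice the separation (e.g.\ $X=\mathbb{R}$, $U=(0,\infty)$, $K=100$, $x_{i}=1.9$, $x_{j}=1.92$); and if you halve the radii to restore disjointness you must recheck the factor $3$ in (c). Second, and more seriously, cross-shell disjointness cannot be repaired by a ``uniform fractional reduction of the radii'': centers selected independently in adjacent shells $U_{k}$ and $U_{k+1}$ can be arbitrarily close to each other (take $\rho(x_{i})=2^{k+1}-\eta$ and $\rho(x_{j})=2^{k+1}+\eta$ with $\eta$ small) while their radii remain comparable to $2^{k}/K$, so no fixed shrinking factor makes the two balls disjoint. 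The selection itself must be coupled across scales --- either a greedy choice from the largest scale downwards discarding points too close to previously chosen centers, or the global maximal-disjoint-subfamily argument --- at which point you have reproduced the standard Whitney construction. So your argument is complete only along the route you mention last, which coincides with the paper's.
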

\begin{defn}[$(\lambda,A)$-good cover]
    A collection $\mathcal{B}$ as above in Lemma \ref{lem:goodcover} is called a \emph{$(\lambda,A)$-good cover of $U$}.
    In addition, for $\varepsilon \in (0,\infty)$, we say that $\mathcal{B} = \{ B(x_{i},r_{i}) \}_{i \in I_{\mathcal{B}}}$ is a $(\lambda,A)$-good cover of $U$ \emph{at scale $\varepsilon$} if and only if $r_{i} \le \varepsilon$ for any $i \in I_{\mathcal{B}}$.
\end{defn}
\begin{rmk}
    Such a nice covering plays important roles in several works; see, e.g., \cite{BBS07,CW71,GS11,HKT07,Mur24}.
\end{rmk}
\begin{proof}[Proof of Lemma \ref{lem:goodcover}]
    This follows from a result of Whitney-type coverings; see, e.g., \cite[Proposition 5.2]{Mur24} or Proposition \ref{prop:whitney}.
\end{proof}

\begin{lem}[Controlled partition of unity]\label{lem:unity}
    Assume that \ref{e:VD} and \ref{e:capu} hold.
    Let $A \in [A_{\mathrm{cap},1},\infty)$ and let $\lambda \in (A,\infty)$.
    Let $U$ be an open set of $X$ and let $\mathcal{B} = \{ B(x_{i},r_{i}) \}_{i \in I_{\mathcal{B}}}$ be a $(\lambda,A)$-good cover of $U$.
    Then there exists a family of functions $\{ \psi_{i} \}_{i \in I_{\mathcal{B}}}$ that satisfies the following properties:
    \begin{enumerate}[label=\textup{(\alph*)},align=left,leftmargin=*,topsep=2pt,parsep=0pt,itemsep=2pt]
        \item For any $i \in I_{\mathcal{B}}$, we have $\psi_{i} \in \mathcal{F}_{p} \cap C_{c}(X)$, $0 \le \psi_{i} \le 1$, $\psi_{i}|_{B(x_{i},3r_{i})} \ge N_{1}^{-1}$ and $\supp_{X}[\psi_{i}] \subset B(x_{i}, 3Ar_{i})$, where $N_{1}$ is the constant in \eqref{e:finiteoverlap};
        \item $\sum_{i \in I_{\mathcal{B}}}\psi_i \equiv 1$ on $U$;
        \item There is $C = C(p,c_{\mathrm{D}},C_{\mathrm{cap}},A,A_{\mathrm{cap},2},C_{\Psi},\beta_{2},\kappa_{1},N_{1}) > 0$ such that for all $i \in I_{\mathcal{B}}$,
        \begin{equation}\label{e:low-energy}
            \mathcal{E}_{p}(\psi_{i})\le C\frac{m(B(x_{i},r_{i}))}{\Psi(r_{i})}.
        \end{equation}
    \end{enumerate}
    A collection $\{ \psi_{i} \}_{i \in I_{\mathcal{B}}}$ as above is called a \emph{controlled partition of unity with respect to $\mathcal{B}$.}
\end{lem}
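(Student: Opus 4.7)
The plan is to first build ``plateau'' cutoffs $\eta_{i}$ from the capacity upper bound \ref{e:capu} and then normalize the family $\{\eta_{i}\}_{i \in I_{\mathcal{B}}}$ by a locally finite sum. For each $i \in I_{\mathcal{B}}$, I apply Lemma \ref{lem:capu.reduction} in combination with Proposition \ref{prop:capu.refine} (with outer ratio $\sigma = A$, admissible because $A \ge A_{\mathrm{cap},1} > 1$) to the ball $B(x_{i},3r_{i})$, and postcompose the resulting function with the $1$-Lipschitz truncation $t \mapsto (t \vee 0) \wedge 1$ via Proposition \ref{prop:Lipc.list}-\ref{it:Lip-bdd}. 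This produces $\eta_{i} \in \mathcal{F}_{p} \cap C_{c}(X)$ with $0 \le \eta_{i} \le 1$, $\eta_{i} \equiv 1$ on $B(x_{i},3r_{i})$, $\supp_{X}[\eta_{i}] \subset B(x_{i},3Ar_{i})$, and $\mathcal{E}_{p}(\eta_{i}) \le C_{1}\,m(B(x_{i},r_{i}))/\Psi(r_{i})$, the final estimate using \ref{e:VD} together with \eqref{e:scalefunction} to exchange $3r_{i}$ for $r_{i}$. The same construction, now with ratio $4/3$, also gives outer cutoffs $\zeta_{i} \in \mathcal{F}_{p} \cap C_{c}(X)$ with $\zeta_{i} \equiv 1$ on $B(x_{i},3Ar_{i})$, $\supp_{X}[\zeta_{i}] \subset B(x_{i},4Ar_{i})$, and an analogous energy bound.

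Setting $I_{i} := \{j \in I_{\mathcal{B}} \mid B(x_{i},3Ar_{i}) \cap B(x_{j},3Ar_{j}) \neq \emptyset\}$ (which has cardinality at most $N_{1}$ by \eqref{e:finiteoverlap}) and $E_{i} := \sum_{j \in I_{i}} \eta_{j} \in \mathcal{F}_{p} \cap C_{c}(X)$, note that $\eta_{k}$ vanishes on $B(x_{i},3Ar_{i})$ for $k \notin I_{i}$, so $E_{i}$ coincides on $B(x_{i},3Ar_{i})$ with the (pointwise locally finite) global sum $E := \sum_{k \in I_{\mathcal{B}}} \eta_{k}$. I then define
\[
\psi_{i} := \frac{\eta_{i}}{\max(E_{i},1)} \in C_{c}(X).
\]
Properties (a) and (b) follow directly: $\supp_{X}[\psi_{i}] \subset B(x_{i},3Ar_{i})$ and $0 \le \psi_{i} \le 1$; on $B(x_{i},3r_{i})$ we have $\eta_{i} = 1$ and $E_{i} \le N_{1}$ by finite overlap, so $\psi_{i} \ge N_{1}^{-1}$; and for $x \in U$, property (c) of Lemma \ref{lem:goodcover} provides $j(x)$ with $\eta_{j(x)}(x) = 1$, so $E(x) \ge 1$, and for every $k$ with $\eta_{k}(x) > 0$ one has $k \in I_{j(x)}$ and $E_{k}(x) = E(x)$, whence $\psi_{k}(x) = \eta_{k}(x)/E(x)$ and summation yields $\sum_{k}\psi_{k}(x) = 1$.

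The energy bound (c) is the main step. I will apply Proposition \ref{prop:Lipc.list}-\ref{it:divide} with $u := \eta_{i}$ and
\[
v := \zeta_{i}\,(1 - \eta_{i}) + (E_{i} - 1)^{+}.
\]
Here $\zeta_{i}(1 - \eta_{i}) = \zeta_{i} - \zeta_{i}\eta_{i}$ belongs to $\mathcal{F}_{p} \cap C_{c}(X)$ via the product rule for bounded $\mathcal{F}_{p}$-functions coming from Proposition \ref{prop:Lipc.list}-\ref{it:Lip-bdd} and the polarization $ab = \tfrac{1}{4}[(a+b)^{2} - (a-b)^{2}]$, while $(E_{i}-1)^{+} \in \mathcal{F}_{p}$ by Lipschitz contraction. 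On $B(x_{i},3Ar_{i}) \supset \{\eta_{i} > 0\}$ we have $\zeta_{i} \equiv 1$ and hence $\eta_{i} + v = \max(E_{i},1) \ge 1$, while $\eta_{i} \le 1$; moreover $\eta_{i}/(\eta_{i}+v) = \psi_{i}$ $m$-a.e.\ on $X$. Proposition \ref{prop:Lipc.list}-\ref{it:divide} (with $\delta = M = 1$) then gives $\mathcal{E}_{p}(\psi_{i}) \le C\bigl(\mathcal{E}_{p}(\eta_{i}) + \mathcal{E}_{p}(v)\bigr)$, and the $p$-energy triangle inequality combined with the product estimate above bounds $\mathcal{E}_{p}(v)$ by a constant multiple of $\mathcal{E}_{p}(\zeta_{i}) + \sum_{j \in I_{i}} \mathcal{E}_{p}(\eta_{j})$. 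The radii comparison \eqref{e:radcomparison} together with \ref{e:VD} and \eqref{e:scalefunction} converts each such term to $m(B(x_{i},r_{i}))/\Psi(r_{i})$, which yields (c). The main obstacle is engineering this $v$: the constant $1$ in the denominator $\max(E_{i},1)$ is not an element of $\mathcal{F}_{p}$ when $m(X) = \infty$, so Proposition \ref{prop:Lipc.list}-\ref{it:divide} cannot be applied naively; the outer cutoff $\zeta_{i}$ serves precisely to localize this additive constant to a neighborhood of $\supp_{X}[\eta_{i}]$.
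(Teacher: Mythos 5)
Your proposal is correct, and its skeleton is the same as the paper's: capacity cutoffs produced from \ref{e:capu} via Lemma \ref{lem:capu.reduction} (with the covering argument of Proposition \ref{prop:capu.refine} for large radii), normalization to a partition of unity, and Proposition \ref{prop:Lipc.list}-\ref{it:divide} plus \eqref{e:radcomparison}, \eqref{e:finiteoverlap}, \ref{e:VD} and \eqref{e:scalefunction} to convert the energy of the quotient into $m(B(x_{i},r_{i}))/\Psi(r_{i})$. Where you genuinely deviate is the normalization step. The paper sets $\psi_{i} = \varphi_{i}\cdot\bigl(\sum_{j}\varphi_{j}\bigr)^{-1}$ and applies Proposition \ref{prop:Lipc.list}-\ref{it:divide} with $u=\varphi_{i}$ and $v=\sum_{j:\,B(x_{i},3Ar_{i})\cap B(x_{j},3Ar_{j})\neq\emptyset}\varphi_{j}-\varphi_{i}$; then $u+v$ is the locally finite sum of the cutoffs themselves, hence already an element of $\mathcal{F}_{p}$ which is bounded below by $1$ on $\supp_{X}[\varphi_{i}]$ (each such point lying in some $B(x_{j},3r_{j})$ where $\varphi_{j}\equiv 1$), so the additive constant you worry about never appears and neither the outer cutoff $\zeta_{i}$ nor the term $(E_{i}-1)^{+}$ is needed. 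Your variant, dividing by $\max(E_{i},1)$ and localizing the constant with $\zeta_{i}$, costs one more capacity cutoff and a quantitative product-energy estimate (which indeed follows from polarization and Proposition \ref{prop:Lipc.list}-\ref{it:Lip-bdd}); what it buys is a denominator bounded below by $1$ everywhere, so continuity of $\psi_{i}$ and the hypothesis $(u+v)|_{\{u\neq 0\}}\ge\delta$ with $\delta=1$ hold unconditionally, whereas the paper's lower bound $\sum_{j}\varphi_{j}\ge 1$ on $\supp_{X}[\varphi_{i}]$ implicitly uses that this support is covered by the balls $B(x_{j},3r_{j})$. Two shared caveats, which affect your argument and the paper's equally and are harmless in the paper's applications (where either $U=X$ or Whitney covers with $B(x_{i},3Ar_{i})\subset U$ are used): the overlap count $\#I_{i}\le N_{1}$ and the comparison \eqref{e:radcomparison} are stated for intersections met inside $U$; and when $\diam(X,d)<\infty$ and $r_{i}$ is comparable to $\diam(X,d)$ one still needs the one-line covering remark from the paper's proof, since Proposition \ref{prop:capu.refine} alone does not cover that range of radii.
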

\begin{proof} 
    The proof is similar to \cite[Lemma 2.5]{Mur20}. 
    Recall $A \ge A_{\mathrm{cap},1}$. 
    By Lemma \ref{lem:capu.reduction}, for each $i \in I_{\mathcal{B}}$ with $r_i < \diam(X,d)/A_{\mathrm{cap},2}$, there exists $\varphi_{i} \in \mathcal{F}_{p} \cap C_{c}(X)$ such that $\varphi_{i}|_{B(x_{i},3r_{i})} = 1$, $\supp_{X}[\varphi_{i}] \subset B(x_{i},3Ar_{i})$ and $\mathcal{E}_{p}(\varphi_{i}) \le c_{1}\frac{m(B(x_{i},r_{i}))}{\Psi(r_{i})}$. 
    One can see that such $\varphi_{i} \in \mathcal{F}_{p} \cap C_{c}(X)$ exists even when $\diam(X,d) < \infty$ and $r_i \ge \diam(X,d)/A_{\mathrm{cap},2}$ by a covering argument as in the proof of Proposition \ref{prop:capu.refine}. 
    Now we define $\psi_{i} \coloneqq \varphi_{i} \cdot \bigl(\sum_{j \in I_{\mathcal{B}}}\varphi_{j}\bigr)^{-1}$, where $0/0$ is considered as $0$.
    Note that, by \eqref{e:finiteoverlap} and $\supp_{X}[\varphi_{i}] \subset B(x_{i},3Ar_{i})$, $1 \le \indicator{\supp_{X}[\varphi_{i}]}\sum_{j \in I_{\mathcal{B}}}\varphi_{j} \le N_{1}$.
    Since $\varphi_{j}|_{B(x_{i},3Ar_{i})} = 0$ whenever $B(x_{i},3Ar_{i}) \cap B(x_{j},3Ar_{j}) = \emptyset$, we observe that $\psi_{i} = \varphi_{i} \cdot (\varphi_{i} + \eta_{i})^{-1}$, where
    \[
    \eta_{i} \coloneqq \sum_{j \in I_{\mathcal{B}}; B(x_{i},3Ar_{i}) \cap B(x_{j},3Ar_{j}) \neq \emptyset}\varphi_{j} - \varphi_{i}.
    \]
    Hence $\mathcal{E}_{p}(\psi_{i})
        \le c_{2}\left(\mathcal{E}_{p}(\varphi_{i}) + \mathcal{E}_{p}(\eta_{i})\right)$ by Proposition \hyperref[it:Lip-bdd]{\ref{prop:Lipc.list}}-\ref{it:divide}.
    Now we have \eqref{e:low-energy} since 
    \begin{equation*}
        \mathcal{E}_{p}(\eta_{i})
        \le N_{1}^{p - 1}\max_{j \in I_{\mathcal{B}}; B(x_{i},3Ar_{i}) \cap B(x_{j},3Ar_{j}) \neq \emptyset}\mathcal{E}_{p}(\varphi_{j})
        \le c_{3}\frac{m(B(x_{i},r_{i}))}{\Psi(r_{i})}. \qedhere 
    \end{equation*}
\end{proof}

\begin{lem}\label{lem:unity.approx}
    Assume that \ref{e:VD} and \ref{e:capu} hold.
    Let $\delta \in (0,\infty)$, let $U$ be an open set of $X$ and let $\mathcal{B}_{\delta} = \{ B(x_{i},r_{i}) \}_{i \in I_{\mathcal{B}_{\delta}}}$ be a $(\lambda,A)$-good cover of $U$ at scale $\delta$.
    Let $\{ \psi_{i} \}_{i \in I_{\mathcal{B}_{\delta}}}$ be a controlled partition of unity with respect to $\mathcal{B}_{\delta}$.
    We define the linear operator $A_{\delta} \colon L^{p}(U,m|_{U}) \to \mathcal{F}_{p} \cap C(X)$ by\footnote{Precisely, we should write $A_{\mathcal{B}_{\delta}}$ instead of $A_{\delta}$, but we use $A_{\delta}$ for brevity.}
    \[
    A_{\delta}u \coloneqq \sum_{i \in I_{\mathcal{B}_{\delta}}}u_{B(x_{i},3 r_{i}) \cap U}\,\psi_{i}, \quad u \in L^{p}(U,m|_{U}).
    \]
    Then $\lim_{\delta \downarrow 0}\norm{A_{\delta}u - u}_{L^{p}(U)} = 0$.
\end{lem}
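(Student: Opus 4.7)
The plan is to follow the standard two-step scheme: first establish uniform $L^{p}$-boundedness of the family $\{ A_{\delta} \}_{\delta > 0}$, and then verify $A_{\delta}u \to u$ on a dense subclass of $L^{p}(U,m|_{U})$.

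\textbf{Step 1 (Uniform boundedness).} Since $\sum_{i \in I_{\mathcal{B}_{\delta}}} \psi_{i} = 1$ on $U$ and $0 \le \psi_{i} \le 1$, Jensen's inequality gives, for $y \in U$,
\[
\abs{A_{\delta}u(y)}^{p} \le \sum_{i \in I_{\mathcal{B}_{\delta}}} \abs{u_{B(x_{i},3r_{i}) \cap U}}^{p} \psi_{i}(y).
\]
Integrating over $U$, using $\int_{U}\psi_{i}\,dm \le m(B(x_{i},3Ar_{i}))$, and applying Jensen once more to the averages, I get
\[
\norm{A_{\delta}u}_{L^{p}(U)}^{p} \le \sum_{i \in I_{\mathcal{B}_{\delta}}} \frac{m(B(x_{i},3Ar_{i}))}{m(B(x_{i},3r_{i}) \cap U)} \int_{B(x_{i},3r_{i}) \cap U} \abs{u}^{p}\,dm.
\]
Since $B(x_{i},r_{i}) \subset B(x_{i},\lambda r_{i}) \cap B(x_{i},3r_{i}) \subset B(x_{i},3r_{i}) \cap U$, the volume doubling property \ref{e:VD} yields $m(B(x_{i},3Ar_{i})) \le C m(B(x_{i},r_{i})) \le C m(B(x_{i},3r_{i}) \cap U)$. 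Combining this with the finite-overlap property \eqref{e:finiteoverlap} (the balls $B(x_{i},3r_{i})$ cover each point at most $N_{1}$ times since $A \ge 1$) gives $\norm{A_{\delta}u}_{L^{p}(U)} \le C\norm{u}_{L^{p}(U)}$ with $C$ independent of $\delta$.

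\textbf{Step 2 (Convergence on a dense class).} The space $C_{c}(U)$ is dense in $L^{p}(U,m|_{U})$ because $m$ is Radon and $U$ is locally compact. Fix $u \in C_{c}(U)$ with $\supp_{U}(u) \subset K$ for some compact $K \subset U$, and choose $\delta_{0} > 0$ small enough that the $3(A+1)\delta_{0}$-neighborhood of $K$ is contained in $U$. For $\delta < \delta_{0}$ and any $y \in U$,
\[
\abs{A_{\delta}u(y) - u(y)} = \biggl| \sum_{i \in I_{\mathcal{B}_{\delta}}} \bigl( u_{B(x_{i},3r_{i}) \cap U} - u(y) \bigr) \psi_{i}(y) \biggr| \le \sum_{i : y \in \supp_{X}[\psi_{i}]} \psi_{i}(y) \cdot \omega_{u}\bigl(3(A+1)\delta\bigr),
\]
where $\omega_{u}$ is the modulus of continuity of $u$; here I used that $y \in B(x_{i},3Ar_{i})$ and $x \in B(x_{i},3r_{i})$ force $d(x,y) < 3(A+1)\delta$. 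Since $\sum_{i}\psi_{i}(y) = 1$ on $U$, this gives $\abs{A_{\delta}u(y) - u(y)} \le \omega_{u}(3(A+1)\delta)$ uniformly in $y$. Moreover, $A_{\delta}u(y) = 0$ whenever $d(y,K) > 3(A+1)\delta$ (because then every ball $B(x_{i},3r_{i})$ with $y \in \supp_{X}[\psi_{i}]$ misses $K$), so the integrand is supported in a fixed compact set $K' \subset U$. Therefore
\[
\norm{A_{\delta}u - u}_{L^{p}(U)} \le \omega_{u}\bigl(3(A+1)\delta\bigr) \cdot m(K')^{1/p} \xrightarrow[\delta \downarrow 0]{} 0.
\]

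\textbf{Step 3 (Density argument).} Given $u \in L^{p}(U,m|_{U})$ and $\epsilon > 0$, pick $v \in C_{c}(U)$ with $\norm{u - v}_{L^{p}(U)} < \epsilon$. Then
\[
\norm{A_{\delta}u - u}_{L^{p}(U)} \le \norm{A_{\delta}(u - v)}_{L^{p}(U)} + \norm{A_{\delta}v - v}_{L^{p}(U)} + \norm{v - u}_{L^{p}(U)} \le (C + 1)\epsilon + \norm{A_{\delta}v - v}_{L^{p}(U)},
\]
and the last term tends to $0$ by Step~2, yielding $\limsup_{\delta \downarrow 0} \norm{A_{\delta}u - u}_{L^{p}(U)} \le (C+1)\epsilon$. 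Since $\epsilon$ is arbitrary, the conclusion follows.

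I do not anticipate a serious obstacle: the only moderately delicate point is checking that the finite-overlap bound from \eqref{e:finiteoverlap} (stated for the dilated balls $B(x_{i},3Ar_{i})$) correctly controls the overlap of the $B(x_{i},3r_{i})$ appearing in the $L^{p}$-bound, which holds since $A \ge 1$.
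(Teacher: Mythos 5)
Your proposal is correct and follows essentially the same scheme as the argument the paper delegates to the cited references (\cite{MS25}, \cite{HKT07}): uniform $L^{p}$-boundedness of $A_{\delta}$ via Jensen's inequality, the inclusion $B(x_{i},r_{i}) \subset B(x_{i},3r_{i}) \cap U$, \ref{e:VD} and the bounded overlap \eqref{e:finiteoverlap}, followed by uniform convergence on the dense class $C_{c}(U)$ together with a standard three-term density argument. The delicate points you flag (overlap of the $B(x_{i},3r_{i})$ controlled by that of the $B(x_{i},3Ar_{i})$ since $A \ge 1$, and finiteness of the measure of the fixed bounded neighborhood of $\supp u$, which follows from \ref{e:VD} via Proposition \ref{prop:top}) are handled correctly.
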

\begin{proof}
	The proof is a minor modification of the argument in \cite[Proof of Lemma 7.3]{MS25} (see also \cite[Lemma 5.2]{HKT07} for a similar argument), so we omit it. 
\end{proof}

With these preparations, we can show the lower bound in Theorem \ref{thm:main1}. 
\begin{thm}\label{thm:lower}
    Assume that \ref{e:VD}, \ref{e:PI}, \ref{e:capu} hold and that $\{ \rho_{\varepsilon} \}_{\varepsilon > 0}$ satisfies Assumption \ref{assum:kernel.1}-\ref{it:assum.nonlocal.lower}.
    Then there exists $C = C(p,c_{\mathrm{D}},C_{\mathrm{cap}},A_{\mathrm{cap},1},A_{\mathrm{cap},2},C_{\Psi},\beta_{2},C_{\mathrm{P}},A_{\mathrm{P}}) \in (0,\infty)$ such that for any open set $\Omega$ of $X$ and any $u \in \mathcal{F}_{p,\mathrm{loc}}(\Omega)$,
    \begin{equation}\label{e:main.lower.gen}
        \Gamma_{p}\langle u \rangle(\Omega)
        \le C\liminf_{\varepsilon \downarrow 0}\int_{\Omega}\int_{\Omega}\frac{\abs{u(x) - u(y)}^{p}}{\Psi(d(x,y))}\rho_{\varepsilon}(x,y)\,m(dx)\,m(dy).
    \end{equation}
\end{thm}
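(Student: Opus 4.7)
My plan is to follow the discrete-convolution strategy of \cite[Section 5]{LPZ24}. Since $\Gamma_{p,\Omega}\langle u \rangle$ is a Radon measure on $\Omega$, it suffices to prove the bound with $\Gamma_{p,\Omega}\langle u\rangle(\Omega)$ replaced by $\Gamma_{p,\Omega}\langle u\rangle(V)$ for each relatively compact open $V$ with $\overline{V} \subset V' \subset\subset \Omega$, and then let $V \uparrow \Omega$. By Definition \ref{defn.Flocal-ss}, I may pick $u^{\#} \in \mathcal{F}_{p}$ with $u^{\#} = u$ on $V'$ (after further multiplying by a compactly supported cutoff obtained from Lemma \ref{lem:capu.reduction}, I may even take $u^{\#}$ to be compactly supported in $\Omega$), so that $\Gamma_{p,\Omega}\langle u\rangle(V) = \Gamma_{p}\langle u^{\#}\rangle(V)$. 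For each small $\delta > 0$, I will fix a $(\lambda,A)$-good cover $\mathcal{B}_{\delta} = \{B(x_{i},r_{i})\}_{i \in I_{\delta}}$ of $V'$ at scale $\delta$ (with $A \ge A_{\mathrm{cap},1}$, $\lambda > 3A$, and all $r_{i} \asymp \delta$), let $\{\psi_{i}\}$ be the associated controlled partition of unity from Lemma \ref{lem:unity}, and define the discrete approximation $A_{\delta}u^{\#} \coloneqq \sum_{i} u^{\#}_{B_{i}'}\psi_{i} \in \mathcal{F}_{p}\cap C(X)$ with $B_{i}' \coloneqq B(x_{i},3r_{i}) \cap V'$, as in Lemma \ref{lem:unity.approx}.

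The core technical step is the discrete energy estimate
\begin{equation*}
\Gamma_{p}\langle A_{\delta}u^{\#}\rangle(V) \le C\int_{V'}\fint_{B(y,C'\delta)}\frac{\abs{u^{\#}(x) - u^{\#}(y)}^{p}}{\Psi(\delta)}\,m(dx)\,m(dy).
\end{equation*}
Since $\sum_{j}\psi_{j} \equiv 1$ on $V'$ and property~(b) of the good cover places $\supp_{X}[\psi_{i}] \subset V'$, on each $3B_{i} \coloneqq B(x_{i},3r_{i})$ one has the decomposition
\[
A_{\delta}u^{\#} = u^{\#}_{B_{i}'} + w_{i}, \quad w_{i} \coloneqq \sum_{j \in J_{i}}\bigl(u^{\#}_{B_{j}'} - u^{\#}_{B_{i}'}\bigr)\psi_{j},
\]
where $J_{i} \coloneqq \{j : 3B_{i} \cap \supp_{X}[\psi_{j}] \neq \emptyset\}$ satisfies $\#J_{i} \le N_{1}$ by \eqref{e:finiteoverlap}. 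Framework \ref{frame:EPEM}-\ref{it:EMslocal} yields $\Gamma_{p}\langle A_{\delta}u^{\#}\rangle|_{3B_{i}} = \Gamma_{p}\langle w_{i}\rangle|_{3B_{i}}$, so the $p$-seminorm inequality combined with \eqref{e:low-energy}, \eqref{e:radcomparison} and \ref{e:VD} gives $\Gamma_{p}\langle A_{\delta}u^{\#}\rangle(3B_{i}) \le C\sum_{j \in J_{i}}\abs{u^{\#}_{B_{i}'} - u^{\#}_{B_{j}'}}^{p}\,m(B_{i})/\Psi(r_{i})$. Summing over $i$ by subadditivity, applying Jensen's inequality to $\abs{u^{\#}_{B_{i}'} - u^{\#}_{B_{j}'}}^{p}$, and using the inclusion $B_{i}' \cup B_{j}' \subset B(y,C'\delta)$ for $y \in B_{i}'$ along with the finite-overlap bounds for $\{B_{i}'\}$ and $\{B_{j}'\}_{j \in J_{i}}$ then delivers the claimed KS-type integral.

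Next I bound this KS-integral by the $\rho_{\varepsilon}$-functional using Assumption \ref{assum:kernel.1}-\ref{it:assum.nonlocal.lower}. In case \eqref{e:kernel.lower.1}, setting $\varepsilon = C'\delta$ gives the pointwise bound $\rho_{\varepsilon}(x,y)/\Psi(d(x,y)) \ge C_{\rho}^{-1}\indicator{B(y,\varepsilon)}(x)/[\Psi(\varepsilon)m(B(y,\varepsilon))]$, which directly controls the KS-integral at scale $C'\delta$ by the $\rho_{\varepsilon}$-functional. In case \eqref{e:kernel.lower.2}, Fubini combined with $m(B(y,d(x,y))) \le m(B(y,s))$ for $d(x,y) \le s$ yields
\[
\int_{\Omega}\int_{\Omega}\frac{\abs{u^{\#}(x) - u^{\#}(y)}^{p}}{\Psi(d(x,y))}\rho_{\varepsilon}(x,y)\,m(dx)\,m(dy) \ge \int_{0}^{\infty}\Psi(s)\,\mathrm{KS}_{s}(u^{\#})\,\nu_{\varepsilon}(ds),
\]
where $\mathrm{KS}_{s}(u^{\#}) \coloneqq \Psi(s)^{-1}\int_{\Omega}\fint_{B(y,s)}\abs{u^{\#}(x) - u^{\#}(y)}^{p}\,m(dx)\,m(dy)$; combined with \eqref{e:kernel.measure} and letting $\delta_{0} \downarrow 0$, this produces $\liminf_{\varepsilon \downarrow 0}\mathrm{RHS}_{\varepsilon} \ge C_{\rho}^{-1}\liminf_{s \downarrow 0}\mathrm{KS}_{s}(u^{\#})$, which again dominates $\liminf_{\delta \downarrow 0}$ of the discrete estimate. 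I may then assume $\liminf_{\varepsilon \downarrow 0}\mathrm{RHS}_{\varepsilon} < \infty$; along a suitable sequence $\delta_{n} \downarrow 0$, the family $\{A_{\delta_{n}}u^{\#}\}$ is bounded in the reflexive Banach space $\mathcal{F}_{p}$ (Framework \ref{frame:EPEM}-\ref{it:banach}) and converges to $u^{\#}$ in $L^{p}$ by Lemma \ref{lem:unity.approx}, so a further subsequence converges weakly to $u^{\#}$ in $\mathcal{F}_{p}$; weak lower semicontinuity of the seminorm $\Gamma_{p}\langle \cdot \rangle(V)^{1/p}$ (Framework \ref{frame:EPEM}-\ref{it:EMtri}) then gives $\Gamma_{p}\langle u^{\#}\rangle(V) \le C\liminf_{\varepsilon \downarrow 0}\mathrm{RHS}_{\varepsilon}$, and letting $V \uparrow \Omega$ completes the proof. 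The hardest step will be the discrete energy estimate: keeping the constants from \eqref{e:radcomparison}, \ref{e:VD}, \eqref{e:finiteoverlap} and \eqref{e:low-energy} independent of $\delta$, and using Framework \ref{frame:EPEM}-\ref{it:EMslocal} to absorb the additive constant $u^{\#}_{B_{i}'}$ on each $3B_{i}$ when estimating $\Gamma_{p}\langle A_{\delta}u^{\#}\rangle(3B_{i})$.
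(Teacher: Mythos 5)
Your overall strategy coincides with the paper's: discrete convolution against a controlled partition of unity at scale $\delta$, the strong-locality decomposition on each ball to bound $\Gamma_{p}\langle A_{\delta}u^{\#}\rangle$ by differences of ball averages, comparison with the $\rho_{\varepsilon}$-functional through Assumption \ref{assum:kernel.1}-\ref{it:assum.nonlocal.lower} (your treatment of both cases \eqref{e:kernel.lower.1} and \eqref{e:kernel.lower.2} is correct), and then reflexivity plus $L^{p}$-convergence, with weak lower semicontinuity of $\Gamma_{p}\langle\cdot\rangle(V)^{1/p}$ in place of the paper's appeal to Mazur's lemma (these are equivalent here, since that seminorm is dominated by $\mathcal{E}_{p}(\cdot)^{1/p}$). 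However, there is a genuine gap in your setup of the cover. A $(\lambda,A)$-good cover of the relatively compact set $V'$ must satisfy $B(x_{i},\lambda r_{i})\subset V'$, which forces $r_{i}\le\dist(x_{i},X\setminus V')/\lambda$; hence ``all $r_{i}\asymp\delta$'' is impossible for the balls needed to cover the points of $V'$ within distance $\asymp\lambda\delta$ of $\partial V'$. You use both features at once: property (b) to guarantee $3B_{i}\subset V'$, $\supp_{X}[\psi_{i}]\subset V'$ and $B_{i}'=B(x_{i},3r_{i})$ (so that $m(B_{i}')\asymp m(B(x_{i},r_{i}))$ in the averaging step), and the uniform radii to turn the weights $m(B_{i})/\Psi(r_{i})$ into the scale-$\delta$ Korevaar--Schoen functional with constants independent of $\delta$, $V$, $V'$. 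With a genuine Whitney-type cover of $V'$ the radii degenerate near $\partial V'$, and there the factors $\Psi(\delta)/\Psi(r_{i})$ and $m(B(y,C'\delta))/m(B_{j}')$ blow up, so your discrete energy estimate does not follow with a uniform constant.

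This flaw becomes decisive at the compactness step: you only bound $\Gamma_{p}\langle A_{\delta}u^{\#}\rangle(V)$, but to extract a weakly convergent subsequence you need $\mathcal{E}_{p}(A_{\delta}u^{\#})=\Gamma_{p}\langle A_{\delta}u^{\#}\rangle(X)$ bounded uniformly in $\delta$, which includes the contribution of exactly those problematic boundary balls (or, if you let balls protrude from $V'$, of regions where $u^{\#}$ no longer coincides with $u$ and the $\rho_{\varepsilon}$-functional over $\Omega$ gives no control). Your sketch simply asserts this boundedness, and notably never uses \ref{e:PI}, even though it appears in the constant of the theorem. The paper resolves precisely this point by covering all of $X$ with balls of radius $\delta$ centered at a $(2\delta)$-net (so the Whitney constraint is vacuous and all radii are equal), using the full averages $u^{\#}_{B(x_{i},3\delta)}$, estimating $\Gamma_{p}\langle A_{\delta}u^{\#}\rangle(U)$ only over balls meeting $U$ (whose enlargements stay inside $\Omega$, where $u^{\#}=u$, because $\dist(U,X\setminus\Omega)>t_{*}\gg\delta$), and proving the global bound $\mathcal{E}_{p}(A_{\delta}u^{\#})\le c\,\Gamma_{p}\langle u^{\#}\rangle(X)<\infty$ separately via \eqref{e:doublevar} and \ref{e:PI} (estimate \eqref{e:discreteconv.prePI}). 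Repairing your argument essentially amounts to adopting this setup; also note that your parenthetical reduction to compactly supported $u^{\#}$ by multiplying with a cutoff is not justified for unbounded $u^{\#}$ (Proposition \ref{prop:Lipc.list} only handles products of bounded functions), but it is not needed anyway.
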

\begin{proof}
    The desired estimate \eqref{e:main.lower.gen} is obvious when
    \[
    M \coloneqq \liminf_{\varepsilon \downarrow 0}\int_{\Omega}\int_{\Omega}\frac{\abs{u(x) - u(y)}^{p}}{\Psi(d(x,y))}\rho_{\varepsilon}(x,y)\,m(dx)\,m(dy) = \infty,
    \]
    so we can assume that $M < \infty$.
    Let $\varepsilon' \in (0,1)$.
    Taking a suitable sequence $(\varepsilon_{n})_{n \in \mathbb{N}} \subset (0,\infty)$ such that $\varepsilon_{n} \downarrow 0$ as $n \to \infty$, we can assume that
    \[
    \int_{\Omega}\int_{\Omega}\frac{\abs{u(x) - u(y)}^{p}}{\Psi(d(x,y))}\rho_{\varepsilon_{n}}(x,y)\,m(dx)\,m(dy) \le M + \varepsilon' \quad \text{for any $n \in \mathbb{N}$}.
    \]
    By using \eqref{e:kernel.lower.1} or \eqref{e:kernel.lower.2}, the same argument as in \cite[Proof of (5.5)]{LPZ24} shows that for any sufficiently small $t > 0$,
    \begin{equation}\label{e:main.upper.1}
        \int_{\Omega}\int_{\Omega}\frac{\abs{u(x) - u(y)}^{p}}{\Psi(t)}\frac{\indicator{B(y,t) \cap \Omega}(x)}{m(B(y,t))}\,m(dx)\,m(dy)
        \le \frac{(M + \varepsilon')C_{\rho}}{1 - \varepsilon'}.
    \end{equation}
    Fix a small $t_{\ast} > 0$ so that \eqref{e:main.upper.1} holds with $t_{\ast}$ in place of $t$.
    Let $U \subset \Omega$ be a relatively compact open subset with $\dist(U, X \setminus \Omega) > t_{\ast}$, let $A \coloneqq A_{\mathrm{cap},1}$ and let $u^{\#} \in \mathcal{F}_{p}$ be such that $u^{\#} = u$ $m$-a.e.\ on $U(At_{\ast})$.
    Note that $U(At_{\ast})$ is also relatively compact by Proposition \ref{prop:top}.
    For each $\delta \in \bigl(0,\frac{t_\ast}{3A+27}\bigr)$, let $\{ x_{i} \}_{i \in I_{\delta}} \subset X$ be a $(2\delta)$-net of $(X,d)$. 
    Set $\mathcal{B}_{\delta} \coloneqq \{ B(x_{j}, \delta) \}_{j \in I_{\delta}}$, then it is easy to see that $\mathcal{B}_{\delta}$ is a $(2,A)$-good cover of $X$ at scale $\delta$.
    Let $\{ \psi_{j} \}_{j \in I_{\delta}}$ be a partition of unity with respect to $\mathcal{B}_{\delta}$ as described in Lemma \ref{lem:unity} and let $A_{\delta}$ be the operator in Lemma \ref{lem:unity.approx}.
    Let us observe that
    \begin{equation}\label{e:another.rep}
        (A_{\delta}u^{\#})\bigr|_{B(x_{i},3A\delta)}
        = u^{\#}_{B(x_{i},3\delta)} + \sum_{j \in I_{\delta} \setminus \{ i \}}\Bigl(u^{\#}_{B(x_{j},3\delta)} - u^{\#}_{B(x_{i},3\delta)}\Bigr)\psi_{j}|_{B(x_{i},3A\delta)}.
    \end{equation}
    By Framework \ref{frame:EPEM}-\ref{it:EMtri},\ref{it:EMtotal},\ref{it:EMslocal}, \ref{e:VD} and \eqref{e:scalefunction},
    \begin{align*}
        &\Gamma_{p}\langle A_{\delta}u^{\#} \rangle(B(x_{i},3A\delta)) \\
        &\le c_{1}\sum_{j \in I_{\delta}; B(x_{i},3A\delta) \cap B(x_{j},3A\delta) \neq \emptyset}\abs{u^{\#}_{B(x_{i},3\delta)} - u^{\#}_{B(x_{j},3\delta)}}^{p}\frac{m(B(x_{i},\delta))}{\Psi(\delta)}.
    \end{align*}
    For any $i,j \in I_{\delta}$ with $B(x_{i},3A\delta) \cap B(x_{j},3A\delta) \neq \emptyset$, by H\"{o}lder's inequality and \eqref{e:doublevar},
    \begin{equation*}
        \abs{u^{\#}_{B(x_{i},3\delta)} - u^{\#}_{B(x_{j},3\delta)}}^{p}
        \le c_{\mathrm{D}}^{5}\fint_{B(x_{i},9\delta)}\fint_{B(x_{i},9\delta)}\abs{u^{\#}(x) - u^{\#}(y)}^{p}\,m(dx)\,m(dy).
    \end{equation*}
    Noting that $\#\{ j \in I_{\delta} \mid B(x_{i},3A\delta) \cap B(x_{j},3A\delta) \neq \emptyset \} \le N$ for some $N \in \mathbb{N}$ depending only on $A,N_{\mathrm{D}}$ and that $B(x_i,9\delta) \cup B(y,18\delta) \subset U(t_\ast) \subset \Omega$ whenever $B(x_i,3A\delta) \cap U \neq \emptyset$ and $y \in B(x_i,9\delta)$, we get
    \begin{align}\label{e:em.approx}
        &\Gamma_{p}\langle A_{\delta}u^{\#} \rangle(U) 
        \le \sum_{i \in I_{\delta}; B(x_{i},3A\delta) \cap U \neq \emptyset}\Gamma_{p}\langle A_{\delta}u^{\#} \rangle(B(x_{i},3A\delta) \cap U) \nonumber \\
        &\le c_{2}\sum_{i \in I_{\delta}; B(x_{i},3A\delta) \cap U \neq \emptyset}\frac{m(B(x_{i},\delta))}{\Psi(\delta)}\fint_{B(x_{i},9\delta)}\fint_{B(x_{i},9\delta)}\abs{u(x) - u(y)}^{p}\,m(dx)\,m(dy) \nonumber \\
        &\le \frac{c_{3}}{\Psi(\delta)}\sum_{i \in I_{\delta}; B(x_{i},3A\delta) \cap U \neq \emptyset}\int_{B(x_{i},9\delta)}\int_{B(x_{i},9\delta)}\abs{u(x) - u(y)}^{p}\frac{\indicator{B(y,18\delta)}(x)}{m(B(y,18\delta))}\,m(dx)\,m(dy) \nonumber \\
        &\le c_{4}\int_{\Omega}\int_{\Omega}\frac{\abs{u(x) - u(y)}^{p}}{\Psi(18\delta)}\frac{\indicator{B(y,18\delta) \cap \Omega}(x)}{m(B(y,18\delta))}\,m(dx)\,m(dy)
        \overset{\eqref{e:main.upper.1}}{\le} \frac{c_{5}(M + \varepsilon')}{1 - \varepsilon'}.
    \end{align} 
    Similarly, we also have
    \begin{align}\label{e:discreteconv.prePI}
        &\mathcal{E}_{p}(A_{\delta}u^{\#})
        = \Gamma_{p}\langle A_{\delta}u^{\#} \rangle(X) \nonumber \quad \text{(by Framework \ref{frame:EPEM}-\ref{it:EMtotal})} \\
        &\le c_{6}\sum_{i \in I_{\delta}}\frac{m(B(x_{i},\delta))}{\Psi(\delta)}\fint_{B(x_{i},9\delta)}\fint_{B(x_{i},9\delta)}\abs{u^{\#}(x) - u^{\#}(y)}^{p}\,m(dx)\,m(dy) \\
        &\overset{\eqref{e:doublevar},\ref{e:PI}}{\le} c_{7}\sum_{i \in I_{\delta}}\Gamma_{p}\langle u^{\#} \rangle(B(x,9A_{\mathrm{P}}\delta))
        \le c_{8}\Gamma_{p}\langle u^{\#} \rangle(X) < \infty. \nonumber
    \end{align}
    Therefore, $\{ A_{\delta}u^{\#} \}_{\delta \in (0,t_{\ast}/(3A+27))}$ is bounded in $\mathcal{F}_{p}$.
    Since $\mathcal{F}_{p}$ is reflexive (Framework \ref{frame:EPEM}-\ref{it:banach}) and $\{ A_{\delta}u^{\#} \}$ converges in $L^{p}(X,m)$ to $u^{\#}$ as $\delta \downarrow 0$ (Lemma \ref{lem:unity.approx}), there exists a subsequence $\{ A_{\delta_{k}}u^{\#} \}_{k}$ of $\{ A_{\delta}u^{\#} \}_{\delta \in (0,t_{\ast}/(3A+27))}$ such that $\{ A_{\delta_{k}}u^{\#} \}_{k}$ converges weakly in $\mathcal{F}_{p}$ to $u^{\#}$.
    By Mazur's lemma (see, e.g., \cite[p.~9]{HKST}), there exist a sequence of convex combinations $\{ \sum_{k = l}^{N(l)}\lambda_{l,k}(A_{\delta_{k}}u^{\#}) \}_{l}$ that converges strongly in $\mathcal{F}_{p}$ to $u^{\#}$, which together with \eqref{e:em.approx} shows that
    \[
    \Gamma_{p}\langle u \rangle(U) = \Gamma_{p}\langle u^{\#} \rangle(U) \le \frac{c_{5}(M + \varepsilon')}{1 - \varepsilon'}.
    \]
    By exhausting $\Omega$ by sets $U$ and letting $\varepsilon' \downarrow 0$, we obtain \eqref{e:main.lower.gen}.
\end{proof} 

The following variant of \ref{e:PI} is now immediate.
\begin{prop}\label{prop:newPI}
    Assume that \ref{e:VD}, \ref{e:PI}, \ref{e:capu} hold and that $\{ \rho_{\varepsilon} \}_{\varepsilon > 0}$ satisfies Assumption \ref{assum:kernel.1}-\ref{it:assum.nonlocal.lower}.
    Then there exists $C = C(p,c_{\mathrm{D}},C_{\mathrm{cap}},A_{\mathrm{cap},1},A_{\mathrm{cap},2},C_{\Psi},\beta_{2},C_{\mathrm{P}},A_{\mathrm{P}}) \in (0,\infty)$ such that for any $x \in X$, $r > 0$ and $u \in \mathcal{F}_{p,\mathrm{loc}}(B(x,A_{\mathrm{P}}\,r))$,
    \begin{align}\label{e:PI.variant}
        &\int_{B(x,r)}\abs{u - u_{B(x,r)}}^{p}\,dm \nonumber \\
        &\quad\le C\Psi(r)\liminf_{\varepsilon \downarrow 0}\int_{B(x,A_{\mathrm{P}}r)}\int_{B(x,A_{\mathrm{P}}r)}\frac{\abs{u(y) - u(z)}^{p}}{\Psi(d(y,z))}\rho_{\varepsilon}(y,z)\,m(dy)\,m(dz). 
    \end{align}
\end{prop}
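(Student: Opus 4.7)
The plan is to simply chain together the localized Poincar\'{e} inequality \eqref{e:PIloc} and the lower bound from Theorem \ref{thm:lower}, both of which are already available in the exact form needed. No new estimate is required; the content is just observing that the right-hand side of \eqref{e:PIloc} is controlled by the $\rho_\varepsilon$-integral appearing on the right-hand side of \eqref{e:PI.variant}.

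First, since $(X,d,m)$ satisfies \ref{e:VD}, Proposition \ref{prop:top} yields that every ball is relatively compact, so the localized Poincar\'{e} inequality \eqref{e:PIloc} applies to any $u \in \mathcal{F}_{p,\mathrm{loc}}(B(x,A_{\mathrm{P}}r))$, giving
\[
\int_{B(x,r)}\abs{u - u_{B(x,r)}}^{p}\,dm \le C_{\mathrm{P}}\Psi(r)\,\Gamma_{p,B(x,A_{\mathrm{P}}r)}\langle u \rangle(B(x,A_{\mathrm{P}}r)).
\]

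Second, take $\Omega \coloneqq B(x,A_{\mathrm{P}}r)$, which is open, and note that $u \in \mathcal{F}_{p,\mathrm{loc}}(\Omega)$ by hypothesis. Applying Theorem \ref{thm:lower} to this $u$ and $\Omega$ yields
\[
\Gamma_{p,B(x,A_{\mathrm{P}}r)}\langle u \rangle(B(x,A_{\mathrm{P}}r)) \le C'\liminf_{\varepsilon \downarrow 0}\int_{B(x,A_{\mathrm{P}}r)}\int_{B(x,A_{\mathrm{P}}r)}\frac{\abs{u(y) - u(z)}^{p}}{\Psi(d(y,z))}\rho_{\varepsilon}(y,z)\,m(dy)\,m(dz),
\]
with $C'$ depending only on the parameters listed in the statement. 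Chaining the two displayed inequalities gives \eqref{e:PI.variant} with $C = C_{\mathrm{P}}C'$.

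There is no serious obstacle here: the hypotheses \ref{e:VD}, \ref{e:PI}, \ref{e:capu}, and Assumption \ref{assum:kernel.1}-\ref{it:assum.nonlocal.lower} are exactly those of Theorem \ref{thm:lower}, and \eqref{e:PIloc} is just the local form of \ref{e:PI}. The only minor point to check is that Theorem \ref{thm:lower} was proved for an arbitrary open set $\Omega$ and $u \in \mathcal{F}_{p,\mathrm{loc}}(\Omega)$, which matches our choice $\Omega = B(x,A_{\mathrm{P}}r)$; the constant $C'$ in Theorem \ref{thm:lower} is independent of $\Omega$, so the resulting bound is uniform in $(x,r)$, as required.
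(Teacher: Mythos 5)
Your proposal is correct and is exactly the paper's argument: the paper proves Proposition \ref{prop:newPI} by combining \ref{e:PI} (in its localized form \eqref{e:PIloc}, valid since \ref{e:VD} makes balls relatively compact) with the lower bound of Theorem \ref{thm:lower} applied to $\Omega = B(x,A_{\mathrm{P}}r)$. Your write-up just spells out the one-line chaining in more detail, with the constants tracked as in the statement.
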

\begin{proof}
    This follows from \ref{e:PI} and Theorem \ref{thm:lower}.
\end{proof}

\subsection{Proof of Theorem \ref{thm:main1} and its corollaries}\label{subsec:proofs}
Now we prove our main result and its corollaries.

\begin{proof}[Proof of Theorem \ref{thm:main1}]
    The upper estimate in \eqref{e:main1} and \eqref{e:WM} are proved in Theorems \ref{thm:upper.ext} and \ref{thm:upper.local.KSext}, and the lower estimate in \eqref{e:main1} follows from Theorem \ref{thm:lower}.
\end{proof}

\begin{proof}[Proof of Corollary \ref{cor:main1.1}]
    Fix a sequence $\{ \theta(\varepsilon) \}_{\varepsilon > 0} \subset (0,\infty)$ with $\theta(\varepsilon) \uparrow \theta_{p}$ as $\varepsilon \downarrow 0$, and let $\bigl\{ \rho_{\varepsilon}^{\mathrm{BBM}} \bigr\}_{\varepsilon > 0}$ be the family given in \eqref{e:BBMkernel}. 
    Then we can check Assumption \ref{assum:kernel.1} for $\bigl\{ \rho_{\varepsilon}^{\mathrm{BBM}} \bigr\}_{\varepsilon > 0}$ with $\nu_{\varepsilon}(dt) \coloneqq p\theta(\varepsilon)(\theta_{p} - \theta(\varepsilon))t^{-p\theta(\varepsilon) - 1}\,dt$ similarly to \cite[Corollary 6.1]{LPZ24} and then apply Theorem \ref{thm:main1} to $\bigl\{ \rho_{\varepsilon}^{\mathrm{BBM}} \bigr\}_{\varepsilon > 0}$.
\end{proof}

\begin{proof}[Proof of Corollary \ref{cor:main1.2}]
	Let $\bigl\{ \rho_{\varepsilon}^{\mathrm{KS}} \bigr\}_{\varepsilon > 0}$ and $\bigl\{ \widehat{\rho}_{\varepsilon}^{\,\mathrm{KS}} \bigr\}_{\varepsilon > 0}$ be the families given in \eqref{e:KSkernel}. 
    Then, clearly, $\bigl\{ \rho_{\varepsilon}^{\mathrm{KS}} \bigr\}_{\varepsilon > 0}$ and $\bigl\{ \widehat{\rho}_{\varepsilon}^{\,\mathrm{KS}} \bigr\}_{\varepsilon > 0}$ satisfy \eqref{e:kernel.lower.1}.
    By Proposition \ref{prop:PI-up} and the reverse volume doubling property of $m$ (see, e.g., \cite[Exercise 13.1]{Hei}), there exist $c_{1}, Q \in (0,\infty)$ such that
    \begin{equation}\label{e:RVD}
        \frac{m(B(y,r))}{m(B(y,R))} \le c_{1}\left(\frac{r}{R}\right)^{Q} \quad \text{whenever $y \in X$ and $0 < r \le R \le \diam(X,d)$}.
    \end{equation}
    Therefore, by putting $d_{j}(\varepsilon) \coloneqq c_{1}2^{(-j+1)Q}$, we have 
    \begin{equation*}
        \rho_{\varepsilon}^{\mathrm{KS}}(x,y)
        \le \widehat{\rho}_{\varepsilon}^{\,\mathrm{KS}}(x,y)
		\le \sum_{j = 1}^{\infty}d_{j}(\varepsilon)\frac{\indicator{B(y,2^{-j+1}\varepsilon) \setminus B(y,2^{-j}\varepsilon)}(x)}{m(B(y,2^{-j}\varepsilon))}. 
    \end{equation*} 
    Since $\sum_{j = 1}^{\infty}d_{j}(\varepsilon) = c_{1}/(1 - 2^{-Q})$, both $\bigl\{ \rho_{\varepsilon}^{\mathrm{KS}} \bigr\}_{\varepsilon > 0}$ and $\bigl\{ \widehat{\rho}_{\varepsilon}^{\,\mathrm{KS}} \bigr\}_{\varepsilon > 0}$ satisfy \eqref{e:truncate.upper.2} and \eqref{e:truncate.upper.const}.
    Note that \eqref{e:kernel.compl} for $\bigl\{ \rho_{\varepsilon}^{\mathrm{KS}} \bigr\}_{\varepsilon > 0}$ and for $\bigl\{ \widehat{\rho}_{\varepsilon}^{\,\mathrm{KS}} \bigr\}_{\varepsilon > 0}$ is obvious since $\rho_{\varepsilon}^{\mathrm{KS}}(x,y) = \widehat{\rho}_{\varepsilon}^{\,\mathrm{KS}}(x,y) = 0$ whenever $d(x,y) \ge \varepsilon$.
    We complete the proof by applying Theorem \ref{thm:main1} to these families.
\end{proof}

\begin{proof}[Proof of Corollary \ref{cor:main1.3}]
	We only show the first equality in \eqref{e:determine} because the other one is similar. 
    Set $\mathcal{D}_{p}^{\Psi,\rho} \coloneqq \bigl\{ u \in L^{p}(X,m) \bigm| \liminf_{\varepsilon \downarrow 0}\int_{X}\int_{X}\frac{\abs{u(x) - u(y)}^{p}}{\Psi(d(x,y))}\rho_{\varepsilon}(x,y)\,m(dx)\,m(dy) < \infty \bigr\}$. 
    By Theorem \ref{thm:main1}, $\mathcal{F}_{p} \subset \mathcal{D}_{p}^{\Psi,\rho}$ is clear.
    Let $u \in \mathcal{D}_{p}^{\Psi,\rho}$.
    As in the proof of Theorem \ref{thm:lower}, for each $\delta > 0$, let $\{ x_{i} \}_{i \in I_{\delta}} \subset X$ be a $(2\delta)$-net of $(X,d)$, let $\mathcal{B}_{\delta} \coloneqq \{ B(x_{j}, \delta) \}_{j \in I_{\delta}}$, which is a $(2,A_{\mathrm{cap},1})$-good cover of $X$ at scale $\delta$, let $\{ \psi_{j} \}_{j \in I_{\delta}}$ be a partition of unity with respect to $\mathcal{B}_{\delta}$ as described in Lemma \ref{lem:unity}, and let $A_{\delta}u \in \mathcal{F}_{p} \cap C(X)$ be the discrete convolution of $u$ as in Lemma \ref{lem:unity.approx}. 
    Then, by \eqref{e:discreteconv.prePI}, \eqref{e:doublevar} and \eqref{e:PI.variant}, we can see that 
    \[
    \mathcal{E}_{p}(A_{\delta}u) 
    \le c_{1}\liminf_{\varepsilon \downarrow 0}\int_{X}\int_{X}\frac{\abs{u(y) - u(z)}^{p}}{\Psi(d(y,z))}\rho_{\varepsilon}(y,z)\,m(dy)\,m(dz) < \infty,
    \]
    whence it follows that $\{ A_{\delta}u \}_{\delta > 0}$ is bounded in $\mathcal{F}_{p}$.
    By the reflexivity of $\mathcal{F}_{p}$ (Framework \ref{frame:EPEM}-\ref{it:banach}), there exists a subsequence $\{ A_{\delta_{k}}u \}_{k}$ that converges weakly in $\mathcal{F}_{p}$ to some $u_{\ast} \in \mathcal{F}_{p}$.
    Since $\lim_{\delta \downarrow 0}\norm{A_{\delta}u - u}_{L^{p}(X)} = 0$ by Lemma \ref{lem:unity.approx}, we get $u = u_{\ast}$, proving $\mathcal{D}_{p}^{\Psi,\rho} \subset \mathcal{F}_{p}$.
\end{proof}

\section{$L^{p}$-Besov critical exponents}\label{sec:critical}
In this section, we will see that the \emph{$L^{p}$-Besov critical exponent} turns out to be equal to $\beta/p$ under the assumptions \ref{e:VD}, \hyperref[e:PI]{\textup{PI$_p$($\beta$)}} and \hyperref[e:capu]{\textup{cap$_p$($\beta$)$_{\le}$}}.

We first recall the definition of Besov-type spaces on $(X,d,m)$.
\begin{defn}\label{defn:besov}
    For $\theta > 0$, we define $B^{\theta}_{p,p}(X)$ and $B^{\theta}_{p,\infty}(X)$ by
    \begin{equation*}
        B^{\theta}_{p,p}(X)
        \coloneqq
        \biggl\{ u \in L^{p}(X,m) \biggm| \int_{X}\int_{X}\frac{\abs{u(x) - u(y)}^{p}}{d(x,y)^{p\theta}m(B(y,d(x,y)))}\,m(dx)\,m(dy) < \infty \biggr\},
    \end{equation*}
    and $B^{\theta}_{p,\infty}(X) \coloneqq \bigl\{ u \in L^{p}(X,m) \bigm| \sup_{r > 0}E_{p,\theta}(u,r) < \infty \bigr\}$, 
   where 
   \[
   E_{p,\theta}(u,r) \coloneqq \int_{X}\fint_{B(y,r)}\frac{\abs{u(x) - u(y)}^{p}}{r^{p\theta}}\,m(dx)\,m(dy). 
   \]
\end{defn}
\begin{rmk}
    Under \ref{e:VD}, we have $B^{\theta}_{p,p}(X) = \bigl\{ u \in L^{p}(X,m) \bigm| \int_{0}^{\diam(X)}E_{p,\theta}(u,r)\,\frac{dr}{r} < \infty \bigr\}$. 
    Indeed, we know from \cite[Theorem 5.2]{GKS10} that
    \begin{equation*}
        \int_{0}^{\diam(X)}E_{p,\theta}(u,r)\,\frac{dr}{r} 
        \quad \asymp \int_{X}\int_{X}\frac{\abs{u(x) - u(y)}^{p}}{d(x,y)^{p\theta}m(B(y,d(x,y)))}\,m(dx)\,m(dy),
    \end{equation*}
    where, by $A \asymp B$ we mean that that there is an implicit constant $C \ge 1$ depending on some unimportant parameters such that $C^{-1} \le A/B \le C$.
\end{rmk}

The following proposition collects basic properties of $B^{\theta}_{p,q}(X)$, $q \in \{ p,\infty \}$.
\begin{prop}\label{prop:basic.besov}
    \begin{enumerate}[label=\textup{(\alph*)},align=left,leftmargin=*,topsep=2pt,parsep=0pt,itemsep=2pt]
        \item\label{it:besov.difftheta} $B^{\theta}_{p,\infty}(X) \subset B^{\theta'}_{p,\infty}(X)$ whenever $0 < \theta' < \theta$.
        \item\label{it:besov.KS} It holds that $B^{\theta}_{p,\infty}(X) = \bigl\{ u \in L^{p}(X,m) \bigm| \limsup_{r \downarrow 0}E_{p,\theta}(u,r) < \infty \bigr\}$. 
        \item\label{it:besov.relation} For any $\theta > 0$ and $\delta \in (0,\theta)$,
        \begin{equation}\label{e:relation.Besov}
            B^{\theta}_{p,p}(X) \subset B^{\theta}_{p,\infty}(X) \subset B^{\theta - \delta}_{p,p}(X).
        \end{equation}
    \end{enumerate}
\end{prop}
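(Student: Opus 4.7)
The plan is to dispatch \ref{it:besov.difftheta}, \ref{it:besov.KS}, and \ref{it:besov.relation} in that order. The single workhorse I will rely on, besides elementary power comparisons, is a crude $L^{p}$-type bound for $E_{p,\theta}(u,r)$ that follows from \ref{e:VD} (which is the standing assumption throughout this section): by $|u(x)-u(y)|^{p}\le 2^{p-1}(|u(x)|^{p}+|u(y)|^{p})$, Fubini, and the doubling comparison $m(B(y,r))\asymp m(B(x,r))$ for $y\in B(x,r)$, one obtains
\[
E_{p,\theta}(u,r)\le C\|u\|_{L^{p}(X,m)}^{p}\,r^{-p\theta}\qquad\text{for every }r>0,
\]
with $C$ depending only on $p$ and $c_{\mathrm{D}}$. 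This single inequality makes the large-scale behaviour of $E_{p,\theta}(u,\cdot\,)$ trivial whenever $u\in L^{p}(X,m)$.

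For \ref{it:besov.difftheta}, fix $r_{0}>0$ and let $u\in B^{\theta}_{p,\infty}(X)$. When $r\le r_{0}$, the estimate $r^{-p\theta'}\le r_{0}^{p(\theta-\theta')}r^{-p\theta}$ gives $E_{p,\theta'}(u,r)\le r_{0}^{p(\theta-\theta')}\sup_{s>0}E_{p,\theta}(u,s)$; when $r>r_{0}$, the crude bound yields $E_{p,\theta'}(u,r)\le Cr_{0}^{-p\theta'}\|u\|_{L^{p}}^{p}$. Combining these gives $u\in B^{\theta'}_{p,\infty}(X)$. For \ref{it:besov.KS}, ``$\subset$'' is trivial; for ``$\supset$'', if $\limsup_{r\downarrow 0}E_{p,\theta}(u,r)=M<\infty$, pick $r_{0}>0$ so that $E_{p,\theta}(u,r)\le M+1$ on $(0,r_{0}]$, and handle $r>r_{0}$ by the crude bound, giving $\sup_{r>0}E_{p,\theta}(u,r)<\infty$.

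For the first inclusion in \ref{it:besov.relation}, observe that for each $r>0$ and $x\in B(y,r)$ one has $d(x,y)<r$, so $r^{-p\theta}\le d(x,y)^{-p\theta}$, and $B(y,d(x,y))\subset B(y,r)$, so $m(B(y,r))^{-1}\le m(B(y,d(x,y)))^{-1}$. Multiplying by $|u(x)-u(y)|^{p}$ and integrating dominates $E_{p,\theta}(u,r)$ by the defining integral of $B^{\theta}_{p,p}(X)$, uniformly in $r$. For the second inclusion, I will use the identification noted in the remark after Definition \ref{defn:besov}, namely $\|u\|_{B^{\theta-\delta}_{p,p}}^{p}\asymp\int_{0}^{\diam(X,d)}E_{p,\theta-\delta}(u,r)\,\tfrac{dr}{r}$, together with the identity $E_{p,\theta-\delta}(u,r)=r^{p\delta}E_{p,\theta}(u,r)$. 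Splitting the integral at $r=1$ (only the lower piece being present when $\diam(X,d)\le 1$), on $(0,1)$ I bound $E_{p,\theta}(u,r)\le\|u\|_{B^{\theta}_{p,\infty}}^{p}$ and note $\int_{0}^{1}r^{p\delta-1}\,dr<\infty$ since $\delta>0$, while on $[1,\diam(X,d))$ the crude bound gives integrand $\lesssim\|u\|_{L^{p}}^{p}r^{-p(\theta-\delta)-1}$, which is integrable since $\theta-\delta>0$.

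The main obstacle, such as it is, is simply the $L^{p}$-side bookkeeping at large scales; once the Fubini/doubling crude bound is recorded, every other step is an elementary power comparison and no genuinely hard step is anticipated.
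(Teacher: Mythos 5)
Your proof is correct. Note, though, that the paper does not prove this proposition directly: it simply cites \cite[Corollary 3.3, Lemma 3.2]{Bau24} and \cite[Proposition 2.2]{GYZ23+} (see also \cite[Lemma 2.5]{KSS24+}), so your argument is a self-contained replacement rather than a variant of an in-paper proof. Your route is essentially the standard one underlying those references: the crude bound $E_{p,\theta}(u,r)\le C\|u\|_{L^p}^p r^{-p\theta}$ obtained from Fubini and \ref{e:VD} handles all large scales (which settles \ref{it:besov.difftheta}, \ref{it:besov.KS} and the tail in \ref{it:besov.relation}), the pointwise comparison $r^{-p\theta}m(B(y,r))^{-1}\le d(x,y)^{-p\theta}m(B(y,d(x,y)))^{-1}$ on $B(y,r)$ gives the first inclusion in \eqref{e:relation.Besov}, and for the second inclusion you invoke the integral characterization $\int_0^{\diam(X)}E_{p,\theta-\delta}(u,r)\,\frac{dr}{r}\asymp \|u\|_{B^{\theta-\delta}_{p,p}}^p$ recorded in the remark after Definition \ref{defn:besov} (from \cite{GKS10}); an equivalent alternative would be a direct dyadic-annulus decomposition of the double integral, which is what the cited sources effectively do, so nothing is gained or lost there. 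The one point worth making explicit: the proposition as stated carries no hypotheses, but both your crude bound and the quoted remark require \ref{e:VD}; this is not a gap, since the cited references work under volume doubling, all later uses of the proposition in Section \ref{sec:critical} assume \ref{e:VD}, and indeed some doubling-type hypothesis is genuinely necessary (without it one can build examples where $\limsup_{r\downarrow 0}E_{p,\theta}(u,r)=0$ yet $E_{p,\theta}(u,r)=\infty$ at a fixed large $r$, so \ref{it:besov.KS} fails) --- but you should state the standing assumption \ref{e:VD} explicitly rather than describe it as implicit.
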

%
%
\begin{proof}
    \ref{it:besov.difftheta}, \ref{it:besov.KS} and \ref{it:besov.relation} are proved in \cite[Corollary 3.3, Lemma 3.2]{Bau24} and \cite[Proposition 2.2]{GYZ23+} respectively. See also \cite[Lemma 2.5]{KSS24+}. 
\end{proof}

Similar to \cite[Section 5.2]{ABCRST1}, we define critical exponents $\alpha_{p}$ and $\alpha_{p}^{\ast}$ as follows.
We also introduce variants of these critical exponents as in \cite{KSS24+}.  
\begin{defn}[$L^{p}$-Besov critical exponents]\label{defn:criticalexponents}
    We define
    \begin{align*}
        \alpha_{p}(X) &\coloneqq \sup\{ \theta > 0 \mid \text{$B^{\theta}_{p,\infty}(X)$ contains a non-constant function} \}, \\
        \alpha_{p}^{\ast}(X) &\coloneqq \sup\{ \theta > 0 \mid \text{$B^{\theta}_{p,\infty}(X)$ is dense in $L^{p}(X,m)$} \}, \\
        \alpha_{p,p}(X) &\coloneqq \sup\{ \theta > 0 \mid \text{$B^{\theta}_{p,p}(X)$ contains a non-constant function} \}, \\
        \alpha_{p,p}^{\ast}(X) &\coloneqq \sup\{ \theta > 0 \mid \text{$B^{\theta}_{p,p}(X)$ is dense in $L^{p}(X,m)$} \}. 
    \end{align*}
\end{defn}

The following proposition gives a relation among these critical exponents, whose proof is omitted because it is clear from the definition and \eqref{e:relation.Besov}. 
\begin{prop}\label{prop:exponents.same}
    Assume that \ref{e:VD} holds. 
    Then $\alpha_{p,p}^{\ast}(X) = \alpha_{p}^{\ast}(X) \le \alpha_{p}(X) = \alpha_{p,p}(X)$. 
\end{prop}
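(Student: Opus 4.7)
The proof is essentially a direct consequence of the chain of inclusions \eqref{e:relation.Besov} combined with the fact that $L^{p}(X,m)$ contains non-constant functions (which follows from $\#X \ge 2$ and $m$ having full topological support). The plan is to prove each of the three equalities/inequalities separately, each by a routine application of Proposition \ref{prop:basic.besov}-\ref{it:besov.relation}.

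First, I will establish the inequality $\alpha_{p}^{\ast}(X) \le \alpha_{p}(X)$. If $\theta < \alpha_{p}^{\ast}(X)$, then $B^{\theta}_{p,\infty}(X)$ is dense in $L^{p}(X,m)$; since $L^{p}(X,m)$ contains non-constant functions, so must $B^{\theta}_{p,\infty}(X)$, giving $\theta \le \alpha_{p}(X)$.

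Next, I will prove $\alpha_{p}(X) = \alpha_{p,p}(X)$. The inequality $\alpha_{p,p}(X) \le \alpha_{p}(X)$ is immediate from the inclusion $B^{\theta}_{p,p}(X) \subset B^{\theta}_{p,\infty}(X)$ in \eqref{e:relation.Besov}. For the reverse, if $\theta < \alpha_{p}(X)$, I will pick $\theta' \in (\theta, \alpha_{p}(X))$ and apply the second inclusion of \eqref{e:relation.Besov} with $\delta = \theta' - \theta$ to obtain $B^{\theta'}_{p,\infty}(X) \subset B^{\theta}_{p,p}(X)$; since the former contains a non-constant function by definition of $\alpha_{p}(X)$, so does the latter, yielding $\theta \le \alpha_{p,p}(X)$.

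The equality $\alpha_{p,p}^{\ast}(X) = \alpha_{p}^{\ast}(X)$ is handled symmetrically. The inequality $\alpha_{p,p}^{\ast}(X) \le \alpha_{p}^{\ast}(X)$ again follows directly from $B^{\theta}_{p,p}(X) \subset B^{\theta}_{p,\infty}(X)$, and for the reverse I will use the second inclusion of \eqref{e:relation.Besov}: if $\theta < \alpha_{p}^{\ast}(X)$, then choosing $\theta' \in (\theta, \alpha_{p}^{\ast}(X))$ yields $B^{\theta'}_{p,\infty}(X) \subset B^{\theta}_{p,p}(X)$ with the former dense in $L^{p}(X,m)$, hence so is the latter.

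There is no serious obstacle here; the argument is essentially a bookkeeping exercise with the inclusions \eqref{e:relation.Besov}. The only point requiring a moment of care is verifying that $L^{p}(X,m)$ admits non-constant functions so that the step $\alpha_{p}^{\ast}(X) \le \alpha_{p}(X)$ goes through, and this is automatic under the standing hypotheses ($\#X \ge 2$ and $m$ is a Radon measure with full topological support, so one can separate two points by disjoint balls of positive measure).
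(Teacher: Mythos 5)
Your proposal is correct and follows exactly the route the paper intends: the paper omits the proof, stating it is clear from the definitions and \eqref{e:relation.Besov}, and your argument is just this bookkeeping spelled out (including the minor but worthwhile check that constants are not dense in $L^{p}(X,m)$, so density forces the existence of a non-constant element).
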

%

Now we prove the main result in this section.
\begin{thm}
    Assume that \ref{e:VD}, \hyperref[e:PI]{\textup{PI$_{p}$($\beta$)}} and \hyperref[e:capu]{\textup{cap$_{p}$($\beta$)$_{\le}$}} hold for some $\beta \in (0,\infty)$.
    Then $\alpha_{p}(X) = \alpha_{p}^{\ast}(X) = \alpha_{p,p}(X) = \alpha_{p,p}^{\ast}(X) = \beta/p$.
\end{thm}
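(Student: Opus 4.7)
The plan is to use Proposition \ref{prop:exponents.same}, which already gives the chain $\alpha_{p,p}^{\ast}(X) = \alpha_{p}^{\ast}(X) \le \alpha_{p}(X) = \alpha_{p,p}(X)$, so that only the two extremal inequalities $\alpha_{p}^{\ast}(X) \ge \beta/p$ and $\alpha_{p}(X) \le \beta/p$ need to be established. Both rely on the Korevaar--Schoen kernel $\rho_{\varepsilon}^{\mathrm{KS}}$ from \eqref{e:KSkernel} with $\Psi(r)=r^{\beta}$, for which the direct calculation
\[
\int_{X}\int_{X}\frac{\abs{u(x)-u(y)}^{p}}{d(x,y)^{\beta}}\rho_{\varepsilon}^{\mathrm{KS}}(x,y)\,m(dx)\,m(dy) = \frac{1}{\varepsilon^{\beta}}\int_{X}\fint_{B(y,\varepsilon)}\abs{u(x)-u(y)}^{p}\,m(dx)\,m(dy) = E_{p,\beta/p}(u,\varepsilon)
\]
identifies the Besov-type $p$-energy functional with the standard Korevaar--Schoen functional.

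For the lower bound $\alpha_{p}^{\ast}(X) \ge \beta/p$, I would invoke the upper estimate in \eqref{e:KS} of Corollary \ref{cor:main1.2} with $\Psi(r)=r^{\beta}$. Using $\Psi(d(x,y)) \le \Psi(r)$ on $B(y,r)$ to drop $d(x,y)$ in favor of $r$ in the denominator, the conclusion reads $\sup_{r>0}E_{p,\beta/p}(u,r) \le C\mathcal{E}_{p}(u)$ for every $u \in \mathcal{F}_{p}$, i.e., $\mathcal{F}_{p} \subset B^{\beta/p}_{p,\infty}(X)$. Since $\mathcal{F}_{p} \cap C_{c}(X)$ is uniformly dense in $C_{c}(X)$ by Framework \ref{frame:EPEM}-\ref{it:regular}, and $C_{c}(X)$ is $L^{p}$-dense because $m$ is Radon, we obtain that $\mathcal{F}_{p}$, and hence $B^{\beta/p}_{p,\infty}(X)$, is dense in $L^{p}(X,m)$.

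For the upper bound $\alpha_{p}(X) \le \beta/p$, I would argue by contradiction. Take $\theta > \beta/p$ and suppose $u \in B^{\theta}_{p,\infty}(X)$. By definition $E_{p,\theta}(u,r) \le C$, which rewrites as $\int_{X}\fint_{B(y,r)}\abs{u(x)-u(y)}^{p}\,m(dx)\,m(dy) \le Cr^{p\theta}$. Plugging this into the identity above gives
\[
\int_{X}\int_{X}\frac{\abs{u(x)-u(y)}^{p}}{d(x,y)^{\beta}}\rho_{\varepsilon}^{\mathrm{KS}}(x,y)\,m(dx)\,m(dy) = E_{p,\beta/p}(u,\varepsilon) \le C\,\varepsilon^{p\theta - \beta} \xrightarrow[\varepsilon\downarrow 0]{} 0,
\]
since $p\theta > \beta$. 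Finiteness of this liminf and Corollary \ref{cor:main1.3} (applicable since $\{\rho_{\varepsilon}^{\mathrm{KS}}\}$ satisfies Assumption \ref{assum:kernel.2}, as verified in the proof of Corollary \ref{cor:main1.2}) yield $u \in \mathcal{F}_{p}$. Applying Theorem \ref{thm:lower} with $\Omega = X$ (which is allowed by its statement, since $u \in \mathcal{F}_{p} \subset \mathcal{F}_{p,\mathrm{loc}}(X)$) then forces $\mathcal{E}_{p}(u) \le C \cdot 0 = 0$, and Framework \ref{frame:EPEM}-\ref{it:Lip} concludes $u \in \mathbb{R}\indicator{X}$, contradicting any non-constant choice of $u$.

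The proof is essentially bookkeeping once the key reduction $E_{p,\beta/p}(u,\varepsilon) = \int\!\!\int \frac{|\delta u|^{p}}{d^{\beta}}\rho_{\varepsilon}^{\mathrm{KS}}$ is spotted; the only subtle point is ensuring Theorem \ref{thm:lower} is applied on $X$ rather than on an extension domain satisfying \hyperref[defn:sextdomain]{\textup{(E)}}, which is fine because Theorem \ref{thm:lower} as stated does not require the extension property. Thus no genuine obstacle arises beyond invoking the already-established machinery.
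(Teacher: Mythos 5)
Your proposal is correct and uses essentially the same machinery as the paper: Proposition \ref{prop:exponents.same} to reduce to the two extremal inequalities, the upper bound in \eqref{e:KS} plus density of $\mathcal{F}_{p}$ for $\alpha_{p}^{\ast}(X) \ge \beta/p$, and the lower-bound results (Theorem \ref{thm:lower}, equivalently Corollaries \ref{cor:main1.2}--\ref{cor:main1.3}) for $\alpha_{p}(X) \le \beta/p$. The only cosmetic difference is the organization of the upper bound: the paper takes a non-constant $u \in \mathcal{F}_{p} = B^{\beta/p}_{p,\infty}(X)$ and shows $E_{p,\theta}(u,r) \to \infty$ from the lower KS estimate, whereas you take $u \in B^{\theta}_{p,\infty}(X)$, observe $E_{p,\beta/p}(u,\varepsilon) \to 0$, and then use Corollary \ref{cor:main1.3} and Theorem \ref{thm:lower} to force $\mathcal{E}_{p}(u) = 0$ and hence constancy — the same ingredients in a different order.
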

\begin{proof}
    By Corollaries \ref{cor:main1.2} and \ref{cor:main1.3}, we know that $\mathcal{F}_{p} = B^{\beta/p}_{p,\infty}(X)$.
    Since $\mathcal{F}_{p}$ is dense in $L^{p}(X,m)$ by Framework \ref{frame:EPEM}-\ref{it:regular}, we obtain $\alpha_{p}^{\ast}(X) \ge \beta/p$.
    Let us fix $\theta > \beta/p$ and $u \in \mathcal{F}_{p}\setminus\mathbb{R}\indicator{X}$.
    Note that $\mathcal{E}_{p}(u) > 0$ (recall that $\mathcal{E}_{p}^{-1}(0) \subset \mathbb{R}\indicator{X}$ by Framework \ref{frame:EPEM}-\ref{it:Lip}) and that $B^{\theta}_{p,\infty}(X)\setminus\mathbb{R}\indicator{X} \subset \mathcal{F}_{p}\setminus\mathbb{R}\indicator{X}$.
    Then we have $r^{p(\theta - \beta/p)}E_{p,\theta}(u,r) \ge \inf_{s \in (0,r]}E_{p,\beta/p}(u,s)$ for any $r > 0$. 
    Since $ \lim_{r \downarrow 0}\inf_{s \in (0,r]}E_{p,\beta/p}(u,s) \ge c_{1}\mathcal{E}_{p}(u) > 0$ by \eqref{e:KS}, we conclude that $\liminf_{r \downarrow 0}E_{p,\theta}(u,r) = \infty$, from which it follows that $B^{\theta}_{p,\infty}(X)\setminus\mathbb{R}\indicator{X} = \emptyset$.
    In particular, $\alpha_{p}(X) \le \beta/p$.
    Since $\alpha_{p}^{\ast}(X) \le \alpha_{p}(X)$, $\alpha_{p,p}(X) = \alpha_{p}(X)$ and $\alpha_{p,p}^{\ast}(X) = \alpha_{p}^{\ast}(X)$ by Proposition \ref{prop:exponents.same}, we complete the proof.
\end{proof}

Let us conclude this section by showing an estimate for the critical exponent $\alpha_{p}(X)$ (Theorem \ref{thm:exp.upper} below), which is an analogue of \cite[Theorem 4.8(ii)]{GHL03} and an improvement of \cite[Theorem 4.2]{Bau24}.
By considering a distance function, one can show $\alpha_{p}(X) \ge 1$ (see \cite[Theorem 4.1]{Bau24}), so we are interested in an upper bound for $\alpha_{p}(X)$.
To state Theorem \ref{thm:exp.upper}, we need to recall the \emph{chain condition} and the \emph{Ahlfors regularity} (see, e.g., \cite[Definition 3.4 and (3.2)]{GHL03}).
\begin{defn}
    \begin{enumerate}[label=\textup{(\arabic*)},align=left,leftmargin=*,topsep=2pt,parsep=0pt,itemsep=2pt]
        \item The metric space $(X,d)$ is said to satisfy the \emph{chain condition} if and only if there exists $C \in (0,\infty)$ such that the following condition holds: for any $n \in \mathbb{N}$ and $x,y \in X$ with $x \neq y$, there exists $\{ x_i \}_{i = 0}^{n} \subset X$ such that $x_{0} = x$, $x_{n} = y$ and
        \[
        d(x_{i - 1}, x_{i}) \le C\frac{d(x,y)}{n} \quad \text{for any $i \in \{  1,\dots,n \}$.}
        \]
        \item Let $Q \in (0,\infty)$. The measure $m$ is said to be \emph{$Q$-Ahlfors regular} if and only if there exists $C_{\mathrm{AR}} \in [1,\infty)$ such that for any $x \in X$ and $r \in (0,2\diam(X,d))$,
        \[
        C_{\mathrm{AR}}^{-1}\,r^Q \le m(B(x,r)) \le C_{\mathrm{AR}}\,r^Q.
        \]
    \end{enumerate}
\end{defn}

\begin{thm}\label{thm:exp.upper}
    Assume that $(X,d)$ satisfies the chain condition and that $m$ is $Q$-Ahlfors regular for some $Q \in (0,\infty)$.
    Then $\alpha_{p}(X) \le \frac{Q + p - 1}{p}$.
\end{thm}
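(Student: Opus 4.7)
The plan is to argue by contradiction: assume $u \in B^{\theta}_{p,\infty}(X) \setminus \mathbb{R}\mathds{1}_X$ with $\theta > (Q + p - 1)/p$, and derive that $u$ must actually be constant $m$-a.e. Setting $M \coloneqq \sup_{s > 0} E_{p,\theta}(u,s) < \infty$, the first step is to establish the scale-invariant Poincar\'e-type bound
\[
\fint_{B(z,r)} \abs{u - u_{B(z,r)}}^{p} \, dm \le c M r^{p\theta - Q} \qquad (z \in X,\; r > 0),
\]
which follows by combining the defining inequality $\int_{X} \fint_{B(y,r)} \abs{u(x) - u(y)}^{p} \, dm(x) \, dm(y) \le M r^{p\theta}$ with the $Q$-Ahlfors regularity $m(B(z,r)) \asymp r^{Q}$.

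Since $u$ is not a.e.\ constant, I would then choose two Lebesgue points $x_{\ast}, y_{\ast}$ of $u$ with $u(x_{\ast}) \neq u(y_{\ast})$, and set $R \coloneqq d(x_{\ast}, y_{\ast}) > 0$. For each $n \in \mathbb{N}$, the chain condition produces $x_{\ast} = x_{0}, x_{1}, \ldots, x_{n} = y_{\ast}$ with $d(x_{i-1}, x_{i}) \le CR/n$. Writing $r \coloneqq CR/n$ and $B_{i} \coloneqq B(x_{i}, r)$, the inclusion $B_{i-1} \cup B_{i} \subset B(x_{i}, 2r)$ together with Jensen's inequality and the Poincar\'e-type bound gives $\abs{u_{B_{i-1}} - u_{B_{i}}}^{p} \le c M r^{p\theta - Q}$ for each $i$.

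The crucial improvement, needed to reach the sharp exponent $(Q+p-1)/p$, is a \emph{summed} estimate
\[
\sum_{i=1}^{n} \abs{u_{B_{i-1}} - u_{B_{i}}}^{p} \le c M r^{p\theta - Q},
\]
which is independent of $n$. To obtain this, I would use Jensen to dominate the left-hand side by $c r^{-2Q} \sum_{i} \int_{B_{i-1}} \int_{B_{i}} \abs{u(x) - u(y)}^{p} \, dm(x) \, dm(y)$ and then collapse the sum, up to a bounded multiplicity factor, into a single integral of the form $\int_{X} \int_{B(x, 3r)} \abs{u(x) - u(y)}^{p} \, dm(y) \, dm(x)$, which is $\le c M r^{Q + p\theta}$ by the Besov seminorm and Ahlfors regularity. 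Applying H\"older's inequality to the telescoping $\abs{u_{B_{0}} - u_{B_{n}}} \le \sum_{i} \abs{u_{B_{i-1}} - u_{B_{i}}}$ then yields
\[
\abs{u_{B_{0}} - u_{B_{n}}}^{p} \le n^{p - 1} \sum_{i=1}^{n} \abs{u_{B_{i-1}} - u_{B_{i}}}^{p} \le c M R^{p\theta - Q} n^{p - 1 - p\theta + Q},
\]
and the exponent $p - 1 - p\theta + Q$ is negative precisely when $\theta > (Q+p-1)/p$, so the right-hand side vanishes as $n \to \infty$. Since Lebesgue differentiation gives $u_{B_{0}} \to u(x_{\ast})$ and $u_{B_{n}} \to u(y_{\ast})$, this forces $u(x_{\ast}) = u(y_{\ast})$, the desired contradiction.

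The principal obstacle is justifying the sum-to-integral reduction, which requires the chain to have bounded local multiplicity $\#\{i : x_{i} \in B(z, r)\} \le K$ with $K$ independent of $z$ and $n$; the naive per-term bound $\abs{u_{B_{i-1}} - u_{B_{i}}}^{p} \le c M r^{p\theta - Q}$ summed trivially only yields the weaker threshold $\theta > 1 + Q/p$, losing the crucial factor of $n^{1/p}$. To overcome this, I would refine the chain supplied by the chain condition via a Whitney-type covering (or a greedy $r$-separation procedure) in a bounded neighbourhood of the chain, leveraging $Q$-Ahlfors regularity and the induced doubling to ensure bounded multiplicity while retaining a subchain of length comparable to $n$.
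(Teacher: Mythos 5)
Your proposal is correct and takes essentially the same route as the paper: telescoping averages along the chain, H\"older's inequality contributing the factor $n^{p-1}$, and control of the summed ball-pair double integrals by the $B^{\theta}_{p,\infty}$ seminorm at scale $\asymp d(z,z')/n$ via $Q$-Ahlfors regularity, with Lebesgue points at the endpoints. The chain refinement you flag as the principal obstacle (a greedy $r$-separation keeping consecutive centers within $O(r)$ and preserving the endpoints, giving bounded multiplicity) is exactly what the paper obtains by invoking \cite[Lemma 4.10]{GHL03}, which extracts a subchain with pairwise disjoint balls and consecutive centers within $5\rho_{n}$, so your multiplicity constant is in fact $1$.
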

\begin{proof}
    Let $\theta > \frac{Q + p - 1}{p}$ and $u \in B_{p,\infty}^{\theta}(X)$.
    Note that the Lebesgue differentiation theorem holds on $(X,d,m)$ (see, e.g., \cite[p.~77]{HKST}).
    Let $z,z' \in X$ be Lebesgue points of $u$ with $z \neq z'$.
    By the chain condition, for any $n \in \mathbb{N}$ there exists $\{ x_i \}_{i = 0}^{n} \subset X$ such that $x_{0} = z$, $x_{n} = z'$ and $d(x_{i - 1},x_{i}) \le C\frac{d(z,z')}{n} \eqqcolon \rho_{n}$.
    Let us consider large enough $n \in \mathbb{N}$ so that $2\rho_{n} < d(z,z')$.
    By \cite[Lemma 4.10]{GHL03}, we can pick a subsequence $\{ x_{i_{k}} \}_{k = 0}^{l}$ of $\{ x_{i} \}_{i = 0}^{n}$ so that $l \le n$, $x_{i_{0}} = z$, $x_{i_{l}} = z'$, $\{ B(x_{i_{k}}, \rho_{n}) \}_{k}$ are disjoint and $d(x_{i_{k - 1}},x_{i_{k}}) < 5\rho_{n}$.
    Set $B_{k,n} \coloneqq B(x_{i_{k}}, \rho_{n})$ and $r_{n} \coloneqq 7\rho_{n}$ as in \cite[Proof of Theorem 4.8(ii)]{GHL03}.
    Noting that $\{ (x,y) \mid x \in B_{k - 1,n}, y \in B_{k,n} \} \subset \{ (x,y) \mid y \in B_{k,n}, x \in B(y,r_{n}) \}$, we see that
    \begin{align*}
        E_{p,\theta}(u,r_{n}) 
        &\ge C_{\mathrm{AR}}^{-1}\,r_{n}^{-p\theta - Q}\sum_{k = 1}^{l}\int_{B_{k,n}}\int_{B_{k-1,n}}\abs{u(x) - u(y)}^{p}\,m(dx)\,m(dy) \\
        &\ge C_{\mathrm{AR}}^{-1}\,r_{n}^{-p\theta - Q}\sum_{k = 1}^{l}m(B_{k-1,n})m(B_{k,n})\abs{u_{B_{k-1}} - u_{B_{k}}}^{p} \\
        &\ge c_{1}C_{\mathrm{AR}}^{-3}\,r_{n}^{-p\theta + Q}\frac{1}{l^{p - 1}}\abs{\sum_{k = 1}^{l}\abs{u_{B_{k - 1,n}} - u_{B_{k,n}}}}^{p}
        \ge c_{2}\,r_{n}^{-p\theta + Q + p - 1}\frac{\abs{u_{B_{0,n}} - u_{B_{l,n}}}^{p}}{d(z,z')^{p - 1}},
    \end{align*}
    where we used H\"{o}lder's inequality in the second and third inequalities.
    Since $p\theta - Q - p + 1 > 0$, we have
    \begin{align*}
        \abs{u(z) - u(z')}^{p}
        = \lim_{n \to \infty}\abs{u_{B_{0,n}} - u_{B_{l,n}}}^{p} 
        \le c_{2}^{-1}d(z,z')^{p - 1}\biggl(\sup_{r > 0}E_{p,\theta}(u,r)\biggr)
        \lim_{n \to \infty}r_{n}^{p\theta - Q - p + 1} = 0,
    \end{align*}
    whence it follows that $u$ is a constant function.
    This shows $\alpha_{p}(X) \le \frac{Q + p - 1}{p}$.
\end{proof}

\section{The extension property \hyperref[defn:sextdomain]{\textup{(E)}} for uniform domains}\label{sec.ud}
The aim of this section is to prove Theorem \ref{thm:ud.sext}, which can be done with straightforward modifications of the arguments in \cite[Section 5]{Mur24}. 
Most results in this section are analogues of those in \cite[Section 5]{Mur24}. More precisely, the $(2,2)$-Poincar\'e inequality on uniform domains and some $L^2$-estimates for extension maps (see \eqref{e:defn.extmap}) are shown in \cite[Theorem 5.3 and Proposition 5.8]{Mur24} under the two-sided heat kernel estimate HKE$(\Psi)$ (see \cite[Definition 2.6 and Assumption 5.5]{Mur24}). By noting that Corollaries \ref{cor:main1.2} and \ref{cor:main1.3} can be used instead of \cite[Theorem 4.6-(b),(c)]{Mur24}, the condition HKE$(\Psi)$ in the results above can be weakened to the conjunction of \ref{e:VD}, \hyperref[e:PI]{\textup{PI$_2$($\Psi$)}} and \hyperref[e:capu]{\textup{cap$_2$($\Psi$)$_{\le}$}}. We aim to prove natural $p$-analogues of them by replacing \hyperref[e:PI]{\textup{PI$_2$($\Psi$)}} and \hyperref[e:capu]{\textup{cap$_2$($\Psi$)$_{\le}$}} with \ref{e:PI} and \ref{e:capu} respectively. 
We record sketches of these modifications and some related results for potential future applications.

\subsection{Whitney covering, uniform domain and their properties}
We first recall the definition of uniform domains, which was originally introduced by Martio and Sarvas in \cite{MS79}.
Here we employ the version of the definition of uniform domains due to \cite{Mur24}; 
see \cite[Section 2.2]{Mur24} for it's relation with the definition in the literature and for some concrete examples of uniform domains.
\begin{defn}[{\cite[Definition 2.3]{Mur24}}]
    Let $A \ge 1$.
    A connected, non-empty open set $U \subset X$ with $U \neq X$ is called a \emph{$A$-uniform domain} if and only if for every $x,y \in U$ there exists a curve $\gamma$ in $U$ from $x$ to $y$, i.e., $\gamma \colon [a,b] \to U$ is a continuous map satisfying $\gamma(a) = x$ and $\gamma(b) = y$, such that $\diam(\gamma) \le Ad(x,y)$ and for any $z \in \gamma \coloneqq \gamma([a,b])$,
    \[
    \delta_{U}(z) \coloneqq d(x,X \setminus U) \ge A^{-1}\min\{ d(x,z), d(y,z) \}.
    \]
    Such a curve $\gamma$ is called a \emph{$A$-uniform curve}.
    We simply say that $U$ is a \emph{uniform domain} if and only if it is a \emph{$A$-uniform domain} for some $A \ge 1$.
\end{defn}

In the next propositions, we recall basic geometric properties of uniform domains.
We use the following notation; for $U \subset X$, $x \in X$ and $r > 0$,
\[
B_{U}(x,r) \coloneqq B(x,r) \cap U.
\]
\begin{prop}[{\cite[Theorem 2.8]{BS07}} or {\cite[Lemma 3.5]{Mur24}}]\label{prop:geom.ud}
    Let $U \subset X$ be a $A$-uniform domain for some $A \ge 1$.
    \begin{enumerate}[label=\textup{(\alph*)},align=left,leftmargin=*,topsep=2pt,parsep=0pt,itemsep=2pt]
        \item\label{it:corkskrew} \textup{(Corkscrew condition)} For any $x \in \closure{U}^{X}$ and $r > 0$ with $U \setminus B(x,r) \neq \emptyset$, there exists $y \in U$ such that $B(y,r/(3A)) \subset B_{U}(x,r)$.
        \item\label{it:VD.ud} Assume that \ref{e:VD} holds. Then $m(\partial U) = 0$ and there exists $C = C(c_{\mathrm{D}},A) \in [1,\infty)$ such that
        \begin{equation}\label{e:VD.ud}
            m(B_{U}(x,2r)) \le Cm(B_{U}(x,r)) \quad \text{for any $x \in X$ and $r > 0$.}
        \end{equation}
    \end{enumerate}
\end{prop}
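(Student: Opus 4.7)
The plan is to derive part (a) by a standard connectedness argument along a uniform curve, and then deduce part (b) from (a) combined with \ref{e:VD}.

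For part (a), given $x \in \closure{U}^{X}$ with $U \setminus B(x,r) \neq \emptyset$, I would first choose $x_{1} \in U$ with $d(x,x_{1}) < r/(6A)$ (possible because $x \in \closure{U}^{X}$) and $x_{2} \in U \setminus B(x,r)$. Let $\gamma \colon [a,b] \to U$ be an $A$-uniform curve joining $x_{1}$ and $x_{2}$. Since $t \mapsto d(x,\gamma(t))$ is continuous with $d(x,\gamma(a)) < r/(6A) < r/2$ and $d(x,\gamma(b)) \ge r$, the intermediate value theorem yields $t_{0} \in (a,b)$ with $d(x,\gamma(t_{0})) = r/2$. Setting $y \coloneqq \gamma(t_{0})$, the triangle inequality gives $d(x_{1},y) \ge r/2 - r/(6A) \ge r/3$ and $d(x_{2},y) \ge r/2$, so the uniform curve condition forces $\delta_{U}(y) \ge A^{-1}\min\{d(x_{1},y),d(x_{2},y)\} \ge r/(3A)$. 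Combining $B(y, r/(3A)) \subset U$ with $B(y, r/(3A)) \subset B(x, r/2 + r/(3A)) \subset B(x,r)$ proves the corkscrew condition.

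For part (b), to prove \eqref{e:VD.ud}, I may assume $B_{U}(x,2r) \neq \emptyset$ and $U \setminus B(x,r) \neq \emptyset$ (otherwise $B_{U}(x,2r) = B_{U}(x,r)$ and the estimate is trivial). Pick $x' \in \closure{U}^{X}$ with $d(x,x') \le r$, apply part (a) at $x'$ with radius $r$, and obtain $y \in U$ with $B(y, r/(3A)) \subset B_{U}(x',r) \subset B_{U}(x,2r)$. Since $B(x,2r) \subset B(y, r(3 + 1/(3A))) \cdot r$ up to a bounded number of doublings, \ref{e:VD} yields a constant $C = C(c_{\mathrm{D}},A)$ with $m(B(x,2r)) \le C\,m(B(y, r/(3A))) \le C\,m(B_{U}(x,r))$, which is \eqref{e:VD.ud}.

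For $m(\partial U) = 0$, fix $x_{0} \in \partial U$ and $r_{0} > 0$. Applying part (a) to every $x \in \partial U \cap B(x_{0},r_{0})$ with radius $\varepsilon$ gives a ball $B(y_{x},\varepsilon/(3A)) \subset U \cap B(x,\varepsilon)$; by a standard Vitali-type covering argument and \ref{e:VD}, the union of such interior balls captures a definite fraction of $m(B(x_{0},r_{0}))$ inside $U$, leaving $m\bigl((\partial U \cap B(x_{0},r_{0}))(\varepsilon)\bigr) \le (1 - c)m(B(x_{0},r_{0}+\varepsilon))$ for some $c = c(c_{\mathrm{D}},A) > 0$. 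Iterating the same argument inside the remaining $\varepsilon$-neighbourhood at finer and finer scales shows $m(\partial U \cap B(x_{0},r_{0})) = 0$, and $\sigma$-finiteness of $m$ (Proposition \ref{prop:top}) together with separability of $(X,d)$ yields $m(\partial U) = 0$. The main obstacle will be the iterative measure-decay argument in the last step: ensuring that the covering of the $\varepsilon$-neighbourhood of $\partial U$ by uniformly sized corkscrew balls yields a geometric decay independent of scale requires careful bookkeeping of the implicit constants coming from \ref{e:VD} and \eqref{e:VD.ud}.
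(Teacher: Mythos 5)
The paper does not actually prove this proposition; it quotes it from \cite[Theorem 2.8]{BS07} and \cite[Lemma 3.5]{Mur24}, so the comparison is with the standard proofs there. Your part (a) is correct and is exactly that standard argument (endpoint near $x$, endpoint outside $B(x,r)$, intermediate value theorem at level $r/2$, then the uniform-curve lower bound on $\delta_{U}$); the numerical constants check out since $A\ge 1$.

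Part (b) as written has genuine gaps. For \eqref{e:VD.ud}, your reduction is flawed: from $B_{U}(x,2r)\neq\emptyset$ you only get $d(x,U)<2r$, so a point $x'\in\closure{U}^{X}$ with $d(x,x')\le r$ need not exist; and even when it does, applying (a) at $x'$ with radius $r$ requires $U\setminus B(x',r)\neq\emptyset$, which does not follow from $U\setminus B(x,r)\neq\emptyset$. In fact the inequality is simply false for arbitrary $x\in X$ (take $X=\mathbb{R}$, $U=(0,1)$, $x=-3/2$, $r=1$: the left-hand side is positive and the right-hand side vanishes), so the display must be read, as in \cite[Lemma 3.5]{Mur24} and as it is used everywhere in this paper, for centers $x\in\closure{U}^{X}$; in that setting your argument with $x'=x$ is complete, since the case $U\subset B(x,r)$ is trivial and otherwise (a) at $x$ with radius $r$ gives $B(y,r/(3A))\subset B_{U}(x,r)$ with $B(x,2r)\subset B(y,3r)$, and \ref{e:VD} finishes. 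For $m(\partial U)=0$ you only give a sketch and yourself flag the iterative decay as the obstacle; as stated the iteration is not justified (the corkscrew balls produced at scale $\varepsilon$ lie in $U$ but need not avoid the finer neighbourhoods of $\partial U$ unless shrunk, and the enlargement from $B(x_{0},r_{0})$ to $B(x_{0},r_{0}+\varepsilon)$ prevents the estimate from telescoping as written). The standard repair is much shorter and avoids iteration: (a) says $\partial U$ is porous, i.e.\ for $\xi\in\partial U$ and all sufficiently small $r$ one has $m(\partial U\cap B(\xi,r))\le m(B(\xi,r))-m(B(y,r/(3A)))\le(1-\delta)\,m(B(\xi,r))$ with $\delta=\delta(c_{\mathrm{D}},A)>0$ by \ref{e:VD}; hence no point of $\partial U$ is an $m$-density point of $\partial U$, and the Lebesgue density (differentiation) theorem for doubling measures (e.g.\ \cite{HKST}) forces $m(\partial U)=0$, after which countably many balls cover $\partial U$ by separability.
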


Now we recall the notion of an $\varepsilon$-Whitney cover (see \cite[Definition 3.16]{GS11} and \cite[Definition 3.1]{Mur24}) whose existence follows from Zorn's lemma (\cite[Proposition 3.2(a)]{Mur24}), and record its basic properties.
\begin{defn}
    Let $\varepsilon \in (0,1/2)$ and let $U \subset X$ be a non-empty open set with $U \neq X$.
    A collection of balls $\mathfrak{R} \coloneqq \{ B_{U}(x_i,r_i) \mid x_i \in U, r_i > 0, i \in I \}$ is said to be an \emph{$\varepsilon$-Whitney cover} if and only if it satisfies the following properties:
    \begin{enumerate}[label=\textup{(\arabic*)},align=left,leftmargin=*,topsep=2pt,parsep=0pt,itemsep=2pt]
        \item $\{ B_{U}(x_i,r_i) \}_{i \in I}$ are pairwise disjoint.
        \item $r_{i} = \frac{\varepsilon}{1 + \varepsilon}\delta_{U}(x_i)$ for any $i \in I$. 
        \item $\bigcup_{i \in I}B_{U}(x_i,K_{\varepsilon}r_i) = U$, where $K_{\varepsilon} \coloneqq 2(1 + \varepsilon) \in (2,3)$.
    \end{enumerate}
\end{defn}

\begin{prop}[{\cite[Proposition 3.2, Lemma 3.7]{Mur24}}]\label{prop:whitney}
    Let $\varepsilon \in (0,1/2)$, let $U \subset X$ be a non-empty open set with $U \neq X$ and let $\mathfrak{R} = \{ B_{U}(x_i,r_i) \mid x_i \in U, r_i > 0, i \in I \}$ be an $\varepsilon$-Whitney cover of $U$.
    \begin{enumerate}[label=\textup{(\alph*)},align=left,leftmargin=*,topsep=2pt,parsep=0pt,itemsep=2pt]
        \item\label{it:whitney.radcomparison} \textup{(radius comparison)} For any $\lambda > 1$ with $(\lambda - 1)\varepsilon < 1$ and any $i,j \in I$ such that $B_{U}(x_i,\lambda r_i) \cap B_{U}(x_j,\lambda r_j) \neq \emptyset$, we have
        \begin{equation}\label{e:rad.comparison1}
            \frac{1 - (\lambda - 1)\varepsilon}{1 + (\lambda + 1)\varepsilon}r_{j} \le r_{i} \le \frac{1 + (\lambda + 1)\varepsilon}{1 - (\lambda - 1)\varepsilon}r_{j}. 
        \end{equation}
        \item\label{it:whitney.overlap} \textup{(bounded overlap)} Assume that $(X,d)$ is a doubling metric space. Then there exists $C = C(N_{\mathrm{D}},\varepsilon)$ such that
        \begin{equation}\label{e:whitney.finiteoverlap}
            \sum_{i \in I}\indicator{B_{U}(x_i,r_i/\varepsilon)} \le C.
        \end{equation}
        \item\label{it:whitney.central} \textup{(central ball)} Assume that $U$ is a $A$-uniform domain for some $A \ge 1$ and that $\varepsilon \in (0,1/14)$. For any $x \in \closure{U}^{X}$ and $r \in [\delta_{U}(x),\diam(U)/2)$,
        \begin{equation}\label{e:whitney.nearball}
            B_{U}(x,r) \subset \bigcup_{B_{U}(x_i,r_i) \in \mathfrak{R}(x,r)}B_{U}(x_i,3r_i) \subset B_{U}(x,2r),
        \end{equation}
        where $\mathfrak{R}(x,r) \coloneqq \{ B_{U}(x_i,r_i) \in \mathfrak{R} \mid B_{U}(x_i,3r_i) \cap B_{U}(x,r) \neq \emptyset \}$.
        In addition, there exists $B_{U}(x_0,r_0) \in \mathfrak{R}(x,r)$ such that
        \begin{equation}\label{e:centralball.rad}
            \frac{\varepsilon}{3A(4 + \varepsilon)}r \le r_0 \le \frac{2\varepsilon}{1 - 2\varepsilon}r.
        \end{equation}
    \end{enumerate}
\end{prop}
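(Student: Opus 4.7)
All three parts rest on two elementary facts: the function $\delta_U \colon X \to [0,\infty)$, $\delta_U(z) = d(z, X \setminus U)$, is $1$-Lipschitz, and by construction each Whitney ball satisfies the linear relation $r_i = \frac{\varepsilon}{1+\varepsilon}\delta_U(x_i)$, so $r_i$ is controlled by $\delta_U(x_i)$ with ratios depending only on $\varepsilon$. For \ref{it:whitney.radcomparison}, I would pick any $z$ in the intersection $B_U(x_i,\lambda r_i) \cap B_U(x_j, \lambda r_j)$ to obtain $d(x_i, x_j) < \lambda(r_i + r_j)$, hence by the Lipschitz property of $\delta_U$, $\abs{\delta_U(x_i) - \delta_U(x_j)} < \lambda(r_i + r_j)$; substituting $\delta_U(x_k) = \frac{1+\varepsilon}{\varepsilon} r_k$ and rearranging gives $\abs{r_i - r_j} < \frac{\lambda \varepsilon}{1+\varepsilon}(r_i + r_j)$, which yields \eqref{e:rad.comparison1} after assuming WLOG that $r_i \ge r_j$.

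For \ref{it:whitney.overlap}, fix $x \in U$ and set $I(x) \coloneqq \{ i \in I \mid x \in B_U(x_i, r_i/\varepsilon) \}$. For each $i \in I(x)$, the Lipschitz bound $\abs{\delta_U(x) - \delta_U(x_i)} < r_i/\varepsilon$ combined with $\delta_U(x_i) = \frac{1+\varepsilon}{\varepsilon} r_i$ forces $r_i \le \delta_U(x) \le \frac{2+\varepsilon}{\varepsilon} r_i$. Since $r_i < \delta_U(x_i)$, the ball $B(x_i, r_i)$ is entirely contained in $U$ and therefore coincides with $B_U(x_i, r_i)$, so the family $\{ B(x_i, r_i) \}_{i \in I(x)}$ is pairwise disjoint, each ball has radius at least $\varepsilon\delta_U(x)/(2+\varepsilon)$, and all centres lie in $B(x, \delta_U(x)/\varepsilon)$. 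A standard packing argument in the doubling metric space $(X,d)$ then bounds $\# I(x)$ in terms of $N_{\mathrm{D}}$ and $\varepsilon$ only.

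For \ref{it:whitney.central}, the first inclusion in \eqref{e:whitney.nearball} is immediate from $K_\varepsilon < 3$ and the covering property $\bigcup_i B_U(x_i, K_\varepsilon r_i) = U$: any $y \in B_U(x,r)$ lies in some $B_U(x_i, K_\varepsilon r_i) \subset B_U(x_i, 3r_i)$, and this ball then intersects $B_U(x,r)$ at $y$, so belongs to $\mathfrak{R}(x,r)$. For the second inclusion, given $B_U(x_i, r_i) \in \mathfrak{R}(x,r)$ I would pick $z \in B_U(x_i, 3r_i) \cap B_U(x,r)$; from $\delta_U(x) \le r$ and $d(x,z) < r$ the Lipschitz property gives $\delta_U(z) \le 2r$, while $\delta_U(z) > \delta_U(x_i) - 3r_i = \frac{1-2\varepsilon}{\varepsilon}r_i$. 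Combining yields the upper bound $r_i < \frac{2\varepsilon}{1-2\varepsilon}r$ in \eqref{e:centralball.rad}, and together with the triangle inequality $d(x,y) \le r + 6r_i$ for $y \in B_U(x_i,3r_i)$ this implies $B_U(x_i, 3r_i) \subset B_U(x,2r)$ provided $\varepsilon < 1/14$.

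To produce a central ball realising the lower radius bound I would invoke the corkscrew condition (Proposition \ref{prop:geom.ud}-\ref{it:corkskrew})---the hypothesis $r < \diam(U)/2$ ensures $U \setminus B(x,r) \neq \emptyset$---to obtain $x' \in U$ with $B(x', r/(3A)) \subset B_U(x,r)$. Since $\mathfrak{R}$ covers $U$, some $B_U(x_0, r_0) \in \mathfrak{R}$ satisfies $x' \in B_U(x_0, K_\varepsilon r_0)$; this ball intersects $B_U(x,r)$ at $x'$, so belongs to $\mathfrak{R}(x,r)$. Lipschitz continuity of $\delta_U$ gives $\frac{1+\varepsilon}{\varepsilon}r_0 = \delta_U(x_0) \ge \delta_U(x') - d(x',x_0) > r/(3A) - K_\varepsilon r_0$, which rearranges to $r_0 > \frac{r\varepsilon}{3A(1+\varepsilon)(1+2\varepsilon)}$; using $(1+\varepsilon)(1+2\varepsilon) \le 4+\varepsilon$ for $\varepsilon \in (0,1/14)$ yields the stated lower bound. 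The main obstacle is the bookkeeping in \ref{it:whitney.central}, where several Lipschitz comparisons of $\delta_U$ must be combined with care to produce the explicit constants in \eqref{e:centralball.rad}; nothing is conceptually deep, but keeping the inequalities sharp enough to match the stated thresholds requires discipline.
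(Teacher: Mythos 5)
Your proof is correct; the paper itself offers no argument for this proposition (it is imported from [Mur24, Proposition 3.2, Lemma 3.7]), and your reconstruction via the $1$-Lipschitz property of $\delta_{U}$, the identity $r_{i} = \frac{\varepsilon}{1+\varepsilon}\delta_{U}(x_{i})$, disjointness plus metric doubling for the packing bound, and the corkscrew condition for the central ball is exactly the standard route used there. The constants also check out: $\varepsilon < 1/14$ is precisely what turns $r_{i} < \frac{2\varepsilon}{1-2\varepsilon}r$ into $r + 6r_{i} < 2r$, and $(1+\varepsilon)(1+2\varepsilon) \le 4+\varepsilon$ converts your lower bound $r_{0} > \frac{\varepsilon r}{3A(1+\varepsilon)(1+2\varepsilon)}$ into the stated $\frac{\varepsilon}{3A(4+\varepsilon)}r$.
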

\begin{rmk}\label{rmk:whitney-good}
    By the properties \ref{it:whitney.radcomparison} and \ref{it:whitney.overlap}, for any $1 \le A' \le 1/(3\varepsilon)$, an $\varepsilon$-Whitney cover $\mathfrak{R}$ of $U$ turns out to be a $\bigl(\frac{1 + \varepsilon}{\varepsilon},A'\bigr)$-good cover of $U$.
\end{rmk}

\subsection{\texorpdfstring{$(p,p)$}{(p,p)}-Poincar\'{e} inequality on uniform domains}
Let us recall the following lemma in \cite{Mur24}, which provides us a nice chain of balls. See also \cite[Lemma 3.23]{GS11}.
\begin{lem}[{\cite[Lemma 3.9]{Mur24}}]\label{lem:chain.PI}
    Let $A \ge 1$ and $\varepsilon \in (0,1/14)$.
    Let $U \subset X$ be a $A$-uniform domain, let $\mathfrak{R} = \{ B_{U}(x_i,r_i) \mid x_i \in U, r_i > 0, i \in I \}$ be an $\varepsilon$-Whitney cover of $U$, let $x \in \closure{U}^{X}$ and let $r \in [\delta_{U}(x),\diam(U)/2)$.
    Let $B_{0} = B_{U}(x(B_{0}),r(B_{0})) \in \mathfrak{R}(x,r)$ satisfy $r(B_{0}) \ge c_{0}r$ for some $c_{0} \in (0,\infty)$, let $D = B_{U}(x(D),r(D)) \in \mathfrak{R}(x,r)$ and let $\gamma$ be a $A$-uniform curve from $x_0$ to $x(D)$.
    There exists a finite collection of distinct balls $\mathbb{S}(D) = \{ B_{0}^{D},B_{1}^{D}, \dots, B_{l}^{D} \}$ of length $l = l(D)$ such that $B_{0}^{D} = B_{0}$, $B_{l}^{D} = D$ and
    \begin{equation}\label{e:chainball}
        B_{j}^{D} = B_{U}(x_{j}(D),r_{j}(D)) \in \mathfrak{R}, \quad
        3B_{j - 1}^{D} \cap 3B_{j}^{D} \neq \emptyset, \quad
        3B_{j}^{D} \cap \gamma \neq \emptyset
    \end{equation}
    for any $j \in \{ 1,\dots,l \}$.
    Here $3B_{j}^{D} \coloneqq B_{U}(x_{j}(D),3r_{j}(D))$.
    Moreover, there exist $C_{0},C_{1} \in (0,\infty)$ depending only on $A,\varepsilon,c_{0}$ such that for any $j \in \{ 0,1,\dots,l \}$,
    \begin{equation}\label{e:chainball.rad}
        r_{j}(D) \le \frac{(A(4\varepsilon+1) + 1 - 2\varepsilon)\varepsilon}{(1 -2\varepsilon)^{2}}r, \quad
        B_{U}(x_{j}(D),r_{j}(D)) \subset B_{U}(x, C_{0}r),
    \end{equation}
    and
    \begin{equation}\label{e:chainball.center}
        D \subset B_{U}(x_{j}(D), C_{1}r_{j}(D)).
    \end{equation}
\end{lem}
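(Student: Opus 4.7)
I would construct the chain $\mathbb{S}(D)$ by tracking how the $A$-uniform curve $\gamma \colon [a,b] \to U$ (with $\gamma(a) = x(B_{0})$ and $\gamma(b) = x(D)$) successively meets Whitney balls. Set $B_{0}^{D} \coloneqq B_{0}$, $t_{0} \coloneqq a$, and inductively, given $B_{j}^{D}$ with $\gamma(t_{j}) \in 3B_{j}^{D}$, stop if $\gamma(t) \in 3B_{j}^{D}$ for all $t \in [t_{j},b]$; otherwise set $t_{j+1} \coloneqq \inf\{t > t_{j} \mid \gamma(t) \notin 3B_{j}^{D}\}$ and pick $B_{j+1}^{D} \in \mathfrak{R}$ so that $\gamma(t_{j+1}) \in B_{j+1}^{D}$, which exists by the covering property of an $\varepsilon$-Whitney cover. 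Continuity of $\gamma$, combined with $B_{j+1}^{D}$ being open, forces $3B_{j}^{D} \cap 3B_{j+1}^{D} \neq \emptyset$ and $\gamma(t_{j}) \in 3B_{j}^{D}$, which are exactly the conditions in \eqref{e:chainball}. Finiteness of the process follows because the Whitney radii $r_{j}(D) = \frac{\varepsilon}{1+\varepsilon}\delta_{U}(x_{j}(D))$ become comparable to $r(D)$ once $\gamma(t_{j})$ is close to $x(D)$, while $\diam(\gamma) \le A\,d(x(B_{0}),x(D))$ is finite; a small adjustment (removing repetitions and forcing the terminal ball to coincide with $D$) produces the distinct chain.

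\textbf{Controlling the radii.} For each $j$ fix $z_{j} \in 3B_{j}^{D} \cap \gamma$. The Whitney identity $r_{j}(D) = \frac{\varepsilon}{1+\varepsilon}\delta_{U}(x_{j}(D))$ together with $d(x_{j}(D), z_{j}) \le 3r_{j}(D)$ and the $1$-Lipschitz property of $\delta_{U}$ yield $\delta_{U}(z_{j}) \asymp_{\varepsilon} r_{j}(D)$. Since $B_{0}, D \in \mathfrak{R}(x,r)$, the Whitney radius comparison \eqref{e:rad.comparison1} and the bound $r(B_{0}) \ge c_{0}r$ give $d(x(B_{0}),x(D)) \lesssim_{A,\varepsilon,c_{0}} r$; hence $\delta_{U}(z_{j}) \le \delta_{U}(x(B_{0})) + d(z_{j},x(B_{0})) \le r(B_{0})(1+\varepsilon)/\varepsilon + \diam(\gamma) \lesssim_{A,\varepsilon,c_{0}} r$, which is the first bound in \eqref{e:chainball.rad}. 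The inclusion $B_{U}(x_{j}(D),r_{j}(D)) \subset B_{U}(x,C_{0}r)$ is then a triangle-inequality consequence of $d(z_{j},x) \le \diam(\gamma) + d(x(B_{0}),x) \lesssim r$ together with $r_{j}(D) \lesssim r$.

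\textbf{The inclusion \eqref{e:chainball.center} and the main obstacle.} This is the subtler assertion. The key input is the $A$-uniform curve condition $\delta_{U}(z_{j}) \ge A^{-1}\min(d(z_{j},x(B_{0})), d(z_{j},x(D)))$. If $d(z_{j},x(D)) \le d(z_{j},x(B_{0}))$, then $r_{j}(D) \asymp_{\varepsilon} \delta_{U}(z_{j}) \gtrsim_{A} d(z_{j},x(D))$, so the triangle inequality gives $d(x_{j}(D),x(D)) \le 3r_{j}(D) + d(z_{j},x(D)) \lesssim_{A,\varepsilon} r_{j}(D)$; combined with $r(D) \lesssim_{A,\varepsilon} r_{j}(D)$ (a second use of the uniform condition near $x(D)$), this yields $D \subset B_{U}(x_{j}(D),C_{1}r_{j}(D))$. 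In the opposite case, $r_{j}(D) \gtrsim_{A} d(z_{j},x(B_{0}))$; when $z_{j}$ lies far from $B_{0}$ this forces $r_{j}(D) \gtrsim r(B_{0}) \ge c_{0}r$, while when $z_{j}$ lies near $B_{0}$ the Whitney comparison \eqref{e:rad.comparison1} directly gives $r_{j}(D) \asymp r(B_{0}) \ge c_{0}r$; in either sub-case $d(x_{j}(D),x(D)) \le 3r_{j}(D) + \diam(\gamma) \lesssim_{A,\varepsilon,c_{0}} r \lesssim r_{j}(D)$. The main obstacle is the bookkeeping: to keep the constants depending only on $A,\varepsilon,c_{0}$ uniformly in $j$, I would first establish the two-sided comparison $r_{j}(D) \asymp_{A,\varepsilon,c_{0}} \max\bigl(r(D), \min(d(z_{j},x(B_{0})), d(z_{j},x(D)))\bigr)$, and then read both \eqref{e:chainball.rad} and \eqref{e:chainball.center} off from this single estimate.
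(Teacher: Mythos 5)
The paper itself contains no proof of this lemma: it is quoted verbatim from \cite[Lemma 3.9]{Mur24}, whose proof is exactly the construction you sketch (follow the $A$-uniform curve, and pass to a new Whitney ball at each first exit time from the current tripled ball, then use $\delta_U(z)\asymp_\varepsilon r_j$ for $z\in 3B_j^D\cap\gamma$ together with the uniform-curve lower bound on $\delta_U$ and the $1$-Lipschitz property of $\delta_U$). So your route is essentially the same as the source's; the two-case analysis you give for \eqref{e:chainball.center} (according to whether $z_j$ is metrically closer to $x(D)$ or to $x(B_0)$, the second case being where $r(B_0)\ge c_0r$ enters) is the right mechanism and does yield the inclusion, and your bounds for \eqref{e:chainball.rad} are of the correct form, though with constants differing from the explicit ones stated (reproducing those would require redoing the bookkeeping of \cite[Lemma 3.9]{Mur24}).

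Two points in the write-up are genuinely loose. First, finiteness of the exit-time process: a uniform curve here is only required to satisfy $\diam(\gamma)\le A\,d(x,y)$, not to be rectifiable, and the process may revisit regions, so ``$\diam(\gamma)$ is finite'' does not by itself bound the number of steps. The fix is the standard one: the radii of Whitney balls meeting $\gamma$ are bounded below (via $\delta_U\gtrsim_{A,\varepsilon}\min\{r(B_0),r(D)\}$ along $\gamma$), $\gamma$ is compact, so one may select the balls from a finite subcover by the dilated balls $B_U(x_i,K_\varepsilon r_i)$ (note the undilated balls of $\mathfrak{R}$ are pairwise disjoint and do \emph{not} cover $U$, so your selection of $B_{j+1}^D$ with $\gamma(t_{j+1})\in B_{j+1}^D$ should be replaced by $\gamma(t_{j+1})\in K_\varepsilon B_{j+1}^D$, which is harmless since $K_\varepsilon<3$), and an infinite chain would force $\gamma$ to oscillate by at least $(3-K_\varepsilon)r_j$ infinitely often near a limiting parameter, contradicting continuity. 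Second, the consolidating claim $r_j(D)\asymp_{A,\varepsilon,c_0}\max\bigl(r(D),\min(d(z_j,x(B_0)),d(z_j,x(D)))\bigr)$ is false in the upper-bound direction: taking $z_j$ at (or near) $x(B_0)$ gives $r_j\asymp r(B_0)\asymp_{c_0} r$, while the right-hand side is comparable to $r(D)$, which may be much smaller than $r$. Only the $\gtrsim$ half holds, and that half—which your two cases in effect establish—is all that \eqref{e:chainball.center} needs, with the separate bound $r_j\lesssim_{A,\varepsilon} r$ giving \eqref{e:chainball.rad}.
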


The following lemma is known as the \emph{Bojarski lemma} \cite[Lemma 4.2]{Boj88}; see also \cite[Lemma 3.25]{GS11}, \cite[Exercise 2.10]{Hei}, \cite[Lemma 5.1]{Mur24}.
\begin{lem}\label{lem:Lp.overlap}
    Assume that \ref{e:VD} holds.
    Let $\{ B(x_i,r_i) \}_{i \in I}$ be a countable collection of balls and let $a_{i} \ge 0$ for $i \in I$.
    For any $q \in (1,\infty)$ and $\lambda \ge 1$, there exists $C \ge 1$ depending only on $q,\lambda$ such that
    \[
    \int_{X}\left(\sum_{i \in I}a_{i}\indicator{B(x_{i},\lambda r_{i})}\right)^{q}\,dm
    \le C\int_{X}\left(\sum_{i \in I}a_{i}\indicator{B(x_{i},r_{i})}\right)^{q}\,dm.
    \]
\end{lem}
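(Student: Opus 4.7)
The plan is to prove this by the classical duality-plus-maximal-function argument. I set $f \coloneqq \sum_{i \in I} a_{i} \indicator{B(x_{i},r_{i})}$ and $g \coloneqq \sum_{i \in I} a_{i} \indicator{B(x_{i},\lambda r_{i})}$, so the goal becomes $\norm{g}_{L^{q}(X,m)} \le C \norm{f}_{L^{q}(X,m)}$. Since $q \in (1,\infty)$, $L^{q}$-$L^{q'}$ duality (with $q' = q/(q-1)$) reduces this to establishing
\[
\int_{X} g h \, dm \le C \norm{f}_{L^{q}(X,m)}
\]
for every nonnegative $h \in L^{q'}(X,m)$ with $\norm{h}_{L^{q'}(X,m)} \le 1$.

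The key pointwise step uses \ref{e:VD}. For each $i \in I$ and every $y \in B(x_{i}, r_{i})$, the inclusion $B(x_{i}, \lambda r_{i}) \subset B(y, (\lambda+1) r_{i})$ combined with iterated applications of \ref{e:VD} yields $m(B(y,(\lambda+1)r_{i})) \le C_{1}(\lambda, c_{\mathrm{D}}) \, m(B(x_{i},r_{i}))$, whence
\[
\fint_{B(x_{i}, \lambda r_{i})} h \, dm
\le C_{1} \fint_{B(y, (\lambda+1)r_{i})} h \, dm
\le C_{1} \, Mh(y),
\]
where $M$ denotes the centered Hardy--Littlewood maximal operator on $(X,d,m)$. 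Taking the infimum over $y \in B(x_{i},r_{i})$ of the right-hand side and multiplying by $a_{i} \, m(B(x_{i},r_{i}))$ gives
\[
a_{i} \int_{B(x_{i},\lambda r_{i})} h \, dm
\le C_{1} \int_{X} a_{i} \indicator{B(x_{i},r_{i})} \cdot Mh \, dm.
\]

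Summing this estimate over $i \in I$ (exchanging sum and integral by monotone convergence), and then applying H\"older's inequality together with the $L^{q'}$-boundedness of $M$, I obtain
\[
\int_{X} g h \, dm
\le C_{1} \int_{X} f \cdot Mh \, dm
\le C_{1} \norm{f}_{L^{q}(X,m)} \norm{Mh}_{L^{q'}(X,m)}
\le C_{2} \norm{f}_{L^{q}(X,m)} \norm{h}_{L^{q'}(X,m)},
\]
and taking the supremum over $h$ completes the proof. The step requiring the most care is the appeal to the Hardy--Littlewood maximal inequality in the general doubling metric measure space setting; this is standard (see, e.g., \cite{HKST}), and its operator norm on $L^{q'}$ depends only on $q$ and $c_{\mathrm{D}}$, which accounts for the claimed dependence of $C$ on $q$ and $\lambda$ (with the doubling constant $c_{\mathrm{D}}$ treated as fixed by the ambient hypothesis).
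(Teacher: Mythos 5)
Your proof is correct: the paper does not prove this lemma itself but cites it as the Bojarski lemma (Bojarski, Gyrya--Saloff-Coste, Heinonen, Murugan), and the standard proof in those references is precisely your duality-plus-Hardy--Littlewood-maximal-function argument, so you have reproduced the intended proof. One small bookkeeping remark: in the step where you ``multiply by $a_{i}\,m(B(x_{i},r_{i}))$,'' the left-hand side actually acquires the factor $m(B(x_{i},r_{i}))/m(B(x_{i},\lambda r_{i}))$, so to reach the displayed inequality you need one further application of \ref{e:VD} to bound $m(B(x_{i},\lambda r_{i})) \le C(\lambda,c_{\mathrm{D}})\,m(B(x_{i},r_{i}))$ --- harmless, since this constant depends only on $\lambda$ and $c_{\mathrm{D}}$ and is absorbed into $C_{1}$.
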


The following lemma is an analogue of \cite[Lemma 5.2]{Mur24}.
\begin{lem}\label{lem:whitney.avediff}
    Assume that \ref{e:VD} and \ref{e:PI} hold.
    Let $U$ be a $A$-uniform domain for some $A \ge 1$, let $\varepsilon \in (0,1/4)$ be such that $A_{\mathrm{P}}\bigl(3 + \frac{6(1 + 4\varepsilon)}{1 - 2\varepsilon}\bigr)\frac{\varepsilon}{1 + \varepsilon} < 1$, where $A_{\mathrm{P}} \ge 1$ is the constant in \ref{e:PI}, and let $\mathfrak{R} = \{ B_{U}(x_i,r_i) \mid x_i \in U, r_i > 0, i \in I \}$ be an $\varepsilon$-Whitney cover of $U$.
    Then there exists $C = C(p,c_{\mathrm{D}},A,\varepsilon,C_{\Psi},\beta_{2},C_{\mathrm{P}}) \in (0,\infty)$ such that for any $i,j \in I$ with $B_{U}(x_i,3r_i) \cap B_{U}(x_j,3r_j) \neq \emptyset$ and $u \in \mathcal{F}_{p,\mathrm{loc}}(B(x_i,A_{\mathrm{P}}Lr_{i}))$, where $L \coloneqq \bigl(3 + \frac{6(1 + 4\varepsilon)}{1 - 2\varepsilon}\bigr) \le 27$,
    \[
    \abs{u_{B_{U}(x_i,3r_i)} - u_{B_{U}(x_j,3r_j)}}^{p}
    \le C\frac{\Psi(r_i)}{m(B(x_i,r_i))}\int_{B(x_i,A_{\mathrm{P}}Lr_{i})}\,d\Gamma_{p}\langle u \rangle.
    \]
\end{lem}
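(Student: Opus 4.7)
The plan is to reduce to the local Poincar\'e inequality \eqref{e:PIloc} applied on a single ball centered at $x_i$ that simultaneously contains $B_U(x_i,3r_i)$ and $B_U(x_j,3r_j)$.

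First, since $\varepsilon < 1/4$, Proposition \ref{prop:whitney}-\ref{it:whitney.radcomparison} with $\lambda = 3$ gives the radius comparison $r_j \le \frac{1 + 4\varepsilon}{1 - 2\varepsilon}\,r_i$. Combined with the overlap hypothesis $B_U(x_i,3r_i) \cap B_U(x_j,3r_j) \neq \emptyset$, the triangle inequality gives $d(x_i,x_j) < 3r_i + 3r_j$, so that both $B_U(x_i,3r_i)$ and $B_U(x_j,3r_j)$ are contained in $B_{\ast} \coloneqq B(x_i, Lr_i)$. Moreover, the defining relation $r_i = \frac{\varepsilon}{1+\varepsilon}\delta_U(x_i)$ of the Whitney cover gives $B(x_i, r_i) \subset U$, hence $B_U(x_i, r_i) = B(x_i, r_i) \subset B_U(x_i, 3r_i)$, and analogously for $j$; combined with \ref{e:VD} and the fact that $B(x_i, r_i)$ and $B(x_j, r_j)$ have comparable radii and nearby centers, this yields $m(B_U(x_i, 3r_i)) \wedge m(B_U(x_j, 3r_j)) \ge c\,m(B(x_i, r_i))$ for some $c$ depending only on $c_{\mathrm{D}}, A, \varepsilon$.

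Next, setting $\bar u \coloneqq u_{B_{\ast}}$, I would split
\begin{equation*}
    \bigl|u_{B_U(x_i,3r_i)} - u_{B_U(x_j,3r_j)}\bigr|^p \le 2^{p-1}\!\!\!\sum_{A \in \{B_U(x_i,3r_i),\,B_U(x_j,3r_j)\}}\!\!\!\bigl|u_{A} - \bar u\bigr|^p
\end{equation*}
and bound each summand by Jensen's inequality as $|u_A - \bar u|^p \le \frac{m(B_{\ast})}{m(A)}\fint_{B_{\ast}}|u - \bar u|^p\,dm$. The lower bound on $m(A)$ from the previous paragraph together with \ref{e:VD} applied to $B(x_i, Lr_i) \supset B(x_i, r_i)$ shows that $m(B_{\ast})/m(A) \le C$ uniformly.

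Finally, since $u \in \mathcal{F}_{p,\mathrm{loc}}(B(x_i, A_{\mathrm{P}}Lr_{i}))$ by hypothesis, I would apply \eqref{e:PIloc} to $u$ on $B_{\ast}$ with outer radius $A_{\mathrm{P}}Lr_i$, yielding $\fint_{B_{\ast}}|u - \bar u|^p\,dm \le C_{\mathrm{P}}\frac{\Psi(Lr_i)}{m(B_{\ast})}\int_{B(x_i, A_{\mathrm{P}}Lr_i)}d\Gamma_p\langle u\rangle$. The scale comparison $\Psi(Lr_i) \le C_{\Psi}L^{\beta_2}\Psi(r_i)$ from \eqref{e:scalefunction} combined with $m(B_{\ast}) \ge m(B(x_i,r_i))$ then produces the claimed estimate. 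The argument is straightforward once the geometric containments are settled; the main technical point is verifying the volume comparison $m(B(x_j,r_j)) \ge c\,m(B(x_i,r_i))$, which hinges on the Whitney radius comparison from step one and a single application of \ref{e:VD}.
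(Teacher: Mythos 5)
Your proof is correct and follows essentially the same route as the paper: both arguments rest on the same geometric facts (the Whitney radius comparison, the containment of both balls in $B(x_i,Lr_i)\subset U$, and the volume comparability $m(B_U(x_j,3r_j))\gtrsim m(B(x_i,r_i))$ via \ref{e:VD}), followed by the Poincar\'e inequality on the enlarged ball $B(x_i,Lr_i)$ with outer ball $B(x_i,A_{\mathrm{P}}Lr_i)$. The only difference is cosmetic: you compare each average to $u_{B_\ast}$ via Jensen, while the paper bounds the difference of averages by the double average of $\abs{u(x)-u(y)}^p$ and invokes \eqref{e:doublevar}; the two manipulations are interchangeable and yield the same constant dependencies.
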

\begin{proof}
    Note that $B_{U}(x_{j},3r_{j}) \subset B_{U}(x_i,Lr_{i})$ by \eqref{e:rad.comparison1} and that $B_{U}(x_{i},A_{\mathrm{P}}Lr_{i}) = B(x_{i},A_{\mathrm{P}}Lr_{i})$ by $A_{\mathrm{P}}L\frac{\varepsilon}{1 + \varepsilon} < 1$.
    By \ref{e:VD} and \eqref{e:rad.comparison1}, we know that $m(B(x_i,r_i)) \le c_{1}m(B(x_j,r_j))$ and $m(B_{U}(x_i,Lr_i)) \le c_{1}m(B(x_i,r_i))$ for some $c_{1}$ depending only on $c_{\mathrm{D}},A,\varepsilon$.
    Now we see that
    \begin{align*}
        \abs{u_{B_{U}(x_i,3r_i)} - u_{B_{U}(x_j,3r_j)}}^{p}
        &\le \fint_{B_{U}(x_i,3r_i)}\fint_{B_{U}(x_j,3r_j)}\abs{u(x) - u(y)}^{p}\,m(dx)\,m(dy) \\
        &\overset{\eqref{e:doublevar},\ref{e:PI}}{\le} \frac{2^{p}c_{1}^{2}\,C_{\mathrm{P}}\Psi(Lr_i)}{m(B(x_i,r_i))}\int_{B(x_i,A_{\mathrm{P}}Lr_i)}\,d\Gamma_{p}\langle u \rangle.
    \end{align*}
    where we used H\"{o}lder's inequality in the first line.
\end{proof}

Now we can estimate discrete $p$-energy forms as follows. 
\begin{lem}\label{lem:telescope}
    Assume that \ref{e:VD} and \ref{e:PI} hold.
    Let $U$ be a $A$-uniform domain for some $A \ge 1$, let $\varepsilon \in (0,1/4)$ be such that $A_{\mathrm{P}}\bigl(3 + \frac{6(1 + 4\varepsilon)}{1 - 2\varepsilon}\bigr)\frac{\varepsilon}{1 + \varepsilon} < 1$, where $A_{\mathrm{P}} \ge 1$ is the constant in \ref{e:PI}, and let $\mathfrak{R} = \{ B_{U}(x_i,r_i) \mid x_i \in U, r_i > 0, i \in I \}$ be an $\varepsilon$-Whitney cover of $U$.
    Then for any $a_{0} \in (0,\infty)$, there exist $C,\lambda \in [1,\infty)$ (depending only on $p,a_{0},c_{\mathrm{D}},A,\varepsilon,C_{\Psi},\beta_{2},C_{\mathrm{P}},A_{\mathrm{P}}$) such that for any $x \in \closure{U}^{X}$, $r \in [\delta_{U}(x),\diam(U)/2)$, $B_{0} = B_{U}(x_{0},r_{0}) \in \mathfrak{R}(x,r)$ with $r_{0} \ge a_{0}r$ and $u \in \mathcal{F}_{p,\mathrm{loc}}(B_{U}(x,\lambda r))$,
    \begin{equation}\label{e:telescope}
        \sum_{i \in I; B_{U}(x_i,r_i) \in \mathfrak{R}(x,r)}\abs{u_{B_{U}(x_{i},3r_{i})} - u_{3B_{0}}}^{p}m(B_{U}(x_{i},3r_{i}))
        \le C\Psi(r)\int_{B_{U}(x,\lambda r)}\,d\Gamma_{p}\langle u \rangle.
    \end{equation}
\end{lem}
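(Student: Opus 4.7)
The plan is to combine the chain construction of Lemma \ref{lem:chain.PI} with a telescoping argument, apply the link estimate of Lemma \ref{lem:whitney.avediff} to each step, and then convert the resulting sum over Whitney balls into a single integral against $\Gamma_{p}\langle u \rangle$ by a Bojarski-type argument (Lemma \ref{lem:Lp.overlap}). For each $D \in \mathfrak{R}(x,r)$, Lemma \ref{lem:chain.PI} (applied with $c_{0} = a_{0}$) produces a chain $B_{0} = B_{0}^{D}, B_{1}^{D}, \ldots, B_{l(D)}^{D} = D$ in $\mathfrak{R}$ with $3B_{j-1}^{D} \cap 3B_{j}^{D} \neq \emptyset$. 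Telescoping yields $|u_{D} - u_{3B_{0}}| \le \sum_{j=1}^{l(D)} |u_{3B_{j}^{D}} - u_{3B_{j-1}^{D}}|$, and Lemma \ref{lem:whitney.avediff} bounds each link by $a_{B_{j}^{D}}$, where
\[
a_{B} := \biggl( C\, \frac{\Psi(r_{B})}{m(B)} \int_{B(x_{B}, A_{\mathrm{P}} L r_{B})} d\Gamma_{p}\langle u \rangle \biggr)^{1/p}
\]
and $L$ is the constant from Lemma \ref{lem:whitney.avediff}. The condition on $\varepsilon$ ensures $B(x_{B}, A_{\mathrm{P}} L r_{B}) \subset U$, and \eqref{e:chainball.rad} places this ball inside $B_{U}(x, \lambda r)$ once $\lambda$ is chosen large enough in terms of $A_{\mathrm{P}}, L, C_{0}$, so $u \in \mathcal{F}_{p,\mathrm{loc}}(B(x_{B}, A_{\mathrm{P}} L r_{B}))$ as required to invoke Lemma \ref{lem:whitney.avediff}.

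Since $\{B\}_{B \in \mathfrak{R}}$ are pairwise disjoint, the elementary inequality $\sum_{D} b_{D}^{p}\, \indicator{3D} \le \bigl(\sum_{D} b_{D}\, \indicator{3D}\bigr)^{p}$ (valid for $p \ge 1$ and $b_{D} \ge 0$) gives
\[
\sum_{D \in \mathfrak{R}(x,r)} |u_{D} - u_{3B_{0}}|^{p}\, m(3D) \le \int_{U} \biggl( \sum_{D} \sum_{j=1}^{l(D)} a_{B_{j}^{D}}\, \indicator{3D}(y) \biggr)^{p} m(dy).
\]
From \eqref{e:chainball.center} together with the $1$-Lipschitz continuity of $\delta_{U}$ and $r_{j}(D) = \tfrac{\varepsilon}{1+\varepsilon}\delta_{U}(x_{j}(D))$, I obtain $r(D) \le C' r_{j}(D)$, and consequently $3D \subset C_{2} B_{j}^{D}$ for some $C_{2} = C_{2}(A,\varepsilon)$. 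Setting $\mathfrak{R}^{*}(x,r) := \{B_{j}^{D} : D \in \mathfrak{R}(x,r),\, 0 \le j \le l(D)\}$ and $n(B) := \#\{(D,j) : B_{j}^{D} = B\}$, the inner double sum is pointwise dominated by $\sum_{B \in \mathfrak{R}^{*}(x,r)} n(B)\, a_{B}\, \indicator{C_{2} B}(y)$.

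The crucial combinatorial step—and the part I expect to be the main obstacle—is the uniform bound $n(B) \le C(A,\varepsilon,c_{\mathrm{D}})$. For $(D,j)$ with $B_{j}^{D} = B$, estimate \eqref{e:chainball.center} forces $x(D) \in C_{1} B$; the $1$-Lipschitz continuity of $\delta_{U}$ then yields $r(D) \asymp r_{B}$, so the pairwise disjointness of Whitney balls together with \ref{e:VD} bounds the number of admissible $D$, and within a fixed $D$ the chain balls are distinct, so $j$ is determined by $B$. With $n(B)$ uniformly bounded, Lemma \ref{lem:Lp.overlap} shrinks $C_{2} B$ back to $B$, and the pairwise disjointness of $\mathfrak{R}$ collapses the $p$-th power to a sum:
\[
\int_{U} \biggl( \sum_{B} n(B)\, a_{B}\, \indicator{C_{2} B} \biggr)^{p} dm \le C \sum_{B \in \mathfrak{R}^{*}(x,r)} a_{B}^{p}\, m(B).
\]
Finally, the radius bound $r_{B} \le c\varepsilon r$ from \eqref{e:chainball.rad} combined with the doubling condition \eqref{e:scalefunction} gives $\Psi(r_{B}) \le C\Psi(r)$, and the bounded overlap of a fixed dilation of Whitney balls (derived from Proposition \ref{prop:whitney}\ref{it:whitney.radcomparison},\ref{it:whitney.overlap}) converts $\sum_{B} \int_{B(x_{B}, A_{\mathrm{P}} L r_{B})} d\Gamma_{p}\langle u \rangle$ into a single integral over $B_{U}(x, \lambda r)$, yielding \eqref{e:telescope}.
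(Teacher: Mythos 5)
Your overall architecture is the same as the paper's (chain from Lemma \ref{lem:chain.PI}, telescoping, the link estimate of Lemma \ref{lem:whitney.avediff}, then Lemma \ref{lem:Lp.overlap}), but the step you yourself flag as the main obstacle is a genuine gap: the uniform bound $n(B)\le C(A,\varepsilon,c_{\mathrm{D}})$ is false. Every chain starts at the central ball, i.e.\ $B_{0}^{D}=B_{0}$ for all $D\in\mathfrak{R}(x,r)$, so $n(B_{0})=\#\mathfrak{R}(x,r)$, and this is unbounded: Whitney radii satisfy $r_i=\frac{\varepsilon}{1+\varepsilon}\delta_{U}(x_i)$, so arbitrarily small Whitney balls accumulate at $\partial U\cap B(x,r)$ and all of them belong to $\mathfrak{R}(x,r)$. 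Your argument for $n(B)\le C$ rests on ``$r(D)\asymp r_{B}$'', but \eqref{e:chainball.center} only gives the one-sided bound $r(D)\lesssim r_{j}(D)$ (a ball contained in $C_{1}B$ can be tiny); there is no reverse inequality, since the chain of a tiny ball $D$ near the boundary must climb through balls of every intermediate scale up to $\sim r$, so a fixed large chain ball is used by the chains of arbitrarily many, arbitrarily small $D$'s. Once $\indicator{3D}$ is replaced by $\indicator{C_{2}B_{j}^{D}}$ and the multiplicities $n(B)$ are introduced, the passage to $\sum_{B}a_{B}^{p}\,m(B)$ breaks down. (A minor slip besides this: the quantity to control is $u_{B_U(x_i,3r_i)}=u_{3D}$, not $u_{D}$; the telescope along $3B_0^D,\dots,3B_{l(D)}^D$ does end at $u_{3D}$, so this is only notational.)

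The fix — and what the paper does — is to never discard the localizing factor and never count multiplicities. For each fixed $D$ one bounds the telescoped quantity multiplied by $\indicator{D}$ (after using \eqref{e:VD.ud} to trade $m(B_{U}(x_i,3r_i))$ for $m(B_{U}(x_i,r_i))$, so that the pairwise disjoint, undilated Whitney balls appear) by the single common majorant $\sum_{B_{U}(x_j,r_j)\in\mathfrak{R}_{\mathrm{chain}}(x,r)}a_{j}\,\indicator{D}\,\indicator{B_{U}(x_j,C_{1}r_j)}$, where $\mathfrak{R}_{\mathrm{chain}}(x,r)$ is the set of distinct chain balls; this is legitimate because the extra terms are nonnegative and, for $B\in\mathbb{S}(D)$, \eqref{e:chainball.center} gives $\indicator{D}\indicator{B_{U}(x_j,C_1r_j)}=\indicator{D}$. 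Summing over the pairwise disjoint $D$'s then costs nothing pointwise — each chain ball enters with coefficient $1$, not $n(B)$ — and Lemma \ref{lem:Lp.overlap} plus the disjointness of the chain balls collapses the $p$-th power to $\sum_{B}a_{B}^{p}m(B)$, after which \eqref{e:chainball.rad}, \eqref{e:scalefunction} and \eqref{e:whitney.finiteoverlap} finish exactly as you indicate. Alternatively, your variant can be repaired in the same spirit: at each $y$ the balls $3D$ have bounded overlap, so $\sum_{D:\,y\in 3D}\sum_{j}a_{B_j^D}\le N\sum_{B\in\mathfrak{R}_{\mathrm{chain}}(x,r)}a_{B}\indicator{C_{2}B}(y)$, which again avoids $n(B)$ entirely; but as written, the combinatorial claim is the missing (and unprovable) ingredient.
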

\begin{proof} 
    Let us fix $D \coloneqq B_{U}(x_{i},r_{i}) \in \mathfrak{R}(x,r)$ and choose a collection $\mathbb{S}(D) = \{ B_{0}^{D}, B_{1}^{D}, \dots, B_{l(D)}^{D} \}$ as given in Lemma \ref{lem:chain.PI}.
    Set $3B_{j}^{D} \coloneqq B_{U}(x_{j}(D),3r_{j}(D))$ and
    \[
    a_{j} \coloneqq \left(\frac{\Psi(r_{j}(D))}{m(B_{U}(x_{j}(D),r_{j}(D)))}\int_{B_{U}(x_{j}(D),A_{\mathrm{P}}Lr_{j}(D))}\,d\Gamma_{p}\langle u \rangle\right)^{1/p},
    \]
    where $L$ is the constant in Lemma \ref{lem:whitney.avediff}.
    By Lemma \ref{lem:chain.PI}, we know that $D \subset B_{U}(x_{j}(D),C_{1}r_{j}(D))$ for any $j \in \{ 0,1,\dots,l(D) \}$, where $C_{1}$ is the constant in \eqref{e:chainball.center}, and by Lemma \ref{lem:whitney.avediff}, 
    \begin{equation}\label{e:telescope.basis}
        \abs{u_{3B_{0}} - u_{3B_{l(D)}^{D}}}\indicator{B_{l(D)}^{D}}
        \le c_{1}\sum_{B_{U}(x_j,r_j) \in \mathfrak{R}_{\mathrm{chain}}(x,r)}a_{j}\indicator{D}\indicator{B_{U}(x_{j},C_{1}r_{j})}, 
    \end{equation}
    where $\mathfrak{R}_{\mathrm{chain}}(x,r) \coloneqq \{ B_{j}^{D} = B_{U}(x_{j},r_{j}) \in \mathbb{S}(D) \mid D \in \mathfrak{R}(x,r) \}$.
    Hence
    \begin{align*}\label{e:PI.chain}
        &\sum_{i \in I; B_{U}(x_i,r_i) \in \mathfrak{R}(x,r)}\int_{B_{U}(x_{i},3r_{i})}\abs{u_{B_{U}(x_{i},3r_{i})} - u_{3B_{0}}}^{p}\,dm \nonumber \\
        &\overset{\eqref{e:VD.ud},\eqref{e:telescope.basis}}{\le} c_{2}\int_{X}\sum_{i \in I; D \coloneqq B_{U}(x_i,r_i) \in \mathfrak{R}(x,r)}\left(\sum_{B_{U}(x_j,r_j) \in \mathfrak{R}_{\mathrm{chain}}(x,r)}a_{j}\indicator{D}\indicator{B_{U}(x_{j},C_{1}r_{j})}\right)^{p}\,dm \nonumber \\
        &\le c_{3}\int_{X}\left(\sum_{B_{U}(x_j,r_j) \in \mathfrak{R}_{\mathrm{chain}}(x,r)}a_{j}\indicator{B_{U}(x_{j},r_{j})}\right)^{p}\,dm \qquad \text{(by Lemma \ref{lem:Lp.overlap})} \nonumber \\
        &\overset{\eqref{e:chainball.rad},\eqref{e:scalefunction}}{\le} c_{4}\Psi(r)\sum_{B_{U}(x_j,r_j) \in \mathfrak{R}_{\mathrm{chain}}(x,r)}\int_{B_{U}(x_{j},A_{\mathrm{P}}Lr_{j})}\,d\Gamma_{p}\langle u \rangle
        \overset{\eqref{e:whitney.finiteoverlap},\eqref{e:chainball.rad}}{\le} c_{5}\Psi(r)\int_{B_{U}(x,\lambda r)}\,d\Gamma_{p}\langle u \rangle.
    \end{align*}
    Here $\lambda$ is chosen so that $\lambda \ge C_{0} + A_{\mathrm{P}}L\frac{(A(4\varepsilon + 1) + 1 - 2\varepsilon)}{(1 - \varepsilon)^{2}}$, where $C_{0}$ is the constant in \eqref{e:chainball.center}.
\end{proof}

Now we prove the Poincar\'{e} inequality on uniform domains, which is an analogue of \cite[Theorem 5.3]{Mur24}. 
\begin{thm}[\ref{e:PI} on uniform domains]\label{thm:PI.ud}
    Assume that \ref{e:VD} and \ref{e:PI} hold.
    Let $U$ be a $A$-uniform domain for some $A \ge 1$.
    Then there exist $A_{U}, C_{U} \ge 1$ (depending only on $p,c_{\mathrm{D}},A,\varepsilon,C_{\Psi},\beta_{2},C_{\mathrm{P}},A_{\mathrm{P}}$) such that for any $x \in U$, $r > 0$, and $u \in \mathcal{F}_{p,\mathrm{loc}}(B_{U}(x,A_{U}r))$,
    \begin{equation}\label{e:PI.ud}
        \int_{B_{U}(x,r)}\abs{u - u_{B_{U}(x,r)}}^{p}\,dm \le C_{U}\Psi(r)\int_{B_{U}(x,A_{U}r)}\,d\Gamma_{p}\langle u \rangle.
    \end{equation}
\end{thm}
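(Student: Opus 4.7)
The plan is to follow the chaining argument of \cite[Theorem 5.3]{Mur24}, adapted to the $p$-energy setting via the tools built up in Lemmas \ref{lem:whitney.avediff}--\ref{lem:telescope}. There are two regimes to consider, according to whether the ball $B_U(x,r)$ is ``interior'' or ``touches the boundary'' of $U$.

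\textbf{Interior ball.} If $B(x, A_{\mathrm{P}} r) \subset U$, then $B_U(x,r) = B(x,r)$ and the localized Poincar\'{e} inequality \eqref{e:PIloc} applied to $u$ directly yields \eqref{e:PI.ud} with $A_U = A_{\mathrm{P}}$ and $C_U = C_{\mathrm{P}}$.

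\textbf{Boundary ball.} Otherwise $\delta_U(x) < A_{\mathrm{P}} r$, and (up to enlarging $A_U$) we may assume $r \in [\delta_U(x), \diam(U)/2)$. Fix $\varepsilon \in (0, 1/14)$ small enough that both Lemmas \ref{lem:whitney.avediff} and \ref{lem:telescope} apply (in particular so that $B(x_i, 3A_{\mathrm{P}} r_i) \subset U$ for every Whitney ball), and take an $\varepsilon$-Whitney cover $\mathfrak{R} = \{B_U(x_i, r_i)\}_{i \in I}$ of $U$. Proposition \ref{prop:whitney}-\ref{it:whitney.central} produces a central ball $B_0 = B_U(x_0, r_0) \in \mathfrak{R}(x, r)$ with $r_0 \ge c_0 r$ together with the covering $B_U(x,r) \subset \bigcup_{B_i \in \mathfrak{R}(x,r)} 3B_i \subset B_U(x, 2r)$. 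Using the elementary inequality $\int_A |u - u_A|^p \, dm \le 2^p \int_A |u - c|^p \, dm$ with $A = B_U(x,r)$ and $c = u_{3B_0}$, combined with the above covering,
\begin{equation*}
\int_{B_U(x,r)} |u - u_{B_U(x,r)}|^p \, dm
\le 2^p \sum_{B_i \in \mathfrak{R}(x,r)} \int_{3B_i} |u - u_{3B_0}|^p \, dm.
\end{equation*}
Splitting $|u - u_{3B_0}|^p \le 2^{p-1}\bigl(|u - u_{3B_i}|^p + |u_{3B_i} - u_{3B_0}|^p\bigr)$ then reduces the problem to two pieces.

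For the first, \eqref{e:PIloc} applied on each $3B_i$ gives $\int_{3B_i}|u - u_{3B_i}|^p dm \le c_1 \Psi(r_i) \Gamma_p\langle u\rangle(B(x_i, 3A_{\mathrm{P}} r_i))$. Summing over $i$ using the radius bound $r_i \le c_2 r$ from Lemma \ref{lem:chain.PI}, the growth bound \eqref{e:scalefunction}, the containment $B(x_i, 3 A_{\mathrm{P}} r_i) \subset B_U(x, \lambda_1 r)$, and the bounded overlap \eqref{e:whitney.finiteoverlap} yields $c_3 \Psi(r)\,\Gamma_p\langle u\rangle(B_U(x, \lambda_1 r))$. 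For the second, the bound $\sum_i m(3B_i)|u_{3B_i} - u_{3B_0}|^p \le c_4 \Psi(r)\,\Gamma_p\langle u\rangle(B_U(x, \lambda_2 r))$ is exactly Lemma \ref{lem:telescope} applied with $a_0 = c_0$. Setting $A_U$ above $\max\{A_{\mathrm{P}}, \lambda_1, \lambda_2\}$ and combining the two pieces yields \eqref{e:PI.ud}.

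\textbf{Main obstacle.} The essential difficulty of the chaining across the boundary has already been absorbed into Lemma \ref{lem:telescope}, so once that lemma is in hand the remainder is geometric bookkeeping (radius comparisons, bounded overlap, and doubling). The only remaining technicality is the degenerate case $r \ge \diam(U)/2$ when $U$ is bounded, which can be handled separately by comparison with a fixed reference ball $B_U(x^*, \diam(U)/4)$ and absorbing the resulting constants into $A_U$ and $C_U$.
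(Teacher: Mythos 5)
Your proposal is correct and follows essentially the same route as the paper's proof: the same three-regime case split (interior ball via \eqref{e:PIloc}, boundary ball via the $\varepsilon$-Whitney cover with the central ball of Proposition \ref{prop:whitney}-\ref{it:whitney.central}, and the intermediate regime $A_{\mathrm{P}}^{-1}\delta_U(x) < r < \delta_U(x)$ absorbed by enlarging the radius to $A_{\mathrm{P}}r$), and the same decomposition $\abs{u-u_{3B_0}}^p \lesssim \abs{u-u_{3B_i}}^p + \abs{u_{3B_i}-u_{3B_0}}^p$ handled by \ref{e:PI} with bounded overlap for the first piece and Lemma \ref{lem:telescope} for the second, exactly as in \eqref{e:prePIonUD.1}--\eqref{e:prePIonUD.3}. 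Your closing remark on the degenerate case $r \ge \diam(U)/2$ is left at the same (sketch) level of detail as the paper, which does not treat it explicitly either, so no substantive discrepancy remains.
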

\begin{proof}
    As in \cite[Theorem 5.3]{Mur24}, the proof is divided into the following three cases.
    \begin{enumerate}[label=\textup{(\roman*)},align=left,leftmargin=*,topsep=2pt,parsep=0pt,itemsep=2pt]
        \item $x \in \closure{U}^{X}$ and $r \ge \delta_{U}(x)$.
        \item $x \in \closure{U}^{X}$ and $A_{\mathrm{P}}r \le \delta_{U}(x)$.
        \item $x \in U$ and $A_{\mathrm{P}}^{-1}\delta_{U}(x) < r < \delta_{U}(x)$.
    \end{enumerate}

    (i)\, Let $\varepsilon \in (0,1/4)$ be small enough so that $27A_{\mathrm{P}}\frac{\varepsilon}{1 + \varepsilon} < 1$ and let $\mathfrak{R} = \{ B_{U}(x_i,r_i) \}_{i \in I}$ be an $\varepsilon$-Whitney cover of $U$.
    Let $B_{0} = B_{U}(x(B_{0}),r(B_{0})) \in \mathfrak{R}(x,r)$ satisfy $r(B_{0}) \ge \frac{\varepsilon}{3A(4+\varepsilon)}r$, which exists by Proposition \ref{prop:whitney}-\ref{it:whitney.central}, and set $3B_{0} \coloneqq B_{U}(x(B_{0}),3r(B_{0}))$.
    By \cite[Lemma 4.17]{BB} and \eqref{e:whitney.nearball}, we observe that
    \begin{align}\label{e:prePIonUD.1}
        &\int_{B_{U}(x,r)}\abs{u - u_{B_{U}(x,r)}}^{p}\,dm
        \le 2^{p}\int_{B_{U}(x,r)}\abs{u - u_{3B_{0}}}^{p}\,dm \nonumber \\
        &\quad\le 2^{2p - 1}\sum_{i \in I; B_{U}(x_i,r_i) \in \mathfrak{R}(x,r)}\int_{B_{U}(x_{i},3r_{i})}\bigl\{\abs{u - u_{B_{U}(x_{i},3r_{i})}}^{p} + \abs{u_{B_{U}(x_{i},3r_{i})} - u_{3B_{0}}}^{p}\bigr\}\,dm.
    \end{align}
    Let $A_{U} > 0$ satisfy $A_{U} \ge \bigl(1 + 6\frac{1 + 4\varepsilon}{1 - 2\varepsilon}\bigr) \vee \lambda$, where $\lambda \in [1,\infty)$ is the constant in Lemma \ref{lem:telescope}. 
    Then we have from \ref{e:PI}, \eqref{e:scalefunction}, \eqref{e:rad.comparison1} and \eqref{e:whitney.finiteoverlap} that  
    \begin{equation}\label{e:prePIonUD.2}
        \sum_{i \in I; B_{U}(x_i,r_i) \in \mathfrak{R}(x,r)}\int_{B_{U}(x_{i},3r_{i})}\abs{u - u_{B_{U}(x_{i},3r_{i})}}^{p}\,dm
        \le c_{1}\Psi(r)\int_{B_{U}(x,A_{U}r)}\,d\Gamma_{p}\langle u \rangle. 
    \end{equation}
    From Lemma \ref{lem:telescope}, we also have
    \begin{equation}\label{e:prePIonUD.3}
        \sum_{i \in I; B_{U}(x_i,r_i) \in \mathfrak{R}(x,r)}\int_{B_{U}(x_{i},3r_{i})}\abs{u_{B_{U}(x_{i},3r_{i})} - u_{3B_{0}}}^{p}\,dm
        \le c_{2}\Psi(r)\int_{B_{U}(x,A_{U}r)}\,d\Gamma_{p}\langle u \rangle. 
    \end{equation}
    We then obtain \eqref{e:PI.ud} by combining \eqref{e:prePIonUD.1}, \eqref{e:prePIonUD.2} and \eqref{e:prePIonUD.3}. 

    (ii)\, In this case, by \ref{e:PI}, we obtain \eqref{e:PI.ud} with $C_{U} = C_{\mathrm{P}}$ and $A_{U} = A_{\mathrm{P}}$.

    (iii)\, This case can be shown by considering $B_{U}(x,A_{\mathrm{P}}r)$ instead of $B_{U}(x,r)$ in (i).
\end{proof}

\subsection{Extension map and its scale-invariant boundedness}
In the following proposition, we recall from \cite{Mur24} a reflection map $Q$, which is motivated by Jones \cite{Jon81}.
\begin{prop}[Reflection map; {\cite[Proposition 3.12]{Mur24}}]\label{prop:ref}
    Assume that $(X,d)$ is a doubling metric space.
    Let $U$ be a $A$-uniform domain for some $A \ge 1$ with $U^{\#} \coloneqq \mathrm{int}(X \setminus U) \neq \emptyset$, let $\varepsilon \in (0,1/14)$ and let $\mathfrak{R}$ be an $\varepsilon$-Whitney cover of $U$.
    Let $\mathfrak{R}^{\#}$ be an $\varepsilon$-Whitney cover of $U^{\#}$ and define
    \[
    \widetilde{\mathfrak{R}}^{\#}
    \coloneqq \biggl\{ B_{U^{\#}}(y,s) \in \mathfrak{R}^{\#} \biggm| s < \frac{\varepsilon}{6A(1 + \varepsilon)}\diam(U) \biggr\}.
    \]
    Then there exists a map $Q \colon \widetilde{\mathfrak{R}}^{\#} \to \mathfrak{R}$ such the the following properties hold:
    \begin{enumerate}[label=\textup{(\alph*)},align=left,leftmargin=*,topsep=2pt,parsep=0pt,itemsep=2pt]
        \item For any $B = B_{U^{\#}}(y,s) \in \widetilde{\mathfrak{R}}^{\#}$, the ball $Q(B) = B_{U}(x,r) \in \mathfrak{R}$ satisfies
        \begin{equation}\label{e:ref.rad}
            \frac{1 + \varepsilon}{1 + 4\varepsilon}s < r < \frac{1 + \varepsilon}{1 - 2\varepsilon}s, \quad d(x,y) \le \left(2 + \frac{3A}{2}\right)\frac{1 + \varepsilon}{\varepsilon}s.
        \end{equation}
        \item There exists $K$ depending only on $\varepsilon,A,N_{\mathrm{D}}$ such that $\sup\{ \#Q^{-1}(B) \mid B \in \mathfrak{R} \} \le K$.
        \item Let $B_{U^{\#}}(y_{i},s_{i}) \in \widetilde{\mathfrak{R}}^{\#}$ and $B_{U}(x_{i},r_{i}) = Q(B_{U^{\#}}(y_{i},s_{i})) \in \mathfrak{R}$, $i = 1,2$. If $B_{U^{\#}}(y_{1},6s_{1}) \cap B_{U^{\#}}(y_{2},6s_{2}) \neq \emptyset$, then
        \begin{equation}\label{e:ref.preball.rad}
            \frac{(1 - 2\varepsilon)(1 - 5\varepsilon)}{(1 + 4\varepsilon)(1 + 7\varepsilon)}r_{2} \le r_{1} \le \frac{(1 + 4\varepsilon)(1 + 7\varepsilon)}{(1 - 2\varepsilon)(1 - 5\varepsilon)}r_{2},
        \end{equation}
        and there is a chain of distinct balls $\{ B_{U}(z_{j}, t_{j}) \in \mathfrak{R} \}_{j = 1}^{N}$ such that $z_{1} = x_{1}$, $t_{1} = r_{1}$, $z_{N} = x_{2}$, $t_{N} = r_{2}$, $B_{U}(z_{j},3t_{j}) \cap B_{U}(z_{j + 1},3t_{j + 1}) \neq \emptyset$ for any $j \in \{ 1,\dots,N-1 \}$ where $N$ satisfies $N \le N_{0}$ for some $N_{0} \in \mathbb{N}$ depending only on $\varepsilon,A,N_{\mathrm{D}}$.
    \end{enumerate}
\end{prop}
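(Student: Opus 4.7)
The plan is to follow the classical Jones-type reflection construction, translated into the metric setting using the corkscrew condition for uniform domains (Proposition \ref{prop:geom.ud}-\ref{it:corkskrew}) in place of Jones' Euclidean argument. Write $K_\varepsilon = 2(1+\varepsilon)$ and, for $B^* = B_{U^\#}(y,s) \in \widetilde{\mathfrak{R}}^{\#}$, observe that $\delta_{U^\#}(y) = \frac{1+\varepsilon}{\varepsilon}s$ and that the closest-point to $y$ in $X \setminus U^\# = \overline{U}^X$ lies in $\partial U$; choose one such point $p \in \partial U$, so $d(y,p) = r' \coloneqq \frac{1+\varepsilon}{\varepsilon}s$. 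The bound $s < \frac{\varepsilon}{6A(1+\varepsilon)}\diam(U)$ translates to $r' < \diam(U)/(6A)$, so we can apply the corkscrew condition at $p$ with scale $r'$ to produce $z \in U$ with $B(z, r'/(3A)) \subset B_U(p, r')$. Define $Q(B^*) \coloneqq B_U(x,r) \in \mathfrak{R}$ to be the (unique) Whitney ball of $U$ for which $z \in B_U(x, K_\varepsilon r)$.

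For the size/location bounds \eqref{e:ref.rad}, combine $r'/(3A) \le \delta_U(z) \le r'$ with $|\delta_U(x) - \delta_U(z)| \le d(x,z) < K_\varepsilon r$ and $r = \frac{\varepsilon}{1+\varepsilon}\delta_U(x)$; solving for $r$ and using $\varepsilon < 1/14$ yields the stated two-sided bound $\frac{1+\varepsilon}{1+4\varepsilon}s < r < \frac{1+\varepsilon}{1-2\varepsilon}s$, and the distance bound follows from $d(x,y) \le d(x,z) + d(z,p) + d(p,y) \le K_\varepsilon r + 2r'$. The bounded-multiplicity property (b) is then a packing argument: any two balls $B_1^*, B_2^* \in Q^{-1}(B)$ with $B = B_U(x,r)$ have radii $s_i \asymp r$ and centers $y_i$ satisfying $d(y_i, x) \lesssim r/\varepsilon$, so $\{B_{U^\#}(y_i, s_i)\}$ is a disjoint family of balls of radius $\asymp r$ inside a ball of radius $\lesssim r/\varepsilon$, and doubling bounds the cardinality by $K = K(\varepsilon, A, N_{\mathrm{D}})$.

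For (c), the radius comparison \eqref{e:ref.preball.rad} is immediate by combining Proposition \ref{prop:whitney}-\ref{it:whitney.radcomparison} for $\mathfrak{R}^{\#}$ (applied to the overlapping dilations $6B_i^*$, giving $s_1 \asymp s_2$) with the two-sided comparison $r_i \asymp s_i$ from \eqref{e:ref.rad}. To build the chain, invoke the $A$-uniform curve $\gamma$ in $U$ joining $x_1$ to $x_2$ (here $d(x_1,x_2) \lesssim r_1$ by the distance bound in \eqref{e:ref.rad} and $d(y_1,y_2) \lesssim s_1$), along which $\delta_U(\cdot) \gtrsim r_1$ holds thanks to the uniformity, so every point of $\gamma$ lies in some Whitney ball of $\mathfrak{R}$ of radius comparable to $r_1$. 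Extract a chain $\{B_U(z_j,t_j)\}_{j=1}^{N}$ by the same procedure as Lemma \ref{lem:chain.PI} (at each step pick a Whitney ball containing a forward point on $\gamma$ whose central ball overlaps the previous one); since all $t_j \asymp r_1$ and the whole chain is contained in a ball of radius $O(r_1)$ around $x_1$, the bounded overlap estimate \eqref{e:whitney.finiteoverlap} caps the length at $N \le N_0(\varepsilon, A, N_{\mathrm{D}})$.

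The main obstacle I expect is the last step: one must guarantee that the extracted chain has uniformly bounded length, which is not automatic from Lemma \ref{lem:chain.PI} where the bound depends on the ambient scale. The key is to exploit the comparability $t_j \asymp r_1$ along the entire chain (a consequence of $\delta_U$ being comparable to $r_1$ on $\gamma$, which in turn uses both the uniformity of $\gamma$ and the proximity of $\gamma$ to the boundary forced by $r_1 \asymp r'/\varepsilon$), so that the chain lives in a ball of controlled radius and bounded overlap finishes the job.
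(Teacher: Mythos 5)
First, a remark on the benchmark: this paper does not prove Proposition \ref{prop:ref} at all; it is imported verbatim from \cite[Proposition 3.12]{Mur24}, so your argument has to be measured against that source, not against anything in the present text.

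Your construction (corkscrew point $z$ at scale $r'\coloneqq\delta_{U^{\#}}(y)=\frac{1+\varepsilon}{\varepsilon}s$, then the Whitney ball of $U$ whose $K_{\varepsilon}$-dilate contains $z$) has a genuine gap in part (a): it cannot deliver the radius bounds \eqref{e:ref.rad}. The corkscrew condition in Proposition \ref{prop:geom.ud}-\ref{it:corkskrew} only gives $\delta_{U}(z)\ge r'/(3A)$, and since $d(x,z)<K_{\varepsilon}r=2\varepsilon\,\delta_{U}(x)$ and $r=\frac{\varepsilon}{1+\varepsilon}\delta_{U}(x)$, what you actually obtain is $r\ge\frac{s}{3A(1+2\varepsilon)}$ — an $A$-dependent lower bound — not the stated $A$-independent bound $\frac{1+\varepsilon}{1+4\varepsilon}s<r$; your claim that ``solving for $r$ \dots yields the stated two-sided bound'' is false. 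Nor can the corkscrew scale be retuned: getting $\delta_{U}(x)\ge\frac{1+\varepsilon}{1+4\varepsilon}r'$ forces the scale $R\gtrsim 3Ar'$, while the upper bound $\delta_{U}(x)\le d(x,y)\le 2\varepsilon\,\delta_{U}(x)+R+r'$ (needed for $r<\frac{1+\varepsilon}{1-2\varepsilon}s$) forces $R\lesssim\varepsilon r'$, so no single corkscrew point can satisfy both. Since $r/s=\delta_{U}(x)/\delta_{U^{\#}}(y)$ up to $1\pm O(\varepsilon)$ factors, \eqref{e:ref.rad} really amounts to locating $v\in U$ with $\delta_{U}(v)$ pinned within a factor $1\pm O(\varepsilon)$ of $\delta_{U^{\#}}(y)$ and $d(v,y)\le C(A)\,\delta_{U^{\#}}(y)$; that requires an additional idea (e.g.\ an intermediate-value selection for the continuous function $\delta_{U}$ along a curve in $U$ joining a near-closest point to a corkscrew point at scale comparable to $A\,\delta_{U^{\#}}(y)$), which is absent from your sketch. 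The defect propagates to \eqref{e:ref.preball.rad}, whose constants are likewise $A$-free and which you deduce from \eqref{e:ref.rad}. There are also smaller slips: the nearest point of $\closure{U}^{X}=X\setminus U^{\#}$ to $y$ need not lie in $\partial U$ in a general metric space (only membership in $\closure{U}^{X}$ is needed for the corkscrew), your bound $\delta_{U}(z)\le r'$ presupposes that point lies in $X\setminus U$ (in general only $\delta_{U}(z)\le d(z,y)<2r'$), and the Whitney ball whose dilate contains $z$ is not unique — one must simply choose one. On the positive side, your outlines of (b) (packing) and (c) (chaining Whitney balls of comparable radius along a uniform curve, with bounded overlap capping the length) are sound, and the weaker, $A$-dependent comparability your construction does give would in fact suffice for the later applications in Section \ref{sec.ud}; but it does not prove the proposition as stated.
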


In the rest of this section, we assume the same conditions as in Proposition \ref{prop:ref}, i.e., $U$ is a $A$-uniform domain for some $A \ge 1$ with $U^{\#} \coloneqq \mathrm{int}(X \setminus U) \neq \emptyset$, $\varepsilon \in (0,1/14)$ and $\mathfrak{R}, \mathfrak{R}^{\#}$ are $\varepsilon$-Whitney covers of $U, U^{\#}$ respectively.
In addition, we assume that \ref{e:VD} and \ref{e:capu} hold.
Let $\{ \psi_{B} \}_{B \in \mathfrak{R}^{\#}}$ be a controlled partition of unity given by Lemma \ref{lem:unity} with $A = 2$, which exists by Remark \ref{rmk:whitney-good} and Proposition \ref{prop:capu.refine}.
As a result, we have $\supp_{X}[\psi_{B}] \subset 6B \coloneqq B(y,6s)$ for any $B = B_{U^{\#}}(y,s) \in \mathfrak{R}^{\#}$.
For $B \in \widetilde{\mathfrak{R}}^{\#}$ and $\lambda \ge 1$, we write $\lambda Q(B) = B_{U}(x,\lambda r)$ where $B_{U}(x,r) = Q(B) \in \mathfrak{R}$.
We define
\[
\mathfrak{R}^{\#}(\xi,r)
\coloneqq \{ B_{U^{\#}}(y,s) \in \mathfrak{R}^{\#} \mid B_{U^{\#}}(y,6s) \cap B(\xi,r) \neq \emptyset \},
\]
and $\widetilde{\mathfrak{R}}^{\#}(\xi,r) \coloneqq \mathfrak{R}^{\#}(\xi,r) \cap \widetilde{\mathfrak{R}}^{\#}$.

The following proposition collects basic propeties that are frequently used in the proof of Proposition \ref{prop:extQ} later.
See \cite[Proposition 3.12 and Lemma 3.14]{Mur24} for details.
\begin{prop}\label{prop:refcovering}
    \begin{enumerate}[label=\textup{(\alph*)},align=left,leftmargin=*,topsep=2pt,parsep=0pt,itemsep=2pt]
        \item\label{it:ref.radiicomparison} For any $B = B_{U^{\#}}(y,s) \in \widetilde{\mathfrak{R}}^{\#}$, the ball $Q(B) = B_{U}(x,r) \in \mathfrak{R}$ satisfies
        \[
        \frac{1 + \varepsilon}{1 + 4\varepsilon}s < r < \frac{1 + \varepsilon}{1 - 2\varepsilon}s,
        \quad d(x,y) \le \left(2 + \frac{3A}{2}\right)\frac{1 + \varepsilon}{\varepsilon}s.
        \]
        In particular, for any $\lambda \ge 1$, there exist $C, K \ge 1$ depending only on $\varepsilon,A,c_{\mathrm{D}},\lambda$ such that
        \[
        C^{-1}m(B) \le m(\lambda Q(B)) \le Cm(B),
        \quad \text{for any $B \in \widetilde{\mathfrak{R}}^{\#}$}
        \]
        and for any $\xi \in \partial U$, $r > 0$,
        \[
        \bigl\{ \lambda Q(B) \bigm| B \in \widetilde{\mathfrak{R}}^{\#}(\xi,r) \bigr\} \subset \mathfrak{R}(\xi,Kr).
        \]
        \item\label{it:ref.Qoverlap} There exists $N \in \mathbb{N}$ depending only on $\varepsilon,A,N_{\mathrm{D}}$ such that $\sup_{B \in \mathfrak{R}}\#(Q^{-1}(B)) \le N$.
        \item\label{it:ref.chain} Let $B_{U^{\#}}(y_{i},s_{i}) \in \widetilde{\mathfrak{R}}^{\#}$, $i = 1,2$ satisfy $B_{U^{\#}}(y_{1},6s_{1}) \cap B_{U^{\#}}(y_{2},6s_{2}) \neq \emptyset$ and let $B_{U}(x_{i},r_{i}) = Q(B_{U^{\#}}(y_{i},s_{i}))$.
        Then there exist $L,K \ge 1$ depending only on $\varepsilon,A,N_{\mathrm{D}}$ such that the following conditions hold.
        There is a family of distinct balls $\{ B_{j} \coloneqq B_{U}(z_{j},t_{j}) \in \mathfrak{R} \}_{j = 1}^{L_{0}}$ with $L_{0} \le L$ such that $B_{1} = B_{U}(x_{1},r_{1})$, $B_{L_{0}} = B_{U}(x_{2},r_{2})$ and $3B_{j} \cap 3B_{j + 1} \neq \emptyset$ for $j \in \{ 1,\dots,L_{0}-1 \}$. Furthermore, if $B_{1} \in \mathfrak{R}(\xi, r)$ for some $\xi \in \partial U$ and $r > 0$, then $\{ B_{j} \}_{j = 1}^{L_{0}} \subset \mathfrak{R}(\xi,Kr)$.
    \end{enumerate}
\end{prop}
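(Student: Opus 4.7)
The plan is to derive Proposition~\ref{prop:refcovering} by combining Proposition~\ref{prop:ref} with the volume doubling property~\ref{e:VD} and some straightforward metric bookkeeping. Part (b) is literally the second bullet of Proposition~\ref{prop:ref}, so no work is needed there, and most of part (a) is an immediate transcription as well: the displayed radius and distance inequalities for $B = B_{U^{\#}}(y,s)$ and $Q(B) = B_{U}(x,r)$ are precisely \eqref{e:ref.rad}. So the content of the proposition beyond Proposition~\ref{prop:ref} consists of the volume comparison $m(B) \asymp m(\lambda Q(B))$, the inclusion of $\{ \lambda Q(B) \mid B \in \widetilde{\mathfrak{R}}^{\#}(\xi,r) \}$ into $\mathfrak{R}(\xi,Kr)$, and the ``furthermore'' clause in (c).

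For the volume comparison, the strategy is to observe from \eqref{e:ref.rad} that $r \asymp s$ (with constants depending only on $\varepsilon$) and $d(x,y) \le C(\varepsilon,A)s$. Hence both $B$ and $\lambda Q(B)$ are contained in a common ball centered at (say) $y$ of radius a constant multiple of $s$, and each contains a ball of radius a constant multiple of $s$; applying \ref{e:VD} a bounded number of times yields $m(B) \asymp m(\lambda Q(B))$ with a constant depending only on $\varepsilon, A, c_{\mathrm{D}}, \lambda$. For the inclusion $\{ \lambda Q(B) \mid B \in \widetilde{\mathfrak{R}}^{\#}(\xi,r) \} \subset \mathfrak{R}(\xi,Kr)$, if $B_{U^{\#}}(y,s) \in \widetilde{\mathfrak{R}}^{\#}(\xi,r)$, then $B_{U^{\#}}(y,6s) \cap B(\xi,r) \neq \emptyset$, so $d(y,\xi) \le 6s + r$; combining this with the center and radius bounds in \eqref{e:ref.rad} shows $B_{U}(x,3\lambda r) \cap B(\xi, K r) \neq \emptyset$ for some $K = K(\varepsilon, A, \lambda)$, as required by the definition of $\mathfrak{R}(\xi, Kr)$.

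For the remaining statement in (c), the existence of the chain $\{B_j = B_{U}(z_j,t_j)\}_{j=1}^{L_0}$ with $B_1 = B_{U}(x_1,r_1)$, $B_{L_0} = B_{U}(x_2,r_2)$, $3B_j \cap 3B_{j+1} \neq \emptyset$, and $L_0 \le L$ is the content of Proposition~\ref{prop:ref}(c). The plan for the ``furthermore'' clause is to iterate the Whitney radius comparison \eqref{e:rad.comparison1} along the chain: since $3B_j \cap 3B_{j+1} \neq \emptyset$ and $\varepsilon$ is small, \eqref{e:rad.comparison1} applied with $\lambda = 3$ gives $t_{j+1}/t_{j}$ bounded above and below by a constant depending only on $\varepsilon$, so all $t_j$ are comparable to $r_1 = t_1$ with a constant depending only on $\varepsilon$ and $L$. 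The triangle inequality then yields $d(z_j, z_1) \le \sum_{k=1}^{j-1} 3(t_k + t_{k+1}) \le C(\varepsilon,L)\, t_1$. Combined with $B_1 \in \mathfrak{R}(\xi, r)$, which gives $d(z_1,\xi) \le 3t_1 + r$ and $t_1 \le C(\varepsilon, A, \lambda)r$ via (a), this produces $3B_j \cap B(\xi, Kr) \neq \emptyset$ for a uniform $K$.

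The main obstacle I anticipate is purely bookkeeping: keeping the constants $C, K, L$ depending only on the allowed parameters $\varepsilon, A, N_{\mathrm{D}}, c_{\mathrm{D}}, \lambda$ as the chain argument is iterated $L_0$ times. None of the individual steps is conceptually hard, but one has to track how the comparison constant in \eqref{e:rad.comparison1} compounds over the chain and verify that it remains uniform, which requires the a priori bound $L_0 \le L$ from Proposition~\ref{prop:ref}(c) to be used at each step.
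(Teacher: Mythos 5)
The paper itself gives no proof of this proposition: it is stated as a collection of facts imported from \cite[Proposition 3.12 and Lemma 3.14]{Mur24}, so your plan of deriving it from Proposition \ref{prop:ref}, the Whitney-cover properties (Proposition \ref{prop:whitney}) and \ref{e:VD} is the intended route, and parts (b) and the first display of (a) are indeed verbatim from Proposition \ref{prop:ref}. Your volume comparison is fine once one notes (as you implicitly do) that Whitney balls are genuine metric balls: $s=\frac{\varepsilon}{1+\varepsilon}\delta_{U^{\#}}(y)$ forces $B_{U^{\#}}(y,s)=B(y,s)$ and likewise $Q(B)=B(x,r)$, so doubling applies directly; and in (c) iterating \eqref{e:rad.comparison1} at most $L$ times keeps all constants uniform because $L=L(\varepsilon,A,N_{\mathrm{D}})$.

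One step, however, does not follow from what you invoke. For the inclusion $\bigl\{\lambda Q(B)\bigm| B\in\widetilde{\mathfrak{R}}^{\#}(\xi,r)\bigr\}\subset\mathfrak{R}(\xi,Kr)$ you must first show that the Whitney radius is controlled by the scale, i.e.\ $s\le C(\varepsilon)\,r$: the bound $d(y,\xi)\le 6s+r$ combined with \eqref{e:ref.rad} alone cannot produce a uniform $K$, since $d(x,\xi)$ is of order $\varepsilon^{-1}s$ while $3\lambda\,\mathrm{rad}(Q(B))$ is only of order $s$, so if $s\gg r$ no $K$ depending only on $\varepsilon,A,\lambda$ would work. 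The missing ingredient is the Whitney property together with $\xi\in\partial U\subset X\setminus U^{\#}$: this gives $\frac{1+\varepsilon}{\varepsilon}s=\delta_{U^{\#}}(y)\le d(y,\xi)\le 6s+r$, and since $\varepsilon<1/14$ one gets $s\le C(\varepsilon)r$, after which your computation closes. The same remark applies to the ``furthermore'' clause of (c): the bound $t_{1}=r_{1}\le C(\varepsilon)r$ does not come from part (a) as you assert, but from $\frac{1+\varepsilon}{\varepsilon}r_{1}=\delta_{U}(x_{1})\le d(x_{1},\xi)\le 3r_{1}+r$, valid because $\xi\notin U$. With these one-line additions your argument is complete and matches the cited source.
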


Now we define the extension map $\Ext_{Q} \colon L^{p}(U,m|_{U}) \to L^{p}(X,m)$ by
\begin{equation}\label{e:defn.extmap}
    \Ext_{Q}(u)(x) \coloneqq
    \begin{cases}
        u(x) \quad &\text{if $x \in U$,} \\
        \sum_{B \in \widetilde{\mathfrak{R}}^{\#}}u_{3Q(B)}\,\psi_{B}(x) \quad &\text{otherwise;}
    \end{cases}
\end{equation}
the fact that $\Ext_{Q}(u) \in L^{p}(X,m)$ is shown in the following lemma. (Recall that $m(\partial U) = 0$ by Proposition \ref{prop:geom.ud}-\ref{it:VD.ud}.)
\begin{lem}
    Let $\Ext_{Q}$ be the extension map as defined in \eqref{e:defn.extmap}.
    Then $\Ext_{Q} \colon L^{p}(U,m|_{U}) \to L^{p}(X,m)$ is a bounded linear operator, and there exist $C_{1}, A_{1} \in [1,\infty)$ (depending only on $p,\varepsilon,A,N_{\mathrm{D}}$) such that for any $u \in L^{p}(U, m|_{U})$, $\xi \in \partial U$ and $r > 0$,
    \begin{equation}\label{e:ref.Lp}
        \int_{B(\xi,r)}\abs{\Ext_{Q}(u)}^{p}\,dm \le C_{1}\int_{B_{U}(\xi,A_{1}r)}\abs{u}^{p}\,dm.
    \end{equation}
\end{lem}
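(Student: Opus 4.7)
The plan is to prove the localized estimate \eqref{e:ref.Lp}, from which the global bound $\|\Ext_Q u\|_{L^p(X)} \leq C\|u\|_{L^p(U)}$ follows by the same argument run without the localization (or by exhausting $X$). Linearity of $\Ext_Q$ is immediate from its definition, so the content of the lemma is the $L^p$ estimate. Using $m(\partial U) = 0$ from Proposition \ref{prop:geom.ud}-\ref{it:VD.ud}, I split
\[
\int_{B(\xi,r)}|\Ext_Q(u)|^p\,dm = \int_{B_U(\xi,r)}|u|^p\,dm + \int_{B(\xi,r)\cap U^{\#}}|\Ext_Q(u)|^p\,dm,
\]
so only the $U^{\#}$ part requires argument.

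The key observation is that $\psi_B$ is supported in $6B$, so only balls $B \in \widetilde{\mathfrak{R}}^{\#}$ with $6B \cap B(\xi,r) \neq \emptyset$ — that is, $B \in \widetilde{\mathfrak{R}}^{\#}(\xi,r)$ — contribute to $\Ext_Q(u)|_{B(\xi,r)\cap U^{\#}}$. The bounded-overlap property \eqref{e:whitney.finiteoverlap} applied to $\mathfrak{R}^{\#}$ (valid since $6 < 1/\varepsilon$) bounds the number of $B$'s with $\psi_B(x) \neq 0$ by some constant $N = N(\varepsilon,A,N_{\mathrm{D}})$, so Hölder's inequality and $0 \le \psi_B \le 1$ give
\[
|\Ext_Q(u)(x)|^p \le N^{p-1}\sum_{B \in \widetilde{\mathfrak{R}}^{\#}(\xi,r)} |u_{3Q(B)}|^p\,\indicator{6B}(x).
\]
Integrating and using $m(6B) \le Cm(3Q(B))$ (Proposition \ref{prop:refcovering}-\ref{it:ref.radiicomparison} together with \ref{e:VD}) and Hölder's inequality to bound $|u_{3Q(B)}|^p m(3Q(B)) \le \int_{3Q(B)}|u|^p\,dm$ yields
\[
\int_{B(\xi,r)\cap U^{\#}}|\Ext_Q(u)|^p\,dm \le C\sum_{B \in \widetilde{\mathfrak{R}}^{\#}(\xi,r)} \int_{3Q(B)}|u|^p\,dm.
\]

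Finally, Proposition \ref{prop:refcovering}-\ref{it:ref.radiicomparison} gives $\{3Q(B) : B \in \widetilde{\mathfrak{R}}^{\#}(\xi,r)\} \subset \mathfrak{R}(\xi,Kr)$ for some $K = K(\varepsilon,A,c_{\mathrm{D}})$; part \ref{it:ref.Qoverlap} says each element of $\mathfrak{R}$ is hit at most $N$ times under $Q$; the bounded overlap \eqref{e:whitney.finiteoverlap} applied to $\mathfrak{R}$ gives $\sum_{B'\in\mathfrak{R}}\indicator{3B'} \le N_1$; and \eqref{e:whitney.nearball} (with $\xi \in \partial U \subset \closure{U}^X$ so $\delta_U(\xi)=0$) yields $\bigcup_{B'\in \mathfrak{R}(\xi,Kr)} 3B' \subset B_U(\xi,2Kr)$ provided $Kr < \diam(U)/2$. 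Chaining these observations,
\[
\sum_{B \in \widetilde{\mathfrak{R}}^{\#}(\xi,r)}\int_{3Q(B)}|u|^p\,dm \le N\cdot N_1\int_{B_U(\xi,2Kr)}|u|^p\,dm,
\]
giving \eqref{e:ref.Lp} with $A_1 = 2K$ in this regime. For $Kr \ge \diam(U)/2$ (and hence for all large $r$), the same computation done without localization bounds $\int_{U^{\#}}|\Ext_Q(u)|^p\,dm$ by $C\int_U|u|^p\,dm$, so enlarging $A_1$ if necessary absorbs this case and also establishes the global boundedness claim.

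The main obstacle is purely bookkeeping — threading together the three comparisons (radius comparison of $B$ with $Q(B)$, finite multiplicity of $Q$, and Whitney bounded overlap) with consistent constants, and checking that the $6B$-versus-$3Q(B)$ measure comparison works uniformly in $B \in \widetilde{\mathfrak{R}}^{\#}$. The only genuinely delicate point is the passage through the boundary region in the split (handled by $m(\partial U)=0$) and the truncation $\widetilde{\mathfrak{R}}^{\#} \subset \mathfrak{R}^{\#}$, which makes $\sum_{B\in\widetilde{\mathfrak{R}}^{\#}}\psi_B \le 1$ on $U^{\#}$ and so causes no loss.
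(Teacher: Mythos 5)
Your proposal is correct and follows essentially the same route as the paper: split off the part over $U$ using $m(\partial U)=0$, use the bounded overlap of the supports $6B$ with H\"older, compare $m(6B)$ with $m(3Q(B))$ via \eqref{e:ref.rad} and \ref{e:VD}, and then localize the family $\{3Q(B)\}_{B\in\widetilde{\mathfrak{R}}^{\#}(\xi,r)}$ into $B_{U}(\xi,A_{1}r)$ with bounded multiplicity. The only cosmetic difference is that the paper obtains the overlap/localization estimate \eqref{e:ref.Lp.overlap} directly from Propositions \ref{prop:ref} and \ref{prop:whitney}-\ref{it:whitney.overlap} for all $r>0$, whereas you pass through the central-ball inclusion \eqref{e:whitney.nearball} and therefore need (and correctly handle) the extra case $Kr\ge\diam(U)/2$ via the unlocalized bound.
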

\begin{proof}
    The linearity of $\Ext_{Q}$ is clear and the boundedness of $\Ext_{Q}$ follows from \eqref{e:ref.Lp}, so we will prove \eqref{e:ref.Lp}.
    Since $\supp_{X}[\psi_{B}] \subset 6B$, we have
    \[
    (\Ext_{Q}(u))\big|_{B(\xi,r) \setminus U} = \sum_{B \in \widetilde{\mathfrak{R}}^{\#}(\xi,r)}u_{3Q(B)}\psi_{B}\big|_{B(\xi,r) \setminus U}
    \]
    Note that, by Propositions \ref{prop:ref} and \ref{prop:whitney}-\ref{it:whitney.overlap}, there exist $c_{1},A_{1} \in [1,\infty)$ depending only on $\varepsilon,A,N_{\mathrm{D}}$ such that
    \begin{equation}\label{e:ref.Lp.overlap}
        \sum_{B \in \widetilde{\mathfrak{R}}^{\#}(\xi,r)}\indicator{3Q(B)} \le c_{1}\indicator{B_{U}(\xi,A_{1}r)}.
    \end{equation}
    Since $\partial U^{\#} \subset \partial U$ by \cite[Lemma 3.11]{Mur24}, we have from Proposition \ref{prop:geom.ud}-\ref{it:VD.ud} that $m(\partial U^{\#}) = 0$, which together with $\psi_{B} \le \indicator{6B}$ implies
    \begin{equation*}
        \int_{B(\xi,r)}\abs{\Ext_{Q}(u)}^{p}\,dm
        \le \int_{B(\xi,r) \cap U}\abs{u}^{p}\,dm + \int_{U^{\#}}\abs{\sum_{B \in \widetilde{\mathfrak{R}}^{\#}(\xi,r)}u_{3Q(B)}\indicator{6B}}^{p}\,dm.
    \end{equation*}
    To estimate the second term, we note that $\sum_{B \in \widetilde{\mathfrak{R}}^{\#}(\xi,r)}\indicator{6B} \le c_{2}$ by Proposition \ref{prop:whitney}-\ref{it:whitney.overlap} and $\varepsilon < 1/6$.
    By H\"{o}lder's inequality, \eqref{e:ref.rad}, \ref{e:VD} and \eqref{e:ref.Lp.overlap},
    \begin{align*}
        \int_{U^{\#}}\abs{\sum_{B \in \widetilde{\mathfrak{R}}^{\#}(\xi,r)}u_{3Q(B)}\indicator{6B}}^{p}\,dm
        \le c_{3}\sum_{B \in \widetilde{\mathfrak{R}}^{\#}(\xi,r)}\int_{3Q(B)}\abs{u}^{p}\,dm
        \le c_{1}c_{3}\int_{B_{U}(\xi,A_{1}r)}\abs{u}^{p}\,dm,
    \end{align*}
    completing the proof.
\end{proof}

The following proposition provides estimates on the $p$-energy measure of $\Ext_{Q}(u)$, which is an analogue of \cite[Proposition 5,8]{Mur24}.
\begin{prop}\label{prop:extQ}
    Assume that \ref{e:VD}, \ref{e:PI} and \ref{e:capu} hold.
    Let $U$ be a $A$-uniform domain for some $A \ge 1$ with $U^{\#} \coloneqq \mathrm{int}(X \setminus U) \neq \emptyset$, let $\varepsilon \in (0,1/14)$ and let $\mathfrak{R}, \mathfrak{R}^{\#}$ be $\varepsilon$-Whitney covers of $U, U^{\#}$ respectively.
    Let $\Ext_{Q}$ be the extension map as defined in \eqref{e:defn.extmap}.
    Then there exist $C,K \in [1,\infty)$ (depending only on $p,A,\varepsilon$ and the constants associated with \ref{e:VD}, \eqref{e:scalefunction}, \ref{e:PI} and \ref{e:capu}) such that the following conditions hold.
    \begin{enumerate}[label=\textup{(\alph*)},align=left,leftmargin=*,topsep=2pt,parsep=0pt,itemsep=2pt]
        \item\label{it:ref.energy.a} There exists $a_{0} \in (0,1)$ such that for any $\xi \in \partial U$, $0 < r < a_{0}\diam(U)$ and $u \in \mathcal{F}_{p}(U)$,
        \begin{equation}\label{e:ext.PI}
            \inf_{\alpha \in \mathbb{R}}\int_{B(\xi,r)}\abs{\Ext_{Q}(u) - \alpha}^{p}\,dm \le C\Psi(r)\int_{B_{U}(\xi,Kr)}\,d\Gamma_{p,U}\langle u \rangle.
        \end{equation}
        \item\label{it:ref.energy.b} For any $u \in L^{p}(U,m|_{U})$, we have $\Ext_{Q}(u) \in \mathcal{F}_{p,\mathrm{loc}}(U^{\#})$.
        Moreover, there exists $a_{1} \in (0,1)$ such that for any $u \in \mathcal{F}_{p}(U)$, $\xi \in \partial U$ and $0 < r < a_{1}\diam(U)$, we have
        \begin{equation}\label{e:extEM.local}
            \Gamma_{p}\langle \Ext_{Q}(u) \rangle(B_{U^{\#}}(\xi,r)) \le C\Gamma_{p,U}\langle u \rangle(B_{U}(\xi,Kr)),
        \end{equation}
        and
        \begin{equation}\label{e:extEM.total}
            \Gamma_{p}\langle \Ext_{Q}(u) \rangle(U^{\#}) \le C\left(\Gamma_{p,U}\langle u \rangle(U) + \frac{1}{\Psi(\diam(U))}\int_{U}\abs{u}^{p}\,dm\right),
        \end{equation}
        where by convention that $\frac{1}{\Psi(\diam(U))} = 0$ if $\diam(U) = \infty$.
        \item\label{it:ref.energy.c} For any $u \in \mathcal{F}_{p}(U)$, we have $\Ext_{Q}(u) \in \mathcal{F}_{p}$ and
        \begin{equation}\label{e:extEp}
            \mathcal{E}_{p}(\Ext_{Q}(u)) \le C\left(\Gamma_{p,U}\langle u \rangle(U) + \frac{1}{\Psi(\diam(U))}\int_{U}\abs{u}^{p}\,dm\right).
        \end{equation}
        \item\label{it:ref.energy.d} For any $u \in \mathcal{F}_{p}(U)$, we have
        \begin{equation}\label{e:extQ.bdry}
            \Gamma_{p}\langle \Ext_{Q}(u) \rangle(\partial U) = 0.
        \end{equation}
        \item\label{it:ref.energy.e} There exists $a_{2} \in (0,1)$ such that for any $u \in \mathcal{F}_{p}(U)$, $x \in \closure{U}^{X}$ and $0 < r < a_{2}\diam(U)$,
        \begin{equation}
            \Gamma_{p}\langle \Ext_{Q}(u) \rangle(B(x,r)) \le C\Gamma_{p,U}\langle u \rangle(B_{U}(x,Kr)).
        \end{equation}
        \item\label{it:ref.energy.f} $\Gamma_{p}\langle u \rangle(\partial U) = 0$ for any $u \in \mathcal{F}_{p}$.
    \end{enumerate}
\end{prop}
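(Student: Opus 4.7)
The plan is to adapt the Whitney--Jones extension strategy to the $p$-energy setting, following the lines of \cite[Proposition 5.8]{Mur24} with \hyperref[e:PI]{\textup{PI$_{2}$($\Psi$)}} replaced by \ref{e:PI} throughout. The six parts are tightly interwoven: (b) is the local energy estimate on the reflection side $U^{\#}$, (a) is its Poincar\'e-type consequence near $\partial U$ (combined with Theorem \ref{thm:PI.ud}), and (c)--(f) are global or asymptotic consequences of (a) and (b). The essential ingredients are Proposition \ref{prop:Lipc.list} (for non-linear operations on the partition of unity), the energy bound \eqref{e:low-energy} on $\psi_{B}$, Lemma \ref{lem:whitney.avediff} together with the chaining from Proposition \ref{prop:refcovering}-\ref{it:ref.chain} (to compare $u_{3Q(B_{1})}$ with $u_{3Q(B_{2})}$), the telescoping of Lemma \ref{lem:telescope}, and the interior Poincar\'e inequality Theorem \ref{thm:PI.ud} on the uniform domain $U$.

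\textbf{Proving (b) and (a).} For (b), on every relatively compact subset of $U^{\#}$ the sum $\sum_{B} u_{3Q(B)}\psi_{B}$ is finite, so Proposition \ref{prop:Lipc.list}-\ref{it:Lip-bdd} places $\Ext_{Q}(u)$ in $\mathcal{F}_{p,\mathrm{loc}}(U^{\#})$. For a ball $B_{U^{\#}}(\xi,r)$ with $r$ small compared with $\diam(U)$, I pick a distinguished $B_{0} \in \widetilde{\mathfrak{R}}^{\#}(\xi,r)$ and, invoking Framework \ref{frame:EPEM}-\ref{it:EMslocal}, rewrite $\Ext_{Q}(u) - u_{3Q(B_{0})} = \sum_{B \in \widetilde{\mathfrak{R}}^{\#}(\xi,r)}\bigl(u_{3Q(B)} - u_{3Q(B_{0})}\bigr)\psi_{B}$ on $B_{U^{\#}}(\xi,r)$. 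Expanding the $p$-energy measure using \eqref{e:low-energy} and \eqref{e:whitney.finiteoverlap}, and controlling each $\abs{u_{3Q(B)} - u_{3Q(B_{0})}}^{p}$ by iterating Lemma \ref{lem:whitney.avediff} along the chains produced by Proposition \ref{prop:refcovering}-\ref{it:ref.chain}, should yield \eqref{e:extEM.local}. Summing over a bounded-overlap cover of $\partial U$ at scale $\diam(U)$ gives \eqref{e:extEM.total}. For (a), I split $B(\xi,r)$ into its $U$-part and its $U^{\#}$-part (modulo $\partial U$, which is null by Proposition \ref{prop:geom.ud}-\ref{it:VD.ud}), choose $\alpha$ to be the average of $u$ over a central Whitney ball supplied by Proposition \ref{prop:whitney}-\ref{it:whitney.central}, bound the $U$-part by Theorem \ref{thm:PI.ud}, and bound the $U^{\#}$-part by a telescoping argument in the spirit of Lemma \ref{lem:telescope}, writing $u_{3Q(B)} - \alpha$ as a chain sum and exploiting \eqref{e:whitney.finiteoverlap}.

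\textbf{Proving (c)--(f); the main obstacle.} Part (e) follows at once by combining Theorem \ref{thm:PI.ud} on $B_{U}(x,r)$ with (b) on $B_{U^{\#}}(x,r)$, using $m(\partial U) = 0$; (c) follows by adding the contribution $\Gamma_{p}\langle \Ext_{Q}(u)\rangle|_{U} = \Gamma_{p,U}\langle u\rangle$ (via Framework \ref{frame:EPEM}-\ref{it:EMslocal}, since $\Ext_{Q}(u) = u$ on $U$) to (b), invoking (d) to discard $\partial U$ and \eqref{e:ref.Lp} for the $L^{p}$-bound ensuring membership in $\mathcal{F}_{p}$. For (d), I cover $\partial U$ by a Vitali-type family $\{B(\xi_{i},r_{i})\}$ centered on $\partial U$ with $r_{i} \to 0$ and bounded overlap; applying \eqref{e:extEM.local} and using that $\{B_{U}(\xi_{i},Kr_{i})\}$ also has bounded overlap inside $U$, the Radon property of $\Gamma_{p,U}\langle u\rangle$ forces $\sum_{i}\Gamma_{p,U}\langle u\rangle(B_{U}(\xi_{i},Kr_{i})) \to 0$. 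I expect part (f) to be the main obstacle: the natural approach is to apply (d) to $v \coloneqq \Ext_{Q}(u|_{U})$, giving $\Gamma_{p}\langle v\rangle(\partial U) = 0$, and transfer the conclusion to $u$ by combining Framework \ref{frame:EPEM}-\ref{it:EMslocal} on $U$ with a symmetric argument on $U^{\#}$. The difficulty is that $U^{\#}$ need not itself be a uniform domain, so the symmetric extension is unavailable off the shelf; I would circumvent this by applying (e) to $v$ on shrinking neighborhoods of $\partial U$, bounding $\Gamma_{p}\langle u\rangle(B(x_{i},r_{i}))$ in terms of $\Gamma_{p,U}\langle u\rangle$ plus a vanishing $L^{p}$-tail from \eqref{e:ref.Lp}, and exploiting the Radon regularity of $\Gamma_{p}\langle u\rangle$ to pass to the limit as the neighborhoods contract to $\partial U$.
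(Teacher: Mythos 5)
Your overall architecture (Whitney--Jones reflection, Lemma \ref{lem:whitney.avediff}, Lemma \ref{lem:telescope}, Theorem \ref{thm:PI.ud}) and your treatment of part \ref{it:ref.energy.a} match the paper, but the argument you sketch for \ref{it:ref.energy.b} contains a step that fails. When you expand $\Gamma_{p}\langle \Ext_{Q}(u)\rangle$ on $B_{U^{\#}}(\xi,r)$ through \eqref{e:low-energy}, each $\psi_{B}$ enters with the \emph{local} weight $m(Q(B))/\Psi(r(Q(B)))$, not $m/\Psi(r)$. With a single global reference $u_{3Q(B_{0})}$ over all of $B_{U^{\#}}(\xi,r)$, the differences $\abs{u_{3Q(B)}-u_{3Q(B_{0})}}^{p}$ are generically of order $\Psi(r)$ times the averaged energy density, so the resulting sum carries factors $\Psi(r)/\Psi(r(Q(B)))$ that blow up as the Whitney balls shrink toward $\partial U$: in the Ahlfors-regular model with $\Psi(t)=t^{\beta}$ and a codimension-one boundary, the sum over dyadic Whitney generations behaves like $\sum_{k}2^{k(\beta-1)}$, which diverges precisely in the regime $\beta>1$ that this paper is about. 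Moreover, the chains of Proposition \ref{prop:refcovering}-\ref{it:ref.chain} only join $Q(B_{1})$ to $Q(B_{2})$ when $6B_{1}\cap 6B_{2}\neq\emptyset$, so they cannot connect a small reflected ball to your distinguished $B_{0}$ in the first place. The paper's proof avoids both problems by writing the representation \eqref{e:ExtQ.localrep} locally on each Whitney ball $B_{U^{\#}}(y_{0},3s_{0})$ with an infimum over the constant $\alpha$ (strong locality, Framework \ref{frame:EPEM}-\ref{it:EMslocal}, kills the constant), so that only differences between \emph{adjacent} reflected balls appear, each matched to the correct weight as in \eqref{e:extEM.pre}, and then sums with bounded overlap.

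Parts \ref{it:ref.energy.c}, \ref{it:ref.energy.d} and \ref{it:ref.energy.f} also have genuine gaps. For \ref{it:ref.energy.c}, adding the measures on $U$ and $U^{\#}$ and ``invoking (d)'' is circular: $\Gamma_{p}\langle\Ext_{Q}(u)\rangle$ is only defined once $\Ext_{Q}(u)\in\mathcal{F}_{p}$, and local finiteness of energy on each side of $\partial U$ plus an $L^{p}$ bound does not give global membership. The paper glues across $\partial U$ by bounding $\limsup_{r\downarrow 0}W_{p}(\Ext_{Q}(u),r)$ using \eqref{e:ext.PI}, \eqref{e:extEM.total} and \ref{e:PI}, and then invoking Corollaries \ref{cor:main1.2} and \ref{cor:main1.3}; this use of the main Korevaar--Schoen characterization is the missing idea and cannot be replaced by measure arithmetic. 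For \ref{it:ref.energy.d}, your Vitali covering controls only $\Gamma_{p}\langle\Ext_{Q}(u)\rangle(B_{U^{\#}}(\xi_{i},r_{i}))$, i.e.\ the $U^{\#}$-portions of the balls, and says nothing about the mass the energy measure may place on $\partial U$ itself; the paper instead bounds the localized Korevaar--Schoen energy on the neighborhoods $(\partial U)(\delta)$ and applies Theorem \ref{thm:lower} with $\Omega=(\partial U)(\delta)$, letting $\delta\downarrow 0$. Finally, for \ref{it:ref.energy.f} your sketch never produces a bound on $\Gamma_{p}\langle u\rangle(\partial U)$: estimates for the extension of $u|_{U}$, or for $\Gamma_{p}\langle u\rangle(B(x_{i},r_{i}))$ (which contains an uncontrolled contribution of $u$ from the $U^{\#}$ side), do not isolate the boundary mass. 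The paper's argument reduces to $u\in\mathcal{F}_{p}\cap C_{c}(X)$ by regularity, builds the continuous modification $\widehat{\Ext}_{Q}(u)$ of \eqref{e:defn.extmap.cptsupp}, uses Framework \ref{frame:EPEM}-\ref{it:EMslocal} to see that $u-\widehat{\Ext}_{Q}(u)$ (which vanishes on $\closure{U}^{X}$) has energy measure not charging $\closure{U}^{X}$, and concludes by the triangle inequality for $A\mapsto\Gamma_{p}\langle\,\cdot\,\rangle(A)^{1/p}$ together with \eqref{e:extQ.bdry}; some argument of this kind is needed and is absent from your proposal.
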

\begin{proof} 
    \ref{it:ref.energy.a}:
    Let $B_0 = B(x_0,r_0) \in \mathfrak{R}(\xi,r)$ satisfy \eqref{e:centralball.rad}.
    By the argument in the case (i) in the proof of Theorem \ref{thm:PI.ud}, we have
    \[
    \int_{B_{U}(\xi,r)}\abs{\Ext_{Q}(u) - u_{B_{U}(x_{0},3r_{0})}}^{p}\,dm
    \le c_{1}\Psi(r)\int_{B_{U}(\xi,A_{U}r)}\,d\Gamma_{p}\langle u \rangle.
    \]
    By \eqref{e:ref.rad}, there exists $A_{1} \ge 1$ depending only on $\varepsilon,A$ such that $\{ Q(B) \mid B \in \widetilde{\mathfrak{R}}^{\#}(\xi,r) \} \subset \mathfrak{R}(\xi,A_{1}r)$.
    Now, by H\"{o}lder's inequality, \eqref{e:whitney.finiteoverlap} $\psi_{B} \le \indicator{6B}$, \ref{e:VD} and Lemma \ref{lem:telescope}, 
    \begin{align*}
        &\int_{B_{U^{\#}}(\xi,r)}\abs{\Ext_{Q}(u) - u_{B_{U}(x_{0},3r_{0})}}^{p}\,dm 
        = \int_{B_{U^{\#}}(\xi,r)}\abs{\sum_{B \in \widetilde{\mathfrak{R}}^{\#}(\xi,r)}\bigl(u_{3Q(B)} - u_{B_{U}(x_{0},3r_{0})}\bigr)\psi_{B}}^{p}\,dm \\
        &\qquad\le c_{2}\sum_{B \in \mathfrak{R}(\xi,A_{1}r)}\abs{u_{3B} - u_{B_{U}(x_{0},3r_{0})}}^{p}m(3B)
        \le c_{3}\Psi(r)\int_{B_{U}(\xi,\lambda A_{1}r)}\,d\Gamma_{p}\langle u \rangle.
    \end{align*}
    These estimates above imply \eqref{e:ext.PI} with $K \coloneqq A_{U} \vee (\lambda A_{1})$.

    \ref{it:ref.energy.b}:
    For any $B_{U^{\#}}(y_{0},s_{0}) \in \mathfrak{R}^{\#}$ and $u \in \mathcal{F}_{p}(U)$, by $\supp_{X}[\psi_{B}] \subset 6B$, we observe that for any fixed $\alpha \in \mathbb{R}$,
    \begin{equation}\label{e:ExtQ.localrep}
        \Ext_{Q}(u)\big|_{B_{U^{\ast}}(y_{0},3s_{0})}
        = \alpha + \sum_{B \in \widetilde{\mathfrak{R}}^{\#}(y_{0},3s_{0})}\bigl(u_{3Q(B)} - \alpha\bigr)\,\psi_{B}\big|_{B_{U^{\ast}}(y_{0},3s_{0})}.
    \end{equation}
    Noting that there exists $N_{1} \in \mathbb{N}$ depending only on $N_{\mathrm{D}},\varepsilon$ such that $\#\bigl(\widetilde{\mathfrak{R}}^{\#}(y_{0},3s_{0})\bigr) \le N_{1}$ by \eqref{e:whitney.finiteoverlap}, we have $\Ext_{Q}(u) \in \mathcal{F}_{p}(B_{U^{\#}}(y_{0},3s_{0}))$.
    Since $B_{U^{\#}}(y_{0},s_{0}) \in \mathfrak{R}^{\#}$ is arbitrary, we conclude $\Ext_{Q}(u) \in \mathcal{F}_{p,\mathrm{loc}}(U^{\#})$.
    We also see from \ref{it:EMtri}-\ref{it:EMslocal} of Framework \ref{frame:EPEM}, \eqref{e:ExtQ.localrep} and \eqref{e:low-energy} that
    \begin{equation}\label{e:extEM.pre}
        \Gamma_{p}\langle \Ext_{Q}(u) \rangle(B_{U^{\ast}}(y_{0},3s_{0}))
        \le c_{4}\inf_{\alpha \in \mathbb{R}}\sum_{B \in \widetilde{\mathfrak{R}}^{\#}(y_{0},3s_{0})}\abs{u_{3Q(B)} - \alpha}^{p}\frac{m(Q(B))}{\Psi(r(Q(B)))},
    \end{equation}
    where $r(Q(B))$ denotes the radius the ball $Q(B)$.
    As in \cite[(5.24)]{Mur24}, by using Proposition \ref{prop:refcovering}, we find $a_{1} \in (0,1)$ such that if $\xi \in \partial U$, $0 < r < a_{1}\diam(U)$ and $B_{i} \in \mathfrak{R}^{\#}$, $i \in \{ 1,2 \}$, satisfy $\{ B_{1}, B_{2} \} \cap \mathfrak{R}^{\#}(\xi,r) \neq \emptyset$ and $6B_{1} \cap B_{2} \neq \emptyset$, then $B_{1},B_{2} \in \widetilde{\mathfrak{R}}^{\#}$. 
    In particular, for such $\xi$ and $r$, we have $B \in \widetilde{\mathfrak{R}}^{\#}(\xi,r)$ whenever $B \in \mathfrak{R}^{\#}$ satisfies $3B \cap B_{U^{\#}}(\xi,r) \neq \emptyset$, thereby that 
    \begin{align}\label{e:ExtQ.cover}
    	\Gamma_{p}\langle \Ext_{Q}(u) \rangle & (B_{U^{\#}}(\xi,r))
        \le \sum_{B_{U^{\#}}(y_0,s_0) \in \widetilde{\mathfrak{R}}^{\#}(\xi,r)}\Gamma_{p}\langle \Ext_{Q}(u) \rangle(B_{U^{\ast}}(y_{0},3s_{0})) \nonumber \\
        &\overset{\eqref{e:extEM.pre}}{\le} c_{4}\sum_{\substack{B' = B_{U^{\#}}(y_,s_0) \in \widetilde{\mathfrak{R}}^{\#}(\xi,r), \\ B \in \widetilde{\mathfrak{R}}^{\#}(y_{0},3s_{0})}}\abs{u_{3Q(B)} - u_{3Q(B')}}^{p}\frac{m(Q(B))}{\Psi(r(Q(B)))} \nonumber \\
        &\;\le c_{5}\sum_{\substack{B_{1},B_{2} \in \mathfrak{R}(\xi,K_{1}r) \\ 3B_{1} \cap 3B_{2} \neq \emptyset}}\abs{u_{3B_{1}} - u_{3B_{2}}}^{p}\frac{m(B_{1})}{\Psi(r(B_{1}))}, 
    \end{align}
    where, in the last line, we used \cite[(5.26) and (5.27)]{Mur24} and the stability of discrete $p$-energy forms under rough isometries (e.g., \cite[Lemma A.1]{Shi24}) along with \cite[Lemma 3.14-(b)]{Mur24}, and we choosed the constant $K_{1} \ge 1$ (independent of $\xi$, $r$ and $u$) as in \cite[(5.26)]{Mur24}. 
    By Lemma \ref{lem:telescope} and Proposition \ref{prop:whitney}-\ref{it:whitney.radcomparison},\ref{it:whitney.overlap}, there exists $K \ge 1$ (independent of $u$, $\xi$ and $r$) such that 
    \begin{equation}\label{e:ExtQ.cover.PI}
    	\sum_{\substack{B_{1},B_{2} \in \mathfrak{R}(\xi,K_{1}r) \\ 3B_{1} \cap 3B_{2} \neq \emptyset}}\abs{u_{3B_{1}} - u_{3B_{2}}}^{p}\frac{m(B_{1})}{\Psi(r(B_{1}))} 
    	\le c_{6}\,\Gamma_{p}\langle u \rangle(B_{U}(\xi,Kr)). 
    \end{equation}
    We obtain \eqref{e:extEM.local} by combining \eqref{e:ExtQ.cover} and \eqref{e:ExtQ.cover.PI}.  
    One can also obtain \eqref{e:extEM.total} in the same manner as in \cite[Proof of Proposition 5.8-(b)]{Mur24} (by using \eqref{e:extEM.pre} instead of \cite[(5.23)]{Mur24} in \cite[(5.33)]{Mur24}). 

    \ref{it:ref.energy.c}:
    Let $r > 0$, let $\mathcal{N}_{1} \subset \partial U$ be a $r$-net of $\partial U$, let $\mathcal{N}$ be a $r$-net of $X$ with $\mathcal{N}_{1} \subset \mathcal{N}$ and define $\mathcal{N}_{2} \coloneqq \{ z \in \mathcal{N} \mid B(z,2A_{\mathrm{P}}r) \cap \partial U = \emptyset \}$.
    Then we have
    \begin{equation}\label{e:ext.cover}
        X = \left(\bigcup_{z \in \mathcal{N}_{1}}B(z,2(A_{\mathrm{P}} + 1)r)\right) \cup \left(\bigcup_{z \in \mathcal{N}_{2}}B(z,r)\right);  
    \end{equation}
    see  \cite[(5.35)]{Mur24}.
    By \eqref{e:ext.cover} and \cite[(5.37)]{Mur24}, we can see that 
    \begin{align}\label{e:ext.KSdivide}
        W_{p}(\Ext_{Q}(u),r)
        &\coloneqq  \frac{1}{\Psi(r)}\int_{X}\fint_{B(y,r)}\abs{\Ext_{Q}(u)(x) - \Ext_{Q}(u)(y)}^{p}\,m(dx)\,m(dy) \nonumber \\
        &\le \frac{c_{7}}{\Psi(r)}\Biggl(\sum_{z \in \mathcal{N}_{1}}\int_{B(z,2(A_{\mathrm{P}} + 3)r)}\abs{\Ext_{Q}(u)(x)- (\Ext_{Q}(u))_{B(z,2(A_{\mathrm{P}}+3)r)}}^{p}\,m(dx) \nonumber \\
        &\qquad\qquad+ \sum_{z \in \mathcal{N}_{2}}\int_{B(z,2r)}\abs{\Ext_{Q}(u)(x)- (\Ext_{Q}(u))_{B(z,2r)}}^{p}\,m(dx)\Biggr).
    \end{align}
    Similar to \cite[(5.39)]{Mur24}, by $\partial U^{\#} \subset \partial U$, \ref{e:PI}, \eqref{e:scalefunction}, \ref{e:VD} and \eqref{e:extEM.total},
    \begin{align*}
        &\frac{1}{\Psi(r)}\sum_{z \in \mathcal{N}_{2}}\int_{B(z,2r)}\abs{\Ext_{Q}(u)(x)- (\Ext_{Q}(u))_{B(z,2r)}}^{p}\,m(dx) \\
        &\le c_{8}\Bigl(\Gamma_{p,U}\langle u \rangle(U) + \Gamma_{p}\langle \Ext_{Q}(u) \rangle(U^{\#})\Bigr)
        \le c_{9}\left(\Gamma_{p,U}\langle u \rangle(U) + \frac{1}{\diam(U)}\int_{U}\abs{u}^{p}\,dm\right).
    \end{align*}
    For any small enough $r > 0$, by \eqref{e:ext.PI}, \eqref{e:scalefunction} and \ref{e:VD},
    \[
    \frac{1}{\Psi(r)}\sum_{z \in \mathcal{N}_{1}}\int_{B(z,2(A_{\mathrm{P}} + 3)r)}\abs{\Ext_{Q}(u)(x)- (\Ext_{Q}(u))_{B(z,2(A_{\mathrm{P}}+3)r)}}^{p}\,m(dx)
    \le c_{10}\Gamma_{p,U}\langle u \rangle(U).
    \]
    These estimates along with \eqref{e:ext.KSdivide} imply that
    \begin{equation*}
        \limsup_{r \downarrow 0}W_{p}(\Ext_{Q}(u),r)
        \le c_{11}\left(\Gamma_{p,U}\langle u \rangle(U) + \frac{1}{\diam(U)}\int_{U}\abs{u}^{p}\,dm\right),
    \end{equation*}
    which together with Corollaries \ref{cor:main1.2} and \ref{cor:main1.3} yields $\Ext_{Q}(u) \in \mathcal{F}_{p}$ and \eqref{e:extEp}.

    \ref{it:ref.energy.d}:
    Let $\mathcal{N}_{1}, \mathcal{N}, \mathcal{N}_{2}$ be chosen as in the proof of \ref{it:ref.energy.c}.
    Then, by using \cite[(5.42)]{Mur24},
    we see that for any $\delta > 0$, $r > 0$ and $u \in \mathcal{F}_{p}(U)$,
    \begin{align}\label{e:extKS.local}
        &\frac{1}{\Psi(r)}\int_{(\partial U)(\delta)}\fint_{B(y,r)}\abs{\Ext_{Q}(u)(x) - \Ext_{Q}(u)(y)}^{p}\,m(dx)\,m(dy) \nonumber \\
        &\qquad \le \frac{c_{12}}{\Psi(r)}\biggl(\sum_{z \in \mathcal{N}_{1}}\int_{B(z,(2A_{\mathrm{P}}+3)r)}\abs{\Ext_{Q}(u)(x) - (\Ext_{Q}(u))_{B(z,(2A_{\mathrm{P}}+3)r)}}^{p}\,m(dx) \nonumber \\
        &\qquad\qquad\qquad + \sum_{z \in \mathcal{N}_{2} \cap (\partial U)(\delta + r)}\int_{B(z,2r)}\abs{\Ext_{Q}(u)(x) - (\Ext_{Q}(u))_{B(z,2r)}}^{p}\,m(dx)\biggr).
    \end{align}
    Similar to \cite[(5.45)-(5.47)]{Mur24}, for any small enough $\delta,r > 0$ with
    \[
    r < \frac{a_{0}}{2A_{\mathrm{P}} + 3}\diam(U) \quad \text{and} \quad \delta + 2(A_{\mathrm{P}}+1)r < a_{1}\diam(U),
    \]
    we have from \ref{e:PI}, \ref{e:VD}, \eqref{e:ext.PI} and \eqref{e:extEM.local} that
    \begin{align}\label{e:extKS.local.1}
        &\frac{1}{\Psi(r)}\sum_{z \in \mathcal{N}_{2} \cap (\partial U)(\delta + r)}\int_{B(z,2r)}\abs{\Ext_{Q}(u)(x) - (\Ext_{Q}(u))_{B(z,2r)}}^{p}\,m(dx) \nonumber \\
        &\qquad\qquad \le c_{13}\Gamma_{p,U}\langle u \rangle(U \cap (\partial U)(K(\delta + (2A_{\mathrm{P}}+3)r))),
    \end{align}
    and
    \begin{align}\label{e:extKS.local.2}
        &\frac{1}{\Psi(r)}\sum_{z \in \mathcal{N}_{1}}\int_{B(z,(2A_{\mathrm{P}}+3)r)}\abs{\Ext_{Q}(u)(x) - (\Ext_{Q}(u))_{B(z,(2A_{\mathrm{P}}+3)r)}}^{p}\,m(dx) \nonumber \\
        &\qquad\qquad \le c_{14}\Gamma_{p,U}\langle u \rangle(U \cap (\partial U)(K(2A_{\mathrm{P}}+3)r)),  
    \end{align}
    where $K \in [1,\infty)$ is the constant in Proposition \ref{prop:extQ}-\ref{it:ref.energy.a},\ref{it:ref.energy.b}.
    By \eqref{e:extKS.local}, \eqref{e:extKS.local.1} and \eqref{e:extKS.local.2}, we conclude that
    \[
    \lim_{\delta \downarrow 0}\limsup_{r \downarrow 0}\frac{1}{\Psi(r)}\int_{(\partial U)(\delta)}\fint_{B(y,r)}\abs{\Ext_{Q}(u)(x) - \Ext_{Q}(u)(y)}^{p}\,m(dx)\,m(dy) = 0,
    \]
    which along with Theorem \ref{thm:lower} (with $(\partial U)(\delta)$ in place of $\Omega$) yields $\Gamma_{p}\langle \Ext_{Q}(u) \rangle(\partial U) = 0$.

    \ref{it:ref.energy.e}:
    Thanks to the property in Framework \ref{frame:EPEM}-\ref{it:EMslocal}, the same argument as in \cite[Proof of Proposition 5.8-(e)]{Mur24} works.

    \ref{it:ref.energy.f}:
    It suffices to show that $\Gamma_{p}\langle u \rangle(\partial U) = 0$ for any $u \in \mathcal{F}_{p} \cap C_{c}(X)$ by virtue of the properties in Framework \ref{frame:EPEM}-\ref{it:regular},\ref{it:EMtri}.
    We define $\widehat{\Ext}_{Q}(u) \colon X \to \mathbb{R}$ by
    \begin{equation}\label{e:defn.extmap.cptsupp}
        \widehat{\Ext}_{Q}(u)(x) \coloneqq
        \begin{cases}
            u(x) \quad &\text{if $x \in \closure{U}^{X}$,} \\
            \sum_{B \in \widetilde{\mathfrak{R}}^{\#}}u_{3Q(B)}\psi_{B}(x) \quad &\text{otherwise;}
        \end{cases}
    \end{equation}
    Then one can see that $\widehat{\Ext}_{Q}(u) \in C_{c}(X)$ by following \cite[Proof of Lemma 5.9]{Mur24}.
    We note that $\widehat{\Ext}_{Q}(u) = \Ext_{Q}(u)$ $m$-a.e.\ since $m(\partial U) = 0$ by Proposition \ref{prop:geom.ud}-\ref{it:VD.ud}.
    From Framework \ref{frame:EPEM}-\ref{it:EMslocal} along with the fact that $u - \widehat{\Ext}_{Q}(u) = 0$ on $\closure{U}^{X}$ and $u - \widehat{\Ext}_{Q}(u) \in C_{c}(X)$, we have $\Gamma_{p}\langle u - \widehat{\Ext}_{Q}(u) \rangle(\closure{U}^{X}) = 0$. 
    This together with \eqref{e:extQ.bdry} and Framework \ref{frame:EPEM}-\ref{it:EMtri} implies that
    \begin{equation*}
        \Gamma_{p}\langle u \rangle(\partial U)
        = \abs{\Gamma_{p}\langle u \rangle(\partial U)^{1/p} - \Gamma_{p}\langle \Ext_{Q}(u) \rangle(\partial U)^{1/p}}^{p}
        \le \Gamma_{p}\langle u - \widehat{\Ext}_{Q}(u) \rangle(\partial U)
        = 0,
    \end{equation*}
    completing the proof.
\end{proof}

\begin{proof}[Proof of Theorem \ref{thm:ud.sext}]
    This follows from Proposition \ref{prop:extQ}-\ref{it:ref.energy.c},\ref{it:ref.energy.f}.
\end{proof}

\appendix
\section{Connectedness of the underlying metric space}\label{sec:conn}
In this section, under Framework \ref{frame:EPEM} and the assumption that any metric ball in $(X,d)$ is relatively compact, we show that the connectedness of $(X,d)$ is implied by \ref{e:PI} as in the case of PI-spaces \cite[Proposition 4.2]{BB}.  
Let $(X,d,m,\mathcal{E}_{p},\mathcal{F}_{p},\Gamma_{p}\langle \,\cdot\, \rangle)$ be as given in Framework \ref{frame:EPEM}. 
We start with the following lemma proved by Naotaka Kajino \cite{Kaj+}. 
\begin{lem}\label{lem:Floc.another}
	Assume that all metric balls in $(X,d)$ are relatively compact. 
	If $U$ is a non-empty open subset of $X$, then 
	\begin{align}\label{eq:Floc.another}
		&\biggl\{ u \in L^{\infty}_{\mathrm{loc}}(U,m|_{U}) \biggm|
        	\begin{minipage}{220pt}
            	$u = u^{\#}$ $m$-a.e.\ on $A$ for some $u^{\#} \in \mathcal{F}_{p}$ for each relatively compact open subset $A$ of $U$
        	\end{minipage}
        \biggr\} \nonumber \\  
        &= \biggl\{ u \in L^{\infty}_{\mathrm{loc}}(U,m|_{U}) \biggm|
        	\begin{minipage}{255pt}
            	for any $x \in U$ there exist an open neighborhood $B$ of $x$ and $u^{\#} \in \mathcal{F}_{p}$ such that $u = u^{\#}$ $m$-a.e.\ on $B$
        	\end{minipage}
        \biggr\}.  
	\end{align}  
\end{lem}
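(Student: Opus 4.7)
The plan is to verify the two inclusions in the displayed equality separately. For the inclusion of the first set into the second, fix $u$ in the first set and $x \in U$; since every metric ball in $(X,d)$ is relatively compact and $U$ is open, I can choose $r > 0$ so that $A \coloneqq B(x,r)$ has compact closure contained in $U$, and the defining property of the first set applied to $A$ produces $u^{\#} \in \mathcal{F}_{p}$ with $u = u^{\#}$ $m$-a.e.\ on $B \coloneqq B(x,r)$.

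For the reverse inclusion, fix $u$ in the second set and a relatively compact open $A \subset U$. Since $\overline{A}^{X}$ is compact, finitely many open sets $B_{1},\dots,B_{n}$ from the local hypothesis cover $\overline{A}^{X}$, with corresponding $u_{1},\dots,u_{n} \in \mathcal{F}_{p}$ satisfying $u = u_{i}$ $m$-a.e.\ on $B_{i}$; after shrinking each $B_{i}$, I may assume $\overline{B_{i}}^{X}$ is a compact subset of $U$. I will proceed by induction on $n$, the case $n = 1$ being immediate. The inductive step $n \to n+1$ will be reduced to the case $n = 2$ as follows: setting $W \coloneqq B_{1} \cup \dots \cup B_{n}$, the compact set $\overline{A}^{X} \setminus B_{n+1}$ is contained in the open set $W$, so by local compactness I can find a relatively compact open $A' \subset U$ with $\overline{A}^{X} \setminus B_{n+1} \subset A' \subset \overline{A'}^{X} \subset W$; applying the inductive hypothesis to $A'$ (covered by $B_{1},\dots,B_{n}$) yields $v \in \mathcal{F}_{p}$ with $u = v$ $m$-a.e.\ on $A'$, and it then remains to glue $(A',v)$ with $(B_{n+1},u_{n+1})$ on the two-cover $A' \cup B_{n+1} \supset \overline{A}^{X}$.

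The crux is thus the $n = 2$ gluing: given a cover $\overline{A}^{X} \subset B_{1} \cup B_{2}$ by relatively compact (in $U$) open sets with $u_{1},u_{2} \in \mathcal{F}_{p}$ as above, I will build $u^{\#} \in \mathcal{F}_{p}$ by a partition-of-unity argument. The disjoint compact sets $K_{1} \coloneqq \overline{A}^{X} \setminus B_{2} \subset B_{1}$ and $K_{2} \coloneqq \overline{A}^{X} \setminus B_{1} \subset B_{2}$ admit, by Urysohn's lemma in the locally compact space $(X,d)$, a function $g \in C_{c}(X)$ with $0 \le g \le 1$, $g \equiv 1$ on $K_{1}$, and $g \equiv 0$ on $K_{2}$. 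Using the density of $\mathcal{F}_{p} \cap C_{c}(X)$ in $C_{c}(X)$ under the uniform norm (Framework \ref{frame:EPEM}-\ref{it:regular}) to pick $h \in \mathcal{F}_{p} \cap C_{c}(X)$ approximating $g$ uniformly within $1/4$, and setting $\varphi \coloneqq 0 \vee \bigl(2(h - \tfrac{1}{4})\bigr) \wedge 1$, Proposition \ref{prop:Lipc.list}-\ref{it:Lip-bdd} (applied to the globally Lipschitz map $\Phi(t) \coloneqq 0 \vee (2(t - \tfrac{1}{4})) \wedge 1$ with $\Phi(0) = 0$) gives $\varphi \in \mathcal{F}_{p} \cap C_{c}(X) \cap L^{\infty}(X,m)$ with $\varphi \equiv 1$ on $K_{1}$ and $\varphi \equiv 0$ on $K_{2}$. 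Next, since $u \in L^{\infty}_{\mathrm{loc}}(U,m|_{U})$ and $\overline{A}^{X} \subset U$ is compact, I pick $M > 0$ with $\abs{u} \le M$ $m$-a.e.\ on $\overline{A}^{X}$, and set $\widetilde{u}_{i} \coloneqq (-M) \vee u_{i} \wedge M \in \mathcal{F}_{p} \cap L^{\infty}(X,m)$. Then $u^{\#} \coloneqq \varphi \widetilde{u}_{1} + (1 - \varphi)\widetilde{u}_{2}$ lies in $\mathcal{F}_{p}$ by Proposition \ref{prop:Lipc.list}-\ref{it:Lip-bdd}, and a case analysis on $A \setminus B_{2} \subset K_{1}$ (where $\varphi = 1$ and $\widetilde{u}_{1} = u_{1} = u$ $m$-a.e.), $A \setminus B_{1} \subset K_{2}$ (where $\varphi = 0$ and $\widetilde{u}_{2} = u$ $m$-a.e.), and $A \cap B_{1} \cap B_{2}$ (where $\widetilde{u}_{1} = \widetilde{u}_{2} = u$ $m$-a.e.) verifies $u = u^{\#}$ $m$-a.e.\ on $A$. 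The main delicate point is the construction of the cutoff $\varphi$ inside $\mathcal{F}_{p}$; this is possible here because Framework \ref{frame:EPEM}-\ref{it:regular} provides both the uniform density of $\mathcal{F}_{p} \cap C_{c}(X)$ in $C_{c}(X)$ and, via Proposition \ref{prop:Lipc.list}-\ref{it:Lip-bdd}, the closure of $\mathcal{F}_{p} \cap L^{\infty}$ under bounded Lipschitz post-composition.
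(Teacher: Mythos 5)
Your proof is correct. It differs from the paper's argument mainly in how the patching is organized: the paper covers $\closure{A}^{X}$ by finitely many balls $B(x_k,r_k)$ with $B(x_k,2r_k)\subset U$, builds cutoffs $\varphi_k\in\mathcal{F}_p\cap C_c(X)$, and then forms a genuine partition of unity $\psi_k=\eta\varphi_k\,\Upsilon\bigl(\sum_j\varphi_j\bigr)$ by post-composing the sum with a Lipschitz ``reciprocal'' $\Upsilon$ (Proposition \ref{prop:Lipc.list}-\ref{it:Lip-bdd}), finally setting $u^{\#}=\sum_k u_k^{\#}\psi_k$; you instead induct on the number of patches and reduce everything to a two-patch gluing, where the partition of unity is simply $\{\varphi,1-\varphi\}$, so the normalization/division step (and the auxiliary cutoff $\eta$) is avoided entirely. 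The ingredients are the same — Framework \ref{frame:EPEM}-\ref{it:Lip},\ref{it:regular} to produce $\mathcal{F}_p\cap C_c(X)$ cutoffs via uniform approximation plus Lipschitz post-composition, and the product rule of Proposition \ref{prop:Lipc.list}-\ref{it:Lip-bdd} for bounded $\mathcal{F}_p$ functions — and your explicit truncation $\widetilde{u}_i=(-M)\vee u_i\wedge M$ makes precise a point the paper passes over quickly (the factors $u_k^{\#}$ must be bounded before multiplying by $\psi_k$). The price of your route is the extra bookkeeping of the induction (choosing the intermediate set $A'$ by local compactness), while the paper's one-shot partition of unity is shorter once $\Upsilon$ is available. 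One cosmetic point: the shrinking of the neighborhoods $B_i$ to sets with compact closure in $U$ should be done at the level of each point's neighborhood \emph{before} extracting the finite subcover of $\closure{A}^{X}$ (shrinking an already-chosen finite cover could destroy the covering property); this is clearly what you intend and is a phrasing issue only.
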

\begin{proof}
	It is obvious that the set in the left-hand side of \eqref{eq:Floc.another} is contained in the set in the right-hand side of \eqref{eq:Floc.another}. 
	To show the reverse inclusion, let 
	\[
	u \in \biggl\{ u \in L^{\infty}_{\mathrm{loc}}(U,m|_{U}) \biggm|
        	\begin{minipage}{260pt}
            	for any $x \in U$ there exist an open neighborhood $B$ of $x$ and $u^{\#} \in \mathcal{F}_{p}$ such that $u = u^{\#}$ $m$-a.e.\ on $B$
        	\end{minipage}
        \biggr\}
	\]
	and $A$ be a relatively compact open subset of $X$. 
	Since $\closure{A}^{X}$ is compact, there exist $N \in \mathbb{N}$, $(x_k,r_k) \in X \times (0,\infty)$ and $u_{k}^{\#} \in \mathcal{F}_{p}$ for each $k \in \{ 1,\dots,N \}$ such that $B(x_k,2r_k) \subset U$, $\closure{A}^{X} \subset \bigcup_{k=1}^{N}B(x_k,r_k)$ and $u = u_{k}^{\#}$ $m$-a.e.\ on $B(x_k,2r_k)$. 
	By Framework \ref{frame:EPEM}-\ref{it:Lip},\ref{it:regular}, for each $k \in \{ 1,\dots,N \}$ there exists $\varphi_{k} \in \mathcal{F}_{p} \cap \contfunc_{c}(X)$ such that $0 \le \varphi_{k} \le 1$, $\varphi_{k}|_{B(x_k,r_k)} = 1$ and $\supp_{X}[\varphi_{k}] \subset B(x_k,2r_k)$. 
	In addition, there exists $\eta \in \mathcal{F}_{p} \cap \contfunc_{c}(X)$ such that $0 \le \eta \le 1$, $\eta = 1$ on $\bigl\{ \sum_{k=1}^{N}\varphi_{k} \ge 1 \bigr\}$ and $\supp_{X}[\eta] \subset \bigl\{ \sum_{k=1}^{N}\varphi_{k} > \frac{1}{2} \bigr\}$. 
	Now we define 
	\[
	\psi_k \coloneqq \frac{\eta\varphi_{k}}{\sum_{j=1}^{N}\varphi_{j}} \in \contfunc(X), 
	\]
	where $0/0$ is considered as $0$.  
	Then $\psi_k = \eta\varphi_{k}\Upsilon\bigl(\sum_{j=1}^{N}\varphi_{j}\bigr) \in \mathcal{F}_{p}$ by Proposition \ref{prop:Lipc.list}-\ref{it:Lip-bdd}, where $\Upsilon \in \contfunc(\mathbb{R})$ is a $4$-Lipschitz map defined by 
	\[
	\Upsilon(x) \coloneqq 
	\begin{cases}
		-4x+4 \quad &\text{if $x \le \frac{1}{2}$,} \\
		\frac{1}{x} \quad &\text{if $x > \frac{1}{2}$.}
	\end{cases}
	\]
	Noting that $\sum_{k=1}^{N}\varphi_{k} \ge \indicator{A}$, we have 
	\[
	u = u\sum_{k = 1}^{N}\psi_{k} = \sum_{k = 1}^{N}u_{k}^{\#}\psi_{k} \quad \text{$m$-a.e.\ on $A$.}
	\]
	Since $u_{k}^{\#}\psi_{k} \in \mathcal{F}_{p}$ by $u \in L^{\infty}_{\mathrm{loc}}(U,m|_{U})$ and Proposition \ref{prop:Lipc.list}-\ref{it:Lip-bdd}, we see that 
	\[
	u \in \biggl\{ u \in L^{\infty}_{\mathrm{loc}}(U,m|_{U}) \biggm|
        	\begin{minipage}{220pt}
            	$u = u^{\#}$ $m$-a.e.\ on $A$ for some $u^{\#} \in \mathcal{F}_{p}$ for each relatively compact open subset $A$ of $U$
        	\end{minipage}
        \biggr\}, 
	\]
	completing the proof. 
\end{proof}

Now we show the main result in this section, which improves Proposition \ref{prop:PI-up}. 
\begin{prop}\label{prop:PI-conn}
	Assume that all metric balls in $(X,d)$ are relatively compact and that \ref{e:PI} holds. 
	Then $X$ is connected.  
\end{prop}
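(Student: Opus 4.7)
The plan is to argue by contradiction: assuming $X = U_{0} \sqcup V_{0}$ with $U_{0},V_{0}$ both nonempty open sets, I will show that the indicator $f \coloneqq \indicator{U_{0}}$ lies in $\mathcal{F}_{p,\mathrm{loc}}(X)$ with vanishing energy measure $\Gamma_{p,X}\langle f \rangle \equiv 0$, and then apply the local Poincar\'{e} inequality \eqref{e:PIloc} to $f$ on a ball that meets both components. This will force the right-hand side of \eqref{e:PIloc} to vanish while the left-hand side is strictly positive by the full topological support of $m$, yielding a contradiction.

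The central construction is a continuous $\mathcal{F}_{p}$-representative of $f$ on each relatively compact open $A \subset X$. First I would set $K_{1} \coloneqq \closure{A}^{X} \cap U_{0}$ and $K_{2} \coloneqq \closure{A}^{X} \cap V_{0}$; these are disjoint and compact. By Urysohn's lemma and local compactness there exists $\eta \in C_{c}(X)$ with $0 \le \eta \le 1$, $\eta|_{K_{1}} \equiv 1$, and $\eta|_{K_{2}} \equiv 0$. Framework \ref{frame:EPEM}-\ref{it:regular} provides $\{\eta_{n}\}_{n} \subset \mathcal{F}_{p} \cap C_{c}(X)$ converging to $\eta$ uniformly, and for $n$ large enough that $\norm{\eta_{n}-\eta}_{\infty} < 1/3$ the composition $\psi_{A} \coloneqq \varphi(\eta_{n})$ with the $3$-Lipschitz cutoff $\varphi(t) \coloneqq (3t-1)^{+} \wedge 1$ (which satisfies $\varphi(0)=0$) lies in $\mathcal{F}_{p} \cap C_{c}(X)$ by Proposition \ref{prop:Lipc.list}-\ref{it:Lip-bdd}, equals $1$ on $K_{1}$, equals $0$ on $K_{2}$, and hence coincides with $f$ pointwise on $A \subset K_{1} \cup K_{2}$. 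This witnesses $f \in \mathcal{F}_{p,\mathrm{loc}}(X)$.

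To see that $\Gamma_{p,X}\langle f \rangle(A) = 0$, I would cover the compact set $\closure{A}^{X}$ by finitely many open balls $\{B(x_{i},s_{i})\}_{i=1}^{N}$, each contained entirely in $U_{0}$ or in $V_{0}$; this is possible because $U_{0} \cup V_{0} = X$ and both are open. On every such ball, $\psi_{A}$ is constant, so Framework \ref{frame:EPEM}-\ref{it:EMslocal} applied to the pair $(\psi_{A},0) \in (\mathcal{F}_{p} \cap C_{c}(X))^{2}$ gives $\Gamma_{p}\langle \psi_{A} \rangle|_{B(x_{i},s_{i})} = 0$. Summing over $i$ yields $\Gamma_{p}\langle \psi_{A} \rangle(A) = 0$, whence $\Gamma_{p,X}\langle f \rangle(A) = 0$ by Definition \ref{defn.Flocal-ss}. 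Now choose $x_{1} \in U_{0}$, $x_{2} \in V_{0}$ and $r > d(x_{1},x_{2})$; \eqref{e:PIloc} gives $\int_{B(x_{1},r)}\abs{f - f_{B(x_{1},r)}}^{p}\,dm \le C_{\mathrm{P}}\Psi(r)\,\Gamma_{p,X}\langle f \rangle(B(x_{1},A_{\mathrm{P}}r)) = 0$, while full topological support of $m$ guarantees $m(U_{0} \cap B(x_{1},r)) > 0$ and $m(V_{0} \cap B(x_{1},r)) > 0$, so $f_{B(x_{1},r)} \in (0,1)$ and the left-hand side is strictly positive---the desired contradiction.

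The main obstacle is the second step: producing a continuous $\mathcal{F}_{p}$-representative of $f$ on $A$. Framework \ref{frame:EPEM}-\ref{it:regular} only yields uniform approximation of continuous functions and cannot by itself produce a function exactly equal to $1$ on $K_{1}$; the trick is to ``snap'' such an approximation to the correct boundary values by composing with the truncating Lipschitz map $\varphi$, relying on Proposition \ref{prop:Lipc.list}-\ref{it:Lip-bdd} to stay inside $\mathcal{F}_{p} \cap C_{c}(X)$. The requirement that the representative be continuous and compactly supported---rather than an arbitrary $\mathcal{F}_{p}$-element furnished by Lemma \ref{lem:Floc.another}---is what makes the local energy property Framework \ref{frame:EPEM}-\ref{it:EMslocal} directly applicable in the third step.
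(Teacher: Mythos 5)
Your overall strategy coincides with the paper's: pass to the indicator of a clopen piece, build a continuous $\mathcal{F}_{p}$-representative by uniform approximation (Framework \ref{frame:EPEM}-\ref{it:regular}) followed by a Lipschitz ``snapping'' (Proposition \ref{prop:Lipc.list}-\ref{it:Lip-bdd}), kill its energy measure via the strong locality property Framework \ref{frame:EPEM}-\ref{it:EMslocal}, and contradict \eqref{e:PIloc} using the full support of $m$. The construction of $\psi_{A}$ and the final Poincar\'{e} step are fine. The gap is in the energy-measure step: the claim that $\psi_{A}$ is constant on each ball $B(x_{i},s_{i})$ of your finite cover of $\closure{A}^{X}$ is unjustified and false in general. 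You only control $\psi_{A}$ on $\closure{A}^{X}$ (namely $\psi_{A}=1$ on $K_{1}$ and $\psi_{A}=0$ on $K_{2}$); outside $\closure{A}^{X}$ the function $\psi_{A}=\varphi(\eta_{n})$ is unconstrained, since the Urysohn function $\eta$ must interpolate between $1$ and $0$ somewhere in $U_{0}$, and nothing prevents that transition region from meeting your balls. A covering ball centered at a boundary point of $A$ and contained in $U_{0}$ will in general leave $\closure{A}^{X}$ and contain points where $\psi_{A}\in(0,1)$, so Framework \ref{frame:EPEM}-\ref{it:EMslocal} applied to the pair $(\psi_{A},0)$ is not available on that ball, and the conclusion $\Gamma_{p}\langle\psi_{A}\rangle|_{B(x_{i},s_{i})}=0$ does not follow as written.

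The repair is immediate and makes the covering superfluous: since $U_{0}$ and $V_{0}$ are clopen, the sets $A\cap U_{0}$ and $A\cap V_{0}$ are open, $A=(A\cap U_{0})\cup(A\cap V_{0})$ disjointly, and $\psi_{A}$ is identically $1$ on the former and identically $0$ on the latter because $A\subset K_{1}\cup K_{2}$. Applying Framework \ref{frame:EPEM}-\ref{it:EMslocal} with $u=\psi_{A}$ and $v=0$ separately on these two open sets (the difference is constant on each, and $\Gamma_{p}\langle 0\rangle$ is the zero measure by Framework \ref{frame:EPEM}-\ref{it:EMtri}) gives $\Gamma_{p}\langle\psi_{A}\rangle(A\cap U_{0})=\Gamma_{p}\langle\psi_{A}\rangle(A\cap V_{0})=0$, hence $\Gamma_{p}\langle\psi_{A}\rangle(A)=0$, after which your argument goes through unchanged. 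With this correction your proof is essentially the paper's: there, $u=\indicator{U\cap B(x,A_{\mathrm{P}}r)}$ is shown to lie in $\mathcal{F}_{p,\mathrm{loc}}(B(x,A_{\mathrm{P}}r))$ via Lemma \ref{lem:Floc.another} together with Framework \ref{frame:EPEM}-\ref{it:Lip},\ref{it:regular}, and the two terms $\Gamma_{p}\langle u\rangle(B(x,A_{\mathrm{P}}r)\cap U)$ and $\Gamma_{p}\langle u\rangle(B(x,A_{\mathrm{P}}r)\cap U')$ are annihilated by Framework \ref{frame:EPEM}-\ref{it:EMslocal} exactly on the two open pieces, which is the corrected form of your step.
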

\begin{proof}
	Suppose that $U$ and $U'$ are disjoint non-empty open subsets of $X$ such that $X = U \cup U'$. 
	Let us fix $(x,r) \in X \times (0,\infty)$ satisfying $B(x,r) \cap U \neq \emptyset$ and $B(x,r) \cap U' \neq \emptyset$. 
	Set $u \coloneqq \indicator{U \cap B(x,A_{\mathrm{P}}r)} \in L^{p}(X,m)$. 
	Using Lemma \ref{lem:Floc.another}, Framework \ref{frame:EPEM}-\ref{it:Lip},\ref{it:regular} and the assumption that both $U$ and $U'$ are open, we can easily see that $u \in \mathcal{F}_{p,\mathrm{loc}}(B(x,A_{\mathrm{P}}r))$. 
	This yields a contradiction since \ref{e:PI} and Framework \ref{frame:EPEM}-\ref{it:EMslocal} imply that 
	\begin{align*}
		0 < \int_{B(x,r)}\abs{u - u_{B(x,r)}}^{p}\,dm 
		&\le C_{\mathrm{P}}\Psi(r)\int_{B(x,A_{\mathrm{P}}r)}d\Gamma_{p}\langle u \rangle \\
		&= C_{\mathrm{P}}\Psi(r)\bigl[\Gamma_{p}\langle u \rangle(B(x,A_{\mathrm{P}}r) \cap U) + \Gamma_{p}\langle u \rangle(B(x,A_{\mathrm{P}}r) \cap U')\bigr] \\
		&= 0. 
	\end{align*} 
	Hence $X$ has to be connected. 
\end{proof}

\noindent \textbf{Acknowledgements.} 
The estimate \eqref{e:KS} was inspired by a question to the author's talk at Applied Mathematical Analysis Seminar of Tohoku University. 
The author is grateful to organizers and members of this seminar for the warm hospitality.
He also would like to thank Takashi Kumagai and Nageswari Shanmugalingam for some illuminating conversations related to this work, which encourage him to prove Corollary \ref{cor:main1.3}. 
He also thanks Riku Anttila, Sylvester Eriksson-Bique, Naotaka Kajino and anonymous referees for some illuminating conversations, helpful comments and suggestions. The current formulation of Framework \ref{frame:EPEM}-\ref{it:EMslocal} resulted from a discussion with Riku Anttila and Sylvester Eriksson-Bique, Lemma \ref{lem:Floc.another} and its proof were communicated to the author by Naotaka Kajino in \cite{Kaj+}, and Propositions \ref{prop:PI-up} and \ref{prop:PI-conn} were inspired by a comment of a referee. 
The author was supported in part by JSPS KAKENHI Grant Number JP23KJ2011.

\noindent \textbf{Ryosuke Shimizu} \\
Waseda Research Institute for Science and Engineering, Waseda University, 3-4-1 Okubo, Shinjuku-ku, Tokyo 169-8555, Japan. \\
Graduate School of Informatics, Kyoto University, Yoshida-honmachi, Sakyo-ku, Kyoto 606-8501, Japan (current address). \\
E-mail: \texttt{r.shimizu@acs.i.kyoto-u.ac.jp}

\end{document}